\newtheorem{thm}{Theorem}[section]
\newtheorem{prop}[thm]{Proposition}
\newtheorem{lem}[thm]{Lemma}
  \theoremstyle{definition}
\newtheorem{df}[thm]{Definition}   \theoremstyle{definition}
\newtheorem{rem}[thm]{Remark}                \theoremstyle{plain}
 \theoremstyle{definition}
\newtheorem{ex}[thm]{Example}   \def\CC{\Bbb{C}}
\def\RR{\Bbb{R}}  
        \def\NN{\Bbb{N}} 
\def\ZZ{\Bbb{Z}}
\def\B1{{\rm\kern.32em\vrule    width.12em       height1.4ex
depth-.05ex\kern-.28em 1}}
\def\g{\gamma }
\def\CMX{\text{CM}(Y)}
\def\OCMX{\text{OCM}(Y)}
\def\Cpt{\text{Cpt}}
\def\Hol{\text{H\"{o}l}}
\def\Min{\text{Min}}
\def\emMin{\text{\em Min}}
\def\emLSf{\text{{\em LS}}({\cal U}_{f,\tau })}
\def\LSfk{\text{LS}({\cal U}_{f,\tau }(K))}
\def\emLSfk{\text{{\em LS}}({\cal U}_{f,\tau }(K))}
\def\LSfak{\text{LS}({\cal U}_{f,\tau ,\ast }(K))}
\def\emLSfak{\text{{\em LS}}({\cal U}_{f,\tau ,\ast }(K))}
\def\LSfl{\text{LS}({\cal U}_{f,\tau }(L))}
\def\emLSfl{\text{{\em LS}}({\cal U}_{f,\tau }(L))}
\def\Ufk{{\cal U}_{f,\tau }(K)}
\def\Uvk{{\cal U}_{v,\tau }(K)}
\def\Ufak{{\cal U}_{f,\tau ,\ast }(K)}
\def\Uvak{{\cal U}_{v,\tau ,\ast }(K)}
\def\Ufl{{\cal U}_{f,\tau }(L)}
\def\Uvl{{\cal U}_{v,\tau }(L)}
\def\Uval{{\cal U}_{v,\tau ,\ast }(L)}
\def\supp{\mbox{supp}}
\def\Pt{\Bbb{P}^{2}}
\def\Poi{\Bbb{P}^{1}_{\infty }}
\def\Ct{\Bbb{C}^{2}}
\def\PACt{\mbox{PA}(\Bbb{C}^{2})}
\def\ve{\varepsilon}
\begin{document}
\markboth{Random Dynamical Systems of Polynomial Automorphisms on $\Bbb{C}^{2}$}{Random Dynamical Systems of Polynomial Automorphisms on $\Bbb{C}^{2}$}
\title{Random Dynamical Systems of Polynomial Automorphisms on $\Ct$
\footnote{Date: July 14, 2026.  
The author declares no conflicts of interest associated with this manuscript. 
Data sharing is not applicable to this article as no datasets were generated or analyzed during the current study.
Keywords: Random holomorphic dynamical systems, polynomial automorphisms, mean stability, randomness-induced phenomena, cooperation principle.  
 MSC: 37F10; 37H10}}
\author{{\bf Hiroki Sumi}\\ 
Division of Mathematical and Information Sciences,\\ 
Graduate School of Human and Environmental Studies, \\ 
Kyoto University, \\ 
Yoshida Nihonmatsu-cho, Sakyo-ku, Kyoto, 606-8501, Japan\\ 
{\bf E-mail: sumi@math.h.kyoto-u.ac.jp}\\ 
http://www.math.h.kyoto-u.ac.jp/\textasciitilde sumi/index.html\\ 
\date{}
}
\maketitle
%
%
\vspace{-10mm} 
\begin{abstract}
We consider random dynamical systems of polynomial automorphisms (complex generalized H\'{e}non maps and their conjugate maps) of $\Ct.$ We show that a generic random dynamical system of polynomial automorphisms has ``mean stablity'' on $\Ct$. Further, we show that 
if a system has mean stability, then (1) for each $z\in \Ct$ and 
for almost every sequence $\gamma =(\gamma _{n})_{n=1}^{\infty }$ of maps, the maximal Lyapunov 
exponent of $\gamma $ at $z$ is negative, 
(2) there are only finitely many minimal sets of the system, (3) 
each minimal set is attracting, (4) for each 
$z\in \Ct$ and for almost every sequence 
$\gamma $ of maps, the orbit 
$\{ \gamma _{n}\cdots \gamma _{1}(z) \} _{n=1}^{\infty }$ 
tends to one of the minimal sets of the system, and 
(5) the transition operator of the system has the spectrum gap 
property on the space of \Hol der continuous functions with some exponent.   
Note that none of (1)--(5) can hold for any deterministic iteration dynamical 
system of a single complex generalized H\'{e}non map. 
We observe many new phenomena in random dynamical systems of 
polynomial automorphisms of $\Ct$ and observe the mechanisms.  
We provide new strategies and methods to study higher-dimensional 
random holomorphic dynamical systems.    
\end{abstract}
\section{Introduction and  main results} 
Nature has many random (noise) terms. 
Thus, it is reasonable and important to consider 
\textcolor{black}{random dynamical systems}. 
Moreover, holomorphic dynamical systems have also been intensively investigated
(\cite{Be, Mi, MNTT00}). The study of these helps us to 
investigate real dynamical systems. 
Combining the above two ideas, we consider 
\textcolor{black}{random holomorphic dynamical systems}.  
The first research on random holomorphic dynamical 
systems was provided by Fornaess and Sibony 
(\cite{FS}) and many researchers  
have since investigated such systems
(\cite{BBR, CD, FW, GP, GQL, GR, JS1, JSAdv}, 
\cite{S3}--\cite{W} etc.) and non-autonomous holomorphic dynamical systems 
(\cite{BV, DS, S7} etc.).  
We want to find new phenomena (so-called randomness-induced phenomena) in random dynamical systems 
that cannot hold in deterministic iteration dynamical systems of single maps (see \cite{BBR, GQL, LZ, MT}, \cite{S3}--\cite{W}). 
We have several other motivations to study random holomorphic dynamical systems.  One of them is to use ``random relaxed Newton's methods'' in which 
we can find roots of polynomials \textcolor{black}{more easily than the deterministic methods} in a sense (\cite{S21}). In addition, many researchers have been investigating   
\textcolor{black}{the actions of holomorphic automorphisms} of complex manifolds (\cite{CD}) 
and  \textcolor{black}{the actions of 
mapping class groups} of many kinds of Riemann surfaces on the 
character varieties(\cite{C}), etc. These topics are strongly related to the 
study of random holomorphic dynamical systems. 
Herein, 
we investigate random 
dynamical systems of polynomial automorphisms on $\Ct$ (generalized H\'{e}non maps and their conjugate maps) and  show that a generic 
random dynamical system of polynomial automorphisms has a kind of stability and some order 
(e.g., for every $z\in \Ct$ , for almost every sequence of maps, 
the maximal Lyapunov exponent at $z$ is negative) that cannot 
hold for any deterministic iteration dynamical system of 
a single generalized H\'{e}non map.  

We give some notations and definitions as follows. 
\begin{df}
\label{d:lc2ms}
\begin{itemize}\ 
\item[\textcolor{black}{(1)}] 
Let $f: \Ct\rightarrow \Ct$ be a \textcolor{black}{polynomial map}, i.e., 
if we write $f(x,y)=(g(x,y), h(x,y))$, then 
$g(x,y)$ and $h(x,y)$ are polynomials of $(x,y).$ 
We say that $f$ is a \textcolor{black}{polynomial automorphism  
of $\Ct$} 
if $f$ is a holomorphic automorphism of $\Ct.$ 
Let $\textcolor{black}{\PACt}$ be the space of all polynomial automorphisms 
of $\Ct$. 
Note that if $f\in \PACt$, then $f^{-1}\in \PACt$ (\cite{MNTT00}).  
   
\textcolor{black}{Remark$(\ast )$}(\cite{FM, MNTT00}).   
If $f\in \PACt$, then $f$ is conjugate by an element $g\in 
 \PACt$ to one of the following maps: 
 \begin{itemize}
 \item[\textcolor{black}{(a)}] 
 an affine map $(x,y)\mapsto (ax+by+c, a'x+b'y+c'), 
 ab'-a'b\neq 0.$
\item[\textcolor{black}{(b)}] 
an elementary map 
 $(x,y)\mapsto (ax+b, sy+p(x)), as\neq 0,$  
 where $p(x)$ is a polynomial of $x$.
 \item[\textcolor{black}{(c)}] 
 a finite composition of some generalized H\'{e}non 
 maps $(x,y)\mapsto (y, p(y)-\delta x), \delta \neq 0,$ 
 where $p(y)$ is a polynomial of $y$ with $\deg (p)\geq 2.$  
\end{itemize}   
  
\item[\textcolor{black}{(2)}] 
Let $\textcolor{black}{X}^{+} $ be the space 
of all maps $f: \Ct\rightarrow \Ct$ 
of the form   
$f(x,y)=(y+\alpha, p(y)-\delta x) $, where 
$\alpha \in \CC, 
\delta \in \CC \setminus \{ 0\}, $ and 
$p(y)$ is a polynomial of $y$  with 
$\deg (p)\geq 2. $  Note that 
\textcolor{black}{$X^{+}\subset \PACt $ (\cite{MNTT00}). } 
We endow $X^{+}$  with the topology such that a 
sequence 
$\{ f_{j}(x,y)=(y+\alpha _{j}, p_{j}(y)-\delta _{j}x)\} _{j=1}^{\infty }  $ in $X^{+}$ converges to an element $f(x,y)=(y+\alpha , p(y)-\delta x)$ in $X^{+}$ if and only if 
(i) $\alpha _{j}\rightarrow \alpha \ (j\rightarrow \infty ), $ 
(ii) $\delta _{j}\rightarrow \delta \ (j\rightarrow \infty )$, 
(iii) $\deg (p_{j})=\deg (p)$ for each large number $j$, and
(iv) the coefficients of $p_{j}$ converge to the coefficients 
of $p$ appropriately as $j\rightarrow \infty .$ 
In addition, we set $\textcolor{black}{X^{-}}:=
\{ f^{-1}\in \PACt \mid f\in X^{+}\}$ endowed with 
 topology similar to that of $X^{+}.$ Note that 
the map $\Phi : X^{+}\rightarrow X^{-}$ defined by 
$\Phi (f)=f^{-1}$ is a homeomorphism. 
In addition, $X^{+}, X^{-}$ are metrizable, separable, and locally compact and are equal to countable disjoint unions 
of finite-dimensional complex manifolds. 
\vspace{2mm}  
  
\textcolor{black}{Remark.} 
If 
$f\in X^{\pm }$, then 
$f$ is conjugate to a generalized H\'{e}non map 
by an element $g\in \PACt$ 
(if $f\in X^{+}$ and 
$f(x,y)=(y+\alpha, p(y)-\delta x)$, then setting 
$g(x,y)=(x,y+\alpha )$, we have that 
$gfg^{-1}$ is a generalized H\'{e}non map). 
Moreover, any finite composition 
of elements of $X^{+}$ (resp. $X^{-}$) 
is conjugate by an element of $\mbox{PA}(\Ct)$ 
to a finite composition of some generalized H\'{e}non map
(we can see this by Remark$(\ast) $ and by 
observing dynamical degrees).  
\vspace{2mm} 
\item[\textcolor{black}{(3)}] 
Let $({\cal Y},d)$ be a metric space. We denote by 
${\frak M}_{1}({\cal Y})$ 
 the space of all Borel probability measures on ${\cal Y}.$ 
In addition, we set ${\frak M}_{1,c}({\cal Y})
:= \{ \tau \in {\frak M}_{1}({\cal Y})\mid \mbox{ supp}\,\tau \mbox{ is a compact subset of }{\cal Y}\}.$ Here, $\supp\,\tau $ denotes the topological support of $\tau .$ 
We endow ${\frak M}_{1,c}({\cal Y})$ with 
a topology $\textcolor{black}{{\cal O}}$, which satisfies that 
$\tau _{n}\in {\frak M}_{1,c}({\cal Y})\rightarrow \tau \in {\frak M}_{1,c}({\cal Y})$ as $n\rightarrow \infty $ 
if and only if 
\begin{itemize}
\item[\textcolor{black}{(a)}] for each bounded continuous function 
$\varphi : {\cal Y} \rightarrow \CC, $ we have 
$\int \varphi \, d\tau _{n}\rightarrow \int \varphi \, d\tau $ 
as $n\rightarrow \infty$, and
\item[\textcolor{black}{(b)}]  
supp$\,\tau _{n}\rightarrow $ supp$\,\tau $ as 
$n\rightarrow \infty $ with respect to the 
Hausdorff metric in the space of all nonempty 
compact subsets of ${\cal Y}.$ 
\end{itemize}

The above topology ${\cal O}$ is called the wH-topology.
  
We can show that any element $f\in X^{+} $ (resp. $X^{-}$) 
can be extended to a holomorphic map on 
$\Pt \setminus \{ [1:0:0]\} $ (resp. 
$\Pt \setminus \{ [0:1:0]\}$) 
(Lemma~\ref{l:ghsm}).    
For each $\tau \in {\frak M}_{1,c}(X^{+}),$ we consider 
\textcolor{black}{i.i.d. random dynamical system on 
$\Pt \setminus \{ [1:0:0]\} $} (resp. $\Pt \setminus \{ [0:1:0]\}$) 
 such that 
at every step we choose a map $f\in X^{+}$ 
(resp. $f\in X^{-}$) according to 
$\tau .$
 This defines a Markov process  whose state space 
is $\Pt\setminus \{ [1:0:0]\} $ 
(resp. $\Pt \setminus \{ [0:1:0]\}$) 
 and whose transition probability $p(z, A)$ 
from a point $z\in \Pt\setminus \{ [1:0:0]\} $ 
(resp. $\Pt \setminus \{ [0:1:0]\}$) 
 to a Borel subset $A$ of $\Pt\setminus \{ [1:0:0]\} $ 
 (resp. $\Pt \setminus \{ [0:1:0]\}$) 
satisfies $p(z, A)=\tau (\{ f\in X^{+}\mid f(z)\in A\} )$ (resp. 
$p(z,A)=\tau (\{ f\in X^{-} \mid f(z)\in A\} $).  
\item[\textcolor{black}{(4)}]
For  each $\tau \in {\frak M}_{1,c}(X^{\pm})$, 
let $\textcolor{black}{G_{\tau }}:= 
\{ \gamma _{n}\circ \cdots \circ \gamma _{1}\mid 
n\in \NN, \gamma _{j}\in \mbox{supp}\,\tau (\forall j)\} $. 
This is a \textcolor{black}{semigroup} with the semigroup operation being the 
 composition of maps.  
(Remark: It is important to study the dynamics of $G_{\tau }$ 
to investigate the random dynamical system generated by 
$\tau.$ For the studies on the dynamics of semigroups of 
holomorphic maps in one dimension, see \cite{HM, JS1, JS2, St1}, \cite{S3}--\cite{SU2}.)
\end{itemize}
\end{df} 
 
We introduce the notion of mean stability 
(on complex-two-dimensional spaces).  
\begin{df}
Let $\Lambda $ be a nonempty subset of 
$\Pt.$ 
We say that an element $\tau \in {\frak M}_{1,c}(X^{\pm})$ is \textcolor{black}{ mean stable} on $\Lambda $ 
if each $f\in \supp\,\tau $ is defined on $\Lambda $ and 
$f(\Lambda)\subset \Lambda $ and  
there exist an $\textcolor{black}{n}\in \NN $, an 
$\textcolor{black}{m}\in \NN $, 
nonempty open subsets 
\textcolor{black}{$U_{1},\ldots ,U_{m}$} of $\Lambda $ 
with respect to the relative topology from $\Pt$,  
a nonempty compact subset 
\textcolor{black}{$Q$} of $\cup _{j=1}^{m}U_{j}$, 
and a constant \textcolor{black}{$c$} with $0<c<1$ such that the following \textcolor{black}{(a)} and \textcolor{black}{(b)}  hold.
\begin{itemize}
\item[\textcolor{black}{(a)}] For each $(\gamma _{1},\ldots, \gamma _{n})\in 
(\mbox{supp}\,\tau)^{n},$ we have 
$\gamma _{n}\circ \cdots \circ \gamma _{1}(\cup _{j=1}^{m}
U_{j})\subset Q.$ Moreover, 
for each $j=1,\ldots, m$, for all $x,y\in U_{j}$ and for each $(\gamma _{1},\ldots, \gamma _{n})
\in (\mbox{supp}\,\tau)^{n}$, we have 
$$d(\gamma _{n}\circ \cdots \circ \gamma _{1}(x), 
\gamma _{n}\circ \cdots \circ \gamma _{1}(y))\leq 
cd(x,y),$$ where 
\textcolor{black}{$d$} denotes the distance induced by the 
Fubini--Study metric 
on $\Pt.$    
\item[\textcolor{black}{(b)}] 
For each $z\in \Lambda$, there exists an element 
$\textcolor{black}{f_{z}}\in G_{\tau }$ such that 
$f_{z}(z)\in \cup _{j=1}^{m}U_{j}.$  
\vspace{2mm} 
  
\end{itemize}
\end{df}
%
We introduce a nice class of elements $\tau \in {\frak M}_{1,c}(X^{\pm}).$ 
\begin{df}
Let $\textcolor{black}{{\cal MS}}$ 
be the set of all $\tau \in {\frak M}_{1,c}(X^{+})$ 
satisfying that 
\begin{itemize}
\item[\textcolor{black}{(i)}] 
$\tau $ is mean stable on 
$\Pt \setminus \{ [1:0:0]\}$ and
\item[\textcolor{black}{(ii)}]   
$\tau ^{-1} $ is mean stable on 
$\Pt \setminus \{ [0:1:0]\}$,
 where 
\textcolor{black}{$\tau ^{-1} $} is the element of 
${\frak M}_{1,c}(X^{-})$ \\ such that 
$\tau ^{-1}(A)= \tau (\{ f\in X^{+}\mid f^{-1}\in A\}) 
$ for each Borel subset $A$ of $X^{-}.$
\end{itemize}  
\end{df} 
\begin{rem}
It is easy to see that ${\cal MS}$ is \textcolor{black}{open} in 
${\frak M}_{1,c}(X^{+})$ with respect to the wH-topology ${\cal O}.$  
\end{rem}
 The following definition is needed to consider the continuity of 
nonautonomous Julia sets.  
\begin{df}
\label{d:contiseqsets}
Let $(\Lambda _{1}, d_{1}), (\Lambda _{2}, d_{2})  $ be  metric spaces and let 
$(J_{\lambda})_{\lambda \in \Lambda _{1}}$ be a family of closed subsets of 
$(\Lambda _{2}, d_{2}).$ Let $\lambda _{0}\in \Lambda _{1}.$ 
We say that the set-valued map 
$\lambda \in \Lambda _{1} \mapsto J_{\lambda }$ is continuous at $\lambda =\lambda _{0}$ 
if all of the following hold. 
\begin{itemize}
\item (upper semicontinuity) If $\lambda _{n}\rightarrow \lambda _{0}$ as $n\rightarrow \infty $ in 
$\Lambda _{1}$, $z_{n}\in J_{\lambda _{n}} (n\in \NN)$ and $z_{n}\rightarrow 
z_{0}$ in $\Lambda _{2}$ as $n\rightarrow \infty $, then $z_{0}\in J_{\lambda _{0}}.$ 
\item  (lower semicontinuity) If $\lambda _{n}\rightarrow \lambda _{0}$ as $n\rightarrow \infty $ in 
$\Lambda _{1}$ and $z_{0}\in J_{\lambda _{0}}$, then there exists a sequence 
$\{ z_{n}\} _{n=1}^{\infty }$ in $\Lambda _{2}$ with $z_{n}\in J_{\lambda _{n}} (n\in \NN)$ 
such that $z_{n}\rightarrow z_{0}$ in $(\Lambda _{2}, d_{2})$ as $n\rightarrow \infty .$  
  \end{itemize}

\end{df}
\begin{df}
For each $\tau \in {\frak M}_{1}(X^{\pm})$, we set 
$\tau ^{\ZZ}:= \otimes _{n=-\infty }^{\infty }\tau \in {\frak M}_{1}((X^{\pm})^{\ZZ}).$
\end{df}
We now present the first main result of this paper.

\begin{thm}
\label{t:rpmms1} 
{\em (}see Theorems~\ref{t:yniceaod}, \ref{t:mtauspec}, 
\ref{t:kjemfhf}, \ref{t:kjemfsp}.{\em )}  
 ${\cal MS}$ is \textcolor{black}{open and dense} in 
 ${\frak M}_{1,c}(X^{+})$ with respect to the wH-topology ${\cal O}.$  
Moreover, for each $\tau \in {\cal MS}$, 
we have all of the
 following \textcolor{black}{{\em (1)--(6)}}. 
 \vspace{-1mm} 
\begin{itemize}
\item[\textcolor{black}{{\em (1)}}] 
There exists a constant $c_{\tau }$ with \textcolor{black}{$c_{\tau }<0$} 
such that the following holds. 

 \begin{itemize}
 \item \textcolor{black}{For each $z\in \Pt\setminus \{ [1:0:0]\}$}, there exists a Borel subset $\textcolor{black}{B^{+}_{\tau, z}}$ 
 of $(X^{+})^{\ZZ }$ with $\tau ^{\ZZ}(B^{+}_{\tau ,z})=1$ 
 such that for each $\gamma =(\gamma _{j}) _{j\in \ZZ}\in 
 B^{+}_{\tau ,z}$, we have
 \vspace{-2mm} 
 $$\limsup _{n\rightarrow \infty }\frac{1}{n}\log \| D(\gamma _{n-1}\circ \cdots \circ  
 \gamma _{0})_{z}\| \leq c_{\tau }\textcolor{black}{<0}.$$ 
 In addition, \textcolor{black}{for each $z\in \Pt\setminus \{ [0:1:0]\}$}, there exists a Borel subset $\textcolor{black}{B^{-}_{\tau, z}}$ 
 of $(X^{+})^{\ZZ }$ with $\tau ^{\ZZ}(B^{-}_{\tau ,z})=1$ 
 such that  for each $\gamma =(\gamma _{j}) _{j\in \ZZ}\in 
 B^{-}_{\tau ,z}$, we have
 \vspace{-2mm}  
 $$\limsup _{n\rightarrow \infty }\frac{1}{n}\log \| D(\gamma^{-1} _{-n}\circ \cdots \circ  
 \gamma ^{-1}_{-1})_{z}\| \leq c_{\tau }\textcolor{black}{<0}.$$ 
 
 Here, for each rational map $f$ on $\Pt$  and for each $z\in \Pt$ where $f$ is defined, 
 we denote by \textcolor{black}{$\| Df_{z}\| $} the norm of the 
 differential of $f$ at $z$ with respect to the 
 Fubiny--Study metric in $\Pt.$   
  \end{itemize}
    
\item[\textcolor{black}{{\em (2)}}]    
\textcolor{black}{For each $z\in \Pt\setminus \{ [1:0:0]\}$}, there exists a 
Borel subset $\textcolor{black}{C^{+}_{\tau, z}} $ of 
$(X^{+})^{\ZZ }$ 
with $\tau^{\ZZ}(C^{+}_{\tau , z})=1$ 
such that for each $\gamma =(\gamma _{j})_{j\in \ZZ}
\in C^{+}_{\tau ,z}$, there exists a number 
$r^{+}=r^{+}(\tau, z, \gamma )>0$ satisfying that 
$$\mbox{diam} (\gamma _{n-1}\circ \cdots \circ \gamma _{0}
(B(z, r^{+})))\rightarrow 0 \mbox{ as }n\rightarrow \infty $$ 
exponentially fast, and 
\textcolor{black}{for each $z\in \Pt\setminus \{ [0:1:0]\}$}, there exists a 
Borel subset $\textcolor{black}{C^{-}_{\tau, z}} $ of 
$(X^{+})^{\ZZ }$ 
with $\tau^{\ZZ}(C^{-}_{\tau , z})=1$ 
such that for each $\gamma =(\gamma _{j})_{j\in \ZZ}
\in C^{-}_{\tau ,z}$, there exists a number 
$r^{-}=r^{-}(\tau, z, \gamma )>0$ satisfying that 
$$\mbox{diam} (\gamma ^{-1}_{-n}\circ \cdots \circ 
\gamma^{-1} _{-1}
(B(z, r^{-})))\rightarrow 0 \mbox{ as }n\rightarrow \infty $$ 
exponentially fast,
where $\textcolor{black}{B(z, r)}$ denotes the ball 
with center $z$ and radius $r$ with respect to the 
distance $d$ induced by the 
Fubini--Study metric on $\Pt$, and for each subset $A$ of 
$\Pt$, we set $\textcolor{black}{\mbox{diam}A}:=\sup _{x,y\in A}d(x,y).$  \\ 
  
\item[\textcolor{black}{{\em (3)}}]
For each $\gamma =(\gamma _{j})_{j\in \ZZ}\in 
(X^{+})^{\ZZ}$, let 
$\textcolor{black}{K_{\gamma }^{+}}:=
\{ z\in \CC^{2}\mid \{ \gamma _{n-1}\circ \cdots \circ \gamma _{0}(z)\} _{n=1}^{\infty } \mbox{ is bounded in }\CC^{2}\} ,$
$\textcolor{black}{K_{\gamma }^{-}}:=
\{ z\in \CC^{2}\mid \{ \gamma ^{-1}_{-n}\circ \cdots \circ \gamma ^{-1}_{-1}(z)\} _{n=1}^{\infty } \mbox{ is bounded in }\CC^{2}\} ,$ 
$\textcolor{black}{J_{\gamma }^{+}}:=\partial K_{\gamma}^{+}, $
$\textcolor{black}{J_{\gamma }^{-}}:=\partial K_{\gamma}^{-} $ 
with respect to the topology in $\Ct.$ 
Then for each $\gamma \in (\supp\tau)^{\ZZ}$, 
$J_{\gamma }^{+}$ is a closed subset of $\Pt \setminus \{ [0:1:0]\}$ 
and $J_{\gamma }^{-}$ is a closed subset of 
$\Pt \setminus \{ [1:0:0]\}. $ Moreover,  
there exists a Borel subset 
\textcolor{black}{$D_{\tau }$} of 
$(X^{+})^{\ZZ}$ with $\tau ^{\ZZ}
(D_{\tau })=1$ such that the following \textcolor{black}
{{\em (a)(b)}} hold.
\vspace{2mm} 
\begin{itemize}
\item[\textcolor{black}{{\em (a)}}]
For each $\gamma \in D_{\tau }$, we have 
\textcolor{black}{{\em Leb}$_{4}(J_{\gamma }^{\pm })=0$},  where \textcolor{black}{{\em Leb}$_{4}$} denotes the 
$4$-dimensional Lebesgue measure on $\CC^{2}.$ 
\vspace{2mm} 
\item[\textcolor{black}{{\em (b)}}]
For each $\gamma \in D_{\tau }$,  
the map 
$\beta =(\beta _{j})_{j\in \ZZ} \in (\supp\,\tau)^{\ZZ} 
\mapsto J_{\beta }^{\pm }$
is 
\textcolor{black}{continuous at $\beta =\gamma $}.  
(Here, we regard $J_{\beta }^{+}$ as a closed subset of $\Pt \setminus \{ [0:1:0]\}$ 
and we regard $J_{\beta }^{-}$ as a closed subset of 
$\Pt \setminus \{ [1:0:0]\}. $)

\end{itemize}
  
\vspace{2mm} 
\item[\textcolor{black}{{\em (4)}}] 
Let $\textcolor{black}{Y}:=\Pt\setminus \{ [1:0:0]\}$ and 
let $\textcolor{black}{C(Y)}$ be the space of all complex-valued bounded continuous 
 functions on $Y$. Let 
$\textcolor{black}{M_{\tau }}: C(Y)\rightarrow C(Y)$ be the linear operator 
defined by 
\vspace{-1mm} 
$$M_{\tau }(\varphi )(z)=\int _{X^{+}}\varphi (f(z))\ d\tau (f), 
\mbox{ for } \varphi \in C(Y), z\in Y.$$
(This $M_{\tau }$ is the transition operator of the Markov process generated by $\tau$ (\cite{D}).) 
Then there exists a finite dimensional subspace 
$\textcolor{black}{W_{\tau }}\neq \{ 0\} $ of $C(Y)$ with $M_{\tau }(W_{\tau })=W_{\tau }$ 
such that the following holds. \\ 
\vspace{2mm} 
\textcolor{black}{For each neighborhood $B$ of $[1:0:0]$}, 
there exists a compact subset  \textcolor{black}{$K$} of 
$\Pt \setminus \{ [1:0:0]\}$ 
with 
$\Pt \setminus K\subset B$ 
satisfying all of the following \textcolor{black}{{\em (a)} and {\em (b)}}.  
\begin{itemize}
\item[\textcolor{black}{{\em (a)}}]
$f(K)\subset K$ for each $f\in \supp\, \tau $.   
\item[\textcolor{black}{{\em (b)}}]
\textcolor{black}{For each $\varphi \in C(K)$}, 
where \textcolor{black}{$C(K)$} denotes the 
Banach space of all complex-valued 
continuous functions on $K$ endowed with the supremum norm,    
\textcolor{black}{$\{ M_{\tau }^{n}(\varphi )\} _{n=1}^{\infty }$ tends to 
$W_{\tau }|_{K}:=\{ \psi |_{K}\in C(K)\mid \psi \in W_{\tau }\} $} 
in $C(K)$.  
More precisely, there exists a continuous map 
$\pi _{\tau , K}: C(K)\rightarrow W_{\tau }|_{K}$ such that 
for each $\varphi \in C(K)$,  
$\| M_{\tau }^{n}(\varphi -\pi _{\tau ,K}(\varphi ))\| _{\infty }
\rightarrow 0$ as $n\rightarrow \infty $, where 
$\| \cdot \| _{\infty }$ denotes the supremum norm in $C(K)$. 
\end{itemize} 
%
  
\item[\textcolor{black}{{\em (5)}}] 
\textcolor{black}{For each neighborhood $B$ of $[1:0:0]$}, 
there exists a compact subset  \textcolor{black}{$K$} of 
$\Pt \setminus \{ [1:0:0]\}$ 
with 
$\Pt \setminus K\subset B$
such that $f(K)\subset K$ for each $f\in \supp\, \tau $,   
and
such that 
there exists a number
 $\textcolor{black}{\alpha} =\alpha (\tau, K)$ with $0<\alpha <1$  
satisfying 
the following \textcolor{black}{{\em (a)}} and 
\textcolor{black}{{\em (b)}}.   
\begin{itemize}
\item[\textcolor{black}{{\em (a)}}] 
The space $W_{\tau }|_{K}$, where $W_{\tau }$ is the 
space in \textcolor{black}{{\em (4)}},  is included in 
the Banach space $\textcolor{black}{C^{\alpha }(K)}$ of all $\alpha $-\Hol der continuous 
functions on $K$ endowed with $\alpha$-\Hol der norm. 
\item[\textcolor{black}{{\em (b)}}] 
For each $\varphi \in C^{\alpha }(K)$, 
the sequence $\{ M_{\tau }^{n}(\varphi )\} _{n=1}^{\infty }$ 
tends to $W_{\tau }|_{K}$ \textcolor{black}{exponentially fast}. 
More precisely, there exist a constant $\lambda =\lambda (K)\in (0,1)$ and 
a constant $C=C(K)>0$ such that 
for each $\varphi \in C^{\alpha }(K)$ and for each $n\in \NN$, 
we have $\| M_{\tau }^{n}(\varphi -\pi _{\tau ,K}(\varphi ))\| _{\alpha }
 \leq C\lambda ^{n}\| \varphi \| _{\alpha }$. 
{\em (}Thus, $M_{\tau }: C^{\alpha }(K)\rightarrow C^{\alpha }(K)$ 
has the ``\textcolor{black}{spectral gap property}'' 
(see Theorem~\ref{t:kjemfsp}{\em ).)}  
\end{itemize}
%
  
\item[\textcolor{black}{{\em (6)}}] We have all of the following. 
\begin{itemize}
\item[\textcolor{black}{{\em (a)}}] 
Let $\textcolor{black}{\emMin (\tau )}$ be the set of all minimal sets of $\tau $ in 
$\Pt \setminus \{ [1:0:0]\}.$ Then, 
$1\leq \sharp \emMin (\tau )\textcolor{black}{<}\infty . $ Here, 
a nonempty compact subset $L$ of $\Pt \setminus \{ [1:0:0]\} $ 
is said to be a \textcolor{black}{minimal set of $\tau $} if 
$L=\overline{\cup _{h\in G_{\tau }}\{ h(z)\}}$ for each 
$z\in L.$ 
\item[\textcolor{black}{{\em (b)}}] 
\textcolor{black}{For each $z\in \Pt \setminus \{ [1:0:0]\}$}, 
there exists a Borel subset ${\cal C}_{z}$ of 
$(\supp\,\tau )^{\ZZ}$ with 
$\tau ^{\ZZ}({\cal C}_{z})=1$ such that 
for each $\gamma =(\gamma _{j})_{j\in \ZZ}\in 
{\cal C}_{z}$, 
we have 
$d(\gamma _{n-1}\circ \cdots \circ \gamma _{0}(z), \cup _{L\in \emMin (\tau )}L)\rightarrow 0\ \mbox{ as } n\rightarrow \infty . $ 
\item[\textcolor{black}{{\em (c)}}] 
For each $L\in \emMin (\tau )$ and for each 
$z\in \Pt \setminus \{ [1:0:0]\}$, 
let \textcolor{black}{$T_{L,\tau }(z)$} be the probability 
of tending to $L$ starting with $z$, i.e., 
$$\textcolor{black}{T_{L, \tau }(z)}=\tau ^{\ZZ}
(\{ (\gamma _{j})_{j\in \ZZ}\in (\supp\,\tau)^{\ZZ}\mid d(\gamma _{n-1}\circ 
\cdots \circ \gamma _{0}(z), L)\rightarrow 0 \mbox{ as }
n\rightarrow \infty \}).$$ 
Then, 
\textcolor{black}{$T_{L, \tau } \in W_{\tau }$} and 
$T_{L, \tau }$ is \textcolor{black}{locally 
\Hol der continuous} on $Y=\Pt \setminus \{ [1:0:0]\}.$ 
Also, $T_{L,\tau }$ is constant on any connected component 
of the Fatou set $F(G_{\tau})$ of $G_{\tau }$, 
where $F(G_{\tau })$ denotes the set of points 
$z\in Y$ for which there exists a neighborhood $U$ of $z$ in $Y$ 
such that $G_{\tau }$ is equicontinuous on $U$ as a family of maps 
from $U$ to $\Pt $ with respect to the distance induced by the 
Fubini-Study metric on $\Pt.$ 
\end{itemize} 
\end{itemize}
\end{thm}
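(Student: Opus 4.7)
\textbf{Openness and density of $\mathcal{MS}$.} My plan rests on three pillars: filtration properties of maps in $X^{\pm}$ near the two ideal points $[1:0:0]$ and $[0:1:0]$, the uniform contraction supplied by mean stability, and a Lasota--Yorke / Ionescu-Tulcea--Marinescu argument on H\"older spaces. Openness is essentially the remark in the paper: both conditions (a) and (b) in the definition of mean stability are expressed in terms of $\supp\,\tau$ and are open under wH-convergence, which includes Hausdorff convergence of supports. For density, given $\tau\in\mathfrak{M}_{1,c}(X^{+})$, I would approximate by $\tau_{\varepsilon}:=(1-\varepsilon)\tau+\varepsilon\sigma$, where $\sigma$ is a finitely supported perturbation on H\'enon-type maps chosen so that a long enough composition from $\supp\sigma$ strongly contracts a filtration neighborhood of $[1:0:0]$ in $\Pt$ (and symmetrically, the inverse family contracts near $[0:1:0]$). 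The filtration structure of $X^{\pm}$ together with the freedom to vary $\delta$ and the coefficients of $p$ makes the contraction as strong as desired; with several such maps in $\sigma$ the accessibility condition (b) can be arranged simultaneously for both $\tau_{\varepsilon}$ and $\tau_{\varepsilon}^{-1}$.

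\textbf{Lyapunov exponents, shrinking balls, Julia sets (parts (1)--(3)).} Given $\tau\in\mathcal{MS}$ with data $(n,m,U_{1},\ldots,U_{m},Q,c)$, condition (a) forces every length-$n$ word in $\supp\,\tau$ to contract $\bigcup_{j}U_{j}$ into $Q\subset\bigcup_{j}U_{j}$ by factor $c<1$, so once an orbit enters $\bigcup_{j}U_{j}$ it stays and contracts by $c$ every $n$ steps. Accessibility (b), combined with a Borel--Cantelli estimate, shows that for $\tau^{\ZZ}$-a.e.\ $\gamma$ every $z\in\Pt\setminus\{[1:0:0]\}$ enters $\bigcup_{j}U_{j}$ in finite time; setting $c_{\tau}:=(\log c)/n<0$ gives (1). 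Combined with Koebe-type distortion on a definite holomorphic scale, this yields the shrinking-ball statement (2). For (3), filtration at the opposite ideal points places $K_{\gamma}^{\pm}$ in the correct punctured planes; upper semicontinuity of $\gamma\mapsto J^{\pm}_{\gamma}$ is formal; lower semicontinuity $\tau^{\ZZ}$-a.s.\ follows from (2), since points off the Julia set possess neighborhoods of definite radius that persist under small perturbations of $\gamma$; and $\mbox{Leb}_{4}(J^{\pm}_{\gamma})=0$ follows by Fubini applied to the event that a random point admits a shrinking ball.

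\textbf{Spectral gap and minimal sets (parts (4)--(6)).} For any neighborhood $B$ of $[1:0:0]$, filtration at $[1:0:0]$ together with compactness of $\supp\,\tau$ yields a compact $K\subset\Pt\setminus\{[1:0:0]\}$ with $\Pt\setminus K\subset B$ and $f(K)\subset K$ for all $f\in\supp\,\tau$. On $C^{\alpha}(K)$ for sufficiently small $\alpha>0$, mean stability together with bounded distortion produces a Lasota--Yorke inequality
$$\|M_{\tau}^{n}\varphi\|_{\alpha}\leq c^{\alpha}\|\varphi\|_{\alpha}+C\|\varphi\|_{\infty},$$
so the Ionescu-Tulcea--Marinescu theorem makes $M_{\tau}$ quasi-compact on $C^{\alpha}(K)$; the direct sum of the peripheral (unit-modulus) eigenspaces is the finite-dimensional $W_{\tau}$, with independence of $K$ established by restriction compatibility, giving (4) and the spectral gap (5). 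Each minimal set $L\in\Min(\tau)$ supplies a nonzero hitting-probability function $T_{L,\tau}\in W_{\tau}$, and distinct minimal sets give linearly independent ones, so $\sharp\,\Min(\tau)<\infty$; the spectral decomposition applied to approximate indicators of neighborhoods of $\bigcup L$ then gives (6)(b)(c), with the H\"older regularity of $T_{L,\tau}$ inherited from $W_{\tau}\subset C^{\alpha}(K)$.

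\textbf{Main obstacle.} I expect the density of $\mathcal{MS}$ to be the hardest step: a single perturbation $\sigma$ must simultaneously install mean stability at the two opposite ideal points, and the contraction and accessibility data must be verified uniformly over all length-$n$ words drawn from the perturbed support. A secondary technical point is the Lasota--Yorke inequality, where one must guarantee uniform distortion and a uniform contraction factor across the compact but typically infinite support $\supp\,\tau$, and choose $\alpha$ small enough that the H\"older seminorm does not blow up through the $\|\cdot\|_{\infty}$ term.
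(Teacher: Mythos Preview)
Your proposal has two genuine gaps.

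\textbf{Density of $\mathcal{MS}$.} Your perturbation $(1-\varepsilon)\tau+\varepsilon\sigma$ with $\sigma$ chosen to contract near the ideal points misses the real obstruction. Contraction near $[0:1:0]$ is automatic for every $f\in X^{+}$ (Lemma~\ref{l:ghsm}); the difficulty is that $\tau$ may have a minimal set $L\subset\Ct$ that is \emph{not} attracting, and adding a few hand-picked maps does nothing to make such an $L$ attracting (nor to destroy it). The paper's mechanism is entirely different: one embeds $\tau$ in a one-parameter family $\{\tau_{t}\}$ with strictly increasing supports satisfying ``nice condition (I)'' (Theorem~\ref{t:nyItaut}). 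Then $t\mapsto\sharp\Min(\tau_{t})$ is non-increasing, so constant on a cofinite set $B\subset[0,1]$. For $t_{0}\in B$ and $K_{0}\in\Min(\tau_{t_{0}})$ with $K_{0}\subset\Ct$, there is a nearby $t_{1}>t_{0}$ with $K_{0}\subset K_{1}\in\Min(\tau_{t_{1}})$; nice condition (I) plus $\supp\,\tau_{t_{0}}\subset\mathrm{int}(\supp\,\tau_{t_{1}})$ forces $g(\partial K_{1})\subset\mathrm{int}(K_{1})$ for all $g\in\supp\,\tau_{t_{0}}$, and then the Carath\'eodory distance on the bounded components of $\mathrm{int}(K_{1})$ gives uniform contraction, so $K_{0}$ is attracting. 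This Carath\'eodory step is the new higher-dimensional idea and is absent from your sketch.

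\textbf{Continuity of $\gamma\mapsto J_{\gamma}^{\pm}$.} You have the two semicontinuities reversed. Lower semicontinuity holds for \emph{every} $\gamma\in(\supp\,\tau)^{\ZZ}$ and is proved via the non-autonomous Green functions (Proposition~\ref{p:Ggcontiph}(viii)): the locus where $\mathcal{G}_{\gamma}^{+}$ fails to be pluriharmonic is exactly $J_{\gamma}^{+}$, and $\mathcal{G}_{\gamma}^{+}$ varies continuously in $\gamma$. Upper semicontinuity is \emph{not} formal for non-autonomous Julia sets---normality for one sequence $\gamma$ does not persist under perturbation of infinitely many coordinates---and it is precisely here that mean stability is used (Lemma~\ref{l:pfmtauspecjconti}): for $\tau^{\ZZ}$-a.e.\ $\gamma$ and every $z_{0}\notin J_{\gamma}^{+}$, some finite iterate $\gamma_{p,1}$ carries a ball around $z_{0}$ into $F(G_{\tau})$, and this is stable under finitely many coordinate changes in $\gamma$. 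Your argument that ``points off the Julia set possess neighborhoods of definite radius that persist'' is really the upper semicontinuity argument, not the lower one.

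A smaller point: the Lasota--Yorke inequality you write, $\|M_{\tau}^{n}\varphi\|_{\alpha}\le c^{\alpha}\|\varphi\|_{\alpha}+C\|\varphi\|_{\infty}$, does not hold on all of $K$, since mean stability gives contraction only on $\bigcup_{j}U_{j}$. The paper's proof of Theorem~\ref{t:kjemfhf} splits the sample space according to the first hitting time of $\bigcup_{j}U_{j}$ and uses the exponential tail $\tau^{\NN}(\text{not hit by time }jk)\le a^{j}$ together with the contraction after hitting; the effective inequality involves $aC_{1}^{\alpha}<1$ rather than a bare $c^{\alpha}$.
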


\begin{rem}
\label{r:furres}Moreover, we show that if ${\cal Y}$ is a subset of $X^{+}$ 
that satisfies some 
 mild conditions (e.g.``nice condition (I)'' (Definition~\ref{d; nicesetI})),  
then the set of elements $\tau \in {\frak M}_{1,c}({\cal Y})$ which are mean stable on 
$\Pt \setminus \{ [1:0:0]\}$ is open and dense in $({\frak M}_{1,c}({\cal Y}), {\cal O})$  
(Theorems~\ref{t:rpmms2}, \ref{t:yniceaod}, Example~\ref{e:addnoise}.)  
For further results on elements $\tau \in {\cal MS}$, 
see Theorem~\ref{t:mtauspec} and Remark~\ref{r:sametauthm}. 

We remark that 
none of the statements \textcolor{black}{(1)(2)(4)(5)(6)} \textcolor{black}{in 
Theorem~\ref{t:rpmms1}  can hold for 
any deterministic dynamical system of a single $f\in X^{\pm }$} 
(\cite{BLS, MNTT00}) 
(thus, for each $f\in X^{+}$, $\delta _{f}\not\in {\cal MS}$).   
We also remark that for any $n\in \NN $ there exists an element 
$\tau \in {\cal MS}$ with $\sharp \Min (\tau )\geq n+1\geq 2.$ 
In fact, if $f\in X^{+}$ has $n$ attracting periodic cycles $E_{1},\ldots, E_{n}$ 
in $\Ct$ (e.g. let $p(y)$ be a polynomial  having $n$ attracting periodic cycles in $\CC $ and let $f(x,y)=(y, p(y)-\delta x)$ with small $|\delta |$), then by Theorem~\ref{t:rpmms1} for each small neighborhood 
$U$ of $\delta _{f}\in {\frak M}_{1,c}(X^{+})$, there exists an element 
$\tau \in U\cap {\cal MS}.$ If $U$ is small enough, then 
$\sharp \Min (\tau )\geq n+1$ 
($\{ [0:1:0]\} \in \Min (\tau )$, and for each $j=1,\ldots ,n$, there exists an element 
$L_{E_{j}}\in \Min (\tau )$ with $E_{j}\subset L_{E_{j}}\subset \Ct$). 
For such an element $\tau$, for each $L\in \Min (\tau )$, 
the function $T_{L,\tau}$ on $Y=\Pt \setminus \{ [1:0:0]\}$ 
is continuous on $Y$, nonconstant on $Y$, and constant on any connected 
component of  $F(G_{\tau }).$ 
\end{rem}
  We now consider random dynamical systems of elements in 
  $X^{+}$ 
  that preserve the volume of $\Ct.$ 
\begin{df}
Let $\textcolor{black}{X_{1}^{+}}$ be the space of all 
elements $f\in X^{+}$ of the form 
$f(x,y)=(y+\alpha , p(y)-x)$, where 
$\alpha \in \CC$ and $p(y)$ is a polynomial of 
$y$ with $\deg (p)\geq 2.$ 
We endow $X_{1}^{+}$ with the relative 
topology from $X^{+}$. 
\end{df}
We now present the second main result of this paper. 
  
\begin{thm}
\label{t:rpmms2}
{\em (}see Theorems~\ref{t:yniceaod} and \ref{t:mtauspec}.{\em )} 
${\cal MS}\cap {\frak M}_{1,c}(X_{1}^{+})$ is \textcolor{black}
{open and dense} in ${\frak M}_{1,c}(X_{1}^{+})$ with respect 
to the wH-topology ${\cal O}$ in ${\frak M}_{1,c}(X_{1}^{+}).$ 
Moreover, for each 
$\tau \in {\cal MS}\cap {\frak M}_{1,c}(X_{1}^{+})$, 
we have the following {\em (1)} and {\em (2)}. 
\begin{itemize}
\item[\textcolor{black}{{\em (1)}}]  
\textcolor{black}{For each $z\in \Pt\setminus \{ [1:0:0]\}$}, 
there exists a Borel subset $\textcolor{black}{E_{\tau, z}^{+}}$ of 
$(X_{1}^{+})^{\ZZ}$ with $\tau ^{\ZZ}(E_{\tau, z}^{+})=1$ 
such that for each 
$\gamma =(\gamma_{j})_{j\in \ZZ}\in E_{\tau ,z}^{+}$, we have 
$$\gamma _{n-1}\circ \cdots \circ \gamma _{0}(z)
\rightarrow [0:1:0] \mbox{ as }n\rightarrow \infty ,$$
and 
\textcolor{black}{for each $z\in \Pt\setminus \{ [0:1:0]\}$}, 
there exists a Borel subset $\textcolor{black}{E_{\tau, z}^{-}}$ of 
$(X_{1}^{+})^{\ZZ}$ with $\tau ^{\ZZ} (E_{\tau, z}^{-})=1$ 
such that for each 
$\gamma =(\gamma_{j})_{j\in \ZZ}\in E_{\tau ,z}^{-}$, we have 
$$\gamma _{-n}^{-1}\circ \cdots \circ \gamma _{-1}^{-1}(z)
\rightarrow [1:0:0] \mbox{ as }n\rightarrow \infty .$$
\item[\textcolor{black}{{\em (2)}}]
For $\tau ^{\ZZ}$-a.e. 
$\gamma \in (X_{1}^{+})^{\ZZ}$, 
we have 
\textcolor{black}{$J_{\gamma }^{\pm}=K_{\gamma }^{\pm }$} 
and 
\textcolor{black}{{\em Leb}$_{4}(K_{\gamma }^{\pm })=0.$}
\end{itemize}  
\end{thm}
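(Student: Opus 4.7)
I organize the argument in three steps, corresponding to openness/density, item (1), and item (2).

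\emph{Step 1 (openness and density).} Openness of ${\cal MS}\cap {\frak M}_{1,c}(X_{1}^{+})$ in ${\frak M}_{1,c}(X_{1}^{+})$ is immediate from the openness of ${\cal MS}$ in ${\frak M}_{1,c}(X^{+})$ (the remark after the definition of ${\cal MS}$) together with the fact that the wH-topology on ${\frak M}_{1,c}(X_{1}^{+})$ is just the restriction of the ambient wH-topology. For density, the plan is to apply Theorem~\ref{t:yniceaod} with ${\cal Y}=X_{1}^{+}$, which only requires that ${\cal Y}$ satisfy ``nice condition (I)'' (Definition~\ref{d; nicesetI}). Verifying this for $X_{1}^{+}$ is a routine parameter-perturbation check: the constraint $\delta\equiv 1$ leaves $\alpha\in\CC$ and the coefficients of $p$ free, and that is more than enough freedom to produce the local contractions and the orbit-transitivity property (b) appearing in the mean-stability definition while remaining inside $X_{1}^{+}$.

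\emph{Step 2 (item (1): convergence to $[0:1:0]$, resp.\ $[1:0:0]$).} Fix $\tau\in {\cal MS}\cap {\frak M}_{1,c}(X_{1}^{+})$. By Theorem~\ref{t:rpmms1}(6)(b), for every $z\in Y:=\Pt\setminus\{[1:0:0]\}$, the orbit $\gamma_{n-1}\circ\cdots\circ\gamma_{0}(z)$ accumulates on $\bigcup_{L\in\Min(\tau)}L$ for $\tau^{\ZZ}$-a.e.\ $\gamma$, so it suffices to show $\Min(\tau)=\{\{[0:1:0]\}\}$. Let $L\in\Min(\tau)$. First, if $L$ meets the line at infinity $\ell_{\infty}\cap Y$, then since every $f\in X^{+}$ extends to $\Pt\setminus\{[1:0:0]\}$ collapsing $\ell_{\infty}\cap Y$ to the super-attracting fixed point $[0:1:0]$, the minimality of $L$ forces $L=\{[0:1:0]\}$. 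It remains to rule out $L\subset \Ct$. Here I use that every $f\in X_{1}^{+}$ has constant complex Jacobian $\det Df\equiv 1$, so preserves the $4$-dimensional Lebesgue measure on $\Ct$. Mean stability produces an $n\in\NN$, open sets $U_{1},\dots,U_{m}\subset Y$, and $c<1$ such that each $\gamma_{n}\circ\cdots\circ\gamma_{1}$ is a $c$-contraction (in the Fubini--Study metric) on each $U_{j}$ and sends $\bigcup U_{j}$ into a compact $Q\subset\bigcup U_{j}$. By minimality and property (b) of mean stability, some $U_{j}$ intersects $L$, and since $L\subset\Ct$ we may take $U_{j}\Subset\Ct$, where the Fubini--Study and Euclidean metrics are comparable. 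Iterating the contraction $k$ times shrinks diameters by $c^{k}$, hence shrinks $4$-volumes; but volume preservation keeps the $4$-volume of every iterate of $U_{j}$ equal to $\mathrm{Leb}_4(U_{j})>0$, a contradiction. Hence $\Min(\tau)=\{\{[0:1:0]\}\}$, and the forward statement of (1) follows. The backward statement is symmetric, applied to $\tau^{-1}$, which lies in ${\cal MS}\cap {\frak M}_{1,c}(X_{1}^{-})$ since $(X_{1}^{+})^{-1}$ also consists of volume-preserving polynomial automorphisms.

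\emph{Step 3 (item (2): zero volume and $J_{\gamma}^{\pm}=K_{\gamma}^{\pm}$).} Consider
\[
A^{+}:=\{(z,\gamma)\in\Ct\times (X_{1}^{+})^{\ZZ}:\gamma_{n-1}\circ\cdots\circ\gamma_{0}(z)\to[0:1:0]\}.
\]
By (1), the $z$-slice of $A^{+}$ has $\tau^{\ZZ}$-measure $1$ for every $z\in\Ct$, so $(\mathrm{Leb}_4|_{B}\otimes\tau^{\ZZ})(A^{+})=\mathrm{Leb}_4(B)$ on every bounded Borel $B\subset\Ct$. Fubini then gives, for $\tau^{\ZZ}$-a.e.\ $\gamma$ and $\mathrm{Leb}_4$-a.e.\ $z\in\Ct$, that $\gamma_{n-1}\circ\cdots\circ\gamma_{0}(z)\to[0:1:0]$; such $z$ cannot lie in $K_{\gamma}^{+}$, hence $\mathrm{Leb}_4(K_{\gamma}^{+})=0$. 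To upgrade to $K_{\gamma}^{+}=J_{\gamma}^{+}$, I invoke the standard nonautonomous H\'enon filtration, uniform over the compact set $\supp\,\tau\subset X_{1}^{+}$: this exhibits $K_{\gamma}^{+}$ as a closed subset of $\Ct$. A closed set of $4$-measure zero has empty interior, so $K_{\gamma}^{+}=\partial K_{\gamma}^{+}=J_{\gamma}^{+}$. The argument for $K_{\gamma}^{-}=J_{\gamma}^{-}$ and $\mathrm{Leb}_4(K_{\gamma}^{-})=0$ is identical with $f\leftrightarrow f^{-1}$.

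\emph{Main obstacle.} The delicate point is Step 2(b): ruling out minimal sets inside $\Ct$. Mean stability is formulated in the Fubini--Study metric on $\Lambda=\Pt\setminus\{[1:0:0]\}$, whereas volume preservation uses the Euclidean structure on $\Ct$; the fix is the observation that on any relatively compact subset of $\Ct$ the two metrics are bi-Lipschitz equivalent, so Fubini--Study diameter contraction forces Euclidean volume decay. Once Step~2(b) is settled, the density claim in Step~1 reduces to an application of the nice-set machinery already developed for Theorem~\ref{t:yniceaod}, and Step~3 is a combination of Fubini with the standard H\'enon filtration.
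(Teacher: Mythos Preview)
Your overall strategy matches the paper's: density via Theorem~\ref{t:yniceaod}, the volume-preservation contradiction to rule out minimal sets in $\Ct$ (which the paper packages as Lemma~\ref{l:gjcaek}(6), but the content is exactly your argument), then (1) from $\Min(\tau)=\{\{[0:1:0]\}\}$ via Theorem~\ref{t:mtauspec}, and (2) by Fubini from (1) together with closedness of $K_{\gamma}^{+}$.

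There is, however, a genuine gap in Step~1. Applying Theorem~\ref{t:yniceaod} with ${\cal Y}=X_{1}^{+}$ only yields open-density of ${\cal A}_{1}^{+}:=\{\tau\in{\frak M}_{1,c}(X_{1}^{+}):\tau\text{ is mean stable on }\Pt\setminus\{[1:0:0]\}\}$. But ${\cal MS}\cap{\frak M}_{1,c}(X_{1}^{+})={\cal A}_{1}^{+}\cap{\cal A}_{1}^{-}$, where ${\cal A}_{1}^{-}:=\{\tau:\tau^{-1}\text{ is mean stable on }\Pt\setminus\{[0:1:0]\}\}$, so you must also show ${\cal A}_{1}^{-}$ is open and dense. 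The paper does this by applying Theorem~\ref{t:yniceaod} a second time with ${\cal Y}=X_{1}^{-}$ (which also satisfies nice condition~(I)) and transporting back via the homeomorphism $f\mapsto f^{-1}$; you should make this second application explicit before intersecting. Separately, your description of nice condition~(I) is off: it is a local surjectivity statement about $(f,z)\mapsto f(z)$ within ${\cal Y}$, not a statement about producing the contractions and property~(b) of mean stability---those are conclusions of the machinery behind Theorem~\ref{t:yniceaod} (via Theorem~\ref{t:nyItaut}), not hypotheses you verify. Finally, in Step~2, iterating the $c$-contraction to get a $c^{k}$-bound requires each successive image to land in a single $U_{i}$; this holds once your shrunken $U_{j}$ has diameter below a Lebesgue number for the cover $\{U_{i}\}$ of $Q$, and you should say so.
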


\begin{rem}
\label{r:msfinsupp}
It is easy to see that 
 the set $\{ \tau \in {\cal MS}\mid 
\sharp \supp\,\tau <\infty \}$ is dense 
in ${\cal MS}$ with respect to the wH-topology, 
and 
the set $\{ \tau \in {\cal MS}\cap {\frak M}_{1,c}(X_{1}^{+})\mid 
\sharp \supp\,\tau <\infty \}$ is dense 
in ${\cal MS}\cap {\frak M}_{1,c}(X_{1}^{+})$ 
with respect to the wH-topology. 
Hence, by Theorems~\ref{t:rpmms1} and \ref{t:rpmms2}, we have 
${\frak M}_{1,c}(X^{+})=\overline{\{ \tau \in {\cal MS}\mid 
\sharp \supp\,\tau <\infty \}}$ in 
$({\frak M}_{1,c}(X^{+}), {\cal O})$  
and 
${\frak M}_{1,c}(X_{1}^{+})=
\overline{\{ \tau \in {\cal MS}\cap {\frak M}_{1,c}(X_{1}^{+})\mid 
\sharp \supp\,\tau <\infty \}}$ in 
$({\frak M}_{1,c}(X_{1}^{+}), {\cal O})$.  
\end{rem}
The following theorem, which deals with the stability and bifurcation of 
a family $\{ \tau _{t}\} _{t}$ in ${\frak M}_{1,c}(X^{+})$,  is the key for  
proving the density of ${\cal MS}$ in ${\frak M}_{1,c}(X^{+})$ with respect to 
${\cal O}$ in 
Theorem~\ref{t:rpmms1} and 
the density of ${\cal MS}\cap {\frak M}_{1,c}(X_{1}^{+})$ in 
${\frak M}_{1,c}(X_{1}^{+})$ with respect to ${\cal O}$ 
in Theorem~\ref{t:rpmms2}. 
\begin{thm}
\label{t:inyItaut} (For more detailed version, see Theorem~\ref{t:nyItaut}.) 
Let ${\cal Y}=X^{+}$ or let ${\cal Y}=X^{+}_{1}.$ 
Let $\{ \tau _{t}\} _{t\in [0.1]}$ be a family of 
elements of ${\frak M}_{1,c}({\cal Y})$ such that 
all of the following hold. 
\begin{itemize}
\item[{\em (i)}] 
$t\in [0,1]\mapsto \tau _{t}\in {\frak M}_{1,c}({\cal Y})$ is continuous 
with respect to the wH-topology ${\cal O}.$ 

\item[{\em (ii)}]
If $t_{1}, t_{2}\in [0,1]$ and $t_{1}<t_{2}$, then 
$\supp\,\tau _{t_{1}}\subset \mbox{int}(\supp\,\tau _{t_{2}}).$ 
Here, int denotes the set of interior points with respect to the topology 
in ${\cal Y}.$  
\item[{\em (iii)}] 
$\mbox{int} (\supp\,\tau _{0})\neq \emptyset .$ 
\end{itemize} 
Let $A:=\{ t\in [0,1]\mid \tau _{t} \mbox{ is not mean stable on }
\Pt \setminus \{ [1:0:0]\} \}.$
Then $\sharp A\leq \emMin(\tau _{0})-1<\infty $ and 
$B:=\{ t\in [0,1]\mid s\mapsto \sharp \emMin(\tau _{s}) \mbox{ is 
constant in a neighborhood of }t\} $ satisfies 
$B= [0,1]\setminus A.$

\end{thm}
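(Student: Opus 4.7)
The plan is to reduce everything to the monotonicity of $t\mapsto \sharp \Min(\tau_{t})$ under enlargement of supports, combined with the openness of mean stability and the interior condition~(ii).

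First I would establish this monotonicity. Condition~(ii) gives $G_{\tau_{t_{1}}}\subset G_{\tau_{t_{2}}}$ for $t_{1}<t_{2}$, so for any $L\in \Min(\tau_{t_{2}})$ and any $z\in L$, the set $\overline{G_{\tau_{t_{1}}}\{z\}}$ is a nonempty closed $G_{\tau_{t_{1}}}$-forward-invariant subset of the compact set $L$. By Zorn's lemma it contains some $L'\in \Min(\tau_{t_{1}})$, and since distinct minimal sets are disjoint, the assignment $L\mapsto L'$ is well defined and injective; hence $\sharp \Min(\tau_{t_{2}})\leq \sharp \Min(\tau_{t_{1}})$. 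Finiteness of $\sharp \Min(\tau_{0})$ follows from condition~(iii): an open ball inside $\supp\,\tau_{0}$ provides enough random perturbation to apply the trapping-and-contraction arguments used elsewhere in the paper for systems whose support contains an open set.

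Second, I would prove the easy inclusion $[0,1]\setminus A\subseteq B$. If $\tau_{t_{0}}$ is mean stable, the remark after the definition of $\mathcal{MS}$ shows mean stability is open in the wH-topology, so by~(i) there is a neighborhood $I$ of $t_{0}$ on which every $\tau_{t}$ is mean stable. Theorem~\ref{t:rpmms1}(6) then provides, for each $L\in \Min(\tau_{t_{0}})$, an open trapping neighborhood $U_{L}$ attracting under $G_{\tau_{t_{0}}}$; by continuity~(i) and the inclusion~(ii), $U_{L}$ remains trapping for $\tau_{t}$ with $t\in I$ and contains a unique $L_{t}\in \Min(\tau_{t})$. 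Combining with the injection from Step~1 forces bijectivity, so $\sharp \Min(\tau_{t})$ is constant on $I$ and $t_{0}\in B$.

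The main obstacle is the reverse inclusion $B\subseteq [0,1]\setminus A$. Assuming $\sharp \Min(\tau_{t})\equiv k$ on a neighborhood $I$ of $t_{0}$, all containment injections on $I$ become bijections, so each $L\in \Min(\tau_{t_{0}})$ is sandwiched $L^{-}\subset L\subset L^{+}$ between corresponding minimal sets for nearby parameters. The algebraic bijection is cheap; the real work is upgrading it to mean stability of $\tau_{t_{0}}$, i.e.\ producing a finite $n\in \NN$, open sets $U_{1},\ldots,U_{m}$ accessible from every $z\in \Pt\setminus \{[1:0:0]\}$, a compact trap $Q\subset \cup_{j}U_{j}$, and a uniform Lipschitz constant $c<1$ for all length-$n$ compositions from $\supp\,\tau_{t_{0}}$. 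My leverage is condition~(ii): $\supp\,\tau_{t_{0}}\subset \mbox{int}(\supp\,\tau_{t'})$ for $t'>t_{0}$ in $I$ means every tuple $(\gamma_{1},\ldots,\gamma_{n})\in (\supp\,\tau_{t_{0}})^{n}$ admits an open wH-neighborhood of perturbed tuples in $(\supp\,\tau_{t'})^{n}$, and \emph{all} of these compositions still preserve the same finite bijective family of minimal sets. This rigidity, fed into Pliss/Lyapunov-type averaging, should force each minimal set of $\tau_{t_{0}}$ to be uniformly attracting and yield the required cover and contraction, giving mean stability. Once the equivalence $A=[0,1]\setminus B$ is in place, the count $\sharp A\leq \sharp \Min(\tau_{0})-1$ is immediate: $t\mapsto \sharp \Min(\tau_{t})$ is nonincreasing, integer-valued on $[0,1]$ with values in $\{1,\ldots,\sharp \Min(\tau_{0})\}$, and $A$ is precisely its set of jump points, of which there can be at most $\sharp \Min(\tau_{0})-1$.
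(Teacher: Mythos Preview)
Your overall architecture matches the paper's: monotonicity of $t\mapsto \sharp\Min(\tau_t)$ via Zorn, finiteness of $\sharp\Min(\tau_0)$ from the interior condition, and the observation that $A$ is then the set of jump points of a nonincreasing integer-valued function. The inclusion $[0,1]\setminus A\subseteq B$ is also handled essentially as in the paper (though beware of invoking Theorem~\ref{t:rpmms1}(6), which logically depends on the present result; you should argue directly from the definition of mean stability and Lemma~\ref{l:taumsema}).

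The genuine gap is in your reverse inclusion $B\subseteq [0,1]\setminus A$. You correctly identify condition~(ii) as the leverage and note that compositions from $(\supp\,\tau_{t_0})^n$ can be perturbed inside $(\supp\,\tau_{t'})^n$ while preserving the same bijective family of minimal sets. But ``Pliss/Lyapunov-type averaging'' is not an adequate mechanism here: you need \emph{uniform} contraction for \emph{every} finite composition from $\supp\,\tau_{t_0}$, not a statement about typical or average orbits, and you have not established any negative Lyapunov exponent to average in the first place. The paper's argument is quite different and more geometric. Fixing $t_1>t_0$ with $\sharp\Min(\tau_{t_0})=\sharp\Min(\tau_{t_1})$, one takes the unique $K_1\in\Min(\tau_{t_1})$ containing a given $K_0\in\Min(\tau_{t_0})$ with $K_0\subset\Ct$. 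The nice condition~(I) forces $\mbox{int}(K_1)\neq\emptyset$. The crucial step (Claim~2 in the paper) is that every $g\in\supp\,\tau_{t_0}$ sends $\partial K_1$ into $\mbox{int}(K_1)$: if $g_0(z_0)\in\partial K_1$, then since $g_0\in\mbox{int}(\supp\,\tau_{t_1})$ by~(ii), a nearby $g_1\in\supp\,\tau_{t_1}$ would throw $z_0$ outside $K_1$, contradicting invariance. Hence $C:=\bigcup_{g\in\supp\,\tau_{t_0}}g(K_1)$ is compact in $\mbox{int}(K_1)$, and one then uses the \emph{Carath\'eodory distance} on bounded connected open subsets of $\mbox{int}(K_1)$ to obtain a uniform contraction constant $c<1$ for all $g\in\supp\,\tau_{t_0}$. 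This is the specific higher-dimensional device the paper introduces, and it is what your sketch is missing.
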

The set $A$ in Theorem~\ref{t:inyItaut} (Theorem~\ref{t:nyItaut}) is called the set of bifurcation parameters 
for the family $\{ \tau _{t}\} _{t}.$ We can construct many families $\{ \tau _{t}\} _{t}$ 
for which conditions (i)(ii)(iii) in Theorem~\ref{t:inyItaut} are satisfied and the 
set $A$ of bifurcation parameters is not empty (see Example~\ref{e:addnoise}). 
 
Note that none of the statements \textcolor{black}{(1)}(2)(4)(5)\textcolor{black}{(6)} 
in Theorem~\ref{t:rpmms1}   
\textcolor{black}{can hold for deterministic dynamics of a single 
polynomial automorphism $f$ on $\CC^{2}$ 
which is 
conjugate to a generalized H\'{e}non map by a polynomial automorphism 
(\cite{BLS, MNTT00}).} 
Hence, herein, we see many \textcolor{black}
{randomness-induced phenomena} (phenomena in random dynamical systems that cannot 
hold for iteration dynamics of single maps). 
In particular, we see 
\textcolor{black}{randomness-induced order}. 
Many kinds of maps in one random dynamical system 
automatically cooperate together to 
make the chaoticity weaker. 
We call this phenomena as the 
``\textcolor{black}{{\bf Cooperation Principle}.''} 
Even if a random dynamical system has  randomness-induced order, the system can still have multiple attracting minimal sets 
(see Remark~\ref{r:furres}), can have some diversity,  
 and 
can have some complexity
(\cite{Splms10}, \cite{Sadv}).  
We have to investigate the 
\textcolor{black}{gradation between chaos and order}
in random dynamical systems.  
The exponent 
\textcolor{black}{$\alpha $} 
in Theorem~\ref{t:rpmms1} (5) may represent the 
quantity of the gradation between chaos and order.   
  
A rough idea of the proof of  
Theorem~\ref{t:rpmms1} is as follows. 
  To show the density of ${\cal MS}$ in ${\frak M}_{1,c}(X^{+}) $, 
 let $\zeta \in {\frak M}_{1,c}(X^{+})$ and let $U$ be an open 
 neighborhood of $\zeta $ in ${\frak M}_{1,c}(X^{+}).$ 
 Then, there exists an element $\zeta _{0}\in U$ with 
 $\sharp \supp\,\zeta _{0}<\infty .$ By enlarging the support of 
 $\zeta _{0}$, we can construct a family 
 $\{ \tau _{t}\}_{t\in [0,1]}$ of elements in $U$  
 such that  $\{ \tau _{t}\}_{t\in [0,1]}$ satisfies 
 the conditions (i)(ii)(iii) in Theorem~\ref{t:inyItaut}. 
Then, by Theorem~\ref{t:inyItaut}, 
there exists a $t>0$ such that $\tau _{t}\in U\cap {\cal MS}$.  
This argument shows the density of ${\cal MS}$ in ${\frak M}_{1,c}(X^{+}) .$ Similarly, we can show the density of 
${\cal MS}\cap {\frak M}_{1,c}(X_{1}^{+})$ in ${\frak M}_{1,c}(X_{1}^{+}) .$ 
For the proofs of the results on elements $\tau \in {\cal MS}$, 
we develop the ideas in \cite{Splms10, Sadv} 
(where we mainly deal with complex-one-dimensional random holomorphic 
dynamical systems) to the settings of the higher-dimesional random holomorphic 
dynamical systems. To show the continuity of nonautonomous Julia sets  in Theorem~\ref{t:rpmms1} (\ref{t:mtauspec}), 
we construct nonautonomous Green's functions 
(see section~\ref{s:nadj}), show 
the lower semicontinuity of 
the nonautonomous Julia sets (Proposition~\ref{p:Ggcontiph}), 
and then combine the above arguments with the results on 
elements $\tau \in {\frak M}_{1,c}(X^{+})$ with mean stability.  

A rough idea of the proof of Theorem~\ref{t:inyItaut} 
(\ref{t:nyItaut}),   
 which is the key for  
proving the density of ${\cal MS}$ in ${\frak M}_{1,c}(X^{+})$ 
 in Theorem~\ref{t:rpmms1} and 
the density of 
${\cal MS}\cap {\frak M}_{1,c}(X_{1}^{+})$ in 
${\frak M}_{1,c}(X_{1}^{+})$,  
is as follows. 
Under the assumptions of Theorem~\ref{t:inyItaut}, by Zorn's lemma, we can show that  
 if $t_{1},t_{2}\in [0,1], t_{1}<t_{2}$ then 
 $\sharp \Min(\tau _{t_{2}})\leq \sharp \Min(\tau _{t_{1}}).$ Moreover, 
 we can easily show that $\sharp \Min(\tau _{0})<\infty .$ 
 It follows that 
 we have $\sharp ([0,1]\setminus B)<\infty .$ 
 Furthermore, by using the Carath\'{e}odory distances on bounded domains, we can show that if $t\in B$, then any $L\in \Min(\tau _{t})$ 
 with $L\subset \Ct$ is attracting for $\tau _{t}$, which implies that 
 $\tau _{t}$ is mean stable on $\Pt \setminus \{ [0:1:0]\}.$ 
 From these arguments, we can prove Theorem~\ref{t:inyItaut} 
(Theorem~\ref{t:nyItaut}).   

We remark that we have several results on complex-one-dimensional 
random holomorphic dynamical systems in \cite{Sadv} which 
are similar to Theorem~\ref{t:inyItaut} and the result on the density of 
${\cal MS}$ in ${\frak M}_{1,c}(X^{+})$ in Theorem~\ref{t:rpmms1}, 
but 
{\bf the methods and arguments in the proofs of them in \cite{Sadv, S21} 
  were valid only for complex-one-dimensional random holomorphic dynamical systems}. 
However, in this paper,  
we introduce {\bf some new and powerful methods 
that are valid for  random holomorphic dynamical systems 
of any dimension} in the proofs of many results, especially in the proof of Theorem~\ref{t:inyItaut} 
(Theorem~\ref{t:nyItaut})(the key for  
proving the density of ${\cal MS}$ in ${\frak M}_{1,c}(X^{+})$ 
 in Theorem~\ref{t:rpmms1} and 
the density of 
${\cal MS}\cap {\frak M}_{1,c}(X_{1}^{+})$ in 
${\frak M}_{1,c}(X_{1}^{+})$). We emphasize that {\bf the methods in 
the proof of Theorem~\ref{t:inyItaut} 
(Theorem~\ref{t:nyItaut}) are completely different from those in the proof of 
\cite[Theorem 1.8]{Sadv}}. 
Note also that as $\Pt \setminus \{ [1:0:0]\} $ and 
$\Pt \setminus \{ [0:1:0]\}$ are not compact, 
some extra efforts are required to prove the results on the global behavior of 
the random dynamical systems and the iterations of transition operators. 
In particular, we need 
some observations on the actions of the systems 
on small neighborhoods of the line at infinity.  
\begin{rem}
\label{r:compare} 
This paper is the first one in which we see that for a 
generic random holomorphic dynamical systems 
(with some natural and mild conditions) in 
a complex manifold of dimension greator than $1$, 
{\bf (I)} for all initial value $z$ and for almost 
every seqence $\gamma $ of maps 
the maximal Lyapunov exponent at $z$ with respect to $\gamma $ 
is negative (Theorem~\ref{t:rpmms1}(1)), {\bf (II)} the transition operator of the system 
has the spectral gap property on the Banach space of 
\Hol der continuous functions with some \Hol der exponent 
(Theorem~\ref{t:rpmms1}(5)), 
{\bf (III)} the orbit of any Borel probability measure on the phase space 
under the iterations of the dual of the transition operator of the system 
tends to a periodic cycle of probability measures 
(Theorem~\ref{t:mtauspec}-\ref{t:mtauspec8},\ref{t:mtauspecdual}),   
and 
{\bf (IV)} the map $\beta \mapsto J_{\beta }^{\pm }$ is continuous 
at almost every $\gamma $ (see Theorem~\ref{t:rpmms1}(3), note that the lower semicontinuity follows 
directly from the standard arguments on Green's functions but the 
upper semicontinuity does not related to Green's functions and the upper semicontinuity follows from 
the mean stability).  Note that the topic on the 
transition operators is completely different from the topic on 
the Green's functions, the Green current or decay of correlations. 
We emphasize that the main results and the methods in the proofs in this paper 
are very different from those of the other papers (\cite{BV, CD, DS, FW, GP}) on higher-dimensional 
random holomorphic dynamical systems or non-autonomous dynamical systems. 
We explain the difference between this paper and the other papers as follows. 

(1) The paper \cite{FW} deals with the randon dynamical 
 systems of holomorphic maps on $\Bbb{P}^{k}.$ More precisely 
 the authors of \cite{FW} investigated holomorphic families 
 $\{ f_{\lambda }\} _{\lambda \in B(0,\delta)}$ on 
 the $k$-dimensional complex ball $B(0,\delta )$ for which the map 
 $\lambda \mapsto f_{\lambda }(z)$ is finite (hence open) 
 for each $z\in \Bbb{P}^{k}$,  
 and denoting  by $\Phi : B(0,\delta ) \rightarrow 
 \mbox{Holo}(\Bbb{P}^{k})$ the map $\lambda \mapsto f_{\lambda }$ 
  (where Holo($\Bbb{P}^{k}$) denotes the space of all holomorphic maps on 
  $\Bbb{P}^{k}$) and denoting by $\nu $ the normalized 
  Lebesgue measure on $B(0,\delta )$, 
  they investigated the i.i.d. random dynamical systems on $\Bbb{P}^{k}$ 
  generated 
  by $\tau = \Phi _{\ast }(\nu ).$ The results and the methods in 
  \cite{FW} depend heavily on the assumption that $\nu $ is the normalized Lebesgue measure (or that $\nu $ is equivalent to Lebesgue measure), whereas in this paper 
  the probability measure $\tau $ is very general  and $\nu $ may have a  
  singular part with respect to the Lebesgue measure 
  (e.g. the support of $\tau $ can be 
  finite even if $\tau \in {\cal MS}$).  
 Note that the statement ``for each $\mu , \ T^{n}(\mu )\rightarrow \mu _{A}$'' (where $T=M_{\tau }^{\ast }$ (the dual map of the transition operator of the system), 
 $\mu $ is an arbitrary Borel probability measure on $\Bbb{P}^{k}$, and 
 $\mu _{A}$ denotes a Borel probability measure on $\Bbb{P}^{k}$ with 
 $M_{\tau }^{\ast }(\mu _{A})=\mu _{A}$) in 
 \cite[Theorem 8]{FW} is not correct.  
In fact, if $k=1, f_{0}(z)=z^{2}-1,$
 $f_{\lambda }(z)=\frac{1}{\frac{1}{z^{2}-1}+\lambda }+\lambda\ 
 (\lambda \in B(0,\delta ))$,   
   $\delta $ 
 is small enough, and $\mu $ is the Dirac measure concentrated at $0$, 
 then $(M_{\tau }^{\ast })^{n}(\mu )$ does not converge weakly 
 (although $(M_{\tau }^{\ast })^{2n}(\mu ) $ converges). (For, since $\{0,-1\}$ is an attracting periodic cycle of 
 $f_{0}$ of period $2$, for any small enough $\delta >0$, there exist a neighborhood $V_{0}$ of $0$ and 
 a neighborhoold $V_{-1}$ of $-1$ with $V_{0}\cap V_{-1}=\emptyset $ 
 such that 
 $\supp\, (M_{\tau }^{\ast })^{2n}(\mu )\subset V_{0}$ and $\supp\,(M_{\tau }^{\ast })^{2n+1}(\mu )
 \subset V_{-1}.$) Similarly, for any $k\in \NN$, 
 if $f_{0}\in \mbox{Holo}(\Bbb{P}^{k})$ has an attracting periodic cycle 
 $\{a_{1},\ldots ,a_{r}\}$ 
 of period $r$ with $r>1$, 
 $\{f_{\lambda }\} _{\lambda \in B(0,\delta )}$ is a holomorphic family of 
 small perturbations of $f_{0}$,  
 and $\mu $ is the Dirac measure 
 concentrated at $a_{1}$, then $(M_{\tau }^{\ast })^{n}(\mu )$ does not converge weakly  
 (although $(M_{\tau }^{\ast })^{rn}(\mu )$ converges weakly). Like this example, 
 we have to consider ``period'' $r_{L}$ that may be greater than $1$ for any minimal set 
 $L$ of $\tau $ (see Theorem~\ref{t:mtauspec}, Example~\ref{e:periodn} and \cite[Theorem 3.15]{Splms10}). 
 We also remark that the Lyapunov exponents of the systems were not investigated 
 in \cite{FW}.   
 
 (2) The paper \cite{GP} deals with the random local dynamical systems at a point 
 $p\in \Bbb{C}^{n}$ 
 of holomorphic maps fixing the point $p$, and the global dynamics and the 
 transition operators were not discussed.  
 The paper \cite{CD} deals with the classification of stationary measures of 
 random holomorphic dynamical systems on complex manifolds, and  the details 
 of the behavior of the transition operators and the Lyapunov exponents were not 
 discussed.    
 
 (3) The papers \cite{BV, DS} deal with non-autonomous dynamical systems of 
 sequences of 
 holomorphic maps on $\Bbb{C}^{n}$ and the non-autonomous Green's functions were discussed. 
 In this paper we have to use non-autonomous Green's functions to prove the main results, and we use some arguments similar to those in \cite {BV,DS}. 
 However, the results on Green's functions are not the main topics in this paper.  
 Also, the setting of this paper is different from \cite{BV} (in this paper 
 we deal with the maps $f(x,y)=(y+a, p(y)-\delta x)$ where $a$ might not be zero), 
 and the continuity of Green's functions ${\cal G}_{\gamma }^{\pm }(x,y)$ with respect to the sequence $\gamma $ of 
 maps, which we need to prove one of the main results 
 (Theorem~\ref{t:rpmms1}(3)) in this paper, were not discussed 
 in \cite{BV, DS}. 
Note that the papers \cite{BV, DS} do not deal with random dynamical systems 
 (Markov process) induced by probability measures on the space of holomorphic maps. 
 Thus the main framework of \cite{BV, DS} are different from that of this paper.  
\end{rem}

The rest of this paper is organized as follows. In section~\ref{s:nadj}, we provide 
some fundamental observations on nonautonomous 
dynamics of elements of $X^{\pm}$, 
Green's functions, and the Julia sets. 
In section~\ref{s:fratp}, we provide further detailed results 
and we  prove them and the main results in this paper.

\section{Nonautonomous dynamics and the Julia sets}
\label{s:nadj}
In this section we provide  
some fundamental observations on nonautonomous 
dynamics of elements of $X^{\pm}$, 
Green's functions, and the Julia sets (although the topics on Green's functions are not the main ones of this paper) since we need them to prove the main results of this paper. 
Some of the contents and discussions below follow from the arguments in \cite{FM, FW, MNTT00, BV, DS}. However, 
since we deal with the elements $f\in X^{+}$ of the form 
$f(x,y)=(y+a, p(y)-\delta x)$ (where $a$ might not be zero and might depend on 
$f$) and the non-autonomous iterations of sequences $\{ f_{n}\}$ 
with 
$f_{n}(x,y)=(y+a_{n}, p_{n}(y)-\delta _{n}x)$ (note that 
\cite{BV} deals with non-autonomous iterations of sequences 
$\{ f_{n}\}$ with $f_{n}(x,y)=(y, p_{n}(y)-\delta _{n}x)$ and the higher-dimensional analogues), and since 
we need to show some uniform estimates on the elements $f$ in any 
compact subset $\Gamma $ in $X^{+}$ (Lemma~\ref{l:wrconv}), 
the uniform convergence of the sequences of the functions ${\cal G}_{\gamma , n}^{+}(x,y)$ on $\gamma \in \Gamma ^{\ZZ}$ 
(Lemmas~\ref{l:greenvr}, \ref{l:Gganpuc}) and the continuity of 
the functions ${\cal G}_{\gamma }^{+}(x,y)=\lim _{n\rightarrow \infty }{\cal G}_{n,\gamma }(x,y)$ with respect to  $\gamma \in \Gamma ^{\ZZ}$ 
that implies the lower-semicontinuity of $\gamma \mapsto J_{\gamma }^{+}$(Proposition~\ref{p:Ggcontiph}) 
(we remark that such observations were not included in \cite{FM, MNTT00, BV,DS}, and 
\cite{FW} deals with non-autonomous iterations of sequences of 
holomorphic maps on $\Bbb{P}^{n}$ and the setting of \cite{FW} is different from that of this paper), we include the proofs of lemmas and proposition in this section for the readers. Note that the lower semicontinuity of the map 
$\gamma \mapsto J_{\gamma }^{+}$ is needed to prove one of the 
main results of this paper (Theorem~\ref{t:rpmms1} (3)).  
   
\noindent {\bf Notation.} 
For each $R>1$ we set 
$V_{R}^{+}=\{ (x,y)\in \Ct \mid \max\{R, |x|\}<|y|\}$ and 
$V_{R}^{-}=\{ (x,y)\in \Ct \mid \max\{R, |y|\} <|x|\}$.  

\begin{df}
\label{d:condA}
Let $g\in \PACt.$ Let $R>0$ and $\rho >1.$ 
We say that $g$ satisfies condition (A) for $(R,\rho)$ if we have the following. 
\begin{itemize}
\item[(1)]
If $(x,y)\in \overline{V_{R}^{+}}$, then $g(x,y)\in V_{R}^{+}$ and 
$|\pi _{y}(g(x,y))|>\rho |y|.$ 

\item[(2)]
If $(x,y)\in \overline{V_{R}^{-}}$, then $g^{-1}(x,y)\in V_{R}^{-}$ and 
$|\pi _{x}g^{-1}(x,y)|>\rho |x|.$ 

\end{itemize}
Here, we set $\pi _{x}(x,y)=x$ and $\pi _{y}(x,y)=y.$ 

\end{df}

\begin{lem}
\label{l:Rrho}
Let $g\in X^{+}.$ Then,  
there exists a number $R_{0}>1$ such that for each $R\geq R_{0}$, 
there exists a $\rho >1$ such that $g$ satisfies condition (A) for 
$(R,\rho ).$ 
\end{lem}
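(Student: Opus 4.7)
The plan is to write $g \in X^{+}$ in the explicit form $g(x,y) = (y + \alpha,\ p(y) - \delta x)$ with $p(y) = \sum_{j=0}^{d} a_{j} y^{j}$, $d = \deg p \geq 2$, $a_{d} \neq 0$, $\delta \neq 0$, and then to compute $g^{-1}$ by inverting the defining equations: setting $x' = y + \alpha$ and $y' = p(y) - \delta x$ gives $y = x' - \alpha$ and $x = \delta^{-1}(p(x' - \alpha) - y')$, so that
$$g^{-1}(x,y) = \bigl(\delta^{-1}(p(x - \alpha) - y),\ x - \alpha\bigr).$$
In particular $\pi_{y} g(x,y) = p(y) - \delta x$ and $\pi_{x} g^{-1}(x,y) = \delta^{-1}(p(x - \alpha) - y)$. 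The entire proof will then reduce to showing that, on $\overline{V_{R}^{+}}$ (resp.\ $\overline{V_{R}^{-}}$), the leading $y^{d}$-term of $p(y)$ (resp.\ the leading $(x-\alpha)^{d}$-term of $p(x-\alpha)$) overwhelms every linear or constant perturbation once $R$ is taken large enough.

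For condition (A)(1), I will use that on $\overline{V_{R}^{+}}$ one has $|y| \geq R$ and $|x| \leq |y|$, so writing $C := \sum_{j=0}^{d-1} |a_{j}|$ and using $d \geq 2$ one obtains, for $|y|\geq 1$,
$$|\pi_{y} g(x,y)| = |p(y) - \delta x| \geq |a_{d}||y|^{d} - C|y|^{d-1} - |\delta||y| = |y|\bigl(|a_{d}||y|^{d-1} - C|y|^{d-2} - |\delta|\bigr).$$
Since $d - 1 \geq 1$, the quantity in parentheses tends to $+\infty$ as $|y| \to \infty$, so I can choose $R_{0} > 1$ so large that for every $|y| \geq R_{0}$ this quantity exceeds $3$ and additionally $R_{0} > |\alpha|$. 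Then for every $R \geq R_{0}$ the choice $\rho = 2$ will give $|\pi_{y} g(x,y)| \geq 3|y| > 2|y| = \rho|y|$, together with $\rho|y| \geq |y| + |\alpha| \geq |y + \alpha| = |\pi_{x} g(x,y)|$ and $\rho|y| \geq 2R > R$; these three inequalities together yield both $g(x,y) \in V_{R}^{+}$ and the required dilation estimate.

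Condition (A)(2) will follow by an entirely symmetric argument applied to the explicit form of $g^{-1}$: on $\overline{V_{R}^{-}}$ one has $|x| \geq R$ and $|y| \leq |x|$, and after enlarging $R_{0}$ so that $|x - \alpha| \geq |x|/2$ for $|x| \geq R_{0}$, the same polynomial-growth estimate yields $|p(x-\alpha) - y| \geq c\,|x|^{d}$ for some $c>0$ and all sufficiently large $|x|$, whence $|\pi_{x} g^{-1}(x,y)| = |\delta|^{-1}|p(x-\alpha)-y| \geq 3|x|$ after one further enlargement of $R_{0}$. The final $R_{0}$ will then be the maximum of the thresholds arising from (1) and (2), and $\rho = 2$ will work in both. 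There is no substantive obstacle: the whole argument is a routine asymptotic estimate exploiting $d\geq 2$, and the only mild bookkeeping point is coordinating the two enlargements of $R_{0}$ for (1) and (2), which is handled by simply taking a maximum.
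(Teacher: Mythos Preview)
Your proposal is correct and follows essentially the same approach as the paper: both compute $g^{-1}$ explicitly, exploit the dominance of the leading term of $p$ to bound $|p(y)-\delta x|$ and $|\delta|^{-1}|p(x-\alpha)-y|$ from below by a constant times $|y|$ (resp.\ $|x|$) on the appropriate region, and conclude with $\rho=2$. The only cosmetic difference is that the paper packages the growth estimate as $|p(\zeta)|/|\zeta|\geq\rho_{0}$ for a specific $\rho_{0}$ and then bounds the ratio $|p(y)-\delta x|/|y+a|$, whereas you estimate $|p(y)-\delta x|$ directly; the content is identical.
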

\begin{proof}
Let $g\in X^{+}$ be of the form 
$g(x,y)=(y+a, p(y)-\delta x)$, where $a\in \CC, \delta \in \CC, \delta \neq 0$, 
and $p(y)$ is a polynomial of $y$ with $\deg (p)\geq 2.$  
Then, it is easy to see that 
\begin{equation}
\label{eq:ginverse}
g^{-1}(x,y)=(\frac{1}{\delta }(p(x-a)-y), x-a).
\end{equation}
Let $\rho _{0}>\max\{ |\delta |+8, 16|\delta |+2\} $ be a number.  
Then, there exists a number $R_{0}=R_{0}(\rho _{0})>\max\{ 1, 2|a|\} $  such that 
for each $R>R_{0}$ and for each $\zeta \in \CC$ with $|\zeta |\geq R$, we have 
$\frac{|p(\zeta )|}{|\zeta |}\geq \rho _{0}.$ 

 Let $R>R_{0}.$ Let $z=(x,y)\in \overline{V_{R}^{+}}.$ 
Then $|y|\geq R, |y|\geq |x|.$ Hence,  
\begin{eqnarray*}
\frac{|p(y)-\delta x|}{|y+a|} 
& \geq  & \frac{\rho _{0}|y|-|\delta ||x|}{|y|+|a|}
 \geq   \frac{(\rho _{0}-|\delta |)|y|}{|y|+|a|} 
\geq \frac{\rho _{0}-|\delta |}{1+\frac{|a|}{|y|}} 
 \geq  \frac{\rho _{0}-|\delta |}{1+\frac{|a|}{R}}>4. 
\end{eqnarray*}
Thus $|p(y)-\delta x|> 4|y+a|\geq 4(|y|-|a|)\geq 4|y|(1-\frac{|a|}{|y|})
\geq 4|y|(1-\frac{|a|}{R})\geq 2|y|>R.$ 

We now let $(x,y)\in \overline{V_{R}^{-}}.$ 
Then, $|x|\geq R, |x|\geq |y|.$ Hence,   
\begin{eqnarray*}
\frac{|\frac{1}{\delta}(p(x-a)-y)|}{|x-a|}
& \geq & \frac{\frac{1}{|\delta |}(\rho _{0}|x-a|-|y|)}{|x|+|a|}
 \geq  \frac{1}{|\delta |} \frac{\rho _{0}(|x|-|a|)-|x|}{|x|+|a|}\\ 
& = & \frac{1}{|\delta |} \frac{\rho _{0}(1-\frac{|a|}{|x|})-1}
        {1+\frac{|a|}{|x|}}
 \geq  \frac{1}{|\delta |} 
\frac{\rho _{0}(1-\frac{|a|}{R})-1}{1+\frac{|a|}{R}}
 \geq  \frac{1}{|\delta |} \frac{\frac{\rho _{0}}{2}-1}{2}
 >  4.
\end{eqnarray*}
Therefore,  
$|\frac{1}{\delta }(p(x-a)-y)|>4|x-a|\geq 4(|x|-|a|)\geq 
4|x|(1-\frac{|a|}{|x|})\geq 4|x|(1-\frac{|a|}{R})\geq 2|x| 
>R.$ From these arguments,  
it follows that 
$g$ satisfies condition (A) for $(R, 2).$ 

Hence, we have proved our lemma. 
\end{proof}
\begin{df}
We set $\Bbb{P}_{\infty }^{1}
:=\{ [x:y:0]\in \Pt \mid (x,y)\in \Ct \setminus \{ (0,0)\}\}.$ 
This is called the line at infinity. 
\end{df}
\begin{lem}
\label{l:ghsm}
Let $g\in X^{+}.$ Then, we have the following. 
\begin{itemize}
\item[{\em (1)}] 
$g$ can be extended to a holomorphic self map 
$\hat{g}$ 
on 
$\Pt \setminus \{ [1:0:0]\} $ and 
$[0:1:0]$ is an attracting fixed point of $\hat{g}.$ 
Also we have $\hat{g}(\Poi \setminus \{ [1:0:0]\} )=\{ [0:1:0]\}.$ 
\item[{\em (2)}]
$g^{-1}$ can be extended to a holomorphic self map 
$\hat{g}^{-1}$ on $\Pt \setminus \{ [0:1:0]\}$ and 
$[1:0:0]$ is an attracting fixed point of $\hat{g}^{-1}.$ 
Also we have $\hat{g}^{-1}(\Poi \setminus \{[0:1:0]\} =\{ [1:0:0]\} .$
\end{itemize} 
\end{lem}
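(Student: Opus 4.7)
The plan is to homogenize the polynomial formula for $g$ to get a rational self-map of $\Pt$, identify its indeterminacy locus explicitly, and then check the fixed point at infinity with a local chart computation. Write $g(x,y)=(y+\alpha,\ p(y)-\delta x)$ with $p(y)=\sum_{i=0}^{d}a_{i}y^{i}$, $a_{d}\neq 0$, $d=\deg p\geq 2$. Passing to homogeneous coordinates $[X:Y:Z]$ on $\Pt$ (so $x=X/Z$, $y=Y/Z$ on $Z\neq 0$) and multiplying through by $Z^{d}$, the map extends to
\[
\hat g[X:Y:Z]=\bigl[\,YZ^{d-1}+\alpha Z^{d}\,:\,\tilde p(Y,Z)-\delta X Z^{d-1}\,:\,Z^{d}\,\bigr],
\]
where $\tilde p(Y,Z)=\sum_{i=0}^{d}a_{i}Y^{i}Z^{d-i}$ is the homogenization of $p$. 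The three components are homogeneous of degree $d$.

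For part (1), I would next locate the indeterminacy locus. A common zero forces $Z=0$ (from the third coordinate), and then the first coordinate vanishes automatically while the second becomes $\tilde p(Y,0)=a_{d}Y^{d}$, which vanishes only for $Y=0$. Hence the only base point is $[1:0:0]$ and $\hat g$ is holomorphic on $\Pt\setminus\{[1:0:0]\}$. For any $[X:Y:0]\in\Poi\setminus\{[1:0:0]\}$ one has $Y\neq 0$, and the formula gives $\hat g[X:Y:0]=[0:a_{d}Y^{d}:0]=[0:1:0]$, proving $\hat g(\Poi\setminus\{[1:0:0]\})=\{[0:1:0]\}$. To see that $[0:1:0]$ is (super-)attracting, work in the chart $Y=1$ with affine coordinates $(u,v)=(X,Z)$; the middle component evaluates to $a_{d}+O(v)$ at $(0,0)$ and is therefore nonvanishing in a neighborhood, so after normalizing we get the local expression
\[
(u,v)\longmapsto\left(\frac{v^{d-1}+\alpha v^{d}}{a_{d}+O(v)},\ \frac{v^{d}}{a_{d}+O(v)}\right),
\]
both of whose components vanish to order $\geq d-1\geq 1$ at the origin. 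Thus the differential of $\hat g$ at $[0:1:0]$ is the zero map, so $[0:1:0]$ is an attracting (in fact super-attracting) fixed point.

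For part (2), I would use the explicit formula (\ref{eq:ginverse}), namely $g^{-1}(x,y)=\bigl(\tfrac{1}{\delta}(p(x-\alpha)-y),\ x-\alpha\bigr)$, and repeat the same homogenization argument with the roles of the first and second affine coordinates interchanged; equivalently, conjugate by the projective involution $[X:Y:Z]\mapsto[Y:X:Z]$ and apply part (1). This immediately yields that $\hat g^{-1}$ extends holomorphically to $\Pt\setminus\{[0:1:0]\}$, that $\hat g^{-1}(\Poi\setminus\{[0:1:0]\})=\{[1:0:0]\}$, and that $[1:0:0]$ is an attracting fixed point.

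There is no real obstacle here; the only thing requiring a small amount of care is confirming that the additive shift $\alpha$ in the first component of $g$ (which was absent in the classical Hénon normal form) does not introduce a new base point or spoil the super-attracting estimate. Both checks go through because $\alpha Z^{d}$ is divisible by $Z$ and hence vanishes on $\Poi$, so it contributes only terms of order $\geq d$ in $v$ in the local chart near $[0:1:0]$ and plays no role in the leading order behavior.
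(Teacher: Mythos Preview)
Your approach is essentially identical to the paper's: homogenize, read off the single indeterminacy point, and check the behavior on the line at infinity. The paper does exactly this, writing $\hat g([x:y:z])=[yz^{n-1}+az^{n}:c_{0}y^{n}+\cdots+c_{n}z^{n}-\delta xz^{n-1}:z^{n}]$ and then simply asserting that the indeterminacy locus is $\{[1:0:0]\}$, that $[0:1:0]$ is attracting, and that $\hat g$ collapses $\Poi\setminus\{[1:0:0]\}$ to $[0:1:0]$, with part~(2) dismissed as ``similarly''. You supply considerably more detail than the paper does, including the local chart computation and the swap-conjugation trick for $g^{-1}$.

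One small correction: when $d=2$, the first affine component in your chart is $v/a_{d}+O(v^{2},uv)$, which vanishes only to first order, so $D\hat g_{[0:1:0]}$ is the nilpotent matrix $\bigl(\begin{smallmatrix}0&1/a_{d}\\0&0\end{smallmatrix}\bigr)$ rather than the zero map. The conclusion is unaffected, since both eigenvalues are $0$ and the fixed point is still super-attracting; just replace ``the differential is the zero map'' by ``the differential is nilpotent, hence all eigenvalues vanish''.
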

\begin{proof}
Let $g\in X^{+}.$ We write $g(x,y)=(y+a, p(y)-\delta x)$ where 
$a\in \CC, \delta \in \CC, \delta \neq 0$ and $p(y)=c_{0}y^{n}+c_{1}y^{n-1}+
\cdots +c_{n}$ is a polynomial of $y$ with 
$c_{0}\neq 0.$  

Let $\hat{g}$ be the birational map on $\Pt$ such that 
$\hat{g}|_{\Ct}=g.$  Then,  
$$\hat{g}([x:y:z])=[yz^{n-1}+az^{n}: c_{0}y^{n}+c_{1}y^{n-1}z+\cdots +c_{n}z^{n}-\delta xz^{n-1}: z^{n}].$$ 
It is easy to see that the set of indeterminancy points of $\hat{g}$ is 
$\{ [1:0:0]\} $ and 
$\hat{g}$ is a holomorphic self map on 
$\Pt \setminus \{ [1:0:0]\}.$ 
Also, it is easy to see that $[0:1:0]$ is an attracting fixed point of $\hat{g}$ and 
$\hat{g}(\Poi \setminus \{ [1:0:0]\} )=\{ [0:1:0]\}.$ 
Similarly, we can show statement (2).
\end{proof}
\begin{rem}
For each $g\in X^{+}$, we regard $g$ as a holomorphic self map on 
$\Pt \setminus \{ [1:0:0]\}$ and we regard $g^{-1}$ as a holomorphic 
self map on $\Pt \setminus \{ [0:1:0]\}.$ 

\end{rem}

\begin{df}
\label{d:drwr}
For each $R>0$, let 
$D_{R}=\{ (x,y)\in \Ct \mid \max\{ |x|, |y|\} <R\}$, 
$W_{R}^{+}=\Ct \setminus \overline{V_{R}^{-}}$, and 
$W_{R}^{-}=\Ct \setminus \overline{V_{R}^{+}}$. 
Note that $W_{R}^{+}=\mbox{int}(D_{R}\cup \overline{V_{R}^{+}})$ and 
$W_{R}^{-}=\mbox{int}(D_{R}\cup \overline{V_{R}^{-}}).$ 
\end{df}
\begin{lem}
\label{l:wrconv}
Let $\Gamma $ be a compact subset of $X^{+}.$ Then, there exists a 
number $R_{0}>0$ such that for each $R\geq R_{0}$, we have the following. 
\begin{itemize}
\item[{\em (1)}]
There exists a number $\rho >1$ such that 
each $g\in \Gamma $ satisfies condition (A) for $(R,\rho ).$  

\item[{\em (2)}] 
For each $(x,y)\in \Ct$ and for each $\gamma =(\gamma _{j})_{j\in \ZZ}
\in \Gamma ^{\ZZ }$ there exists a number $n_{0}=n_{0}((x,y), \gamma)\in \NN $ such that 
for each $n\in \NN $ with $n\geq n_{0}$, we have 
$\gamma _{n-1}\circ \cdots \circ \gamma _{0}(x,y)\in W_{R}^{+}$ and 
$\gamma _{-n}^{-1}\circ \cdots \circ \gamma _{-1}^{-1}(x,y)\in 
W_{R}^{-}.$ 
\item[{\em (3)}]
For each $\gamma =(\gamma _{j})_{j\in \ZZ }\in \Gamma ^{\ZZ}$, 
$\gamma _{n-1}\circ \cdots \circ \gamma _{0}\rightarrow 
[0:1:0] \mbox{ as } n\rightarrow \infty $ uniformly on the closure of 
$V_{R}^{+}$ in $\Pt$, and 
$\gamma _{n}^{-1}\circ \cdots \circ \gamma _{1}^{-1}\rightarrow 
[1:0:0] \mbox{ as } n\rightarrow \infty $ uniformly on the closure of 
$V_{R}^{-}$ in $\Pt.$  
\end{itemize}

\end{lem}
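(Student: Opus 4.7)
The plan is to use compactness of $\Gamma $ to upgrade the estimates in Lemma~\ref{l:Rrho} to uniform estimates, proving (1), and then to deduce (2) and (3) by iterating condition (A) and tracking the dynamics both in $\Ct $ and at the line at infinity. For (1), I exploit the fact that $X^{+}$ is a countable disjoint union of finite-dimensional complex manifolds on which $\deg (p)$ is locally constant: the subsets $\{ f\in X^{+}:\deg (p)=d\} $ are clopen in $X^{+}$, so the compact set $\Gamma $ meets only finitely many of them. Hence $\deg (p)$ is uniformly bounded on $\Gamma $, the coefficients $\alpha ,\delta ,c_{0},\ldots ,c_{d}$ lie in a compact set, and both $|\delta |$ and the leading coefficient $|c_{0}|$ are bounded away from $0$. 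Inspecting the proof of Lemma~\ref{l:Rrho}, the constant $R_{0}$ depends only on uniform upper bounds for $|\alpha |,|\delta |,\deg (p)$ and a uniform lower bound for $|c_{0}|$, and one may take $\rho =2$ for every $R\geq R_{0}$. This yields uniform $R_{0}$ and $\rho $ over $\Gamma $, establishing (1).

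For (2), fix $(x,y)\in \Ct $ and $\gamma =(\gamma _{j})_{j\in \ZZ }\in \Gamma ^{\ZZ }$, and set $z_{n}:=\gamma _{n-1}\circ \cdots \circ \gamma _{0}(x,y)$. Suppose $z_{n}\in \overline{V_{R}^{-}}$ for some $n\geq 1$. Since $z_{n-1}=\gamma _{n-1}^{-1}(z_{n})$, condition (A)(2) for $\gamma _{n-1}$ gives $z_{n-1}\in V_{R}^{-}\subset \overline{V_{R}^{-}}$ with $|\pi _{x}(z_{n-1})|>\rho |\pi _{x}(z_{n})|$. Iterating backward, $z_{n-k}\in V_{R}^{-}$ for $k=1,\ldots ,n$, and $|x|=|\pi _{x}(z_{0})|>\rho ^{n}|\pi _{x}(z_{n})|\geq \rho ^{n}R$. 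This bounds $n$ above, so only finitely many forward iterates can lie in $\overline{V_{R}^{-}}$, i.e., $z_{n}\in W_{R}^{+}$ for all sufficiently large $n$. Setting $w_{n}:=\gamma _{-n}^{-1}\circ \cdots \circ \gamma _{-1}^{-1}(x,y)$ and using $w_{n-1}=\gamma _{-n}(w_{n})$, the symmetric argument with condition (A)(1) gives $w_{n}\in W_{R}^{-}$ for all sufficiently large $n$.

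For (3), let $\gamma \in \Gamma ^{\ZZ }$. On the $\Ct $-part of the closure, iterating condition (A)(1) yields $z_{n}=\gamma _{n-1}\circ \cdots \circ \gamma _{0}(z_{0})\in V_{R}^{+}$ with $|\pi _{y}(z_{n})|\geq \rho ^{n}R\to \infty $. Using $\gamma _{j}(x,y)=(y+\alpha _{j},p_{j}(y)-\delta _{j}x)$ with $\deg (p_{j})\geq 2$ and the uniform bounds on coefficients, a direct estimate gives a constant $C>0$ such that $|\pi _{x}(z_{n})|/|\pi _{y}(z_{n})|\leq C/|\pi _{y}(z_{n-1})|$ for all $n\geq 1$, uniformly over starting points in $\overline{V_{R}^{+}}\cap \Ct $. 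In the affine chart $\{ [u:1:v]\} $ around $[0:1:0]$ we have $u_{n}=\pi _{x}(z_{n})/\pi _{y}(z_{n})\to 0$ and $v_{n}=1/\pi _{y}(z_{n})\to 0$ uniformly, whence $z_{n}\to [0:1:0]$ uniformly in Fubini--Study distance. The remaining points of the closure lie on $\Poi $ and are of the form $[a:1:0]$ with $|a|\leq 1$; since these lie in $\Poi \setminus \{ [1:0:0]\} $, Lemma~\ref{l:ghsm} gives $\gamma _{0}(z)=[0:1:0]$, which is then fixed by every subsequent $\gamma _{j}$. The two cases combine to give uniform convergence on the full closure; the corresponding statement for backward iterates on the closure of $V_{R}^{-}$ is symmetric.

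The main technical point is Step~3: once the uniform constants are in hand, the iteration itself is clean, but uniform convergence must be established simultaneously on the interior (via the $\Ct $-dynamics) and at the boundary points on the line at infinity (via the extensions from Lemma~\ref{l:ghsm}). The hypothesis $\deg (p_{j})\geq 2$ is essential here: it is precisely what yields $|\pi _{x}(z_{n})|/|\pi _{y}(z_{n})|\to 0$ rather than merely a bounded ratio, and together with the uniform lower bound on $|c_{0}|$ from Step~1 it produces the required uniform decay in $\gamma \in \Gamma ^{\ZZ }$ and in the starting point.
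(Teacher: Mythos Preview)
Your proof is correct and follows essentially the same strategy as the paper's: uniformize Lemma~\ref{l:Rrho} via compactness for (1), then deduce (2) and (3) from condition~(A) together with Lemma~\ref{l:ghsm}. The paper's proof is much terser---for (2) it simply invokes Lemma~\ref{l:ghsm} (noting parenthetically that one can alternatively argue via condition~(A) as in \cite[p.~236--239]{MNTT00}), and for (3) it just cites (1) and Lemma~\ref{l:ghsm}---whereas you spell out the backward-iteration bound for (2) and the explicit chart computation $u_{n},v_{n}\to 0$ for (3); but the underlying mechanism is the same.
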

\begin{proof}
By using the argument in the proof of Lemma~\ref{l:Rrho},  
we can take a number $R_{0}>0$ such that 
for each $R\geq R_{0}$, statement (1) holds. 
Taking 
a large enough  value for $R_{0}$, 
Lemma~\ref{l:ghsm} implies that 
for each $R\geq R_{0}$, statement (2) holds. 
(Alternatively, for any large $R$, we can prove statement (2) by using statement (1) and 
the arguments similar to \cite[p236--239]{MNTT00}.)   
For this $R_{0}$, for each $R\geq R_{0}$, statement (3) follows from statement (1) and Lemma~\ref{l:ghsm}.
\end{proof}
\begin{df}
Let $\gamma =(\gamma _{j})_{j\in \ZZ} \in \mbox{PA}(\Ct )^{\ZZ}.$ 
We set\\  
$I_{\gamma }^{+}:=\{ z\in \Ct \mid \| \gamma _{n-1}\circ \cdots \circ 
\gamma _{0}(z)\| \rightarrow \infty \mbox{ as }n\rightarrow \infty \},$\\ 
 $I_{\gamma }^{-}:=\{ z\in \Ct \mid \| \gamma _{n}^{-1}\circ \cdots \circ 
\gamma _{-1}^{-1}(z)\| \rightarrow \infty \mbox{ as }n\rightarrow \infty \},$\\
$K_{\gamma }^{+}:=\{ \gamma _{n-1}\circ \cdots \gamma _{0}(z)\} _{n\in \NN} \mbox{ is bounded in } \Ct\}$, 
$K_{\gamma }^{-}:=\{ \gamma _{-n}^{-1}\circ \cdots \gamma _{-1}^{-1}(z)\} _{n\in \NN} \mbox{ is bounded in } \Ct\}$,\\ 
and 
$J_{\gamma }^{\pm }:= \partial K_{\gamma }^{\pm }$ 
(with respect to the topology in $\Ct$). (Hence,  
$J_{\gamma }^{\pm}\subset \Ct$.)  
$J_{\gamma }^{\pm }$ are called the Julia sets of $\gamma .$ 
\end{df}
\begin{lem}
\label{l:igkg}
Let $\Gamma $ be a compact subset of $X^{+}$ and 
let $\gamma =(\gamma _{j})_{j\in \ZZ}\in \Gamma ^{\ZZ}.$ 
Then, $I_{\gamma }^{+}\cup K_{\gamma }^{+}=\Ct, I_{\gamma }^{+}\cap 
K_{\gamma }^{+}=\emptyset$, 
$I_{\gamma }^{-}\cup K_{\gamma }^{-}=\Ct, $ and $I_{\gamma }^{-}\cap 
K_{\gamma }^{-}=\emptyset.$ 
Moreover, $J_{\gamma }^{+}$ is equal to the set of all 
$z\in \Ct $ for which there exists no neighborhood $U$ of $z$ in $\Ct$ such that 
$\{ \gamma _{n-1}\circ \cdots \circ \gamma _{0}\}_{n\in \NN}$ 
is equicontinuous as a family of maps from $U$ to $\Pt.$   
Also, $J_{\gamma }^{-}$ is equal to the set of all 
$z\in \Ct $ for which there exists no neighborhood $U$ of $z$ in $\Ct$ such that 
$\{ \gamma _{n}^{-1}\circ \cdots \circ \gamma _{-1}^{-1}\}_{n\in \NN}$ 
is equicontinuous as a family of maps from $U$ to $\Pt.$ 
\end{lem}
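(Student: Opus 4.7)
My plan is to combine the filtration established in Lemma~\ref{l:wrconv} with the expansion provided by condition~(A) from Lemma~\ref{l:Rrho}. Fix $R \geq R_{0}$ and $\rho >1$ so that Lemma~\ref{l:wrconv}(1)(2) apply to every $g\in \Gamma$, write $\varphi _{n}:=\gamma _{n-1}\circ \cdots \circ \gamma _{0}$, and carry out the argument for the forward objects; the claims for $K_{\gamma }^{-}, I_{\gamma }^{-}, J_{\gamma }^{-}$ follow by symmetry using the $X^{-}$-analog of Lemma~\ref{l:wrconv} applied to the sequence $(\gamma _{-n}^{-1})_{n}$.

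First I would prove the dichotomy $\Ct = K_{\gamma }^{+}\sqcup I_{\gamma }^{+}$. Disjointness is automatic. Conversely, if $z\notin K_{\gamma }^{+}$ then some subsequence $\| \varphi _{n_{k}}(z)\| \to \infty $; Lemma~\ref{l:wrconv}(2) forces the tail of the orbit into $W_{R}^{+}=\Ct \setminus \overline{V_{R}^{-}}$, and a direct coordinate check shows that any $(x,y)\in W_{R}^{+}$ with $\max(|x|,|y|)>R$ actually lies in $V_{R}^{+}$. Hence $\varphi _{n_{0}}(z)\in V_{R}^{+}$ for some $n_{0}$, and iterating condition~(A) traps the subsequent orbit inside $V_{R}^{+}$ while forcing $|\pi _{y}\circ \varphi _{n}(z)|$ to grow by at least the factor $\rho >1$ at each step, so $z\in I_{\gamma }^{+}$. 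A byproduct is the identity $I_{\gamma }^{+}=\bigcup _{n}\varphi _{n}^{-1}(V_{R}^{+})$, so $I_{\gamma }^{+}$ is open, $K_{\gamma }^{+}$ is closed, and $J_{\gamma }^{+}=\partial K_{\gamma }^{+}=\partial I_{\gamma }^{+}$.

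For the equicontinuity characterization, let $\mathcal{F}$ denote the set of $z\in \Ct$ admitting a neighborhood on which $\{ \varphi _{n}\} $ is equicontinuous as a family into $\Pt$. The inclusion $J_{\gamma }^{+}\subset \Ct \setminus \mathcal{F}$ is a separation argument: every neighborhood of $z\in \partial K_{\gamma }^{+}$ contains some $p\in K_{\gamma }^{+}$ whose orbit stays in a compact subset of $\Ct$ and some $q\in I_{\gamma }^{+}$ whose orbit converges to $[0{:}1{:}0]$ by Lemma~\ref{l:wrconv}(3), so the Fubini--Study distance $d(\varphi _{n}(p),\varphi _{n}(q))$ stays bounded below. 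For the reverse, split $\Ct \setminus J_{\gamma }^{+}=I_{\gamma }^{+}\cup \mathrm{int}(K_{\gamma }^{+})$: on $I_{\gamma }^{+}$, the formula $I_{\gamma }^{+}=\bigcup _{n}\varphi _{n}^{-1}(V_{R}^{+})$ together with Lemma~\ref{l:wrconv}(3) gives $\varphi _{m}\to [0{:}1{:}0]$ uniformly on a suitable neighborhood of $z$; on $\mathrm{int}(K_{\gamma }^{+})$, pick an open $V\ni z$ with $V\subset K_{\gamma }^{+}$, note that every orbit of $V$ avoids $\overline{V_{R}^{+}}$ for all time (otherwise condition~(A) would force escape, contradicting $V\subset K_{\gamma }^{+}$), apply a Baire-category argument on $V$ to produce an open subset on which the orbits are uniformly bounded in $\Ct$, and conclude equicontinuity from Cauchy estimates applied to holomorphic maps with image in a fixed bounded subset of $\Ct$.

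The main obstacle is the last step: Baire yields uniform boundedness on \emph{some} open subset of $V$, but a priori not at the given point $z$ itself. Propagating uniform bounds to an arbitrary interior point requires either using the dynamics directly (compactness of $\Gamma$ provides uniform moduli of continuity on single iterates, which one can chain together along a path from a Baire-good point to $z$ inside the connected component of $\mathrm{int}(K_{\gamma }^{+})$), or, more cleanly, invoking the non-autonomous Green function $\mathcal{G}_{\gamma }^{+}$ constructed in Section~\ref{s:nadj}: this function is plurisubharmonic on $\Ct$, vanishes precisely on $K_{\gamma }^{+}$, and satisfies a transformation rule under $\gamma $ that forces $\mathrm{int}(K_{\gamma }^{+})\subset \mathcal{F}$ via the standard normality argument for families of holomorphic maps bounded in terms of the Green function.
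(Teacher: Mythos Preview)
Your outline follows the same filtration strategy as the paper (which simply cites Lemma~\ref{l:wrconv}(1) together with \cite[p.~236--239]{MNTT00}), and the dichotomy and non-equicontinuity parts are fine. The difficulty you flag on $\mathrm{int}(K_{\gamma}^{+})$, however, is self-inflicted: no Baire argument is needed. Condition~(A)(2) gives $g^{-1}(\overline{V_{R}^{-}})\subset V_{R}^{-}$ for every $g\in\Gamma$, which after taking complements says $g(W_{R}^{+})\subset W_{R}^{+}$, i.e.\ $W_{R}^{+}$ is forward invariant. Now for $z\in\mathrm{int}(K_{\gamma}^{+})$ choose $n_{0}$ with $\varphi_{n_{0}}(z)\in W_{R}^{+}$ (Lemma~\ref{l:wrconv}(2)) and a neighborhood $V'\subset K_{\gamma}^{+}$ of $z$ with $\varphi_{n_{0}}(V')\subset W_{R}^{+}$. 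For every $m\geq n_{0}$ and $w\in V'$ one has $\varphi_{m}(w)\in W_{R}^{+}$ by invariance and $\varphi_{m}(w)\notin V_{R}^{+}$ since $w\in K_{\gamma}^{+}$; your own coordinate check then yields $\varphi_{m}(w)\in\overline{D_{R}}$. Together with the trivial bound on the finitely many $\varphi_{0},\ldots,\varphi_{n_{0}-1}$ this gives a uniform bound on $V'$, and normality follows by Montel.

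One further caution: the Green-function fallback you propose is circular as stated, since the full package of properties of $\mathcal{G}_{\gamma}^{+}$ on all of $\Ct$ (Lemma~\ref{l:Gganpuc} and Proposition~\ref{p:Ggcontiph}) is proved \emph{using} Lemma~\ref{l:igkg}.
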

\begin{proof}
The statement of our lemma follows from statement (1) in 
Lemma~\ref{l:wrconv} and the arguments similar to \cite[p236--239]{MNTT00}. 
\end{proof}
\begin{df}
For each $\tau \in {\frak M}_{1}(X^{\pm })$ we denote by $G_{\tau }$ 
the subsemigroup of $\PACt$ generated by $\supp\,\tau$, i.e. 
$G_{\tau }=\{ g_{n}\circ \cdots \circ g_{1}\mid n\in \NN, 
g_{j}\in \supp\,\tau (\forall j)\}.$  
\end{df}
\begin{df}
\begin{itemize}
\item[(1)]
Let $\tau \in {\frak M}_{1,c}(X^{+})$ (resp. $\tau \in {\frak M}_{1,c}(X^{-})$). 
We say that a nonempty compact subset $L$ of 
$\Pt \setminus \{ [1:0:0]\} $ (resp. $\Pt \setminus \{ [0:1:0]\}$) 
is a minimal set of $\tau $ if $L=\overline{\cup _{g\in G_{\tau }}\{ g(x)\} }$ 
for each $x\in L.$ Also, we denote by $\Min (\tau )$ the set of 
all minimal sets of $\tau .$  
\item[(2)]
Let $\tau \in {\frak M}_{1,c}(X^{+})$ (resp. $\tau \in {\frak M}_{1,c}(X^{-})$).  
Let $\tau $ be a minimal set of $\tau .$ 
We say that $L$ is attracting for $\tau $ if 
there exist finitely many open subsets $U_{1}, \ldots, U_{m}$ of 
$\Pt \setminus \{ [1:0:0]\}$ (resp. $\Pt \setminus \{ [0:1:0]\}$), 
a compact subset $K$ of $\cup _{j=1}^{m}U_{j}$, a number 
$n\in \NN$, and a constant $c\in (0,1)$ 
such that $L\subset K$, 
such that 
for each $g\in \supp\,\tau$, we have 
$g(\cup _{j=1}^{m}U_{j})\subset K$, 
and such that for each $j=1,\ldots, m$, for each $x,y\in U_{j}$ and 
for each $(g_{1},\ldots, g_{n})\in (\supp\,\tau)^{n}$, we have 
$d(g_{n}\circ \cdots \circ g_{1}(x), g_{n}\circ \cdots \circ g_{1}(y))
\leq cd(x,y)$ where $d$ denotes the distance on $\Pt$ induced from the 
Fubini-Study metric on $\Pt.$  A minimal set $L$ of $\tau $ that  
is attracting for $\tau $ is called an attracting minimal set of $\tau .$   
\end{itemize}
\end{df}
\begin{df}
For each  $(\gamma _{j})_{j\in \ZZ}\in (\mbox{PA}(\Ct ))^{\ZZ}$ 
and for each $m,n\in \ZZ$ with $m\geq n$, 
we set $\gamma _{m,n}=\gamma _{m}\circ \cdots \circ \gamma _{n}.$ 
Similarly, for each  $(\gamma _{j})_{j\in \NN}\in (\mbox{PA}(\Ct ))^{\NN}$ 
and for each $m,n\in \NN$ with $m\geq n$, 
 we set $\gamma _{m,n}=\gamma _{m}\circ \cdots \circ \gamma _{n}.$ 
 Also, we denote by $\sigma : (\mbox{PA}(\Ct ))^{\ZZ} 
 \rightarrow (\mbox{PA}(\Ct ))^{\ZZ}$ the shift map, i.e., 
 $\sigma ((\gamma _{j})_{j\in \ZZ})=(\sigma _{j+1})_{j\in \ZZ}.$ 
 Similarly, we denote by $\sigma : (\mbox{PA}(\Ct ))^{\NN} 
 \rightarrow (\mbox{PA}(\Ct ))^{\NN}$ the shift map, i.e., 
 $\sigma ((\gamma _{j})_{j\in \NN})=(\sigma _{j+1})_{j\in \NN}.$ 
\end{df} 
We provide some observations on $K_{\gamma }^{\pm }.$ 
\begin{lem}
\label{l:gjcaek}
Let $\gamma =(\gamma _{j})_{j\in \ZZ} \in (X^{+})^{\ZZ}. $ 
Suppose that there exists a number 
$R_{0}>0$ such that for each $R>R_{0}$, there exists a number 
$\rho >1$ such that for each $j\in \ZZ$, $\gamma _{j}$ 
satisfies condition (A) for $(R, \rho ).$ 
Then, we have the following. 
\begin{itemize}
\item[{\em (1)}] 
Let $R>R_{0}$ and let $E_{\gamma ,R}:=D_{R}\cap K_{\gamma }^{+}.$ 
Then, $\gamma _{1}(E_{\gamma, R})\subset E_{\sigma (\gamma), R}.$ 

\item[{\em (2)}]
\begin{itemize}
\item[{\em (i)}] 
For each $R>R_{0}$ and each $n\in \NN$, we have 
$E_{\gamma ,R}\subset \gamma _{1}^{-1}(E_{\sigma (\gamma ), R})
\subset \gamma _{1}^{-1}(\gamma _{2}^{-1}(E_{\sigma ^{2}(\gamma ), R}))
\cdots \subset \gamma _{1}^{-1}\cdots \gamma _{n}^{-1}
(E_{\sigma ^{n}(\gamma ), R}).$
\item[{\em (ii)}] 
$K_{\gamma }^{+}=
\cup _{n=1}^{\infty }\gamma _{1}^{-1}\cdots \gamma _{n}^{-1}
(E_{\sigma ^{n}(\gamma ),R}).$
\end{itemize}
\item[{\em (3)}] 
Suppose that $\limsup _{n\rightarrow \infty }\frac{1}{n} 
\Sigma _{j=1}^{n}\log |\delta (\gamma _{j})|>0,$ where 
$\delta (g)$ denotes the Jacobian of $g$ for each $g\in X^{\pm}$ 
(this is a constant).  
Then, {\em Leb}$_{4}(K_{\gamma }^{+})=0.$ 
\item[{\em (4)}] 
Suppose that 
$\limsup _{n\rightarrow \infty} \frac{1}{n}\sum _{j=1}^{\infty }
\log |\delta (\gamma _{j})|<0$ and 
$\limsup _{n\rightarrow \infty }\mbox{{\em Leb}}_{4}(K_{\sigma ^{n}(\gamma )}^{+})>0.$ 
Then, {\em Leb}$_{4}(K_{\gamma }^{+})=\infty .$ 
\item[{\em (5)}] 
Suppose that 
$\limsup _{n\rightarrow \infty} \frac{1}{n}\sum _{j=1}^{\infty }
\log |\delta (\gamma _{j})|<0$ and 
suppose also that there exists a nonempty open bounded 
set $U$ in $\Ct $ such that 
for each $j\in \NN$, $\gamma _{j}(U)\subset U.$ Then,  
{\em Leb}$_{4}(K_{\gamma }^{+})=\infty .$  
\item[{\em (6)}] 
Let $\tau \in {\frak M}_{1,c}(X^{+}). $ 
Suppose that there exists an attracting minimal set 
$L$ 
of $\tau $
 with $L\subset \Ct.$ 
 Then 
$\max \{ |\delta (h)|\mid h\in \supp\,\tau \}<1$ and  
 for each $\gamma =(\gamma _{j})_{j\in \ZZ} \in (\supp \,\tau)^{\ZZ}$, 
 {\em Leb}$_{4}(K_{\gamma }^{+})=\infty .$ 
  
\end{itemize}
\end{lem}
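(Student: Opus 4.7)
The plan is to establish \textit{(1)}--\textit{(6)} in order, each part being input for the next.

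For \textit{(1)}, let $z\in E_{\gamma ,R}$. The containment $\gamma _{1}(z)\in K_{\sigma (\gamma )}^{+}$ is immediate, since the forward orbit of $\gamma _{1}(z)$ under $\sigma (\gamma )$ is the tail of that of $z$. To show $\gamma _{1}(z)\in D_{R}$ I will argue by contradiction in two cases. If $\gamma _{1}(z)\in \overline{V_{R}^{+}}$, iterating condition (A)(1) along $\gamma _{2},\gamma _{3},\ldots $ yields $|\pi _{y}\gamma _{n,1}(z)|>\rho ^{n-1}R\to \infty $, contradicting $z\in K_{\gamma }^{+}$. If $\gamma _{1}(z)\in \overline{V_{R}^{-}}$, condition (A)(2) applied to $\gamma _{1}$ forces $z=\gamma _{1}^{-1}(\gamma _{1}(z))\in V_{R}^{-}$ with $|\pi _{x}z|>\rho R$, contradicting $z\in D_{R}$. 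Part \textit{(2)(i)} then follows by applying \textit{(1)} to $\sigma ^{k}(\gamma )$ for $k=0,1,\ldots ,n-1$ and chaining the inclusions. For the nontrivial $\subset $ direction of \textit{(2)(ii)}, given $z\in K_{\gamma }^{+}$ I will use the partition $\Ct =D_{R}\cup \overline{V_{R}^{+}}\cup \overline{V_{R}^{-}}$ to argue that the orbit $\{\gamma _{n,1}(z)\}$ enters $D_{R}$ in finite time: it cannot visit $\overline{V_{R}^{+}}$ (by the same unboundedness argument), and any maximal consecutive run inside $\overline{V_{R}^{-}}$ has length at most $\log _{\rho }(|\pi _{x}\gamma _{n,1}(z)|/R)$, since condition (A)(2) forces $|\pi _{x}\gamma _{n+1,1}(z)|<\rho ^{-1}|\pi _{x}\gamma _{n,1}(z)|$ along such a run while $|\pi _{x}|>R$ is required to remain in $V_{R}^{-}$.

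Parts \textit{(3)}--\textit{(5)} reduce to a change-of-variables calculation based on the fact that each $\gamma _{j}$ is a polynomial automorphism of $\Ct $ with constant complex Jacobian $\delta (\gamma _{j})$, so $\gamma _{n,1}$ has real Jacobian $|\prod _{j=1}^{n}\delta (\gamma _{j})|^{2}$. For \textit{(3)}, part \textit{(2)(ii)} writes $K_{\gamma }^{+}$ as an increasing union, and monotone convergence combined with $E_{\sigma ^{n}(\gamma ),R}\subset D_{R}$ yields $\mbox{Leb}_{4}(\gamma _{n,1}^{-1}(E_{\sigma ^{n}(\gamma ),R}))\leq \mbox{Leb}_{4}(D_{R})|\prod _{j=1}^{n}\delta (\gamma _{j})|^{-2}$ for each $n$; the hypothesis forces this bound to $0$ along some subsequence, and monotonicity of the underlying sequence then forces its full limit to $0$. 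For \textit{(4)}, the identity $K_{\gamma }^{+}=\gamma _{n,1}^{-1}(K_{\sigma ^{n}(\gamma )}^{+})$ (which characterises bounded forward orbits through their tails) gives $\mbox{Leb}_{4}(K_{\gamma }^{+})=|\prod _{j=1}^{n}\delta (\gamma _{j})|^{-2}\mbox{Leb}_{4}(K_{\sigma ^{n}(\gamma )}^{+})$; choosing a subsequence along which $\mbox{Leb}_{4}(K_{\sigma ^{n}(\gamma )}^{+})\geq c>0$ while the Jacobian factor diverges to $\infty $ yields the conclusion. Part \textit{(5)} is immediate from \textit{(4)}: the invariance $\gamma _{j}(U)\subset U$ for $j\in \NN $ gives $U\subset K_{\sigma ^{n}(\gamma )}^{+}$ for every $n$, supplying the needed lower bound on $\mbox{Leb}_{4}(K_{\sigma ^{n}(\gamma )}^{+})$.

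Part \textit{(6)} is where I expect the main subtlety, and it has two halves. For $\max _{h\in \mbox{supp}\,\tau }|\delta (h)|<1$, I will fix $h$ and use that the $n$-step attracting contraction $d(h^{n}(x),h^{n}(y))\leq cd(x,y)$ on each $U_{j}$ gives $\|Dh^{n}(p)\|_{\mbox{FS}}\leq c$ for every $p\in \cup _{j}U_{j}$. Since $h(\cup _{j}U_{j})\subset K\subset \cup _{j}U_{j}$, the chain rule gives $\|Dh^{kn}(p)\|_{\mbox{FS}}\leq c^{k}$ for every $p\in \cup _{j}U_{j}$ and every $k\in \NN $. Using that $L\subset \Ct $ is compact so that Fubini--Study and Euclidean norms are boundedly equivalent on a fixed compact neighborhood of $L$, together with $|\det A|\leq \|A\|^{2}$ for any complex $2\times 2$ matrix $A$, I obtain $|\delta (h)|^{kn}=|\det Dh^{kn}|_{\mbox{Eucl}}\leq Cc^{2k}$ with $C$ independent of $k$; taking the $k$-th root as $k\to \infty $ yields $|\delta (h)|^{n}\leq c^{2}<1$. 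Continuity of $\delta (\cdot )$ together with compactness of $\mbox{supp}\,\tau $ then promotes the pointwise bound to a uniform strict bound. For $\mbox{Leb}_{4}(K_{\gamma }^{+})=\infty $ I will apply \textit{(4)}: the same contraction argument shows that every orbit $\gamma _{n,1}(z)$ with $\gamma \in (\mbox{supp}\,\tau )^{\ZZ }$ and $z$ in a fixed open Euclidean neighborhood $V\subset \Ct \cap (\cup _{j}U_{j})$ of $L$ converges to $L$ and is therefore bounded in $\Ct $, so $V\subset K_{\sigma ^{n}(\gamma )}^{+}$ for every $n$, giving $\limsup _{n}\mbox{Leb}_{4}(K_{\sigma ^{n}(\gamma )}^{+})\geq \mbox{Leb}_{4}(V)>0$, while the uniform bound $\max |\delta (h)|<1$ supplies the Lyapunov hypothesis of \textit{(4)}. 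The delicate point throughout is the transfer between the Fubini--Study contraction (in which the attracting property is phrased) and the Euclidean Jacobian (which governs Lebesgue scaling in $\Ct $), and it is precisely the assumption $L\subset \Ct $ that makes the metric-comparison constants uniform on a fixed compact set.
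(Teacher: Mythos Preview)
Your proposal is correct. Parts (1), (2)(i), (3), (4), (5) match the paper's proof essentially verbatim. Two places differ in an interesting way.

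For the hard inclusion in (2)(ii), both you and the paper first reduce to showing that the forward orbit of $z\in K_{\gamma}^{+}$ cannot remain in $\overline{V_{R}^{-}}$ for all $n$. You finish by observing that condition (A)(2), applied to $\gamma_{n+1}$ at the point $\gamma_{n+1,1}(z)\in\overline{V_{R}^{-}}$, gives $|\pi_{x}\gamma_{n,1}(z)|>\rho\,|\pi_{x}\gamma_{n+1,1}(z)|$, so $|\pi_{x}|$ decays geometrically along the orbit and eventually drops below $R$, a contradiction. The paper instead extracts a convergent subsequence $\gamma_{n_{j},1}(z_{0})\to z_{1}\in\overline{V_{R}^{-}}$, passes to a slightly smaller radius $R_{1}<R$, and uses condition (A)(2) for $(R_{1},\rho_{1})$ to show $\|(\gamma_{n_{j},1})^{-1}(w)\|\to\infty$ uniformly near $z_{1}$, contradicting $(\gamma_{n_{j},1})^{-1}(\gamma_{n_{j},1}(z_{0}))=z_{0}$. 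Your argument is shorter and avoids both the compactness step and the auxiliary radius $R_{1}$.

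For (6), the paper is terse: it asserts that an attracting minimal set $L\subset\Ct$ forces $|\delta(h)|<a<1$ for all $h\in\supp\,\tau$, and then invokes (5). You instead supply the Jacobian bound explicitly via $\|Dh^{kn}(p)\|_{\mathrm{FS}}\le c^{k}$ at a point $p\in L$, metric comparison on the compact set $L\subset\Ct$, and $|\det A|\le\|A\|^{2}$; this yields $|\delta(h)|^{n}\le c^{2}$ uniformly in $h$, so the appeal to continuity and compactness is in fact unnecessary. You then close with (4) rather than (5). This is a genuine simplification: applying (5) as the paper does requires a single-step forward-invariant bounded open set $U\subset\Ct$, whose construction from the $n$-step contraction data in the definition of ``attracting'' is not entirely immediate (one must, for instance, saturate a small neighbourhood of $L$ under all words of length $<N$ for suitable $N$). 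Your route through (4) only needs $V\subset K_{\sigma^{n}(\gamma)}^{+}$ for a fixed neighbourhood $V$ of a point of $L$, which follows directly from the attraction.
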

\begin{proof}
We first show  statement (1). 
We have $\gamma _{1}(E_{\gamma , R})=
\gamma _{1}(D_{R})\cap \gamma _{1}(K_{\gamma }^{+})
=\gamma _{1}(D_{R})\cap K_{\sigma (\gamma )}^{+}.$ 
Since $K_{\sigma (\gamma )}^{+}\cap V_{R}^{+}=\emptyset$ 
(Lemma~\ref{l:wrconv}),  
we have $\gamma _{1}(E_{\gamma, R})\subset 
\Ct \setminus V_{R}^{+}.$ Moreover,  
since there exists a $\rho >1$ such that  
$\gamma _{1}$ satisfies Condition (A) for $(R,\rho )$, we have 
$\gamma _{1}(E_{\gamma ,R})\subset \gamma _{1}(D_{R})
\subset D_{R}\cup V_{R}^{+}.$ It follows that 
$\gamma _{1}(E_{\gamma ,R})\subset D_{R}.$ From these arguments, 
we obtain that $\gamma _{1}(E_{\gamma ,R})\subset D_{R}\cap 
K_{\sigma (\gamma )}^{+}=E_{\sigma (\gamma ),R}.$ 
Thus, we have proved  statement (1). 

 By statement (1), we can easily show  statement (2)(i). 
 
 We now prove the statement (2)(ii).   
 For each $n\in \NN, $ we have 
$$\gamma _{1}^{-1}\cdots \gamma _{n}^{-1}(E_{\sigma ^{n}(\gamma ),R})
\subset \gamma _{1}^{-1}\cdots \gamma _{n}^{-1}
(K_{\sigma ^{n}(\gamma )}^{+}) \subset K_{\gamma }^{+}.$$ 
Thus,  
$\cup _{n=1}^{\infty }\gamma _{1}^{-1}\cdots \gamma _{n}^{-1}(E_{\sigma ^{n}(\gamma ),R})\subset K_{\gamma }^{+}.$ 
To prove $K_{\gamma }^{+} \subset 
\cup _{n=1}^{\infty }\gamma _{1}^{-1}\cdots \gamma _{n}^{-1}(E_{\sigma ^{n}(\gamma ),R})$, let 
$z_{0}\in K_{\gamma }^{+}.$ 
Suppose that 
$z_{0}\not\in \cup _{n=1}^{\infty }\gamma _{1}^{-1}\cdots \gamma _{n}^{-1}(E_{\sigma ^{n}(\gamma ),R}).$ Then,  
for each $n\in \NN,$ we have 
$\gamma _{n,1}(z_{0})\in 
\Ct \setminus E_{\sigma ^{n}(\gamma ), R}=
(\Ct \setminus K_{\sigma ^{n}(\gamma )}^{+})\cup 
(\Ct \setminus D_{R}).$ 
Moreover, siince $z_{0}\in K_{\gamma }^{+}$, we have 
$\gamma _{n,1}(z_{0})\in 
K_{\sigma ^{n}(\gamma )}^{+}\subset 
\Ct \setminus \overline{V_{R}^{+}}$ for each $n\in \NN.$  
It follows that 
$\gamma _{n,1}(z_{0})\in 
(\Ct \setminus D_{R})\cap 
(\Ct \setminus \overline{V_{R}^{+}})\subset \overline{V_{R}^{-}}$ 
for each $n\in \NN.$ 
Combining this with $z_{0}\in K_{\gamma }^{+}$,  
we obtain that there exist a strictly increasing sequence 
$\{ n_{j}\} _{j=1}^{\infty }$ in $\NN $ and a point 
$z_{1}\in \overline{V_{R}^{-}}$ such that 
\begin{equation}
\label{eq:z0z1conv}
\gamma _{n_{j}}\cdots \gamma _{1}(z_{0})\rightarrow 
z_{1}\ \mbox{ as }j\rightarrow \infty .
\end{equation}
Let $R_{1}\in \RR $ with $R_{0}<R_{1}<R.$ 
By the assumptions of our lemma, there exists a number 
$\rho _{1}>1$ such that 
each $\gamma _{i}$ satisfies condition (A) for $(R_{1}, \rho _{1}).$ 
Hence, $\| (\gamma _{n_{j}}\cdots \gamma _{1})^{-1}(z)\| 
\rightarrow \infty \mbox{ as } j\rightarrow \infty $ uniformly 
on a neighborhood of $z_{1}.$ However, this contradicts to 
(\ref{eq:z0z1conv}). Thus, $z_{0}\in \cup _{n=1}^{\infty }\gamma _{1}^{-1}\cdots \gamma _{n}^{-1}(E_{\sigma ^{n}(\gamma ),R}).$
Hence, we have proved statement (2)(ii). 

 We now prove statement (3).  Let $\{n_{i}\}_{i=1}^{\infty }$ be a strictly increasing sequence 
 in $\NN $ such that 
 $\prod _{k=1}^{n_{i}}|\delta (\gamma _{k})|^{-1}\rightarrow 
 0$ as $i\rightarrow \infty.$ By statement (2), 
 we have that 
 \begin{eqnarray*}
 \mbox{Leb}_{4}(K_{\gamma }^{+})
 & = & \lim _{i\rightarrow \infty }
 \mbox{Leb}_{4}(\gamma _{1}^{-1}\cdots 
 \gamma _{n_{i}}^{-1}
 (E_{\sigma ^{n_{i}}(\gamma ),R}))
 =  \lim _{i\rightarrow \infty }\prod _{k=1}^{n_{i}}| \delta (\gamma _{k})
 |^{-2}\mbox{Leb}_{4}(E_{\sigma ^{n_{i}}(\gamma ),R})\\ 
 & \leq & \lim _{i\rightarrow \infty }
 \prod _{k=1}^{n_{i}}|\delta (\gamma _{k})|^{-2}\mbox{Leb}_{4}(D_{R})=0
  \end{eqnarray*} 
Therefore, $\mbox{Leb}_{4}(K_{\gamma }^{+})=0.$ Hence, we have 
proved statement (3). 

We now prove statement (4). By the assumptions of statement (4), 
$\prod _{j=1}^{n}|\delta (\gamma _{j})|^{-2}\rightarrow \infty 
\mbox{ as }n\rightarrow \infty $ 
and there exist a strictly increasing sequence 
$\{ n_{i}\} _{i=1}^{\infty }$ in $\NN $ and a positive constant $c>0$ such that 
$\mbox{Leb}_{4}(K_{\sigma ^{n_{i}}(\gamma )}^{+})\geq c \mbox{ for each }i\in \NN.$
Combining these with the fact $\gamma _{n_{i}}\cdots \gamma _{1}(K_{\gamma }^{+})
=K_{\sigma ^{n_{i}}(\gamma )}^{+}$ for each $i\in \NN$, 
we obtain that 
$\mbox{Leb}_{4}(K_{\gamma }^{+})=
\prod _{j=1}^{n_{i}}|\delta (\gamma _{j})|^{-2}
(\mbox{Leb}_{4}(K_{\sigma ^{n_{i}}(\gamma )}^{+}))\rightarrow 
\infty \mbox{ as } i\rightarrow \infty .
$
Hence, $\mbox{Leb}_{4}(K_{\gamma }^{+})=\infty .$ Therefore, 
we have proved statement (4) of our lemma. 

Statement (5) follows from statement (4). 

 We now prove statement (6). By the assumptions of statement (6), 
 there exists a constant $a\in (0,1)$ such that 
 for each $h\in \supp\, \tau, |\delta (h)|<a.$ Combining this with 
 statement (5), we can see that statement (6) follows. 
 
  Hence, we have proved our lemma.  
\end{proof}
We now consider nonautonomous Green's functions. 
\begin{df}
For each $\gamma =(\gamma _{j})_{j\in \ZZ}
\in (X^{+})^{\ZZ}$, 
for each $n\in \NN$ and for each $(x,y)\in \Ct$, 
let ${\cal G}_{\gamma ,n}^{+}(x,y)=
\frac{1}{\deg(\gamma _{n-1,0})} \log ^{+}\| \gamma _{n-1,0}(x,y)\| $ 
and let ${\cal G}_{\gamma ,n}^{-}(x,y)
=\frac{1}{\deg (\gamma _{-1, -n})}
\log ^{+}\| \gamma _{-n}^{-1}\circ \cdots \circ \gamma _{-1}^{-1}
(x,y)\| .$  
Here, for each $f\in \mbox{PA}(\Ct)$ of the form 
$f(x,y)=(g(x,y), h(x,y))$, we set $\deg (f):=
\max\{\deg (g), \deg (h)\}.$ Also, 
we set $\log ^{+}(x):=\max\{\log x, 0\}$ for each 
$x\in [0,\infty )$ ($\log 0:= -\infty $).

\end{df}
Regarding the following lemma we remark that 
if $\Gamma $ is a nonempty compact subset of $X^{+}$, then 
there exist a number $R>1$ and a number $\rho >1$ such that 
each $h\in \Gamma $ satisfies condition (A) for $(R, \rho ).$ 
\begin{lem}
\label{l:greenvr}
Let $\Gamma $ be a nonempty compact subset of 
$X^{+}.$ Let $R>1, \rho >1$ be two numbers such that 
each $h\in \Gamma $ satisfies condition (A) for 
$(R, \rho ).$ 
Then, we have the following. 
\begin{itemize}
\item[{\em (1)}] 
\begin{itemize}
\item[{\em (i)}] 
For each $\gamma =(\gamma _{j})_{j\in \ZZ}\in \Gamma ^{\ZZ}$, 
there exists a function 
${\cal G}_{\gamma }^{+}(x,y)$ on $\Ct$ such that 
${\cal G}_{\gamma ,n}^{+}(x,y)\rightarrow {\cal G}_{\gamma }^{+}(x,y)$ 
as $n\rightarrow \infty $ on $\Ct.$ 
\item[{\em (ii)}] 
We have 
${\cal G}_{\gamma ,n}^{+}(x,y)\rightarrow 
{\cal G}_{\gamma }^{+}(x,y) \mbox{ as }n\rightarrow \infty $ 
uniformly on $\overline{V_{R}^{+}}\times \Gamma ^{\ZZ}.$ 
Here, $\overline{V_{R}^{+}}$ denotes the closure 
of $V_{R}^{+}$ in $\Ct.$ 
\item[{\em (iii)}] 
The function ${\cal G}_{\gamma }^{+}$ is pluriharmonic on 
$I_{\gamma }^{+}$ and 
$(x,y,\gamma )\in \overline{V_{R}^{+}}\times \Gamma ^{\ZZ}\mapsto 
{\cal G}_{\gamma }^{+}(x,y)$ is continuous 
on $\overline{V_{R}^{+}} \times \Gamma ^{\ZZ}.$  
\item[{\em (iv)}] 
For each $\gamma =(\gamma _{j})_{j\in \ZZ}\in \Gamma ^{\ZZ}$ and for each 
$(x,y)\in \Ct$, we have 
$\frac{1}{\deg (\gamma _{0})}{\cal G}_{\sigma (\gamma )}^{+}
(\gamma _{0}(x,y))={\cal G}_{\gamma }^{+}(x,y).$ 
For each $\gamma =(\gamma _{j})_{j\in \ZZ}\in \Gamma ^{\ZZ}$, 
there exist a 
constant $r_{\gamma }^{+}\in \RR $ and a 
bounded pluriharmonic function $u_{\gamma }^{+}$ on 
$V_{R}^{+}$ 
such that ${\cal G}_{\gamma }^{+}(x,y)=\log |y|+r_{\gamma }^{+}
+u_{\gamma }^{+}(x,y)$ on $V_{R}^{+}$ and such that 
$u_{\gamma }^{+}(x,y)\rightarrow 0$ as $|y|\rightarrow \infty $. 
Moreover, 
$\gamma \in \Gamma ^{\ZZ}\mapsto r_{\gamma }^{+}$ is 
continuous on $\Gamma ^{\ZZ}.$ Furthermore, 
$(x,y,\gamma )\in \overline{V_{R}^{+}} \times \Gamma ^{\ZZ}
\mapsto u_{\gamma }^{+}(x,y)$ is continuous on 
$\overline{V_{R}^{+}} \times \Gamma ^{\ZZ}.$    
 \end{itemize}
 \item[{\em (2)}] 
\begin{itemize}
\item[{\em (i)}] 
For each $\gamma =(\gamma _{j})_{j\in \ZZ}\in \Gamma ^{\ZZ}$
there exists a function 
${\cal G}_{\gamma }^{-}(x,y)$ on $\Ct$ such that 
${\cal G}_{\gamma ,n}^{-}(x,y)\rightarrow {\cal G}_{\gamma }^{-}(x,y)$ 
as $n\rightarrow \infty $ on $\Ct.$ 
\item[{\em (ii)}] 
We have 
${\cal G}_{\gamma ,n}^{-}(x,y)\rightarrow 
{\cal G}_{\gamma }^{-}(x,y) \mbox{ as }n\rightarrow \infty $ 
uniformly on $\overline{V_{R}^{-}}\times \Gamma ^{\ZZ}.$ 
Here, $\overline{V_{R}^{-}}$ denotes the 
closure of $V_{R}^{-}$ in $\Ct.$ 
\item[{\em (iii)}] 
The function ${\cal G}_{\gamma }^{-}$ is pluriharmonic on 
$I_{\gamma }^{-}$ and 
$(x,y,\gamma )\in \overline{V_{R}^{-}}\times \Gamma ^{\ZZ}\mapsto 
{\cal G}_{\gamma }^{-}(x,y)$ is continuous 
on $\overline{V_{R}^{-}} \times \Gamma ^{\ZZ}.$  
\item[{\em (iv)}] 
For each $\gamma =(\gamma _{j})_{j\in \ZZ}\in \Gamma ^{\ZZ}$ and for each 
$(x,y)\in \Ct$, we have 
$\frac{1}{\deg (\gamma _{-1})}{\cal G}_{\sigma ^{-1}(\gamma )}^{-}
(\gamma _{-1}(x,y))={\cal G}_{\gamma }^{-}(x,y).$ 
For each $\gamma =(\gamma _{j})_{j\in \ZZ}\in 
\Gamma ^{\ZZ}$, there exist a 
constant $r_{\gamma }^{-}\in \RR $ and a 
bounded pluriharmonic function $u_{\gamma }^{-}$ on 
$V_{R}^{-}$ 
such that ${\cal G}_{\gamma }^{-}(x,y)=\log |x|+r_{\gamma }^{-}
+u_{\gamma }^{-}(x,y)$ on $V_{R}^{-}$ and such that 
$u_{\gamma }^{-}(x,y)\rightarrow 0$ as $|x|\rightarrow \infty $. 
Moreover, 
$\gamma \in \Gamma ^{\ZZ}\mapsto r_{\gamma }^{-}$ is 
continuous on $\Gamma ^{\ZZ}.$ Furthermore, 
$(x,y,\gamma )\in \overline{V_{R}^{-}} \times \Gamma ^{\ZZ}
\mapsto u_{\gamma }^{-}(x,y)$ is continuous on 
$\overline{V_{R}^{-}} \times \Gamma ^{\ZZ}.$    
 \end{itemize}
 \end{itemize} 
\end{lem}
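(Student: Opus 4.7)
My approach is to run the classical Bedford--Smillie--Fornaess construction of Green's functions for H\'enon maps in the random/non-autonomous setting, making all estimates uniform in $\gamma \in \Gamma ^{\ZZ }$ by compactness of $\Gamma \subset X^{+}$. The engine is a uniform geometric Cauchy estimate for $|{\cal G}^{+}_{\gamma ,n+1}-{\cal G}^{+}_{\gamma ,n}|$ on the escape region $\overline{V_{R}^{+}}$. Each $h\in \Gamma $ has the form $h(x,y)=(y+a_{h},p_{h}(y)-\delta _{h}x)$, and compactness of $\Gamma $ together with the topology of $X^{+}$ forces $|a_{h}|,|\delta _{h}|,|c_{0}(h)|^{\pm 1}$ and the remaining coefficients of $p_{h}$ to be uniformly bounded, while $\deg (h)$ is locally constant on $\Gamma $ and takes finitely many values. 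A direct leading-term estimate yields
\[ \Bigl|\frac{1}{\deg (h)}\log \|h(x,y)\| -\log |y|\Bigr| \leq C_{\Gamma } \]
uniformly on $(h,(x,y))\in \Gamma \times \overline{V_{R}^{+}}$. Applying this with $h=\gamma _{n}$ and $z_{n}:=\gamma _{n-1,0}(x,y)\in V_{R}^{+}$ (which stays in $V_{R}^{+}$ by condition~(A)), one obtains
\[ |{\cal G}^{+}_{\gamma ,n+1}(x,y)-{\cal G}^{+}_{\gamma ,n}(x,y)| =\frac{1}{\deg (\gamma _{n-1,0})}\Bigl|\frac{1}{\deg (\gamma _{n})}\log \|\gamma _{n}(z_{n})\| -\log \|z_{n}\|\Bigr| \leq \frac{C}{2^{n}}, \]
since $\deg (\gamma _{n-1,0})\geq 2^{n}$ and $\log \|z_{n}\| -\log |\pi _{y}(z_{n})|=O(1)$ on $V_{R}^{+}$. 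This proves~(1)(ii), and since each ${\cal G}^{+}_{\gamma ,n}$ is jointly continuous in $(x,y,\gamma )$ (the composition $(x,y,\gamma )\mapsto \gamma _{n-1,0}(x,y)$ is continuous and $\deg$ is locally constant on $\Gamma $), the uniform limit ${\cal G}^{+}_{\gamma }$ is continuous on $\overline{V_{R}^{+}} \times \Gamma ^{\ZZ}$.

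Pointwise convergence on $\Ct $ (statement~(1)(i)), pluriharmonicity, and the functional equation then follow by standard arguments. On $K^{+}_{\gamma }$ the orbit is bounded while $\deg (\gamma _{n-1,0})\to \infty $, so ${\cal G}^{+}_{\gamma ,n}(x,y)\to 0$. On $I^{+}_{\gamma }$, Lemma~\ref{l:wrconv}(2) together with the observation that once iterates leave $D_{R}$ and avoid $\overline{V_{R}^{-}}$ they must lie in $\overline{V_{R}^{+}}$ produces some $n_{0}$ with $\gamma _{n_{0}-1,0}(x,y)\in \overline{V_{R}^{+}}$; the finite-level identity ${\cal G}^{+}_{\gamma ,n+k}(x,y)=\frac{1}{\deg (\gamma _{k-1,0})}{\cal G}^{+}_{\sigma ^{k}(\gamma ),n}(\gamma _{k-1,0}(x,y))$, immediate from the definitions, propagates the uniform convergence on $\overline{V_{R}^{+}}$ back to $(x,y)$ and, upon letting $n\to \infty $, also yields the functional equation in~(1)(iv). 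Pluriharmonicity on $I^{+}_{\gamma }$ holds because each ${\cal G}^{+}_{\gamma ,n}$ is pluriharmonic on $\{\|\gamma _{n-1,0}(\cdot )\|>1\}$, which covers $I^{+}_{\gamma }$ for all large $n$, and a uniform limit of pluriharmonic functions is pluriharmonic.

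The bulk of the remaining work is the decomposition ${\cal G}^{+}_{\gamma }=\log |y|+r^{+}_{\gamma }+u^{+}_{\gamma }$ on $V_{R}^{+}$. Here the refined asymptotic $\log |\pi _{y}(\gamma _{0}(x,y))|=\deg (\gamma _{0})\log |y|+\log |c_{0}(\gamma _{0})|+O(1/|y|)$ holds uniformly in $\gamma _{0}\in \Gamma $ (using $|x|\leq |y|$), so formally iterating the functional equation suggests the cocycle fixed-point relation $r^{+}_{\gamma }=(\log |c_{0}(\gamma _{0})|+r^{+}_{\sigma (\gamma )})/\deg (\gamma _{0})$. I \emph{define}
\[ r^{+}_{\gamma }:=\sum _{n=0}^{\infty }\frac{\log |c_{0}(\gamma _{n})|}{\deg (\gamma _{n,0})}, \]
which converges absolutely and uniformly in $\gamma \in \Gamma ^{\ZZ}$ since $|\log |c_{0}(\cdot )||$ is bounded on $\Gamma $ and $\deg (\gamma _{n,0})\geq 2^{n+1}$; hence $\gamma \mapsto r^{+}_{\gamma }$ is continuous. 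Setting $u^{+}_{\gamma }(x,y):={\cal G}^{+}_{\gamma }(x,y)-\log |y|-r^{+}_{\gamma }$ gives a pluriharmonic function on $V_{R}^{+}$, jointly continuous in $(x,y,\gamma )$ on $\overline{V_{R}^{+}} \times \Gamma ^{\ZZ}$. The main obstacle is showing $u^{+}_{\gamma }(x,y)\to 0$ uniformly in $\gamma $ as $|y|\to \infty $: iterating the functional equation $n$ times and peeling off the partial sum of $r^{+}_{\gamma }$ decomposes the residual into a piece bounded by $\frac{1}{\deg (\gamma _{n-1,0})}({\cal G}^{+}_{\sigma ^{n}(\gamma )}(\gamma _{n-1,0}(x,y))-\log |\pi _{y}(\gamma _{n-1,0}(x,y))|)$, which is $O(2^{-n})$ uniformly by the earlier boundedness of ${\cal G}^{+}_{\gamma }-\log |\pi _{y}|$ on $\overline{V_{R}^{+}} \times \Gamma ^{\ZZ}$, plus an accumulated error $\sum _{k<n}O(|y_{k}|^{-1})/\deg (\gamma _{k,0})=O(|y|^{-1})$ coming from the leading-term expansion at each step. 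Sending $|y|\to \infty $ first and then $n\to \infty $ yields the vanishing. Finally, statement~(2) for ${\cal G}^{-}$ is obtained by a completely symmetric argument using the explicit formula $h^{-1}(x,y)=(\delta ^{-1}(p(x-a)-y),x-a)$ and condition~(A)(2); since $\Phi :X^{+}\to X^{-}$ is a homeomorphism, $\{h^{-1}:h\in \Gamma \}$ is compact in $X^{-}$, so the same uniform estimates transfer verbatim with $V_{R}^{-}$, $|x|$, and $\sigma ^{-1}$ replacing $V_{R}^{+}$, $|y|$, and $\sigma $.
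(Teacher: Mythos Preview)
Your overall strategy matches the paper's: establish a uniform geometric estimate on $\overline{V_R^+}$ by comparing $\log\|h(x,y)\|$ with $d(h)\log|y|$, then propagate to all of $I_\gamma^+$ via the finite-level functional equation. The paper phrases the core estimate as a two-sided sandwich for $\frac{1}{D_n}\log|\pi_y(\gamma_{n-1,0}(x,y))|$ rather than a telescoping Cauchy sum, but these are equivalent. Your explicit formula $r_\gamma^+=\sum_{n\geq 0}\log|c_0(\gamma_n)|/\deg(\gamma_{n,0})$ and the two-scale ($n$, then $|y|$) argument for $u_\gamma^+\to 0$ are in fact more detailed than the paper, which for (1)(iv) simply invokes an argument from \cite[p.~283]{MNTT00}.

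There is, however, one genuine gap. You assert that ``each ${\cal G}^+_{\gamma,n}$ is pluriharmonic on $\{\|\gamma_{n-1,0}(\cdot)\|>1\}$''. This is false: $\log\|w\|=\frac{1}{2}\log(|w_1|^2+|w_2|^2)$ is plurisubharmonic but \emph{not} pluriharmonic on $\Ct\setminus\{0\}$ (for instance $\partial_{\bar w_1}\partial_{w_1}\log\|w\|=\tfrac{1}{2}|w_2|^2/\|w\|^4\neq 0$), and precomposing with a holomorphic map does not change this. The paper avoids the issue by working instead with $\tfrac{1}{D_n}\log|\pi_y(\gamma_{n-1,0}(x,y))|$, which \emph{is} pluriharmonic on $V_R^+$ (it is $\log|f|$ for a nonvanishing holomorphic $f$), and observing that it differs from ${\cal G}^+_{\gamma,n}$ by at most $\tfrac{\log\sqrt{2}}{D_n}\to 0$ uniformly on $\overline{V_R^+}$ (since $1\leq \|z\|/|\pi_y z|\leq\sqrt{2}$ there). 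Hence ${\cal G}_\gamma^+$ is a uniform limit of pluriharmonic functions on $V_R^+$, and pluriharmonicity on all of $I_\gamma^+$ then follows from the functional equation you already established, since every point of $I_\gamma^+$ is eventually mapped into $V_R^+$. With this correction your proof is complete.
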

\begin{proof}
For each $h\in \Gamma $ with 
$h(x,y)=(y+\alpha, p(y)-\delta x)$, 
where $\alpha \in \CC, \delta \in \CC $ with 
$\delta \neq 0$, $p(y)$ is a polynomial of $y$ with 
$\deg (p)\geq 2$, 
 let $c(h)\in \CC $ be the 
coefficient of the highest degree term of $p(y).$ 
Since each $h\in \Gamma $ satisfies condition (A) for 
$(R,\rho )$, 
there exist  
constants $a_{1}>0, a_{2}>0$ such that 
for each $h\in \Gamma $ and for each 
$(x,y)\in \overline{V_{R}^{+}}$, 
\begin{equation}
\label{eq:a1ydh}
a_{1}|y|^{d(h)}\leq |\pi _{2}(h(x,y))|\leq a_{2}|y|^{d(h)}
\end{equation}
where $d(h)=\deg (h).$ 
By using (\ref{eq:a1ydh}), it is easy to see that for each 
$(\gamma _{j})_{j\in \ZZ}\in \Gamma ^{\ZZ}$, for each 
$(x,y)\in \overline{V_{R}^{+}}$ and 
for each $n\in \NN$, 
\begin{eqnarray}
\label{eqn:d0dn-11}
 &  & (\frac{1}{d_{0}\cdots d_{n-1}}+\frac{1}{d_{0}\cdots d_{n-2}}+
\cdots +\frac{1}{d_{0}})\log a_{1}+\log |y|\\ 
 & \leq & 
 \frac{1}{d_{0}\cdots d_{n-1}}\log ^{+}|\pi _{2}(\gamma _{n-1,0}(x,y))| 
 =\frac{1}{d_{0}\cdots d_{n-1}}\log | \pi _{2}(\gamma _{n-1,0}(x,y))|\\ 
 \label{eqn:d0dn-13}
 & \leq & 
  (\frac{1}{d_{0}\cdots d_{n-1}}+\frac{1}{d_{0}\cdots d_{n-2}}+
\cdots +\frac{1}{d_{0}})\log a_{2}+\log |y|
\end{eqnarray} 
where $d_{j}:=\deg (\gamma _{j})\geq 2$ for each $j=0,\ldots, n-1.$  
Moreover,  
for each $(\gamma _{j})_{j\in \ZZ}\in \Gamma ^{\ZZ}, $ 
for each $(x,y)\in \overline{V_{R}^{+}}$ and for each $n\in \NN$, since $\gamma _{n-1,0}(x,y)\in \overline{V_{R}^{+}}$, 
we have 
\begin{equation}
\label{eq:gapi2ga}
1\leq \frac{\| \gamma _{n-1,0}(x,y)\| }
{|\pi _{2}(\gamma _{n-1,0}(x,y))|}\leq \sqrt{2} 
\mbox{ for each }(x,y,\gamma,n)\in \overline{V_{R}^{+}}\times 
\Gamma ^{\ZZ}\times \NN.
\end{equation}  
From these arguments, it follows that 
for each $\gamma =(\gamma _{j})_{j\in \ZZ}\in \Gamma ^{\ZZ}$
there exists a pluriharmonic function 
${\cal G}_{\gamma }^{+}(x,y)$ on $V_{R}^{+}  $ 
such that 
\begin{equation}
\label{eq:cGgamman}
{\cal G}_{\gamma ,n}^{+}(x,y)\rightarrow 
{\cal G}_{\gamma }^{+}(x,y) \mbox{ as }n\rightarrow \infty 
\mbox{ uniformly on } \overline{V_{R}^{+}}\times \Gamma ^{\ZZ}.
\end{equation} 
Therefore, as a sequence of functions of $(x,y)$, 
$\{ {\cal G}_{\gamma ,n}^{+}(x,y)\}_{n=1}^{\infty }$ converges 
 as $n\rightarrow \infty $
uniformly on each compact subset of $I_{\gamma }^{+}.$ 
We set ${\cal G}_{\gamma }^{+}(x,y)=\lim _{n\rightarrow \infty }
{\cal G}_{\gamma ,n}^{+}(x,y)$ for each $(x,y)\in I_{\gamma }^{+}.$  
Then, ${\cal G}_{\gamma }^{+}$ is pluriharmonic on $I_{\gamma }^{+}$ 
for each $\gamma \in \Gamma ^{\ZZ}.$ 
Moreover, setting ${\cal G}_{\gamma }^{+}(x,y)=0$ on 
$K_{\gamma }^{+}$ for each $\gamma \in \Gamma ^{\ZZ}$, 
we have ${\cal G}_{\gamma ,n}^{+}\rightarrow {\cal G}_{\gamma }^{+}$ 
as $n\rightarrow \infty $ on $K_{\gamma }^{+}.$   
Then, ${\cal G}_{\gamma ,n}^{+}(x,y)
\rightarrow {\cal G}_{\gamma }^{+}(x,y)$ as $n\rightarrow 
\infty $ on $\Ct$ for each $\gamma \in \Gamma ^{\ZZ}.$ 
Hence, $\frac{1}{\deg (\gamma _{0})}{\cal G}_{\sigma (\gamma )}^{+}
(\gamma _{0}(x,y))={\cal G}_{\gamma }^{+}(x,y)$ for each 
$(x,y,\gamma )\in \Ct \times \Gamma ^{\ZZ}.$  
By (\ref{eqn:d0dn-11})--(\ref{eqn:d0dn-13}), (\ref{eq:gapi2ga}),  (\ref{eq:cGgamman}), and \cite[an argument in p283]{MNTT00}, 
 statements (1)(iii)(iv) follow.   
Hence, statement (1) follows. 
We can prove statement (2) similarly. 

Thus, we have proved our lemma. 
\end{proof}
We now prove that if $\Gamma $ is a nonempty 
compact subset of $X^{+}$, then 
${\cal G}_{\gamma ,n}(x,y)$, which is regarded as a 
function of $((x,y), \gamma )\in \Ct \times \Gamma ^{\NN}$, 
tends to ${\cal G}_{\gamma }^{+}(x,y)$ as $n\rightarrow \infty $ 
uniformly on any compact subset of $\Ct \times \Gamma ^{\NN}.$ 
\begin{lem}
\label{l:Gganpuc}
Let $\Gamma $ be a nonempty compact subset of 
$X^{+}.$ Let $R>1, \rho >1$ be two numbers such that 
each $h\in \Gamma $ satisfies condition (A) for 
$(R, \rho ).$ 
Then 
for each $A>R$, 
${\cal G}_{\gamma ,n }^{+}(x,y)\rightarrow {\cal G}_{\gamma }^{+}(x,y)
\mbox{ as } n\rightarrow \infty $ uniformly on 
$C^{A}\times \Gamma ^{\ZZ}$, where 
$C^{A}:=\{ (x,y)\in \Ct \mid |x|\leq A\} \cap \overline{D_{R}\cup V_{R}^{-}}.$ In particular, 
${\cal G}_{\gamma, n}^{+}(x,y)\rightarrow {\cal G}_{\gamma }^{+}(x,y) 
\mbox{ as }n\rightarrow \infty $ uniformly on any compact subset of 
$\Ct \times \Gamma ^{\ZZ}.$ 

\end{lem}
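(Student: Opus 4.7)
The plan is to combine a compactness argument that supplies a \emph{uniform} entry time into $W_{R}^{+}$ with the functional equation for the nonautonomous Green's function, thereby reducing the claim to a uniform remainder estimate. First I would verify that $W_{R}^{+}=\mbox{int}(D_{R}\cup \overline{V_{R}^{+}})$ is forward invariant under every $g\in \Gamma$: condition (A)(2) says $g^{-1}(\overline{V_{R}^{-}})\subset V_{R}^{-}$, whence $g(\Ct \setminus V_{R}^{-})\subset \Ct \setminus \overline{V_{R}^{-}}=W_{R}^{+}$, and $W_{R}^{+}\subset \Ct \setminus V_{R}^{-}$. Consequently the sets
\[ U_{n}:=\{((x,y),\gamma)\in C^{A}\times \Gamma^{\ZZ} : \gamma_{n-1,0}(x,y)\in W_{R}^{+}\} \]
are open (they depend continuously on only finitely many coordinates of $\gamma$), are increasing in $n$ by forward invariance, and, by Lemma~\ref{l:wrconv}(2), cover $C^{A}\times \Gamma^{\ZZ}$. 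Since $C^{A}\times \Gamma^{\ZZ}$ is compact (as $C^{A}$ is a closed subset of $\{|x|,|y|\leq A\}$ and $\Gamma^{\ZZ}$ is a countable product of compacta), there is $N_{0}=N_{0}(A,\Gamma)\in \NN$ with $\gamma_{n-1,0}(x,y)\in W_{R}^{+}$ for every $n\geq N_{0}$, uniformly on $C^{A}\times \Gamma^{\ZZ}$.

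Next I would iterate the functional equation in Lemma~\ref{l:greenvr}(1)(iv) to write
\[ {\cal G}_{\gamma}^{+}(x,y)-{\cal G}_{\gamma,n}^{+}(x,y)=\frac{1}{d_{0}\cdots d_{n-1}}\bigl[{\cal G}_{\sigma^{n}(\gamma)}^{+}(\gamma_{n-1,0}(x,y))-\log^{+}\|\gamma_{n-1,0}(x,y)\|\bigr], \]
where $d_{j}=\deg(\gamma_{j})\geq 2$. For $n\geq N_{0}$ the argument $\gamma_{n-1,0}(x,y)$ lies in $\overline{V_{R}^{+}}\cup \overline{D_{R}}$. On $\overline{V_{R}^{+}}$ the expansion ${\cal G}_{\gamma'}^{+}=\log|y|+r_{\gamma'}^{+}+u_{\gamma'}^{+}$ from Lemma~\ref{l:greenvr}(1)(iv), together with $|y|\leq \|(x,y)\|\leq \sqrt{2}|y|$, bounds the bracket uniformly: $\gamma'\mapsto r_{\gamma'}^{+}$ is continuous on the compact space $\Gamma^{\ZZ}$ and $u_{\gamma'}^{+}$ is uniformly bounded there. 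On $\overline{D_{R}}$, $\log^{+}\|\cdot\|$ is trivially bounded, and a telescoping estimate $\log(1+\|g(z)\|)\leq C_{0}+\deg(g)\log(1+\|z\|)$ (uniform for $g\in \Gamma$ thanks to the uniform degree bound and uniform coefficient bounds extracted from compactness) yields ${\cal G}_{\gamma',n}^{+}(z)\leq C_{1}+\log(1+\|z\|)$ uniformly in $n$ and $\gamma'$; passing to the limit, ${\cal G}_{\gamma'}^{+}$ is uniformly bounded on $\overline{D_{R}}\times \Gamma^{\ZZ}$. In either case the bracket is bounded by some $C=C(A,R,\Gamma)$, so division by $d_{0}\cdots d_{n-1}\geq 2^{n}$ gives exponential—hence uniform—convergence on $C^{A}\times \Gamma^{\ZZ}$.

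For the ``in particular'' statement, any compact $K\subset \Ct$ is contained in $\{|x|,|y|\leq A\}$ for some $A>R$, and each such point is either in $\overline{V_{R}^{+}}$ or in $C^{A}$; so uniform convergence on $K\times \Gamma^{\ZZ}$ follows by combining the conclusion above with the uniform convergence on $\overline{V_{R}^{+}}\times \Gamma^{\ZZ}$ furnished by Lemma~\ref{l:greenvr}(1)(ii). The main obstacle will be obtaining the uniform upper bound for ${\cal G}_{\gamma'}^{+}$ on $\overline{D_{R}}\times \Gamma^{\ZZ}$: the telescoping constants must stay under control along the iteration, the uniformity has to survive the pointwise limit $n\to\infty$, and both rely crucially on extracting uniform degree and coefficient bounds from the compactness of $\Gamma$ in $X^{+}$.
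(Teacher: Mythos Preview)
Your argument is correct and takes a genuinely different route from the paper's proof. The paper stratifies $C^{A}$ by the \emph{first entry time into $V_{R}^{+}$}, i.e., the sets $C^{A}_{n,\gamma}=C^{A}\cap(\gamma_{n-1,0}^{-1}(V_{R}^{+})\setminus \gamma_{n-2,0}^{-1}(V_{R}^{+}))$. This entry time is not uniform (points in $K_{\gamma}^{+}$ never enter $V_{R}^{+}$), so the paper must first prove the geometric fact that an orbit starting in $C^{A}_{n,\gamma}$ actually stays inside $C^{A}$ for the first $n-2$ iterates, then run a case analysis separating ``early entry'' ($j\le n_{0}$, handled via uniform convergence on $\overline{V_{R}^{+}}$) from ``late entry'' ($j\ge n_{0}$, where both ${\cal G}_{\gamma,m}^{+}$ and ${\cal G}_{\gamma}^{+}$ are shown to be small).

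You instead work with the larger forward-invariant set $W_{R}^{+}\supset K_{\gamma}^{+}$, which admits a \emph{uniform} entry time $N_{0}$ on $C^{A}\times\Gamma^{\ZZ}$ by a clean open-cover/compactness argument; then a single application of the functional equation reduces everything to bounding the defect ${\cal G}_{\gamma'}^{+}-\log^{+}\|\cdot\|$ on $\overline{D_{R}}\cup\overline{V_{R}^{+}}$. This is more conceptual, avoids the stratification and case analysis entirely, and even yields an explicit exponential rate $O(2^{-n})$ once $n\ge N_{0}$. What the paper's approach buys is that it never needs a global a~priori bound on ${\cal G}_{\gamma'}^{+}$ over $\overline{D_{R}}\times\Gamma^{\ZZ}$: it only uses the estimates on $\overline{V_{R}^{+}}$ already recorded in Lemma~\ref{l:greenvr}. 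Your approach needs that extra ingredient, and your telescoping bound $\log(1+\|g(z)\|)\le C_{0}+\deg(g)\log(1+\|z\|)$ (with $C_{0}$ uniform over $\Gamma$ by compactness, which indeed forces bounded degree and bounded coefficients) supplies it correctly.
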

\begin{proof}
Let $A>R.$ 
For each $\gamma =(\gamma _{j})_{j\in \ZZ}
\in \Gamma ^{\ZZ}$ and for each 
$n\geq 2$, let 
$C_{n, \gamma }^{A}=C^{A}\cap 
(\gamma _{n-1,0}^{-1}(V_{R}^{+})\setminus \gamma _{n-2,0}^{-1}(V_{R}^{+})).$ Also,  
let $C_{1,\gamma }^{A}=C^{A}\cap \gamma _{0}^{-1}(V_{R}^{+}).$ 
Note that 
\begin{equation}
\label{eq:Igammapn0}
I_{\gamma }^{+}=
\cup _{n=0}^{\infty }\gamma _{n,0}^{-1}(V_{R}^{+}) \mbox{ and } 
\gamma _{n-2,0}^{-1}(V_{R}^{+})\subset \gamma _{n-1,0}^{-1}
(V_{R}^{+}) \mbox{ for each } n\in \NN \mbox{ with }n\geq 2.
\end{equation}
 We prove the following claim. 

Claim 1. $\lim _{n\rightarrow \infty }\sup _{\gamma \in \Gamma ^{\ZZ}} \sup _{(x,y)\in C_{n,\gamma }^{A}}\sup _{m\geq n}
{\cal G}_{\gamma ,m}^{+}(x,y)=0.$ 

To prove this claim, let $\gamma \in \Gamma ^{\ZZ}, n\in \NN$ 
with $n\geq 2$,  
and $z\in C_{n,\gamma }^{A}.$ 
Then, 
\begin{equation}
\label{eq:gaj0zca}
\gamma _{j,0}(z)\in C^{A} \mbox{ for each } j=0,\ldots, n-2. 
\end{equation}
For, since $z\in C_{n,\gamma }^{A}$, we have 
$\gamma _{n-1,0}(z)\in V_{R}^{+} $ and 
$\gamma _{n-2,0}(z)\in \overline{D_{R}\cup V_{R}^{-}}.$
Suppose that $\gamma _{0,0}(z)=\gamma _{0}(z)\in \Ct \setminus C^{A}.$ 
Then, $\gamma _{0}(z)\in V_{R}^{+}\cup (V_{R}^{-}\setminus C^{A}).$ 
We consider the following two cases.\\ 
 Case 1. $\gamma _{0}(z)\in V_{R}^{+}$.\\ 
 Case 2. $\gamma _{0}(z)\in V_{R}^{-}\setminus C^{A}.$\\ 
 Suppose we have Case 1. Then, 
 since each $h\in \Gamma $ satisfies condition (A) for $(R,\rho )$, 
we have  $\gamma _{n-2,0}(z)\in 
 V_{R}^{+}.$ However, this contradicts that $\gamma _{n-2,0}(z)\in 
 \overline{D_{R}\cup V_{R}^{-}}.$ \\ 
 Suppose we have Case 2. Then, $\gamma _{0}(z)\in 
 V_{R}^{-}\setminus C^{A}$, 
 $\gamma _{0}^{-1}(\gamma _{0}(z))=z$,  and 
 $\gamma _{0}^{-1}(V_{R}^{-1}\setminus C^{A})\subset 
 V_{R}^{-}\setminus C^{A}$ since each $h\in \Gamma $ satisfies 
 condition (A) for $(R,\rho ).$ It follows that 
 $z\in V_{R}^{-}\setminus C^{A}.$ However, this contradicts 
 $z\in C_{n,\gamma }^{A}\subset C^{A}.$ 
Hence, we should have $\gamma _{0,0}(z)=\gamma _{0}(z)\in C^{A}.$ 
Repeating the above arguments, we can show that 
(\ref{eq:gaj0zca}) holds. 

By (\ref{eq:gaj0zca}), there exists a compact set $L$ in 
$\overline{V_{R}^{+}}$  such that 
 for each $n\in \NN, \gamma \in \Gamma ^{\ZZ}$ and 
 $z\in C_{n,\gamma }^{A},$  we have 
 $\gamma _{n-1,0}(z)\in L.$ 
 Combining this with Lemma~\ref{l:greenvr}, 
 it follows that there exists a constant 
 $S$ (depending only on $\Gamma $ and $A$) such that 
 for each $\gamma \in \Gamma ^{\ZZ}, $ for each 
 $m,n\in \NN $ with $m\geq n$, and for each 
 $z\in C_{n,\gamma }^{A}$, we have 
$$
 {\cal G}_{\gamma, m}^{+}(z)
  =  \frac{1}{\deg (\gamma_{n-1,0})}
 {\cal G}_{\sigma ^{n}(\gamma ), m-n}(\gamma _{n-1,0}(z))
 \leq  \frac{1}{2^{n}}S $$ 
 where we set ${\cal G}_{\rho , 0}(x,y):=\log ^{+}\| (x,y)\|$ 
 for each $\rho \in \Gamma ^{\ZZ}$ and $(x,y)\in \Ct.$ 
 Therefore, Claim 1 holds. 

Let $\ve >0$. By Claim 1, there exists a number 
$n_{0}\in \NN $ such that for each 
$n\in \NN $ with $n\geq n_{0}$, we have 
\begin{equation}
\label{eq:sssGge}
\sup _{\gamma \in \Gamma ^{\ZZ} }
\sup _{(x,y)\in C_{n,\gamma }^{A}}
\sup _{m\geq n}{\cal G}_{\gamma, m}^{+}(x,y)\leq \ve.
\end{equation}
Hence, for each $n\geq n_{0}$, 
\begin{equation}
\label{eq:ssGge}
\sup_{\gamma \in \Gamma ^{\ZZ}} \sup _{(x,y)\in C_{n,\gamma }^{A}}
{\cal G}_{\gamma }^{+}(x,y)\leq \ve.
\end{equation}
Also, we may assume 
\begin{equation}
\label{eq:12n0log}
\frac{1}{2^{n_{0}}}\log ^{+}\sup _{z\in C^{A},h\in \Gamma }\| h(z)\| <\ve .
\end{equation}
Moreover, we have 
\begin{equation}
\label{eq:n0j1ugj}
\cup _{j=1}^{n_{0}}\{ \gamma _{j-1, 0}(x,y) 
\mid \gamma \in \Gamma ^{\ZZ}, (x,y)\in C_{j,\gamma }^{A}\} 
\subset \overline{V_{R}^{+}}.
\end{equation}
Furthermore, by Lemma~\ref{l:greenvr}, 
there exists a number $n_{1}\in \NN $ such that 
for each $n\geq n_{1}$, for each $\gamma \in \Gamma ^{\ZZ}$ 
and for each $(x,y)\in \overline{V_{R}^{+}}$, 
\begin{equation}
\label{eq:Ggnpxymi}
|{\cal G}_{\gamma, n}^{+}(x,y)-{\cal G}_{\gamma }^{+}(x,y)|\leq \ve.
\end{equation}
It follows that for each $\gamma \in \Gamma ^{\ZZ}$, 
for each $n\geq n_{0}+n_{1}$, for each $j=1,\ldots, n_{0}$ and for each 
$(x,y)\in C_{j,\gamma }^{A}$, 
\begin{eqnarray}
 & & |{\cal G}_{\gamma, n}^{+}(x,y)-{\cal G}_{\gamma }^{+}(x,y)| \nonumber \\ 
& = & |\frac{1}{\deg (\gamma _{n_{0}-1,0})}
{\cal G}^{+}_{\sigma ^{n-n_{0}}(\gamma ), n-n_{0}}
(\gamma _{n_{0}-1,0}(x,y))-
\frac{1}{\deg (\gamma _{n_{0}-1,0})}
{\cal G}^{+}_{\sigma ^{n-n_{0}}(\gamma )}
(\gamma _{n_{0}-1,0}(x,y))| \nonumber \\ 
& \leq & 
\frac{1}{\deg (\gamma _{n_{0}-1,0})}\ve <\ve . \label{eq:1deggn0ve}
\end{eqnarray} 
We now let $\gamma \in \Gamma ^{\ZZ}, n\geq n_{0}+n_{1}, 
j\geq n_{0}, $ and $(x,y)\in C_{j, \gamma }^{A}.$ 
We consider the following two cases. 

Case 1. $n\geq j.$ Case 2. $n<j.$ 

Suppose we have Case 1. Then, 
by (\ref{eq:sssGge})(\ref{eq:ssGge}), we have 
$|{\cal G}_{\gamma, n}^{+}(x,y)-{\cal G}_{\gamma }^{+}(x,y)|\leq \ve.$ 

Suppose we have Case 2. Then, by 
(\ref{eq:gaj0zca}), we have $\gamma _{n-2,0}(x,y)\in C^{A}.$ 
Therefore, by (\ref{eq:12n0log}), we obtain that 
$$
{\cal G}_{\gamma, n}^{+}(x,y)=\frac{1}{\deg (\gamma _{n-1,0})}
\log ^{+}\| \gamma _{n-1,0}(x,y)\| 
\leq \frac{1}{2^{n_{0}+n_{1}}}\log ^{+}\sup _{z\in C^{A}, h\in \Gamma }\| h(z)\| <\ve.$$
Combining this with  
(\ref{eq:ssGge}), 
we see that 
$$|{\cal G}_{\gamma, n}^{+}(x,y)-{\cal G}_{\gamma }^{+}(x,y)|\leq \ve.$$
From these arguments, it follows that 
for each $\gamma \in \Gamma ^{\ZZ}$, for each 
$n\geq n_{0}+n_{1}$, for each $j\geq n_{0}$ and for each 
$(x,y)\in C_{j,\gamma }^{A}$, 
\begin{equation}
\label{eq:GgnpGgpe}
|{\cal G}_{\gamma, n}^{+}(x,y)-{\cal G}_{\gamma }^{+}(x,y)|\leq \ve.
\end{equation}
Combining Lemma~\ref{l:igkg}, (\ref{eq:Igammapn0}), 
(\ref{eq:1deggn0ve}), and 
(\ref{eq:GgnpGgpe}), we obtain that 
${\cal G}_{\gamma, n}^{+}(x,y)\rightarrow {\cal G}_{\gamma }^{+}(x,y)$ 
as $n\rightarrow \infty$ uniformly on $C^{A}\times \Gamma ^{\ZZ}.$ 
Thus, we have proved our lemma. 
\end{proof}
Similarly, we can prove the following. 
\begin{lem}
\label{l:Gganmuc}
Let $\Gamma $ be a nonempty compact subset of 
$X^{+}.$ Let $R>1, \rho >1$ be two numbers such that 
each $h\in \Gamma $ satisfies condition (A) for 
$(R, \rho ).$ 
Then,  
for each $A>R$, 
${\cal G}_{\gamma ,n }^{-}(x,y)\rightarrow {\cal G}_{\gamma }^{-}(x,y)
\mbox{ as } n\rightarrow \infty $ uniformly on 
$C^{A,-}\times \Gamma ^{\ZZ}$, where 
$C^{A,-}:=\{ (x,y)\in \Ct \mid |y|\leq A\} \cap \overline{D_{R}\cup V_{R}^{+}}.$ In particular, 
${\cal G}_{\gamma, n}^{-}(x,y)\rightarrow {\cal G}_{\gamma }^{-}(x,y) 
\mbox{ as }n\rightarrow \infty $ uniformly on any compact subset of 
$\Ct \times \Gamma ^{\ZZ}.$ 
\end{lem}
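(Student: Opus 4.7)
The proof will mirror that of Lemma \ref{l:Gganpuc} line for line, exploiting the symmetry built into condition (A) (Definition \ref{d:condA}) between a map and its inverse: the backward iterates $\gamma_j^{-1}$ play the role that the forward iterates $\gamma_j$ played in the previous lemma, with $V_R^+$ and $V_R^-$ interchanged and the $y$-coordinate replaced by the $x$-coordinate. The key point is that, under condition (A) for $(R,\rho)$, each $\gamma_j^{-1}$ maps $\overline{V_R^-}$ into $V_R^-$ (expansively in $|x|$), so all the dynamical properties used to run the previous proof carry over verbatim.

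Concretely, for each $A > R$, $\gamma = (\gamma_j)_{j \in \ZZ} \in \Gamma^{\ZZ}$, and $n \geq 2$, I would set
$$C^{A,-}_{n,\gamma} \; := \; C^{A,-} \cap \bigl(\gamma_{-1,-n}^{-1}(V_R^-) \setminus \gamma_{-1,-(n-1)}^{-1}(V_R^-)\bigr),$$
with $C^{A,-}_{1,\gamma} := C^{A,-} \cap \gamma_{-1}^{-1}(V_R^-)$, writing $\gamma_{-1,-n}^{-1} := \gamma_{-n}^{-1} \circ \cdots \circ \gamma_{-1}^{-1}$. These sets are nested analogues of the $C^{A}_{n,\gamma}$ from the previous proof, and Lemma \ref{l:igkg} applied to the backward sequence gives $I_\gamma^- = \bigcup_{n \geq 0} \gamma_{-1,-n}^{-1}(V_R^-)$. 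The central claim, proved by the same argument as Claim 1 in the previous lemma, is
$$\lim_{n \to \infty} \sup_{\gamma \in \Gamma^{\ZZ}} \sup_{(x,y) \in C^{A,-}_{n,\gamma}} \sup_{m \geq n} {\cal G}^-_{\gamma, m}(x,y) \; = \; 0.$$

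To establish this, I would show the analogue of (\ref{eq:gaj0zca}): if $(x,y) \in C^{A,-}_{n,\gamma}$, then $\gamma_{-1,-j}^{-1}(x,y) \in C^{A,-}$ for every $0 \leq j \leq n-1$. The case analysis is the same as before, ruling out escape into $V_R^+$ via the invariance of $\overline{V_R^+}$ under forward dynamics, and ruling out escape out of the slab $\{|x| \leq A\}$ via the invariance of $V_R^- \setminus C^{A,-}$ under backward dynamics, both supplied by condition (A). Consequently, $\gamma_{-1,-n}^{-1}(x,y)$ lies in a fixed compact subset of $\overline{V_R^-}$ (depending only on $\Gamma$ and $A$), and Lemma \ref{l:greenvr}(2) yields a uniform bound there; division by $\deg(\gamma_{-1,-n}^{-1}) \geq 2^n$ produces the stated decay. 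Combining this claim with Lemma \ref{l:greenvr}(2)(ii) and the trivial bound $\log^+ \sup_{z \in C^{A,-},\, h \in \Gamma} \|h^{-1}(z)\| < \infty$ (to treat the range $m < n$), exactly as in the final estimates of the proof of Lemma \ref{l:Gganpuc}, gives uniform convergence on $C^{A,-} \times \Gamma^{\ZZ}$. Uniform convergence on every compact subset of $\Ct \times \Gamma^{\ZZ}$ follows by choosing $A$ large. The only non-trivial step, parallel to the forward case, is the invariance check for intermediate backward iterates inside $C^{A,-}$; no fundamentally new difficulty arises because condition (A) was designed precisely to furnish this symmetric control.
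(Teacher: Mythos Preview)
Your proposal is correct and matches the paper's approach exactly: the paper's own proof of Lemma~\ref{l:Gganmuc} is the single line ``Similarly, we can prove the following,'' i.e.\ run the proof of Lemma~\ref{l:Gganpuc} with the forward/backward roles exchanged, precisely as you outline.

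One cosmetic point: in your description of the two cases in the invariance step, the $+/-$ labels (and correspondingly $x$ versus $y$, forward versus backward) are swapped. Since $C^{A,-}=\{|y|\leq A\}\cap\overline{D_R\cup V_R^+}$, escape from $C^{A,-}$ under a backward step $\gamma_{-1}^{-1}$ means landing either in $V_R^-$ (ruled out by the backward invariance of $\overline{V_R^-}$, condition~(A)(2)) or in $V_R^+\setminus C^{A,-}=V_R^+\cap\{|y|>A\}$ (ruled out by the forward invariance of $V_R^+\setminus C^{A,-}$, condition~(A)(1), applied to $\gamma_{-1}$). Your text has these two invariances interchanged and refers to the slab $\{|x|\leq A\}$ rather than $\{|y|\leq A\}$. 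The argument itself is unaffected.
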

We now prove the pluriharmonicity of ${\cal G}_{\gamma }^{\pm }$ 
in $\Ct$, that $I_{\gamma }^{+}$ (resp. $I_{\gamma }^{-}$) 
is equal to the set of 
points where ${\cal G}_{\gamma }^{+}$ (resp. ${\cal G}_{\gamma }^{-}$) is not pluriharmonic, 
that $J_{\gamma }^{\pm }$ is uncountable, and that 
the map $\gamma \in \Gamma ^{\ZZ}\mapsto 
J_{\gamma }^{\pm}$ is lower semicontinuous if $\Gamma $ is a 
compact subset of $X^{\pm }.$  
\begin{prop}
\label{p:Ggcontiph}
Let $\Gamma $ be a nonempty compact subset of 
$X^{+}.$ 
We endow $\Gamma ^{\ZZ}$ with the product topology. 
Then, we have the following. 
\begin{itemize}
\item[{\em (i)}] 
$(x,y,\gamma )\in \Ct \times \Gamma ^{\ZZ}\rightarrow 
{\cal G}_{\gamma }^{\pm }(x,y)$ is continuous on 
$\Ct\times \Gamma ^{\ZZ}.$ 
\item[{\em (ii)}] 
If $\rho \in \Gamma ^{\ZZ}$ tends to $\gamma \in \Gamma ^{\ZZ}$ 
in $\Gamma ^{\ZZ}$, then 
${\cal G}_{\rho }^{\pm }(x,y)\rightarrow {\cal G}_{\gamma }^{\pm}(x,y)$ 
locally uniformly on $\Ct.$ 
\item[{\em (iii)}] 
For each $\gamma \in \Gamma ^{\ZZ}$, 
${\cal G}_{\gamma }^{\pm }(x,y)$ is plurisubharmonic on $\Ct.$ 
\item[{\em (iv)}] 
For each $\gamma \in \Gamma ^{\ZZ}$, 
${\cal G}_{\gamma }^{\pm }(x,y)$ is positive on $I^{\pm}_{\gamma }$ 
and is equal to zero on $K_{\gamma }^{\pm}.$ 
\item[{\em (v)}] 
Let $\gamma \in \Gamma ^{\ZZ}$. Then, ${\cal G}_{\gamma }^{\pm}$ 
is pluriharmonic on $I_{\gamma }^{\pm }$ and 
is not pluriharmonic on any neighborhood of any point 
of $J_{\gamma }^{\pm }.$ 
A point $z_{0}\in \Ct$ belongs to $J_{\gamma }^{\pm }$ if and only 
if ${\cal G}_{\gamma }^{\pm }$ is not pluriharmonic in any neighborhood of $z_{0}.$ 
\item[{\em (vi)}]
There exists a number $R_{0}>0$ such that 
for each $\gamma \in \Gamma ^{\ZZ}$, 
$K_{\gamma }:=K_{\gamma }^{+}\cap K_{\gamma }^{-}$ is a 
nonempty compact subset of $\overline{D_{R_{0}}}.$ 
\item[{\em (vii)}] 
For each $\gamma \in \Gamma ^{\ZZ}$, 
$J_{\gamma }^{\pm }$ has no isolated point.  
In particular, $J_{\gamma }^{\pm }$ is uncountable. 
\item[{\em (viii)}] 
 The map 
$\gamma \in \Gamma ^{\ZZ}\mapsto 
J_{\gamma }^{\pm }$ is lower semicontinuous, i.e., 
for any $\gamma \in \Gamma ^{\ZZ}$, 
for any $z_{0}\in J_{\gamma }^{\pm }$, and 
for any sequence $(\zeta ^{n})_{n=1}^{\infty }$ in 
$\Gamma ^{\ZZ}$ with $\zeta ^{n}\rightarrow \gamma $ as 
$n\rightarrow \infty $ in $\Gamma ^{\ZZ}$, there exists a sequence 
$(z_{n})_{n=1}^{\infty }$ in $\Ct $ with 
$z_{n}\in J_{\gamma }^{\pm }$ for each $n\in \NN$ such that 
$z_{n}\rightarrow z_{0} $ as $n\rightarrow \infty .$   
\end{itemize}
\end{prop}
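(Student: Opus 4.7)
The plan is to derive all eight assertions from the uniform convergence results already established (Lemmas \ref{l:greenvr}, \ref{l:Gganpuc}, \ref{l:Gganmuc}) combined with standard facts from pluripotential theory; throughout I treat the ``$+$'' case, the ``$-$'' case being symmetric. Parts (i) and (ii) follow immediately because each $\mathcal{G}_{\gamma,n}^{+}$ is jointly continuous in $((x,y),\gamma)$ and the convergence to $\mathcal{G}_\gamma^{+}$ is uniform on compact subsets of $\Ct\times\Gamma^{\ZZ}$ by Lemma \ref{l:Gganpuc}. For (iii), each $\mathcal{G}_{\gamma,n}^{+}=\deg(\gamma_{n-1,0})^{-1}\log^{+}\|\gamma_{n-1,0}\|$ is PSH in $(x,y)$, and locally uniform limits of PSH functions are PSH. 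Part (iv) uses the boundary behavior from Lemma \ref{l:greenvr}(1)(iv): in $V_R^{+}$, $\mathcal{G}_\gamma^{+}=\log|y|+r_\gamma^{+}+u_\gamma^{+}$ with $u_\gamma^{+}$ bounded, so $\mathcal{G}_\gamma^{+}>0$ on a deep enough part of $V_R^{+}$; any $z\in I_\gamma^{+}$ has some forward iterate there, and the functional equation propagates positivity back to $z$, while on $K_\gamma^{+}$ boundedness of orbits gives $\mathcal{G}_{\gamma,n}^{+}\to 0$.

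For (v), pluriharmonicity on $I_\gamma^{+}$ follows from iterating the functional equation: Lemma \ref{l:greenvr}(1) gives pluriharmonicity on $V_R^{+}$, and $I_\gamma^{+}=\bigcup_n\gamma_{n-1,0}^{-1}(V_R^{+})$, so pullback by the biholomorphisms $\gamma_{n-1,0}$ transfers it to all of $I_\gamma^{+}$. If instead $\mathcal{G}_\gamma^{+}$ were pluriharmonic in a neighborhood $U$ of some $z_0\in J_\gamma^{+}$, then $\mathcal{G}_\gamma^{+}(z_0)=0$ together with $\mathcal{G}_\gamma^{+}\ge 0$ and the strong minimum principle for pluriharmonic functions would force $\mathcal{G}_\gamma^{+}\equiv 0$ on the component of $U$ through $z_0$; that component would then lie in $K_\gamma^{+}$, contradicting $z_0\in\partial K_\gamma^{+}$. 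The ``iff'' characterization follows at once.

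For (vi), the inclusion $K_\gamma\subset\overline{D_R}$ comes from Lemma \ref{l:wrconv} because $V_R^{\pm}\subset I_\gamma^{\pm}$. Non-emptiness I would prove using fixed points of the two-sided compositions $g_n:=\gamma_{n-1,-n}$: each $g_n$ is a polynomial automorphism conjugate to a composition of generalized H\'enon maps and hence has fixed points $p_n\in\Ct$ by a Bezout-type count. Condition (A) combined with $g_n(p_n)=p_n$ forces the entire cyclic orbit $\{\gamma_{j,-n}(p_n)\}_{j=-n}^{n-1}$ to lie in $\overline{D_R}$: a single entry into $V_R^{+}$ (respectively $V_R^{-}$) would produce $\rho^{2n}$-amplification of $|\pi_y|$ (respectively $|\pi_x|$) along the loop, contradicting closure. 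Setting $p_n':=\gamma_{-1,-n}(p_n)\in\overline{D_R}$ and extracting a convergent subsequence $p_{n_k}'\to p^{*}$, continuity of each finite composition shows both the forward and backward $\gamma$-orbits of $p^{*}$ stay in $\overline{D_R}$, so $p^{*}\in K_\gamma$. For (vii), if $z_0\in J_\gamma^{+}$ were isolated, the connected punctured ball $B^{*}(z_0,r)$ would lie entirely in $\operatorname{int}(K_\gamma^{+})$ or entirely in $I_\gamma^{+}$: the first case makes $z_0$ an interior point of $K_\gamma^{+}$, a contradiction, while in the second, $\mathcal{G}_\gamma^{+}$ is pluriharmonic on $B^{*}(z_0,r)$, extends pluriharmonically across the isolated puncture by the continuity of (i), and the minimum principle again yields $\mathcal{G}_\gamma^{+}\equiv 0$, contradicting positivity on the punctured ball. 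Since $K_\gamma^{+}$ is a proper non-empty closed subset of $\Ct$, $J_\gamma^{+}=\partial K_\gamma^{+}$ is non-empty, and perfectness plus closedness then gives uncountability.

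Finally for (viii), the key is the identification $J_\gamma^{+}=\operatorname{supp}(dd^c\mathcal{G}_\gamma^{+})$, which is immediate from (v) since $\mathcal{G}_\gamma^{+}$ is pluriharmonic precisely on $\Ct\setminus J_\gamma^{+}$. The joint continuity in (i) gives $\mathcal{G}_{\zeta^n}^{+}\to\mathcal{G}_\gamma^{+}$ locally uniformly when $\zeta^n\to\gamma$, and hence $dd^c\mathcal{G}_{\zeta^n}^{+}\to dd^c\mathcal{G}_\gamma^{+}$ weakly as positive $(1,1)$-currents; the support of a positive current is lower semicontinuous under such convergence, so for any neighborhood $U$ of $z_0\in J_\gamma^{+}$ one finds $J_{\zeta^n}^{+}\cap U\neq\emptyset$ for $n$ large, and shrinking $U$ produces $z_n\in J_{\zeta^n}^{+}$ with $z_n\to z_0$. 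The main technical obstacle is the non-emptiness in (vi): because $\gamma$ is bi-infinite one cannot just take fixed points of a single map, and one-sided compositions only give bounded forward orbits; the symmetric device of $g_n=\gamma_{n-1,-n}$ and the re-indexing $p_n':=\gamma_{-1,-n}(p_n)$ is what pins down the limit point in both $K_\gamma^{+}$ and $K_\gamma^{-}$ simultaneously.
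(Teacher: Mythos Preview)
Your proof is correct. The arguments for (i)--(v) and (vii) match the paper's closely (the paper is terser but relies on the same ingredients: Lemmas~\ref{l:wrconv}, \ref{l:greenvr}, \ref{l:Gganpuc}, \ref{l:Gganmuc} plus the minimum principle), and your treatment of (viii) via $J_\gamma^{+}=\operatorname{supp}(dd^{c}\mathcal{G}_\gamma^{+})$ and weak convergence of currents is just a repackaging of the paper's argument, which instead phrases it as ``locally uniform limits of pluriharmonic functions are pluriharmonic'' applied to (ii) and (v).

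The one genuine divergence is in (vi). The paper's proof of non-emptiness of $K_\gamma$ is indirect: it observes that for \emph{periodic} sequences $\zeta\in\Gamma^{\ZZ}$ the set $K_\zeta$ is nonempty and contained in $\overline{D_{R_0}}$ by the classical single-map theory, then uses density of periodic sequences in $\Gamma^{\ZZ}$ together with the joint continuity of $\mathcal{G}^{\pm}$ from (i) and the identification $K_\gamma=\{\mathcal{G}_\gamma^{+}=0\}\cap\{\mathcal{G}_\gamma^{-}=0\}$ from (iv) to pass to the limit. Your route via fixed points of the symmetric compositions $g_n=\gamma_{n-1,-n}$, trapping the whole cycle in $\overline{D_R}$ by condition~(A), and extracting a limit of the re-centered points $p_n'=\gamma_{-1,-n}(p_n)$ is more direct and does not invoke (i) or the autonomous H\'enon theory (beyond the Bezout-type fixed-point count). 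The paper's argument is shorter once one is willing to quote the classical case; yours is more self-contained and makes transparent why both forward and backward boundedness hold for the limit point.
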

\begin{proof}
Statements (i)(ii)(iii) follow from Lemmas~\ref{l:wrconv}, \ref{l:Gganpuc}, and \ref{l:Gganmuc}. 
Statement (iv) follows from Lemmas~\ref{l:wrconv}, \ref{l:greenvr}(1)(iv) (2)(iv). 

We now prove statement (v). Let $\gamma \in \Gamma ^{\ZZ}.$ 
By Lemmas~\ref{l:wrconv} and \ref{l:greenvr}(1)(iii)(2)(iii), we have that 
${\cal G}_{\gamma }^{\pm}$ is pluriharmonic on 
$I_{\gamma }^{\pm}.$  
Let $z_{0}\in J_{\gamma }^{\pm }.$ 
Then, statement (iv) implies that ${\cal G}_{\gamma }^{\pm }$ 
takes its minimal value at $z_{0}$.  
Hence, if there exists a neighborhood 
of $U$ of $z_{0}$ such that ${\cal G}_{\gamma }^{\pm }$ is 
pluriharmonic, then it contradicts the minimum principle of 
pluriharmonic functions. Thus, for each neighborhood $U$ 
of $z_{0}$, ${\cal G}_{\gamma }^{\pm }$ cannot be 
pluriharmonic on $U.$ Moreover, statement (iv) implies that  
${\cal G}_{\gamma }^{\pm }$ is pluriharmonic 
on $\mbox{int}(K_{\gamma }^{\pm})$. 
It follows that for any $z_{0}\in \Ct$, 
$z_{0}\in J_{\gamma }^{\pm }$ if and only if 
${\cal G}_{\gamma }^{\pm }$ is not pluriharmonic on 
any neighborhood of $z_{0}.$   

We now prove statement (vi). 
By Lemma~\ref{l:wrconv}, there exist a number $R_{0}>0$ and 
a number $\rho >1$ such that each 
$h\in \Gamma $ satisfies condition (A) for $(R_{0}, \rho ).$ 
Then, for each cyclic $\zeta \in \Gamma ^{\ZZ}$, 
$K_{\zeta }$ is a nonempty compact subset of 
$\overline{D_{R_{0}}}.$ Since for each $\gamma \in \Gamma ^{\ZZ}$, 
there exists a sequence of cyclic elements of $\Gamma ^{\ZZ}$ 
that converges to $\gamma $, statement (i) implies 
$K_{\gamma }\cap \overline{D_{R_{0}}}\neq \emptyset .$ 
Moreover, since each $h\in \Gamma $ satisfies condition 
(A) for $(R_{0}, \rho )$, we have that 
$K_{\gamma }\subset \overline{D_{R_{0}}}.$ Hence, statement (vi) holds. 

Statement (vii) follows from statements (v)(vi). 
Statement (viii) follows from statements (ii)(v). 

Thus, we have proved our proposition. 
\end{proof}
\section{Further results and the proofs of the main results}
\label{s:fratp}
In this section, we give further detailed results 
and we prove them and the main results in this paper.
We first give some definitions. 
\begin{df}
\begin{itemize}
\item[(1)] 
Let $(\Omega, d_{\Omega })$ be a metric space. 
Let $G$ be a semigroup  of continuous maps on $\Omega$, where 
the semigroup operation is the composition of maps. 
We denote by $F(G)$ the set of points $z\in \Omega $ 
for which there exists an open neighborhood $U$ of $z$ in $\Omega $ 
such that $G$ is equicontinuous on $U$ as a family of maps 
from $U$ to $\Omega$ with respect to the distance $d_{\Omega }.$ 
The set $F(G)$ is called the Fatou set of $G.$ 
We set $J(G)=\Omega \setminus  F(G).$ This is called the Julia set of $G.$ 
Also, we set $J_{\ker }(G)=\cap _{g\in G}g^{-1}(J(G)).$ 
This is called the kernel Julia set of $G$. 
Moreover, for each subset $A$ of $\Omega, $ we set 
$G(A)=\cup _{g\in G}g(A)$ and for each $z\in \Omega, $ 
we set $G(z)=\cup _{g\in G}\{ g(z)\}.$  
\item[(2)] 
Let $G$ be a semigroup generated by a subset of $X^{+}$ 
(resp. $X^{-}$)  
where the semigroup operation is the composition of maps on 
$Y:=\Pt \setminus \{ [1:0:0]\}$ (resp. 
$Y^{-}=\Pt \setminus \{ [0:1:0]\}$).   
We regard $G$ as a semigroup of self-maps on the metric space $(Y, d)$ (resp. $(Y^{-}, d)$),  
where $d$ denotes the distance induced by the Fubini--Study metric on $\Pt$, and 
$F(G), J(G), J_{\ker }(G)$ are considered for $G$ as in (1). 

\end{itemize}
\end{df}

\begin{rem}
\label{r:ifgsgx}
If $G$ is a semigroup generated by a subset of $X^{+}$
 (resp. $X^{-}$),  
then it is easy to see that $G(F(G))\subset F(G)$ 
since each $g\in X^{+}$ (resp. $X^{-}$) is an open map on 
$\Pt \setminus \{ [1:0:0]\}$ (resp. $\Pt \setminus \{ [0:1:0]\}). $ 
\end{rem}
\begin{df}
For any topological space $Y$, 
we denote by $\mbox{CM}(Y)$ the space of all 
continuous maps on $Y$ endowed with 
the compact-open topology. 
Note that $\mbox{CM}(Y)$ is a semigroup 
where the semigroup 
operation is the composition of maps.  
\end{df}
\begin{df}
For a topological space $Y$, we denote by Cpt$(Y)$ the space of all nonempty compact subsets of $Y$. 
If $Y$ is a metric space, we endow Cpt$(Y)$  
with the Hausdorff metric.
\end{df}
\begin{df}
\label{d:minimal}
Let $Y$ be a metric space and let $G$ be a subsemigroup of $\CMX .$ Let $K\in \Cpt(Y).$ 
We say that $K$ is a minimal set for $(G,Y)$ if 
$K$ is minimal among the space 
$\{ L\in \Cpt(Y)\mid G(L)\subset L\} $ with respect to inclusion. 
Moreover, we set $\Min(G,Y):= \{ K\in \Cpt(Y)\mid K \mbox{ is a minimal set for } (G,Y)\} .$  
\end{df}
\begin{rem}
\label{r:minimal}
Let $Y$ be a metric space and let $G$ be a subsemigroup of $\CMX .$
By Zorn's lemma, it is easy to see that 
if $K_{1}\in \Cpt(Y)$ and $G(K_{1})\subset K_{1}$, then 
there exists a $K\in \Min(G,Y)$ with $K\subset K_{1}.$  
Moreover, it is easy to see that 
for each $K\in \Cpt(Y)$, 
$K\in \Min(G,Y)$ if and only if 
$\overline{G(z)}=K$ for each $z\in K.$ 
In particular, if $K_{1},K_{2}\in \Min(G,Y)$ with $K_{1}\neq K_{2}$, then 
$K_{1}\cap K_{2}=\emptyset .$ 
Furthermore, it is easy to see that 
if $\Gamma \in \Cpt(\CMX), 
G=\{ h_{1}\circ \cdots \circ h_{n}\mid n\in \NN, 
h_{j}\in \Gamma (\forall j)\}$ and 
$K\in \Min(G,Y)$, then $K=\bigcup _{h\in \Gamma }h(K).$   
\end{rem}

\begin{df}
Let $\tau \in {\frak M}_{1}(X^{\pm}).$ We set 
$\tau ^{\NN }:= \otimes _{j=1}^{\infty }\tau \in {\frak M}_{1}((X^{\pm })^{\NN})$ and 
$\tau ^{\ZZ}:=\otimes _{j=-\infty }^{\infty }\tau \in {\frak M}_{1}((X^{\pm })^{\ZZ}).$ 

\end{df}
We provide a sufficient condition for $\tau \in {\frak M}_{1,c}(X^{+})$ 
to be mean stable on $\Pt \setminus \{ [1:0:0]\}.$ 
\begin{prop}
\label{p:intsupdlarge}
Let $\tau \in {\frak M}_{1,c}(X^{+}).$ Suppose that the following conditions {\em (i)} and {\em (ii)} are satisfied.
\begin{itemize}
\item[{\em (i)}]
For each $z\in \Ct$, {\em Leb}$_{4}(\{ h(z)\mid h\in \supp\,\tau \})>0$.
\item[{\em (ii)}] There exists an element $g\in \supp\,\tau $ 
such that $\delta (g)>1.$ 
\end{itemize}
Then, we have the following. 
\begin{itemize}
\item[{\em (a)}]For each $z\in \Ct$, there exists a Borel subset 
$A_{z}$ of $(\supp\,\tau )^{\NN}$ with $
\tau ^{\NN}(A_{z})=1$ such that 
for each $\gamma =(\gamma _{j})_{j=1}^{\infty }\in A_{z}$, 
$\gamma _{n,1}(z)\rightarrow [0:1:0] \mbox{ as }n\rightarrow \infty .$

\item[{\em (b)}]
$\tau $ is mean stable on 
$\Pt \setminus \{ [1:0:0]\}.$ 

\end{itemize} 

\end{prop}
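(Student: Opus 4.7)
The starting point is the key observation that conditions (i) and (ii) together forbid any nonempty compact set $K \subset \Ct$ satisfying $h(K) \subset K$ for every $h \in \supp \tau$. For any $z \in K$, condition (i) gives $\mbox{Leb}_{4}(K) \geq \mbox{Leb}_{4}(\{h(z) \mid h \in \supp \tau\}) > 0$, while condition (ii) supplies $g \in \supp \tau$ with $|\delta(g)| > 1$ and $g(K) \subset K$, forcing $\mbox{Leb}_{4}(K) \geq \mbox{Leb}_{4}(g(K)) = |\delta(g)|^{2}\,\mbox{Leb}_{4}(K) > \mbox{Leb}_{4}(K)$, a contradiction. This single fact drives both parts.

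For part (a), fix $z \in \Ct$ and argue by contradiction, assuming $p := \tau^{\NN}(\{\gamma \mid z \in K^{+}_{\gamma}\}) > 0$. For $M$ large enough the event $E_{M} := \{\gamma \mid \gamma_{n,1}(z) \in D_{M} \mbox{ for all } n \in \NN\}$ has $\tau^{\NN}$-measure at least $p/2$, so the Ces\`{a}ro averages $\nu_{N} := \frac{1}{N}\sum_{n=0}^{N-1}(M_{\tau}^{\ast})^{n}\delta_{z}$ satisfy $\nu_{N}(D_{M}) \geq p/2$. Extracting a weak-$\ast$ subsequential limit in ${\frak M}_{1}(\Pt \setminus \{[1:0:0]\})$ (with Lemma~\ref{l:wrconv}(2) ruling out any mass escaping toward $[1:0:0]$) yields a stationary probability measure $\mu$ with $\mu(D_{M}) \geq p/2$, and an ergodic decomposition supplies an ergodic component $\mu_{1}$ with $\mu_{1}(D_{M}) > 0$. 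The support $\supp \mu_{1}$ cannot meet $V_{R}^{+}$ (an orbit entering $V_{R}^{+}$ is trapped there and converges to $[0:1:0]$, contradicting ergodicity of $\mu_{1}$ together with $\mu_{1}(D_{M}) > 0$), and by Lemma~\ref{l:wrconv}(2) cannot persistently sit in $V_{R}^{-}$ either, so $\supp \mu_{1} \subset \overline{D_{R}}$, compact in $\Ct$. A ``$\tau$-a.e.\ invariance plus closure'' argument then upgrades the stationarity of $\mu_{1}$ to the relation $h(\supp \mu_{1}) \subset \supp \mu_{1}$ for every $h \in \supp \tau$, contradicting the first paragraph. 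Hence $z \in I^{+}_{\gamma}$ for $\tau^{\NN}$-a.e.\ $\gamma$, and Lemma~\ref{l:wrconv}(2)(3) automatically upgrades this to $\gamma_{n,1}(z) \to [0:1:0]$: once $\|\gamma_{n,1}(z)\| > R$ the orbit lies in $W_{R}^{+} \subset D_{R} \cup V_{R}^{+}$, and being unbounded it must enter $V_{R}^{+}$, stay there by condition (A), and converge by Lemma~\ref{l:wrconv}(3).

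For part (b), the data required by the definition of mean stability is built from the super-attracting structure at $[0:1:0]$. A coordinate computation as in the proof of Lemma~\ref{l:ghsm} shows $D\hat{h}([0:1:0]) = 0$ for every $h \in X^{+}$, and since $\supp \tau$ is compact in $X^{+}$ the operator norm of $D\hat{h}$ can be made uniformly small on a small enough open neighborhood $U$ of $[0:1:0]$ in $\Pt \setminus \{[1:0:0]\}$. Choosing $U$ so that $\hat{h}(\overline{U}) \subset U$ strictly for every $h \in \supp \tau$, and then setting $m = n = 1$, $U_{1} = U$, $Q = \overline{\bigcup_{h \in \supp \tau} \hat{h}(\overline{U})}$, and an appropriate $c \in (0,1)$, verifies condition (a) of mean stability. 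Condition (b) is then immediate: on $\Poi \setminus \{[1:0:0]\}$ a single $\hat{h}$ sends any such point to $[0:1:0] \in U$ by Lemma~\ref{l:ghsm}; on $\Ct$, part (a) provides a full-measure set of $\gamma$'s with $\gamma_{n,1}(z) \in U$ for some $n$, and any single such $\gamma$ supplies a concrete $f_{z} = \gamma_{n} \circ \cdots \circ \gamma_{1} \in G_{\tau}$ with $f_{z}(z) \in U$.

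The main obstacle is the rigorous extraction of the ergodic component $\mu_{1}$ with $\supp \mu_{1} \subset \overline{D_{R}}$ and then the upgrade from ``$\tau$-a.e.\ invariance of $\supp \mu_{1}$'' to ``$h(\supp \mu_{1}) \subset \supp \mu_{1}$ for every $h \in \supp \tau$''. The upgrade hinges on the fact that every nonempty open subset of $\supp \tau$ has positive $\tau$-measure (so the $\tau$-a.e.\ set is dense in $\supp \tau$), combined with closedness of $\supp \mu_{1}$ and continuity of $h \mapsto h(z)$ on the compact set $\supp \tau$; this is where the specific structure of the wH-topology and the compactness of $\supp \tau$ really come into play.
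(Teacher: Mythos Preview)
Your first paragraph (the ``key observation'') is exactly the paper's Claim~1, and part (b) is handled essentially the same way in both. The real divergence is in part (a). The paper argues via \emph{minimal sets}: by the key observation and Lemma~\ref{l:ghsm}, the only minimal set of $\tau$ in $\Pt\setminus\{[1:0:0]\}$ is $\{[0:1:0]\}$; then for every $z$ in a forward-invariant compact $\Pt\setminus B$, Zorn's lemma forces $[0:1:0]\in\overline{G_\tau(z)}$, so some $h_z\in G_\tau$ sends $z$ into the attracting neighborhood $W$; finally \cite[Lemma~4.6]{Splms10} converts ``some $h_z\in G_\tau$ reaches $W$'' into ``$\tau^{\NN}$-a.e.\ $\gamma$ reaches $W$''. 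You instead build an ergodic stationary measure $\mu_1\neq\delta_{[0:1:0]}$ from Ces\`aro averages and ergodic decomposition, and apply the key observation to $\supp\,\mu_1$. Your route trades the external lemma for heavier ergodic-theoretic machinery (ergodic decomposition, constancy of bounded harmonic functions for an ergodic stationary measure); the paper's route is considerably shorter.

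There is one step to repair. Your deduction ``$\supp\,\mu_1\subset\overline{D_R}$'' does not follow: Lemma~\ref{l:wrconv}(2) only says orbits eventually leave $\overline{V_R^-}$, but a forward-invariant set can perfectly well meet $V_R^-$ while every one of its orbits eventually lands in $\overline{D_R}$. More importantly, you never rule out $\supp\,\mu_1$ meeting $\Poi\setminus\{[1:0:0]\}$, and without this $\supp\,\mu_1$ need not lie in $\Ct$ at all, so the key observation does not apply. The fix is the same ergodicity device you used for $V_R^+$: every point of $\Poi\setminus\{[1:0:0]\}$ has a neighborhood on which the probability of convergence to $[0:1:0]$ is identically $1$ (one application of any $h\in\supp\,\tau$ lands near $[0:1:0]$ by Lemma~\ref{l:ghsm}), which is incompatible with $\mu_1\neq\delta_{[0:1:0]}$. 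Once $\supp\,\mu_1\subset\Ct\cap(\Pt\setminus B)$, compactness in $\Ct$ is automatic, and your ``upgrade'' together with the key observation finish the job; the stronger claim $\supp\,\mu_1\subset\overline{D_R}$ is never needed.
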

\begin{proof}
We first prove the following claim. 

Claim 1. There exists no minimal set $L$ of $\tau 
$ with $L\neq \{ [0:1:0]\}.$ 

To prove this claim, 
suppose that there exists a minimal set $L$ of $\tau $ with 
$L\neq \{ [0:1:0]\} .$ 
Then, by Lemmas~\ref{l:ghsm} and \ref{l:wrconv}, we have $L\subset \Ct.$ 
Since $L$ is compact in $\Pt \setminus \{ [1:0:0]\}$, it follows that 
$L$ is a compact subset of $\Ct.$   
Combining this with  (i) in the assumptions of our proposition, 
we obtain $0<\mbox{Leb}_{4}(L)<\infty .$ 
Let $g\in \supp\,\tau $ be an element with 
$\delta (g)>1.$ 
Since $g(L)\subset L$, it follows that   
$\delta (g)^{2}\mbox{Leb}_{4}(L)=\mbox{Leb}_{4}(g(L))\leq \mbox{Leb}_{4}(L).$ However, this contradicts $0<\mbox{Leb}_{4}(L)<\infty $ since 
$\delta (g)>1.$ 
Therefore, we have proved Claim 1. 

By Lemma~\ref{l:ghsm}(1) and the assumption that 
$\supp\,\tau $ is compact, 
there exists an open neighborhood $W$ of $
[0:1:0]$ in $\Pt $ and a constant $c\in (0,1)$ 
such that for each $h\in \supp\,\tau$ and for each 
$x,y\in W$, we have  
$h(W)\subset W$ and $d(h(x), h(y))\leq cd(x,y)$ where 
$d$ denotes the distance induced from the Fubini--Study metric 
on $\Pt.$ In particular, 
\begin{equation}
\label{eq:dgn1z0}
d(\gamma _{n,1}(z), [0:1:0])\rightarrow 0 \mbox{ as } 
n\rightarrow \infty 
\end{equation}
uniformly on $W \times (\supp\,\tau)^{\NN}.$  

We now let $U$ be any neighborhood of $[1:0:0]$ in $\Pt$ with 
$[0:1:0]\in \Pt \setminus U.$ 
Then, by Lemma~\ref{l:ghsm}, there exists an open neighborhood $B$ of $[1:0:0]$ 
in $\Pt$ with $B\subset U$ such that 
$h(\Pt\setminus B)\subset \Pt \setminus \overline{B}$ for each 
$h\in \supp\,\tau$ and such that for each $z\in \Pt \setminus 
\{ [0:1:0]\} $ and for each $h\in \supp\,\tau $, there exists a number 
$n\in \NN $ with $h^{n}(z)\in \Pt \setminus B.$ 

By Claim 1, the set of minimal sets of $\tau $ in 
$\Pt \setminus B$ is equal to 
$\{ \{ [0:1:0]\} \}. $ 
Hence, for each $z\in \Pt \setminus B$, we have 
\begin{equation}
\label{eq:010ino}
[0:1:0]\in \overline{G_{\tau }(z)}.
\end{equation}  
For, otherwise, $\overline{G_{\tau }(z)}$ is a nonempty compact 
subset of $\Pt \setminus (B\cup \{ [0:1:0]\})$ which is forward invariant under any element 
of $G_{\tau }.$ Combining this with Zorn's lemma, we must have a 
minimal set $L_{0}$ of $\tau $ with $L_{0}\subset \overline{G_{\tau }(z)}
\subset \Pt \setminus (B\cup \{ [0:1:0]\})$, which contradicts Claim 1.
Thus, we have (\ref{eq:010ino}). 

 By (\ref{eq:010ino}), we have that 
 for each $z\in \Pt \setminus B$, there exists a map $h_{z}\in G_{\tau }$ 
 such that $h_{z}(z)\in W.$ 
 Combining this with that $G_{\tau }(W)\subset W$ and 
 \cite[Lemma 4.6]{Splms10}, 
we obtain that for each $z\in \Pt \setminus B$, 
 there exists a Borel subset 
$A_{z}$ of $(\supp\,\tau )^{\NN}$ with $
\tau ^{\NN}(A_{z})=1$ such that 
for each $\gamma =(\gamma _{j})_{j=1}^{\infty }\in A_{z}$, 
there exists a number $m=m(z, \gamma)\in \NN $ such that 
$\gamma _{m,1}(z)\in W.$ 
Combining this with  (\ref{eq:dgn1z0}), 
it follows that for each $\gamma \in {\cal A}_{z}$, 
$\gamma _{n,1}(z)\rightarrow [0:1:0] \mbox{ as }n\rightarrow \infty .$ 

Since $U$ can be an arbitrarily small neighborhood of $[1:0:0]$, 
the above argument 
 implies that $\tau $ is mean stable on $\Pt \setminus 
 \{ [0:1:0]\}.$
Thus, we have proved our proposition. 
\end{proof}
\begin{ex}
\label{ex:lebsupppos}
Let $f\in X^{+}$ with $|\delta (f)|>1$, where 
$\delta (f)$ denotes the Jacobian of $f.$ Let $B$ be a bounded 
Borel measurable subset of $\Ct $ with 
Leb$_{4}(B)>0.$ Let $\mu $ be the Borel probability measure 
on $\Ct $ defined by $\mu (C)=\frac{\mbox{Leb}_{4}(B\cap C)}
{\mbox{Leb}_{4}(B)}.$ Let $\Phi : \Ct \rightarrow X^{+}$ 
be the continuous map defined by $\Phi (a,b)=f+(a,b).$ 
Let $\tau =\Phi _{\ast }(\mu )\in {\frak M}_{1,c}(X^{+}).$ 
Then, $\tau $ satisfies the assumptions of 
Proposition~\ref{p:intsupdlarge}.  
Hence by Proposition~\ref{p:intsupdlarge}, we have 
that for each $z\in \Ct$, there exists a Borel subset 
$A_{z}$ of $(\supp\,\tau )^{\NN}$ with $
\tau ^{\NN}(A_{z})=1$ such that 
for each $\gamma =(\gamma _{j})_{j=1}^{\infty }\in A_{z}$, 
$\gamma _{n,1}(z)\rightarrow [0:1:0] \mbox{ as }n\rightarrow \infty $,  
and $\tau $ is mean stable on 
$\Pt \setminus \{ [1:0:0]\}.$ 
\end{ex}

We now introduce the definition of nice subsets of $X^{\pm}.$ 
\begin{df}
\label{d; nicesetI}
Let ${\cal Y}$ be a nonempty intersection of 
an open subset of $X^{+}$ (resp. $X^{-}$) and a closed subset of 
$X^{+}$ (resp. $X^{-}$).  We endow ${\cal Y}$ with the relative topology from 
$X^{\pm }.$ 
We say that $ {\cal Y}$ satisfies nice condition (I) if 
for each $z_{0}\in \Ct$, for each $f_{0}\in {\cal Y}$, 
for each neighborhood $U_{0}$ of $f_{0}$ in ${\cal Y}$, 
there exist a neighborhood $V_{0}$ of $z_{0} $ and a number $\delta >0$ 
such that for each $z\in V_{0}$, 
$\{ f(z)\mid f\in U_{0}\} \supset \{ w\in \Ct \mid \| w-f_{0}(z)\| <\delta \}.$  

\end{df}
\begin{ex}
$X^{+}$ and $X^{-}$  satisfy nice condition (I). 
Let $X_{1}^{-}:=\{ f^{-1}\mid f\in X_{1}^{+}\}.$ Then,  
$X_{1}^{+}$ and $X_{1}^{-}$ satisfy nice condition (I).  
Let $f\in X^{+}$ and let $D$ be a nonempty open subset 
of $\Ct .$ Let  
${\cal Y}:= \{ f+(a,b) \in X^{+}\mid (a,b)\in D\} .$ Then, ${\cal Y}$ satisfies 
nice condition (I).   
\end{ex}
We now provide a result on a sufficient condition 
for generic Julia sets to be of measure zero. 
\begin{prop}
\label{p:intl4jg0}
Let ${\cal Y}$ be a nonempty intersection of 
an open subset of $X^{+}$ and a closed subset of 
$X^{+}$. 
We endow ${\cal Y}$ with the relative topology from 
$X^{+}.$ 
Suppose that 
${\cal Y}$ satisfies nice condition (I). 
Let $\tau \in {\frak M}_{1,c}({\cal Y})$ 
with int$(\supp\,\tau)\neq \emptyset$, 
where int denotes the set of interior points  
with respect to the topological space ${\cal Y}.$   
Then, for $\tau ^{\ZZ}$-a.e. $\gamma =(\gamma _{j})_{j\in \ZZ}
\in (X^{+})^{\ZZ}$, 
we have Leb$_{4}(J_{\gamma }^{+})=0$. 

\end{prop}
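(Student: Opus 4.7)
The plan is to split the analysis into two regimes according to the sign of $\int\log|\delta(f)|\,d\tau(f)$, handle the positive-sign regime directly from Lemma~\ref{l:gjcaek}, and treat the non-positive regime by a Fubini-plus-shift argument that exploits nice condition~(I).

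When $\int\log|\delta(f)|\,d\tau(f)>0$, the strong law of large numbers applied to the i.i.d.\ sequence $\{\log|\delta(\gamma_j)|\}_j$ gives $\frac{1}{n}\sum_{j=1}^n\log|\delta(\gamma_j)|\to\int\log|\delta(f)|\,d\tau(f)>0$ for $\tau^{\ZZ}$-a.e.\ $\gamma$, so in particular the $\limsup$ in the hypothesis of Lemma~\ref{l:gjcaek}(3) is positive a.s. That lemma then gives $\text{Leb}_4(K_\gamma^+)=0$ a.s., and hence $\text{Leb}_4(J_\gamma^+)=0$ a.s.\ since $J_\gamma^+\subset K_\gamma^+$.

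When $\int\log|\delta(f)|\,d\tau(f)\leq 0$ (in particular, the conservative case $\mathcal{Y}=X_1^+$, where $|\delta|\equiv 1$), Lemma~\ref{l:gjcaek}(3) is useless and one must work harder. Use Fubini on $\text{Leb}_4\otimes\tau^{\ZZ}$: the joint Borel measurability of $(z,\gamma)\mapsto\mathbf{1}_{\{z\in J_\gamma^+\}}$ follows from the lower semicontinuity of $\gamma\mapsto J_\gamma^+$ (Proposition~\ref{p:Ggcontiph}(viii)), and gives for each $R>0$
$$\int\text{Leb}_4(J_\gamma^+\cap\overline{D_R})\,d\tau^{\ZZ}(\gamma)=\int_{\overline{D_R}}\tau^{\ZZ}\{\gamma:z\in J_\gamma^+\}\,dz,$$
so it suffices to show $\tau^{\ZZ}\{\gamma:z_0\in J_\gamma^+\}=0$ for every fixed $z_0\in\Ct$. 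Since $\gamma_0$ is a biholomorphism with $\gamma_0(K_\gamma^+)=K_{\sigma\gamma}^+$, one has $J_\gamma^+=\gamma_0^{-1}(J_{\sigma\gamma}^+)$, so the event $\{z_0\in J_\gamma^+\}$ equals $\{\gamma_0(z_0)\in J_{\sigma\gamma}^+\}$; by independence of $\gamma_0$ and $\sigma\gamma$, it is enough to prove $\tau\{f:f(z_0)\in J_{\gamma'}^+\}=0$ for $\tau^{\ZZ}$-a.e.\ $\gamma'$. Nice condition~(I) enters by providing $f_0\in\text{int}(\supp\,\tau)$, a neighborhood $U_0$ of $f_0$ in $\mathcal{Y}$ with $\tau(U_0)>0$, and a $\delta_0>0$ such that $\{f(z_0):f\in U_0\}\supset B(f_0(z_0),\delta_0)$.

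The main obstacle is then to conclude in this second regime: nice condition~(I) supplies the openness of $\Phi_{z_0}(f):=f(z_0)$, but not absolute continuity of $(\Phi_{z_0})_{\ast}\tau$ with respect to $\text{Leb}_4$, so one cannot directly invoke ``$\text{Leb}_4(J_{\gamma'}^+)=0$''. The technical core is therefore a compositional/convolution argument: by iterating nice condition~(I) through $\gamma_n\circ\cdots\circ\gamma_0(z_0)$ one shows that after sufficiently many random steps, the distribution of the orbit acquires a genuine absolutely continuous component, which is incompatible with staying in the thin sets $J_{\sigma^n\gamma}^+$ (characterized via Proposition~\ref{p:Ggcontiph}(v) as the locus where $\mathcal{G}_{\sigma^n\gamma}^+$ fails to be pluriharmonic) for all $n\geq 0$. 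Assembling this convolution argument, using the Green-function uniformities from Lemmas~\ref{l:greenvr}, \ref{l:Gganpuc}, \ref{l:Gganmuc} and Proposition~\ref{p:Ggcontiph}, is where the bulk of the work lies.
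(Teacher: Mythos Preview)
Your Case~1 argument via Lemma~\ref{l:gjcaek}(3) and the law of large numbers is correct. Your Case~2, however, has a genuine gap that you flag but do not close. After reducing via Fubini and the shift to $\tau\{f: f(z_0)\in J_{\gamma'}^+\}=0$, you propose to manufacture an absolutely continuous component in the law of $\gamma_{n,0}(z_0)$ by iterating nice condition~(I). This fails on two counts. First, nice condition~(I) asserts only that $f\mapsto f(z_0)$ is open near interior points of $\supp\,\tau$; it says nothing about the pushforward $(\Phi_{z_0})_*\tau$. Since $\tau$ may be purely atomic (take $\tau=\sum_n 2^{-n}\delta_{f_n}$ with $\{f_n\}$ dense in an open subset of $\mathcal{Y}$, so that $\mbox{int}(\supp\,\tau)\neq\emptyset$), the law of $\gamma_{n,0}(z_0)$ can remain purely atomic for every $n$ and never acquire an absolutely continuous part. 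Second, even granting such a part, using it to kill the hit probability requires $\mbox{Leb}_4(J_{\sigma^{n+1}\gamma}^+)=0$ for the future sequence---precisely the statement being proved, so the argument is circular. The characterization in Proposition~\ref{p:Ggcontiph}(v) does not by itself force Lebesgue-nullity of the non-pluriharmonic locus.

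The paper's proof is entirely different and needs no case split on the sign of $\int\log|\delta|\,d\tau$. It works at the semigroup level: nice condition~(I) together with $\mbox{int}(\supp\,\tau)\neq\emptyset$ forces every minimal set $L\subset\Ct$ of $G_\tau$ to have nonempty interior, hence to lie in $F(G_\tau)$. Combined with the attracting point $[0:1:0]$ (Lemma~\ref{l:ghsm}) and a Zorn's-lemma argument, this shows that every $G_\tau$-orbit in a suitable forward-invariant compactum $K\subset\Pt\setminus\{[1:0:0]\}$ meets $F(G_\tau)$, i.e., $J_{\ker}(G_\tau|_K)=\emptyset$. The conclusion $\mbox{Leb}_4(J_\gamma^+\cap K)=0$ for $\tau^{\ZZ}$-a.e.\ $\gamma$ then follows from the general result \cite[Proposition~4.8]{Splms10}, and letting $K$ exhaust $\Pt\setminus\{[1:0:0]\}$ finishes. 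Thus nice condition~(I) is used only to guarantee that minimal sets have interior, not to smooth the orbit distribution.
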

\begin{proof}
Let $U$ be any neighborhood of $[1:0:0]$ in $\Pt.$ 
Then, by Lemma~\ref{l:ghsm} (2) and Lemma~\ref{l:wrconv}(2), there exists an open neighborhood 
$B$ of $[1:0:0]$ with $B\subset U$ such that 
for each $g\in \supp\,\tau$, 
$g(\Pt \setminus B)\subset \Pt \setminus B$,  
and such that for each $z\in \Pt \setminus \{ [1:0:0]\}$ 
and for each $g\in \supp\,\tau $, there exists a number 
$n_{0}\in \NN $ with $g^{n_{0}}(z)\in \Pt \setminus B.$ 

From the above and Lemma~\ref{l:ghsm} (1)(2), it follows that there exists a number 
$R>0$ such that any minimal set $L$ of $\tau $ satisfies that 
either $L=\{ [0:1:0]\}$ or $L\subset \{ (x,y)\in \Ct \mid \|(x,y)\| <R\}.$ 
  Since ${\cal Y}$ satisfies nice condition (I) and 
  $\mbox{int}(\supp\,\tau )\neq \emptyset, $ 
  there exists a number $r>0 $ such that 
  any minimal set $L$ of $\tau $ with 
  $L\subset \{ (x,y)\in \Ct \mid \|(x,y)\| <R\}$ 
  contains a ball of radius $r.$  
  Therefore the number of minimal sets $L$ of $\tau $ 
  with $L\subset \{ (x,y)\in \Ct \mid \|(x,y)\| <R\}$ is finite. 
  In particular, the number of minimal sets of $\tau $ is finite.  
 We now prove the following claim. \\ 
Claim 1. Let $z\in \Pt \setminus B.$ Then, there exists a 
minimal set $L$ of $\tau $ with $\overline{G_{\tau }(z)}\cap L\neq 
\emptyset$, where $\overline{G_{\tau }(z)}$ denotes the closure
 of $G_{\tau }(z)$ in $\Pt.$  

To prove this claim, let  $z\in \Pt \setminus B$ and 
suppose that there exists  no minimal set $L$ of $\tau $ with $\overline{G_{\tau }(z)}\cap L\neq 
\emptyset.$ Then, since the number of minimal sets of $\tau $ is finite, 
we obtain that $\overline{G_{\tau }(z)}\cap \overline{\cup _{L\in \Min(\tau )}L}=\emptyset.$ Since $\overline{G_{\tau }(z)}$ is a compact subset 
of $\Pt \setminus B$ and since 
$G_{\tau }(\overline{G_{\tau }(z)})\subset \overline{G_{\tau }(z)}$,  
Zorn's lemma implies that 
there exists a minimal set $L_{0}$ of $\tau $ with 
$L_{0}\subset \overline{G_{\tau }(z)}.$ However, this contradicts 
$\overline{G_{\tau }(z)}\cap \overline{\cup _{L\in \Min(\tau )}L}=\emptyset.$ Hence, we have proved Claim 1.   
 
 Let $L$ be a minimal set of $\tau $ with $L\subset \Ct.$ 
Since ${\cal Y}$ satisfies nice condition (I) and int$(\supp\,\tau )\neq \emptyset$, we have 
int$(L)\neq \emptyset.$ 
Hence, if $z\in \Pt \setminus B$ and  $\overline{G_{\tau }(z)}\cap L\neq \emptyset$, then 
$G_{\tau }(z)\cap \mbox{int}(L)\neq \emptyset.$ 
Since  int$(L)\subset F(G_{\tau })$, we have that 
\begin{equation}
\label{eq:ifzcfg}
\mbox{if \ } z\in \Pt \setminus B \mbox{ and } \overline{G_{\tau }(z)}\cap L\neq \emptyset , 
\mbox{\ then\ } G_{\tau }(z)\cap F(G_{\tau })\neq \emptyset.
\end{equation}
Moreover, by Lemma~\ref{l:ghsm}(1), the point $[0:1:0]$ belongs to $F(G_{\tau }).$ 
Hence,  
\begin{equation}
\label{eq:ifzcfg2}
\mbox{if \ } z\in \Pt \setminus B \mbox{ and } \overline{G_{\tau }(z)}\cap \{ [0:1:0]\} \neq \emptyset , 
\mbox{\ then\ } G_{\tau }(z)\cap F(G_{\tau })\neq \emptyset.
\end{equation}
By Claim 1, (\ref{eq:ifzcfg}), (\ref{eq:ifzcfg2}) and Lemma~\ref{l:ghsm}(1),  
it follows that 
\begin{equation}
\label{eq:ifzcfg3}
\mbox{for each } z\in \Pt \setminus B, \ 
G_{\tau }(z)\cap F(G_{\tau })\neq \emptyset .
\end{equation}
By (\ref{eq:ifzcfg3}), we have $J_{\ker }(G_{\tau }| _{\Pt\setminus B})=\emptyset, $ 
where $G_{\tau }|_{\Pt \setminus B}=\{g|_{\Pt \setminus B}: \Pt \setminus B\rightarrow \Pt \setminus B \mid g\in G_{\tau }\}.$ 
Moreover, for each $\gamma \in (\supp\,\tau )^{\ZZ}, $ 
$J_{\gamma }^{+}\cap (\Pt \setminus \overline{B})$ is equal to the set of all 
points $z\in \Pt \setminus \overline{B}$ for which there exists no open neighborhood 
$U$ of $z$ in $\Pt \setminus \overline{B}$ such that 
$\{ \gamma _{n-1}\circ \cdots \circ \gamma _{1}\}_{n=1}^{\infty }$ is equicontinuous 
on $U$. 

Combining the above facts with \cite[Proposition 4.8]{Splms10}, 
it follows that 
Leb$_{4}(J_{\gamma }^{+}\cap (\Pt\setminus \overline{B}))=0$ for 
$\tau ^{\ZZ}$-a,e. $\gamma \in (\supp\,\tau)^{\ZZ}.$  
Since $U$ is an arbitrary neighborhood of $[1:0:0]$, it follows that 
Leb$_{4}(J_{\gamma }^{+})=0$ for 
$\tau ^{\ZZ}$-a,e. $\gamma \in (\supp\,\tau)^{\ZZ}.$   
\end{proof}

We need the following lemma to prove Theorem~\ref{t:nyItaut}. 
\begin{lem}
\label{l:taumsema}
Let $\tau \in {\frak M}_{1,c}(X^{+})$ 
(resp. ${\frak M}_{1,c}(X^{-})$). 
Then, $\tau $ is mean stable 
on $\Pt \setminus \{ [1:0:0]\}$ (resp. $\Pt \setminus \{ [0:1:0]\}$) 
if and only if 
each minimal set $L$ of $\tau $ with $L\subset \Ct$ 
is attracting for $\tau .$  
\end{lem}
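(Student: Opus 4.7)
The plan is to prove both directions by exploiting minimality of $L$ together with the contracting data provided in each hypothesis; the main technical work is the construction of the attracting data from mean stability.

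For the ``only if'' direction, assume $\tau$ is mean stable on $\Pt\setminus\{[1:0:0]\}$ with data $(U_1,\ldots,U_m,Q,n,c)$ and let $L\subset\Ct$ be a minimal set of $\tau$. Applying condition~(b) to some $z_0\in L$ and using $G_\tau$-invariance of $L$ (Remark~\ref{r:minimal}) gives $L\cap(\cup_j U_j)\neq\emptyset$, and then condition~(a) yields $L\cap Q\neq\emptyset$. Fix $w\in L\cap Q$. Iterating~(a) shows $\gamma_{kn}\circ\cdots\circ\gamma_1(Q)\subset Q$ for all $k$ and all choices; writing any $h\in G_\tau$ of length $m=kn+r$ with $0\leq r<n$ as a length-$r$ composition following a length-$kn$ composition that sends $Q$ into $Q$, I conclude $h(w)\in K':=\bigcup_{r=0}^{n-1}\bigcup_{(\gamma_i)\in(\supp\tau)^r}\gamma_r\circ\cdots\circ\gamma_1(Q)$, which is compact (a continuous image of a compact parameter space), and hence by minimality $L=\overline{G_\tau(w)}\subset K'$. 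I then build the attracting data for $L$ as follows: using compactness of $\supp\tau$ (equicontinuity on $\Pt\setminus\{[1:0:0]\}$), choose $\varepsilon>0$ small enough that $g(B(K',\varepsilon))\subset B(K',\varepsilon/2)$ for all $g\in \supp\tau$, set $K:=\overline{B(K',\varepsilon/2)}$, and cover $K$ by finitely many small open balls $V_1,\ldots,V_M$ contained in $B(K',\varepsilon)$, each lying inside some $\gamma_r\circ\cdots\circ\gamma_1(U_j)$. For length-$n'$ contraction on $V_i\subset\gamma_r\circ\cdots\circ\gamma_1(U_j)$, I write a length-$n'$ composition as a length-$(n'+r)$ composition starting on $U_j$, apply mean stability's length-$n$ contraction $\lfloor(n'+r)/n\rfloor$ times, and absorb the uniformly bounded Lipschitz distortion of $\gamma_r\circ\cdots\circ\gamma_1$ by choosing $n'$ sufficiently large.

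For the ``if'' direction, assume each minimal $L\subset\Ct$ is attracting. Note that $\{[0:1:0]\}$ is a minimal set of $\tau$ with a uniform contracting neighborhood by Lemma~\ref{l:ghsm}(1) and compactness of $\supp\tau$. By Lemma~\ref{l:wrconv}(2), the forward orbit of any $z\in\Pt\setminus\{[1:0:0]\}$ is eventually contained in a compact subset of $\Pt\setminus\{[1:0:0]\}$, and minimal sets $L\subset\Ct$ lie in a uniformly bounded region; combined with the disjointness of distinct attracting basins forced by the uniform attracting data, this gives $\sharp\emMin(\tau)<\infty$. I then assemble mean stability data by taking $\{U_j\}$ to be the finite union of all the $V_i^L$, $Q$ the union of the $K^L$, $n$ a common iterate of the $n_L$'s, and $c$ the associated iterated contraction factor. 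Condition~(a) follows from the single-step invariance $g(\cup_i V_i^L)\subset K^L$ and the length-$n_L$ contraction of attraction, iterated appropriately. Condition~(b) follows because for each $z\in\Pt\setminus\{[1:0:0]\}$ the closure $\overline{G_\tau(z)}$ is compact in $\Pt\setminus\{[1:0:0]\}$, hence contains a minimal set by Zorn's lemma (Remark~\ref{r:minimal}), whose attracting neighborhood is entered by some iterate of $z$.

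The main obstacle is the construction of the attracting data in the ``only if'' direction: mean stability's contraction is intrinsic to the $U_j$'s, whereas attraction requires contraction on neighborhoods of $L$ that are images of the $U_j$'s under arbitrary elements of $G_\tau$. Overcoming this requires using compactness of $\supp\tau$ both to pass from the continuously parametrized family $\{\gamma_r\circ\cdots\circ\gamma_1(U_j)\}$ to a finite open cover of $L$ and to bound the Lipschitz distortion of these pre-maps uniformly, so that the intrinsic length-$n$ contraction of mean stability, after sufficiently many iterations, overwhelms any local distortion and yields a genuine contraction on the new open sets.
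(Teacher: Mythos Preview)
Your argument for the ``only if'' direction has a genuine gap at the step where you claim that equicontinuity gives $g(B(K',\varepsilon))\subset B(K',\varepsilon/2)$ for all $g\in\supp\,\tau$. This is the crucial step for obtaining the single-step invariance $g(\cup_i V_i)\subset K$ required in the definition of \emph{attracting}, but equicontinuity only yields $g(B(K',\varepsilon))\subset B(K',\delta(\varepsilon))$ with $\delta(\varepsilon)\to 0$ as $\varepsilon\to 0$; it does not force $\delta(\varepsilon)\le\varepsilon/2$. Under mean stability, individual maps $g\in\supp\,\tau$ may well expand near $K'$: for instance, if $Dg$ at a fixed point in $K'$ is a Jordan block with eigenvalue $0.9$ and large off-diagonal entry, then $\|Dg\|\gg 1$ while $\|Dg^n\|\to 0$, so $g^n$ contracts but $g$ does not. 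Mean stability only guarantees contraction after $n$ steps, not single-step contraction. Your later remarks about absorbing Lipschitz distortion by taking $n'$ large address the \emph{contraction} requirement on the $V_i$, but the single-step \emph{invariance} $g(\cup_i V_i)\subset K$ rests entirely on the flawed step. A repair requires a different construction of the open sets, for example a finite tower $\cup_{k=0}^{N-1}\cup_{\gamma\in(\supp\,\tau)^k}\gamma_{k,1}(B(L,\varepsilon_k))$ with increasing radii $\varepsilon_0<\varepsilon_1<\cdots<\varepsilon_{N-1}$ and $N$ chosen so that length-$N$ compositions send $B(L,\varepsilon_{N-1})$ into $B(L,\varepsilon_0)$; this produces single-step forward invariance into a compact subset. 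The paper itself gives no details here, asserting the implication as ``easy to see from the definition''.

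In the ``if'' direction, your appeal to ``disjointness of distinct attracting basins forced by the uniform attracting data'' for $\sharp\Min(\tau)<\infty$ is too vague: there is no a~priori lower bound on the size of the attracting neighborhoods, so disjoint open sets in a bounded region need not be finite in number. The paper's argument is a clean limit-point argument: assuming infinitely many distinct $L_n\subset\Ct$ with $z_n\in L_n\to z_\infty$, the compact set $\overline{G_\tau(z_\infty)}$ contains some attracting $L$ by Zorn's lemma, and then for large $n$ the point $z_n$ lies in the attracting neighborhood of $L$, forcing (via minimality) $L_n=L$, a contradiction. The assembly of mean stability data from the finitely many attracting neighborhoods, and the verification of condition~(b) via Zorn, you handle correctly.
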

\begin{proof}
Let $\tau \in {\frak M}_{1,c}(X^{+})$ (resp. 
${\frak M}_{1,c}(X^{-})$).  
By Lemma~\ref{l:ghsm},  
any minimal set $L$ of $\tau $ 
with $L\neq \{ [0:1:0]\}$ (resp. $L\neq \{ [1:0:0]\}$) 
satisfies $L\subset \Ct .$ 
Also, since $\supp\,\tau $ is compact, 
 Lemma~\ref{l:ghsm} implies that  
$\{ [0:1:0]\} $ (resp. $\{ [1:0:0]\} $) is an attracting minimal 
set for $\tau $. If $\tau $ is mean stable on $\Pt \setminus \{ [1:0:0]\}$ 
(resp. $\Pt \setminus \{ [0:1:0]\}$), then by the definition of mean stability it 
is easy to see that each minimal set of $\tau $ is attracting for $\tau.$ 

We now suppose that each minimal set of $\tau $ in $\Ct$ is attracting for
 $\tau .$ Then we have $\sharp \Min (\tau )<\infty .$ For, if $\sharp \Min (\tau )=\infty $ then by Lemma~\ref{l:ghsm} there exist a sequence 
 $\{ L_{n}\} $ of mutually distinct elements of $\Min (\tau )$ in $\Ct$, a sequence 
 $\{ z_{n}\} $ in $\Ct $ with $z_{n}\in L_{n}$, and a point $z_{\infty }\in \Ct$ with 
 $z_{n}\rightarrow z_{\infty }$ as $n\rightarrow \infty .$ Then 
 $\overline{G_{\tau }(z_{\infty })} \subset \Ct $ and Zorn's lemma implies 
 that there exists an element $L\in \Min (\tau )$ with $L\subset \overline{G_{\tau }(z)}$, which is attracting for $\tau.$  However, this implies that $L_{n}=L$ for each large 
 $n$ and we have a contradiction. Hence we have  $\sharp \Min (\tau )<\infty .$ 
 Combining this with that each minimal set of $\tau $ is attracting, we obtain that 
 $\tau $ is mean stable on $\Pt \setminus \{ [1:0:0]\}$ 
 (resp. $\Pt \setminus \{ [0:1:0]\} $). 
\end{proof}
The following theorem is the key for  
proving the density of ${\cal MS}$ in ${\frak M}_{1,c}(X^{+})$ with respect to 
${\cal O}$ in 
Theorem~\ref{t:rpmms1} and 
the density of ${\cal MS}\cap {\frak M}_{1,c}(X_{1}^{+})$ in 
${\frak M}_{1,c}(X_{1}^{+})$ with respect to ${\cal O}$ 
in Theorem~\ref{t:rpmms2}. 
\begin{thm}
\label{t:nyItaut}
Let ${\cal Y}$ be  
a nonempty intersection of 
an open subset of $X^{+}$ (resp. $X^{-}$) and a closed subset of 
$X^{+}$ (resp. $X^{-}$). 
Suppose that ${\cal Y}$ satisfies 
nice condition (I). 
Let $\{ \tau _{t}\} _{t\in [0.1]}$ be a family of 
elements of ${\frak M}_{1,c}({\cal Y})$ such that 
 the following {\em (i)}, {\em (ii)} and {\em (iii)} hold. 
\begin{itemize}
\item[{\em (i)}] 
$t\in [0,1]\mapsto \tau _{t}\in {\frak M}_{1,c}({\cal Y})$ is continuous 
with respect to the wH-topology ${\cal O}.$ 

\item[{\em (ii)}]
If $t_{1}, t_{2}\in [0,1]$ and $t_{1}<t_{2}$, then 
$\supp\,\tau _{t_{1}}\subset \mbox{int}(\supp\,\tau _{t_{2}}).$ 
Here, int denotes the set of interior points with respect to the topology 
in ${\cal Y}.$  
\item[{\em (iii)}] 
$\mbox{int} (\supp\,\tau _{0})\neq \emptyset .$ 
\end{itemize} 
Let $A:=\{ t\in [0,1]\mid \tau _{t} \mbox{ is not mean stable on }
\Pt \setminus \{ [1:0:0]\} (\mbox{resp. } 
\Pt \setminus \{ [0:1:0]\}).$ 
Then, $\sharp A\leq \sharp \emMin(\tau _{0})-1<\infty $ and 
$B:=\{ t\in [0,1]\mid s\mapsto \sharp \emMin(\tau _{s}) \mbox{ is 
constant in a neighborhood of }t\} $ satisfies 
$B= [0,1]\setminus A.$

\end{thm}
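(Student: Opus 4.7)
The plan is to leverage monotonicity of $t\mapsto \sharp \emMin(\tau_t)$ combined with a Carath\'{e}odory-distance argument. First I would verify $\sharp \emMin(\tau_0)<\infty$: by Lemmas~\ref{l:ghsm} and~\ref{l:wrconv}, every minimal set of $\tau_0$ other than $\{[0:1:0]\}$ lies in a uniformly bounded region $\overline{D_R}\cap \Ct$; applying nice condition~(I) at any fixed $h_0\in \mbox{int}(\supp\,\tau_0)$ to each $z_L\in L$ and extracting a uniform $\delta^*>0$ by compactness of $\overline{D_R}$ forces every such $L$ to contain a ball of radius $\delta^*$, and a volume comparison yields finiteness.

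Next, for $t_1<t_2$, condition~(ii) gives $G_{\tau_{t_1}}\subset G_{\tau_{t_2}}$; any $L\in \emMin(\tau_{t_2})$ is $G_{\tau_{t_1}}$-invariant, so by Zorn's lemma contains a $\tau_{t_1}$-minimal set, and disjointness of distinct $\tau_{t_2}$-minimal sets yields an injection $\emMin(\tau_{t_2})\hookrightarrow \emMin(\tau_{t_1})$. Hence $t\mapsto \sharp \emMin(\tau_t)$ is a non-increasing integer-valued function on $[0,1]$ with range in $[1,\sharp\emMin(\tau_0)]$, so its discontinuity set $[0,1]\setminus B$ has cardinality at most $\sharp\emMin(\tau_0)-1$.

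The heart of the argument is the inclusion $B\subset [0,1]\setminus A$. For $t\in B$ and $L\in \emMin(\tau_t)$ with $L\subset \Ct$, local constancy upgrades the injection to a bijection: for $t'>t$ close, there is a unique $L_{t'}\in \emMin(\tau_{t'})$ with $L\subset L_{t'}$. Since $\supp\,\tau_t\subset \mbox{int}(\supp\,\tau_{t'})$ by (ii), every $h_0\in \supp\,\tau_t$ lies in an open family $W\subset \supp\,\tau_{t'}$ of maps preserving $L_{t'}$; nice condition~(I), applied uniformly over $z\in L_{t'}$ by compactness, yields $h_0(L_{t'})+B(0,\delta^*)\subset L_{t'}$, hence $h_0(L_{t'})\Subset \Omega:=\mbox{int}(L_{t'})$. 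Thus $\Omega$ is a bounded domain in $\Ct$ with $h_0(\Omega)\Subset \Omega$ for every $h_0\in \supp\,\tau_t$; the Schwarz--Pick principle gives a uniform strict contraction on $(\Omega,c_\Omega)$, and comparability of $c_\Omega$ with the Fubini--Study metric on compact subsets of $\Omega$ promotes this to Fubini--Study contraction after sufficiently many iterations. Since $L=\bigcup_{h\in\supp\,\tau_t}h(L)\subset \Omega$, this shows $L$ is attracting; Lemma~\ref{l:taumsema} then yields $t\in [0,1]\setminus A$.

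For the converse $[0,1]\setminus A\subset B$, mean stability of $\tau_t$ with attracting basins $U_1,\ldots,U_k$ (plus a contracting neighborhood $V$ of $[0:1:0]$) persists under small wH-perturbations of the generators, so each $U_i$ contains a unique $\tau_{t'}$-minimal set for $t'$ near $t$, giving $\sharp\emMin(\tau_{t'})\geq k+1$. For $t'>t$, monotonicity forces equality; for $t'<t$, suppose extra minimal sets $L'_n$ exist for $t_n\to t^-$, necessarily disjoint from $\overline{\cup U_i\cup V}$ and contained in $\overline{D_R}$. A Hausdorff subsequential limit $L^*$ is a nonempty $G_{\tau_t}$-invariant compact set in $\overline{D_R}\setminus (\cup U_i\cup V)$ (invariance follows from wH-continuity of $t\mapsto \supp\,\tau_t$), which by Zorn contains a $\tau_t$-minimal set; this contradicts the fact that every $\tau_t$-minimal set lies in $\cup U_i \cup \{[0:1:0]\}$. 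Combining the two inclusions gives $B=[0,1]\setminus A$ and $\sharp A\leq \sharp\emMin(\tau_0)-1$. The main obstacle is the compact-containment upgrade $h_0(L_{t'})\Subset \mbox{int}(L_{t'})$ in Step~3: this geometric input, coupling nice condition~(I) with~(ii), is what unlocks the Schwarz--Pick contraction and drives the entire argument.
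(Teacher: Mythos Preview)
Your proof is correct and follows essentially the same strategy as the paper's: finiteness of $\Min(\tau_0)$ via nice condition~(I) and a volume count, monotonicity of $\sharp\Min(\tau_t)$ via Zorn, and the key step that for $t\in B$ each bounded minimal set is attracting via the Carath\'{e}odory/Schwarz--Pick contraction on $\mbox{int}(L_{t'})$, with Lemma~\ref{l:taumsema} to conclude. One small caution: $\Omega=\mbox{int}(L_{t'})$ need not be connected, so calling it a ``domain'' and invoking $c_\Omega$ globally is imprecise---the paper runs the Carath\'{e}odory contraction componentwise on the finitely many components $W_1,\ldots,W_r$ meeting $C=\bigcup_{g\in\supp\tau_{t_0}}g(K_1)$, which is a routine adjustment to your argument.
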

\begin{proof} 
We show the statement of our theorem for the case 
that  ${\cal Y}$ is a nonempty intersection of 
an open subset of $X^{+}$ and a closed subset of 
$X^{+}$ and satisfies nice condition (I) 
(we can show the statement of our theorem for the case 
that  ${\cal Y}$ is a nonempty intersection 
of an open subset of $X^{-}$ and a closed subset of  
$X^{-}$ and satisfies nice condition (I) 
by arguments similar to those below
). 
   
By Lemma~\ref{l:wrconv}, there exist a number 
$R>1$ and a number $\rho >1$ such that 
each $g\in \supp\,\tau_{0}$ satisfies 
condition (A) for $(R,\rho).$ 
By Lemma~\ref{l:wrconv}(2)(3), it follows that each 
$L\in \Min (\tau _{0})$ with $L\subset \Ct$ 
should be included in $D_{R}.$ 
Combining this with (iii) in the assumptions of our theorem 
and the assumption that ${\cal Y}$ satisfies nice condition 
(I),  
we have that $\sharp \{ L\in \Min (\tau _{0})\mid L\subset \Ct\}<\infty .$  
Moreover, by Zorn's lemma, we have that 
if $t_{1},t_{2}\in [0,1], t_{1}<t_{2}$ then 
$1\leq \sharp \Min (\tau _{t_{2}})\leq 
\sharp \Min (\tau _{t_{1}})$. 
 (For, if $L\in \Min(\tau _{t_{2}})$, then $G_{\tau _{t_{1}}}(L)
 \subset G_{\tau _{t_{2}}}(L)\subset L .$ By Zorn's lemma, 
 there exists an element $L'\in \Min(\tau _{t_{1}})$ 
 with $L'\subset L$.)   
It follows that setting $$B:=\{ t\in [0,1]\mid 
s\mapsto \sharp \Min (\tau _{s}) \mbox{ is constant on a neighborhood of } t\},$$ 
we have $\sharp ([0,1]\setminus B)\leq \sharp 
\Min(\tau _{0})-1<\infty .$   

Let $t_{0}\in B.$ Then, there exists $t_{1}\in [0,1]$ with  
$t_{0}<t_{1}$ (arbitrarily close to $t_{0}$) such that  
$\sharp \Min(\tau _{t_{0}})=\sharp \Min(\tau _{t_{1}}).$
 We now prove the following claim. 

Claim 1. For each $K_{0}\in \Min (\tau _{t_{0}})$, 
there exists a unique element $K_{1}\in 
\Min (\tau _{t_{1}})$ such that 
$K_{0}\subset K_{1}.$ Moreover, 
the map 
$K_{0}\in \Min (\tau _{t_{0}})\mapsto 
K_{1}\in \Min (\tau _{t_{1}})$  
is injective. 

To prove Claim 1, let $L\in \Min (\tau _{t_{1}}).$ 
Since  $G_{\tau _{t_{0}}}(L)\subset 
G_{\tau _{t_{1}}}(L)\subset L$, 
 Zorn's lemma implies that there exists 
 an element 
$L'\in \Min (\tau _{t_{0}})$ such that $L'\subset L.$ 
Combining the above argument with the fact that $\sharp \Min(\tau _{t_{0}})=\sharp \Min(\tau _{t_{1}})$, 
we easily see that 
Claim 1 holds. 

We now let $K_{0}\in \Min(\tau _{t_{0}})$ 
with $K_{0}\subset \Ct.$  
%
%
Then, by Claim 1, 
there exists a unique element 
$K_{1}\in \Min (\tau _{t_{1}})$ 
such that $K_{0}\subset K_{1}.$ 
Since $K_{0}\neq \{ [0:1:0]\}$, we have $K_{1}\neq \{ [0:1:0]\}.$  
Hence by Lemma~\ref{l:ghsm}, we have 
$K_{1}\subset \Ct.$ 
Since $K_{1}\subset \Ct $, 
${\cal Y}$ satisfies nice condition (I) and 
int$(\supp\,\tau _{t_{1}})\neq \emptyset $,  
we have that $\emptyset \neq \mbox{int}(K_{1})$ and 
int$(K_{1})\subset F(G_{\tau _{t_{1}}}).$ We now prove the following claim. 

Claim 2. For each $g\in \supp\,\tau _{t_{0}}$, 
$g(\partial K_{1})\subset \mbox{int}(K_{1}).$ 

To prove this claim, suppose that there exist an element $g_{0}\in 
\supp\,\tau _{t_{0}}$ and an element $z_{0}\in \partial K_{1}$ 
such that $g_{0}(z_{0})\in \partial K_{1}.$ Then, since ${\cal Y}$ satisfies
 nice condition (I) and 
 $g_{0}\in \supp\,\tau _{t_{0}}\subset 
 \mbox{int}(\supp\,\tau _{t_{1}})$, 
 it follows that there exists an element $g_{1}$ (near $g_{0}$) 
in $\mbox{int}(\supp\,\tau _{t_{1}})$ such that $g_{1}(z_{0})\in 
\Ct \setminus K_{1}.$ However, this contradicts that 
$K_{1}\in \Min(\tau _{t_{1}}).$ Hence, we have proved Claim 2. 

 By Claim 2, we have that for each $g\in \supp\,\tau _{t_{0}}$, 
 $g(K_{1})\subset \mbox{int}(K_{1}).$ Therefore,  
 $C:=\cup _{g\in \supp\,\tau _{t_{0}}}g(K_{1})$ is a compact subset 
 of $\mbox{int}(K_{1}).$ Since $G_{\tau _{0}}(C)\subset C$, Zorn's lemma 
 implies that there exists an element $K'\in \Min (\tau _{t_{0}}) $
 such that $K'\subset C\subset K_{1}.$ By Claim 1, 
 we have 
 \begin{equation}
\label{eq:k0ksube}
 K_{0}=K'\subset C\subset \mbox{int}(K_{1}).
 \end{equation}
  
 Let $\{ W_{1},\ldots, W_{r}\}$ be the set of all connected components of int$(K_{1})$ that intersect with $C.$ 
 Let  $V, U $ be two open subsets of $\cup _{j=1}^{r}W_{j}$ 
 such that 
 $C\subset V\subset \overline{V}\subset U\subset \overline{U}
 \subset \cup _{j=1}^{r}W_{j}$ and such that 
 $U\cap W_{j}$ is connected for each $j=1.\ldots r.$ 
 Then, we have 
\begin{equation}
\label{eq:ugsuppgou}
\cup _{g\in \supp\,\tau _{t_{0}}}g(\overline{U})\subset 
C\subset V\subset U. 
\end{equation}
Let $d_{j}$ be the Carath\'{e}odory distance on $U\cap W_{j}$ 
for each $j=1,\ldots, r.$  
Note that since $U\cap W_{j}$ is a bounded region in $\Ct$, 
the Carath\'{e}odory pseudodistance on $U\cap W_{j}$ is a 
distance on $U\cap W_{j}.$ By (\ref{eq:ugsuppgou}) and 
\cite[page 253]{Mi}, there exists a number $c\in (0,1)$ 
such that for each $g\in \supp\tau _{t_{0}}$ and for each 
$i,j$, 
if $g(U\cap W_{j})\subset W_{i}$, then 
$$\mbox{diam}_{d_{i}}(g(U\cap W_{j}))\leq c\mbox{diam}_{d_{j}}
(U\cap W_{j}), $$
where diam$_{d_{i}}(D)=\sup \{ d_{i}(x,y)\mid x,y\in D\}$ for each $i$ and  
for each subset $D\subset U\cap W_{i}.$ 
Therefore, for each 
$j=1,\ldots, r$, we have that 
diam$(\gamma _{n,1}(U\cap W_{j}))\rightarrow 0 $ as $n\rightarrow 
\infty $ uniformly on $(\supp\,\tau _{t_{0}})^{\NN}$,  
where diam$(\gamma _{n,1}(U\cap W_{j}))$ denotes the 
diameter of $\gamma _{n,1}(U\cap W_{j})$ 
with respect to the Euclidean distance on $\Ct.$ Since $K_{0}\subset C\subset 
\cup _{j=1}^{r}(U\cap W_{j})$, 
it follows that $K_{0}$ is an attracting minimal set for 
$\tau_{t_{0}}.$ 

From the above arguments, we see that each 
$K_{0}\in \Min(\tau _{t_{0}})$ with $K_{0}\subset \Ct$ 
is an attracting minimal set for $\tau _{t_{0}}.$ 
Therefore, Lemma~\ref{l:taumsema} 
implies that 
$\tau _{0}$ is mean stable  on $\Pt \setminus \{[ 1:0:0]\}.$ 
Also, by (i) in the assumptions of our theorem, it is easy to see that $[0,1]\setminus A\subset B$ 
(see the argument in the proof of \cite[Lemma 5.7]{Sadv}). 
Thus, we have proved our theorem.
\end{proof}
\begin{rem}
We remark that in the proof of  
Theorem~\ref{t:nyItaut},  
we introduce some new and strong methods 
that are valid for  random holomorphic dynamical systems 
of any dimension. 
Note that we have several results on complex-one-dimensional 
random holomorphic dynamical systems in \cite{Sadv} that 
are similar to Theorem~\ref{t:inyItaut} and the result on the density of 
${\cal MS}$ in $X^{+}$ in Theorem~\ref{t:rpmms1}, 
but 
the methods and arguments in the proofs of them in \cite{Sadv} 
  were valid only for complex-one-dimensional random holomorphic dynamical systems.

\end{rem}
\begin{ex}
\label{e:addnoise}
Let $f\in X^{+}$. 
Let ${\cal Y}=\{ f+(a,b)\in X^{+}\mid (a,b)\in \Ct \} .$ 
Then, ${\cal Y}$ is a closed subset of $X^{+}$ and 
satisfies nice condition (I). 
Let $u>v>0$ and for each $t\in [0,1]$, 
let $\mu _{t}$ be the normalized Lebesgue measure 
on $B((0,0),ut+(1-t)v)$. 
Let $\Psi : \Ct \mapsto {\cal Y}$ be the continuous map defined 
 by $\Psi (a,b)=f+(a,b)$. 
For each $t\in [0,1]$, 
let $\tau _{t} :=\Psi _{\ast }(\mu _{t})\in {\frak M}_{1,c}({\cal Y}).$ 
Then, $\{ \tau _{t}\}_{t\in [0,1]}$ satisfies the assumptions 
of Theorem~\ref{t:nyItaut}. 
We now suppose that $f$ has an attracting periodic cycle $E$ in $\Ct .$   
Let $v>0$ be small enough and let $u$ be large enough. 
Then there exists an attracting minimal set of $\tau _{0}$ which contains 
$E.$ Hence, $\sharp \Min (\tau _{0})>1$ (note that 
$\{ [0:1:0]\} \in \Min (\tau _{0})).$ 
If we 
adopt a large enough value for $u$, 
then we see that there exists no minimal set $L$ of $\tau _{1}$ 
with $L\subset \Ct.$ Therefore,  
$\sharp \Min(\tau _{1})=1.$ It follows that 
the set $A$ in Theorem~\ref{t:nyItaut} for 
this family $\{ \tau _{t}\} _{t\in [0,1]}$ is not empty. 
 \end{ex}
 We now show that 
 if ${\cal Y}$ is a subset of $X^{+}$ (resp. $X^{-}$) 
 satisfying some mild conditions, then  
 generic $\tau \in {\frak M}_{1,c}({\cal Y})$ are 
 mean stable on $\Pt \setminus \{ [1:0:0]\}$
  (resp.  $\Pt \setminus \{ [0:1:0]\}$). 
\begin{thm}
\label{t:yniceaod}
Let ${\cal Y}$ be an open subset of a 
(not necessarily connected) closed submanifold of $X^{+}$  
(resp. $X^{-}$).  
Suppose that ${\cal Y}$ satisfies 
nice condition (I). Let 
$${\cal A}:= \{ \tau \in {\frak M}_{1,c}({\cal Y}) \mid 
\tau \mbox{ is mean stable on }\Pt \setminus 
\{ [1:0:0]\}  (\mbox{resp. } \Pt \setminus 
\{ [0:1:0]\})\} .$$ Then,  
${\cal A}$ is open and dense in 
${\frak M}_{1,c}({\cal Y})$ with respect to the 
wH-topology ${\cal O}$ in  ${\frak M}_{1,c}({\cal Y})$. 
\end{thm}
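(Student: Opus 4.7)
The plan is to prove openness and density separately, with the density part reduced to Theorem~\ref{t:nyItaut} via a suitable one-parameter family of perturbations. I will argue the case ${\cal Y}\subset X^{+}$; the case ${\cal Y}\subset X^{-}$ is analogous.

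\emph{Openness.} Suppose $\tau_{0}\in{\cal A}$ with witnessing data $(n,m,U_{1},\ldots,U_{m},Q,c)$. The composition map $(\g_{1},\ldots,\g_{n})\mapsto \g_{n}\circ\cdots\circ\g_{1}$ is jointly continuous on $(\supp\tau_{0})^{n}$ in the compact-open topology, and the contraction constant $c$ leaves room on both $U_{j}$ and $Q$. Because the wH-topology controls the Hausdorff convergence of supports, for any $\tau$ sufficiently wH-close to $\tau_{0}$ we have $\supp\tau$ contained in a small $C^{0}$-neighborhood of $\supp\tau_{0}$, so the same $(n,U_{1},\ldots,U_{m},Q,c')$ with a slightly worse $c'<1$ continue to satisfy condition (a) for $\tau$. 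Condition (b) is verified pointwise on $\Lambda$: for $z\in\Lambda$, the map $f_{z}\in G_{\tau_{0}}$ sends $z$ into the open set $\bigcup_{j}U_{j}$; choosing the same $f_{z}$ as a word in $\supp\tau_{0}$ and using Hausdorff closeness of supports, one produces a nearby word in $G_{\tau}$ with the same property, and by compactness of $\Lambda$ (restricted to any compact piece; globally we use uniform bounds near $[1{:}0{:}0]$ from Lemma~\ref{l:wrconv}) this extends to all $z$. Hence ${\cal A}$ is open.

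\emph{Density.} Let $\tau_{0}\in{\frak M}_{1,c}({\cal Y})$ and let $U$ be a wH-neighborhood of $\tau_{0}$. First approximate $\tau_{0}$ by a finitely supported $\zeta\in U\cap{\frak M}_{1,c}({\cal Y})$, which is standard in the wH-topology. Write $\supp\zeta=\{f_{1},\ldots,f_{N}\}$ with weights $p_{1},\ldots,p_{N}$. Using that ${\cal Y}$ is an open subset of a submanifold and that ${\cal Y}$ satisfies nice condition (I), choose, in local manifold charts around each $f_{j}$, a one-parameter family of open neighborhoods $B_{j}(t)$ of $f_{j}$ with $\overline{B_{j}(t_{1})}\subset B_{j}(t_{2})$ whenever $t_{1}<t_{2}$, with $B_{j}(t)\subset{\cal Y}$, and with diameters shrinking to $0$ as the global scale parameter shrinks. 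Let $\mu_{j,t}$ be the normalized restriction of a fixed smooth volume measure on the manifold to $\overline{B_{j}(t)}$, and set
$$
\tau_{t}\;:=\;\sum_{j=1}^{N} p_{j}\,\mu_{j,t}\,,\qquad t\in[0,1].
$$
Then $\supp\tau_{t}=\bigcup_{j}\overline{B_{j}(t)}$, the map $t\mapsto\tau_{t}$ is continuous in the wH-topology (both bounded integration and Hausdorff convergence of supports), and $\supp\tau_{t_{1}}\subset\mathrm{int}(\supp\tau_{t_{2}})$ for $t_{1}<t_{2}$; moreover $\mathrm{int}(\supp\tau_{0})=\bigcup_{j}B_{j}(0)\neq\emptyset$. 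Thus $\{\tau_{t}\}$ satisfies conditions (i), (ii), (iii) of Theorem~\ref{t:nyItaut}. By choosing the initial scale of the $B_{j}(0)$ sufficiently small, we can guarantee $\tau_{t}\in U$ for every $t\in[0,1]$, because as scale $\to0$ the family $\{\tau_{t}\}$ converges in the wH-topology to $\zeta\in U$ uniformly in $t$.

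Apply Theorem~\ref{t:nyItaut} to $\{\tau_{t}\}_{t\in[0,1]}$. The theorem asserts that the set $A\subset[0,1]$ of parameters for which $\tau_{t}$ fails to be mean stable on $\Pt\setminus\{[1{:}0{:}0]\}$ is finite. Hence there exists $t_{\ast}\in[0,1]\setminus A$, and the corresponding $\tau_{t_{\ast}}$ lies in $U\cap{\cal A}$. This establishes density.

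\emph{Main obstacle.} The delicate point is producing the nested family $\{\supp\tau_{t}\}$ inside ${\cal Y}$ so that condition (ii) of Theorem~\ref{t:nyItaut} holds together with wH-continuity. This requires using the submanifold structure of ${\cal Y}$ to select local charts in which one can build strictly expanding open sets with continuously varying boundaries, and simultaneously controlling the total mass so that $\tau_{t}$ stays inside the prescribed wH-neighborhood $U$ of $\tau_{0}$; nice condition (I) ensures that enlarging the support in this way produces enough dynamical perturbation for Theorem~\ref{t:nyItaut} to apply. Once the family is in hand, the rest is an immediate appeal to Theorem~\ref{t:nyItaut}.
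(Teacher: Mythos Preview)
Your proof is correct and follows essentially the same approach as the paper: openness is declared routine (the paper simply says ``it is easy to see''), and density is obtained by first approximating $\tau_{0}$ by a finitely supported $\zeta$, then enlarging the support of $\zeta$ into a one-parameter family $\{\tau_{t}\}_{t\in[0,1]}$ satisfying conditions (i)(ii)(iii) of Theorem~\ref{t:nyItaut}, and finally invoking that theorem to find a mean stable $\tau_{t_{\ast}}\in U$. Your construction of the family via normalized smooth volumes on nested balls in local submanifold charts is exactly the kind of ``enlarging the support'' the paper has in mind; you supply more detail than the paper does, but the skeleton is identical.
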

\begin{proof}
It is easy to see that ${\cal A}$ is open in ${\frak M}_{1,c}({\cal Y})$. 
To show the density of ${\cal A}$ in 
${\frak M}_{1,c}({\cal Y})$, let $\zeta \in {\frak M}_{1,c}({\cal Y})$ 
and let $U$ be an open neighborhood of $\zeta $ in 
${\frak M}_{1,c}({\cal Y}).$ 
Then, there exists an element $\zeta _{0}\in U$ with 
$\sharp \supp\,\zeta _{0}<\infty .$ By enlarging 
the support of $\zeta _{0}$, we can construct a family 
$\{ \tau _{t}\} _{t\in [0.1]}$ of elements in $U$ 
that satisfies (i)(ii)(iii) in the assumptions of 
Theorem~\ref{t:nyItaut}.  
Then, by Theorem~\ref{t:nyItaut}, 
there exists a $t\in [0,1]$ such that 
$\tau _{t}\in U$ and 
$\tau _{t}$ is mean stable on 
$\Pt \setminus \{ [1:0:0]\}$ (resp. $\Pt \setminus 
\{ [0:1:0]\}$).  
Hence, ${\cal A}$ is dense in ${\frak M}_{1,c}({\cal Y}).$ 
Thus, we have proved our theorem.
\end{proof}
To present results on elements $\tau \in {\frak M}_{1,c}(X^{+})$ 
that are mean stable on $\Pt \setminus \{ [1:0:0]\}$
 (Theorem~\ref{t:mtauspec}), 
 we need 
the following definitions. 
\begin{df}
For a topological space $Y$, we denote by $C(Y)$ 
the set of all complex-valued bounded continuous functions 
on $Y.$ 
\end{df}
\begin{df}
Let ${\cal B}$ be a complex vector space, and let $M:{\cal B}\rightarrow {\cal B}$ be a linear operator.  
Let $\varphi \in {\cal B}$ and $a\in \CC $ be such that 
$\varphi \neq 0, |a|=1$, and $M(\varphi )=a\varphi .$ Then, we say that  
$\varphi $ is a unitary eigenvector of $M$ with respect to $a$
and 
that $a$ is a unitary eigenvalue.   
\end{df}
\begin{df}
For any topological space  $Y$, we denote by 
${\frak M}_{1}(Y)$ the set of all Borel probability measures on $Y.$ 
Also, we set ${\frak M}_{1,c}(Y)=\{ \tau \in {\frak M}_{1}(Y)\mid 
\supp\,\tau \mbox{ is a compact subset of }Y\}.$ 
\end{df}
\begin{df}
Let $Y$ be a $\sigma$-compact metric space. 
Let 
$\tau \in {\frak M}_{1}(\CMX).$ 
We set $G_{\tau }=\{ h_{n}\circ \cdots \circ h_{1}\mid 
n\in \NN , h_{j}\in \supp\,\tau (\forall j)\}.$ 
Let $K$ be a nonempty subset of $Y$ such that 
$G_{\tau }(K)\subset K$. 
Let $M_{\tau }:C(K)\rightarrow C(K)$ be the linear operator 
defined as $M_{\tau }(\varphi )(z)=
\int _{CM(Y)} \varphi (g(z))d\tau (g)$ for each $\varphi \in C(K)$ 
and for each $z\in K.$ 
We denote by ${\cal U}_{f,\tau }(K)$ the set of 
all unitary eigenvectors of $M_{\tau }:C(K)\rightarrow C(K)$. Moreover, 
we denote by ${\cal U}_{v,\tau }(K)$ the set of all 
unitary eigenvalues of $M_{\tau }:C(K)\rightarrow C(K).$  
Similarly, we denote by 
${\cal U}_{f,\tau ,\ast}(K)$ the set of all unitary eigenvectors of 
$M_{\tau }^{\ast }:C(K)^{\ast }\rightarrow C(K)^{\ast }$, 
 and we denote by ${\cal U}_{v,\tau ,\ast }(K)$ the set of all 
unitary eigenvalues of $M_{\tau }^{\ast }:C(K)^{\ast }\rightarrow C(K)^{\ast }.$ 
\end{df}
\begin{df}
Let $V$ be a complex vector space, and let $A$ be a subset of $V.$ 
We set $\mbox{LS}(A):= \{ \sum _{j=1}^{m}a_{j}v_{j}\mid a_{1},\ldots ,a_{m}\in \CC , 
v_{1},\ldots ,v_{m}\in A, m\in \NN \} .$ 
\end{df}
\begin{df}
Let $Y$ be a topological space, and let $V$ be a subset of $Y.$ 
We denote by $C_{V}(Y)$ the space of all $\varphi \in C(Y)$ such that 
for each connected component $U$ of $V$, there exists a constant $c_{U}\in \CC $ with 
$\varphi |_{U}\equiv c_{U}.$ 
\end{df}
\begin{df}
Let $Y$ be a metric space. 
We set $\OCMX :=\{ g\in \CMX \mid g \mbox{ is an open map on }Y\}$ endowed with the relative topology from 
$\CMX.$   
\end{df}
\begin{rem}
$C_{V}(Y)$ is a linear subspace of $C(Y).$ Moreover, if 
$Y$ is compact, metrizable, and locally connected and $V$ is an open subset of $Y$, then 
$C_{V}(Y)$ is a closed subspace of $C(Y)$ 
(we endow $C(Y)$ the supremum norm on $Y$). 
Furthermore, 
if $Y$ is compact, metrizable, and locally connected, $\tau \in {\frak M}_{1}(\CMX)$, and $G_{\tau }$ is a 
subsemigroup of $\OCMX$, then $M_{\tau }(C_{F(G_{\tau })}(Y))\subset C_{F(G_{\tau })}(Y).$  
\end{rem}
\begin{df}
Let $Y$ be a compact metric space. 
Let $\rho \in  C(Y)^{\ast }.$  
We denote by $a(\rho )$ the set of points $z\in Y$ 
which satisfies that there exists a neighborhood $U$ of $z$ in $Y$ 
such that for each $\varphi \in C(Y)$ with $\mbox{supp}\, \varphi \subset U$, 
$\rho (\varphi )=0.$ 
We set $\mbox{supp}\, \rho := Y\setminus a(\rho ).$ 
\end{df}

\begin{df}
Let $\{ \varphi _{n}:U\rightarrow \Pt \} _{n=1}^{\infty }$ be a sequence of 
holomorphic maps on an open set $U$ of $\Pt .$ Let 
$\varphi :U\rightarrow \Pt $ be a holomorphic map. 
We say that $\varphi $ is a limit map of 
$\{ \varphi _{n}\} _{n=1}^{\infty }$ if 
there exists a strictly increasing sequence $\{ n_{j}\} _{j=1}^{\infty }$ in 
$\NN $ such that $\varphi _{n_{j}}\rightarrow \varphi $ as $j\rightarrow \infty $ locally 
uniformly on $U.$ 
\end{df}
\begin{df}
For a topological space $Z$, we denote by Con$(Z)$ the set of all 
connected components of $Z.$  
\end{df}
\begin{df}
Let $K\in \Cpt (\Pt).$ Let $\{\varphi _{n}\}_{n=1}^{\infty }$ 
be a family in $C(K)$ with $\overline{\{\varphi _{n}\mid n\in \NN\}}
=C(K).$ For each $\tau, \rho \in {\frak M}_{1}(K)$, we set 
$d_{0,K}(\tau, \rho ):=
\Sigma _{n=1}^{\infty }\frac{|\int \varphi _{n}d\tau -
\int \varphi _{n} d\rho |}
{1+|\int \varphi _{n}d\tau -\int \varphi _{n}d\rho |}.$ 
Note that $d_{0,K}$ is a distance on 
${\frak M}_{1}(K)$ that induces the weak$^{\ast }$ topology 
on ${\frak M}_{1}(K)$.  

\end{df}
\begin{df}
\label{d:hcvk}
Let $K$ be a nonempty subset of 
$\Pt .$ Let $V$ be an open subset of $\Pt .$ 
We denote by $\hat{C}_{V}(K)$ the set of all 
elements $\varphi \in C(K)$ satisfying that 
for each $U\in \mbox{Con}(V)$ with $U\cap K\neq \emptyset,$  
$\varphi |_{U\cap K}$ is constant. 

\end{df}
\begin{df}
Let $Y$ be a topological space. 
For each subspace $A$ of $C(Y)$, we denote by 
$A_{nc}$ the set of all nonconstant elements in $C(Y).$ 
\end{df}
\begin{df}
Let $G$ be a semigroup generated by a subset $\Lambda $ of $X^{+}$, 
i.e., 
$G=\{ h_{n}\circ \cdots \circ h_{1}\mid n\in \NN, h_{j}\in \Lambda (\forall j)\} .$  
We regard $G$ as a subsemigroup of 
$\mbox{CM}(\Pt \setminus \{ [1:0:0]\}).$  
We denote by $J_{res}(G)$ the set of all elements 
$z\in J(G)$ satisfying that for each $U\in \mbox{Con}(F(G))$, 
$z\not\in \partial U.$ 
\end{df}
\begin{df}
Let $\tau \in {\frak M}_{1,c}(X^{+})$ (resp. ${\frak M}_{1,c}(X^{-})$).  
Let $A$ be a nonempty subset of $\Pt .$ 
For each $z\in \Pt \setminus \{ [1:0:0]\}$ (resp. 
$\Pt \setminus \{ [1:0:0]\}$), 
we set\\  
$T_{A,\tau }(z)=\tau ^{\ZZ} (\{ \gamma =(\gamma _{j})_{j\in \ZZ}\in (\supp\,\tau )^{\ZZ}\mid 
d(\gamma _{n,1}(z), A)\rightarrow 0\ \mbox{ as } n\rightarrow \infty \}).$ 
(Note that $T_{A,\tau }(z)=\tau ^{\NN} (\{ \gamma =(\gamma _{j})_{j\in \NN}\in (\supp\,\tau )^{\NN}\mid 
d(\gamma _{n,1}(z), A)\rightarrow 0\ \mbox{ as } n\rightarrow \infty \}).$)
\end{df}
We now present results on elements $\tau \in {\frak M}_{1,c}(X^{+})$ 
that are mean stable on $\Pt \setminus \{ [1:0:0]\}$. 
\begin{thm}[Cooperation Principle: Disappearance of Chaos]
\label{t:mtauspec}
Let $\tau \in {\frak M}_{1,c}(X^{+})$. 
Let $Y=\Pt \setminus \{ [1:0:0]\}.$  
Suppose that $\tau $ is mean stable on $Y.$ 
Let 
$S_{\tau }:=\bigcup _{L\in \emMin (\tau )}L.$  
Then, for each neighborhood $B$ of $[1:0:0]$,    
there exists a compact subset  \textcolor{black}{$K$} of 
$Y$ 
with 
$\Pt \setminus K\subset B$  
such that $G_{\tau }(K)\subset K$ and 
$S_{\tau }\subset \mbox{int}(K).$ 
Moreover, there exists a finite dimensional subspace 
$\textcolor{black}{W_{\tau }}\neq \{ 0\} $ of $C(Y)$ with $M_{\tau }(W_{\tau })=W_{\tau }$ 
such that 
for each compact subset $K$ of $Y$ satisfying that  
$G_{\tau }(K)\subset K$ and 
$S_{\tau }\subset \mbox{int}(K)$,  
all of the following statements 1,$\ldots ,$19 hold. 
\begin{enumerate}
\item \label{t:mtauspec2}
Let ${\cal B}_{0,\tau, K }:= \{ \varphi \in C(K )\mid M_{\tau }^{n}(\varphi )\rightarrow 0 \mbox{ as }n\rightarrow \infty \} $, and 
let $W_{\tau }|_{K}:=\{ \varphi |_{K} \in C(K)\mid \varphi \in W_{\tau }\}$.  We endow $C(K)$ with the supremum norm 
$\| \cdot \| _{\infty }.$ 
Then, ${\cal B}_{0,\tau, K }$ is a closed subspace of $C(K )$,
$W_{\tau }|_{K}=\mbox{{\em LS}}({\cal U}_{f,\tau }(K))$ 
 and there exists a direct sum decomposition 
$C(K)=\mbox{{\em LS}}({\cal U}_{f,\tau }(K))\oplus {\cal B}_{0,\tau, K}.$ 
Also, the projection map 
$\pi _{\tau, K}: C(K)\rightarrow \mbox{{\em LS}}({\cal U}_{f,\tau}(K))$ 
{\em (}$\subset C(K)${\em )}  
is continuous. Moreover, 
$W_{\tau }\subset C_{F(G_{\tau })}(Y).$ 
Furthermore, $\mbox{{\em LS}}({\cal U}_{f,\tau }(K))\subset 
\hat{C}_{F(G_{\tau })}(K )$,  
the map $g\in W_{\tau }\mapsto g|_{K}\in \mbox{{\em LS}}({\cal U}_{f,\tau }(K))$ is a linear isomorphism from $W_{\tau }$ to 
$\mbox{{\em LS}}({\cal U}_{f,\tau }(K))$, 
and 
$\dim _{\CC }(\mbox{{\em LS}}({\cal U}_{f,\tau }(K)))=\dim _{\CC }
W_{\tau }<\infty .$ 
\item \label{t:mtauspec2-1}
Let $q:=\dim_{\CC }(\emLSfk )=\dim _{\CC}W_{\tau }.$ 
Let $\{ \varphi _{j}\} _{j=1}^{q}$ be a basis of 
$\emLSfk$ such that for each $j=1,\ldots ,q$, there exists an $\alpha _{j}\in \Uvk $ with 
$M_{\tau }(\varphi _{j})=\alpha _{j}\varphi _{j}.$ Then, 
there exists a unique family $\{ \rho _{j}:C(K)\rightarrow \CC \} _{j=1}^{q}$ of 
complex linear functionals such that for each $\varphi \in C(K )$, 
$\| M_{\tau }^{n}(\varphi -\sum _{j=1}^{q}\rho _{j}(\varphi )\varphi _{j})\| _{\infty }
\rightarrow 0$ as $n\rightarrow \infty .$ 
Moreover, $\{ \rho _{j}\} _{j=1}^{q}$ satisfies all of the following.
\begin{itemize}
\item[{\em (a)}] For each $j=1,\ldots ,q$, $\rho _{j}:C(K)\rightarrow \CC $ is continuous. 
\item[{\em (b)}] For each $j=1,\ldots ,q$, $M_{\tau }^{\ast }(\rho _{j})=\alpha _{j}\rho _{j}.$ 
\item[{\em (c)}] For each $(i,j)$, $\rho _{i}(\varphi _{j})=\delta _{ij}.$ 
Moreover, $\{ \rho _{j}\} _{j=1}^{q}$ is a basis of $\emLSfak.$  
\item[{\em (d)}] For each $j=1,\ldots ,q$, {\em supp}$\, \rho _{j}\subset S_{\tau }.$ 
\end{itemize} 

\item \label{t:mtauspec1}
There exists a Borel measurable subset ${\cal A}$ of 
$(X^{+})^{\NN }$ with $\tau ^{\NN}({\cal A})=1$ such that 
\begin{itemize}
\item[{\em (a)}]
for each $\gamma \in {\cal A}$ and for each $U\in \mbox{{\em Con}}(F(G_{\tau }))$, 
each limit map of $\{ \gamma _{n,1}|_{U}\} _{n=1}^{\infty }$ is constant, and 
\item[{\em (b)}]
for each $\gamma \in {\cal A}$ and for each $Q\in \mbox{{\em Cpt}}(F(G_{\tau }))$, 
$\sup _{a\in Q}\| D(\gamma _{n,1})_{a}\| \rightarrow 0$ as $n\rightarrow \infty $, 
where $\| D(\gamma _{n,1})_{a} \| $ denotes the norm of the derivative 
of $\gamma _{n,1}$ at a point $a$ 
with respect to the Fubini-Study metric on $\Pt.$ 
\end{itemize} 
\item \label{t:mtauspecaz} For each $z\in Y $, there exists a Borel subset ${\cal A}_{z}$ 
of $(X^{+})^{\NN }$ with $\tau ^{\NN}({\cal A}_{z})=1$ with the following property.
\begin{itemize}
\item For each $\gamma =(\gamma _{1},\gamma _{2},\ldots )\in {\cal A}_{z}$, 
there exists a number $\delta =\delta (z,\gamma )>0$ such that 
$\mbox{diam}(\gamma _{n,1}(B(z,\delta )))\rightarrow 0$ as 
$n\rightarrow \infty $, where diam denotes the diameter with respect to the distance induced by the 
Fubini--Study metric on $\Pt $, and $B(z,\delta )$ denotes the ball with center $z$ and radius 
$\delta .$ 
\end{itemize}

\item \label{t:mtauspec3}
$\sharp \emMin(\tau )<\infty .$
\item \label{t:mtauspec4}
Let $\Omega= \bigcup _{A\in \text{{\em Con}}(F(G_{\tau })), A\cap S_{\tau }\neq \emptyset }A$. 
Then, $S_{\tau }$ is compact. Moreover, for each $z\in Y$, there exists a Borel measurable subset 
${\cal C}_{z}$ of $(X^{+})^{\NN }$ with $\tau ^{\NN}({\cal C}_{z})=1$ such that 
for each $\gamma \in {\cal C}_{z}$, there exists an $n\in \NN $ with 
$\gamma _{n,1}(z)\in \Omega$ and   
$d(\gamma _{m,1}(z),S_{\tau })\rightarrow 0$ as $m\rightarrow \infty .$ 
\item \label{t:mtauspec5}
Let $L\in \emMin(\tau)$ and 
$r_{L}:= \dim _{\CC }(\emLSfl ).$ Then, 
${\cal U}_{v,\tau }(L)$ is a finite subgroup of $S^{1}$ with 
$\sharp {\cal U}_{v,\tau }(L)=r_{L}.$  
Moreover, there exist an $a_{L}\in S^{1}$   
and a family $\{ \psi _{L,j}\} _{j=1}^{r_{L}}$ in ${\cal U}_{f,\tau }(L)$ 
such that 
\begin{itemize}
\item[{\em (a)}]
\label{t:mtauspec5a}
$a_{L}^{r_{L}}=1$, ${\cal U}_{v,\tau }(L)=\{ a_{L}^{j}\} _{j=1}^{r_{L}}$, 
\item[{\em (b)}]
\label{t:mtauspec5b}
$M_{\tau }(\psi _{L,j})=a_{L}^{j}\psi _{L,j} $ for each $j=1,\ldots ,r_{L}$, 
\item[{\em (c)}] 
\label{t:mtauspec5c}
$\psi _{L,j}=(\psi _{L,1})^{j}$ for each $j=1,\ldots ,r_{L}$, and 
\item[{\em (d)}] 
\label{t:mtauspec5d}
$\{ \psi _{L,j}\} _{j=1}^{r_{L}}$ is a basis of $\mbox{{\em LS}}({\cal U}_{f,\tau }(L)).$ 
\end{itemize}
\item \label{t:mtauspec6}
Let $\Psi _{S_{\tau }}: \mbox{{\em LS}}({\cal U}_{f,\tau }(K))\rightarrow 
C(S_{\tau })$ be the map defined by $\varphi \mapsto \varphi |_{S_{\tau }}.$ 
Then, $\Psi _{S_{\tau }}(\mbox{{\em LS}}({\cal U}_{f,\tau }(K)))=
\mbox{{\em LS}}({\cal U}_{f,\tau }(S_{\tau }))$
and 
$\Psi _{S_{\tau }}: \emLSfk \rightarrow 
\mbox{{\em LS}}({\cal U}_{f,\tau }(S_{\tau }))$ is a linear isomorphism. 
Furthermore, $\Psi _{S_{\tau }}\circ M_{\tau }=M_{\tau }\circ \Psi _{S_{\tau }}$ 
on $\emLSfk .$ 
\item \label{t:mtauspec7}
$\Uvk={\cal U}_{v,\tau }(S_{\tau })=\bigcup _{L\in \emMin(\tau)}\Uvl=\bigcup _{L\in \emMin(\tau)}\{ a_{L}^{j}\} _{j=1}^{r_{L}}$ 
and $\dim _{\CC }(W_{\tau })=\dim _{\CC }(\emLSf )=\sum _{L\in \emMin(\tau)}r_{L}.$ 
\item \label{t:mtauspec7-1}
$\Uvak=\Uvk ,{\cal U}_{v,\tau, \ast }(S_{\tau })={\cal U}_{v,\tau}(S_{\tau })$,  and $\Uval=\Uvl$ for each $L\in \emMin(\tau).$ 
\item \label{t:mtauspec7-2} 
Let $L\in \emMin(\tau).$ 
Let $\Lambda _{r_{L}}:= \{ g_{1}\circ \cdots \circ g_{r_{L}}\mid 
\forall j, g_{j}\in \supp\,\tau \} .$ Moreover, 
let $G_{\tau }^{r_{L}}$ be the semigroup generated by the elements of 
$\Lambda _{r_{L}}$, i.e., 
$G_{\tau }^{r_{L}}=\{ h_{1}\circ \cdots \circ h_{n}\mid n\in \NN, h_{j}\in \Lambda _{r_{L}} (\forall j)\}.$ 
Then, $r_{L}=\sharp \emMin(G_{\tau }^{r_{L}},L).$  
\item \label{t:mtauspec8}
There exist a basis 
$\{ \varphi _{L,i,K}\mid L\in \emMin(\tau), i=1,\ldots ,r_{L}\} $ 
of $\emLSfk $ and a basis $\{ \rho _{L,i,K}\mid L\in \emMin(\tau), i=1,\ldots ,r_{L}\} $ 
of $\emLSfak $ such that for each $L\in \emMin(\tau)$ and for each 
$i=1,\ldots ,r_{L}$, we have all of the following statements 
{\em (a)--(f)}.  
\begin{itemize}
\item[{\em (a)}]
$M_{\tau }(\varphi _{L,i,K})=a_{L}^{i}\varphi _{L,i,K}.$ 
\item[{\em (b)}] 
$|\varphi _{L,i,K}||_{L}\equiv 1.$ 
\item[{\em (c)}] $\varphi _{L,i,K}|_{L'}\equiv 0$ for any  $L'\in \emMin(\tau)$ with $L'\neq L.$ 
\item[{\em (d)}] $\varphi _{L,i,K}|_{L}=(\varphi _{L,1,K}|_{L})^{i}.$  
\item[{\em (e)}] $\mbox{{\em supp}}\, \rho _{L,i,K}=L.$ 
\item[{\em (f)}] $\rho _{L,i,K}(\varphi _{L,j,K})=\delta _{ij}$ for each $j=1,\ldots ,r_{L}.$   
\end{itemize}
\item \label{t:mtauspecdual}
For each $\nu \in {\frak M}_{1}(K)$, 
$d_{0,K}((M_{\tau }^{\ast })^{n}(\nu ), \emLSfak \cap {\frak M}_{1}(K))\rightarrow 0$ as 
$n\rightarrow \infty. $ Moreover, 
$\dim _{T}(\emLSfak \cap {\frak M}_{1}(K))\leq 2\dim _{\CC }\emLSfk <\infty $, 
where $\dim _{T}$ denotes the topological dimension.  
\item \label{t:mtauspec9}
For each $L\in \emMin(\tau)$, we have that 
$T_{L,\tau }: Y \rightarrow [0,1]$ is continuous,     
$M_{\tau }(T_{L,\tau })=T_{L,\tau }$, 
$T_{L,\tau }\in W_{\tau },$ and $T_{L,\tau }|_{K}\in \mbox{{\em LS}}({\cal U}_{f,\tau}(K)).$    
Moreover, $\sum _{L\in \emMin(\tau)}T_{L,\tau }(z)=1$ for each 
$z\in Y.$ 
\item \label{t:mtauspecjconti}
For each $\beta \in (\supp\,\tau)^{\ZZ}$, 
$J_{\beta }^{+}$ is a closed subset of $Y.$ Moreover, 
there exists a Borel subset ${\cal B}$ of $(\supp\,\tau )^{\ZZ}$ with 
$\tau ^{\ZZ}({\cal B})=1$ such that for each $\gamma \in {\cal B}$, 
we have $Leb_{4}(J_{\gamma }^{+})=0$, 
and such that for each $\gamma \in {\cal B}$, 
the set-valued map 
$\beta \in (\supp\,\tau )^{\ZZ} \mapsto J_{\beta }^{+}$ is continuous 
at $\beta =\gamma $ (where we regard 
$J_{\beta }^{+}$ as a closed subset of $Y$). 
\item \label{t:mtauspec9-2}
If $\sharp \emMin (\tau )\geq 2$, then 
{\em (a)} for each $L\in \emMin (\tau)$, $T_{L,\tau }(J(G_{\tau }) )=[0,1]$, 
and {\em (b)} $\dim _{\CC} W_{\tau }=\dim _{\CC }(\emLSfk )>1.$   

\item \label{t:mtauspeccfi}
If $\dim _{\CC}W_{\tau }>1$ and {\em int}$(J(G_{\tau }))=\emptyset $, 
then $\sharp \mbox{{\em Con}}(F(G_{\tau }))=\infty .$ 

\item \label{t:mtauspec10}
$S_{\tau }= \{ \overline{z\in S_{\tau }\mid \exists g\in G_{\tau } \mbox{ {\em s.t.} } z \mbox{ is an attracting fixed point of }g} \} $, 
where the closure is taken in $\Pt $.
\item \label{t:mtauspec12} 
If $\dim _{\CC} (W_{\tau })>1$, then 
for any $\varphi \in (W_{\tau })_{nc}$, there exists  
an uncountable subset $A$ of $\CC $ such that 
for each $t\in A$, 
$\emptyset \neq \varphi ^{-1}(\{ t\} )\cap J(G_{\tau })\subset J_{res}(G_{\tau }).$ 
\end{enumerate}
\end{thm}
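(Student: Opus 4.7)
The plan is to peel off the countably many values of $\varphi$ on Fatou components and use connectedness of $Y$ to show that the remaining image is still uncountable. By statement~\ref{t:mtauspec2}, $W_{\tau}\subset C_{F(G_{\tau})}(Y)$, so $\varphi$ is continuous on $Y$ and constant on each $U\in\mbox{Con}(F(G_{\tau}))$; let $c_{U}\in\CC$ denote the value of $\varphi$ on $U$ and set $\mathcal{C}:=\{c_{U}\mid U\in\mbox{Con}(F(G_{\tau}))\}$. Since $F(G_{\tau})$ is an open subset of the second countable space $Y=\Pt\setminus\{[1:0:0]\}$, $\mbox{Con}(F(G_{\tau}))$ is at most countable, hence $\mathcal{C}$ is at most countable.

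Next I would verify that $\varphi(Y)$ is uncountable. The space $Y$ is connected (being the complement of a single point in the connected manifold $\Pt$), so $\varphi(Y)$ is a connected subset of $\CC$; the hypothesis $\varphi\in(W_{\tau})_{nc}$ ensures it contains two distinct points $a,b$. Applying the continuous map $w\mapsto |w-a|$ sends $\varphi(Y)$ to a connected subset of $[0,\infty)$ containing both $0$ and $|b-a|>0$, hence an interval, which is uncountable. Therefore $A:=\varphi(Y)\setminus\mathcal{C}$ is uncountable.

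I claim $A$ has the required properties. Fix $t\in A$. Since $t\in\varphi(Y)$, we have $\varphi^{-1}(\{t\})\neq\emptyset$, and since $t\notin\mathcal{C}$, we have $\varphi^{-1}(\{t\})\cap F(G_{\tau})=\emptyset$; hence $\emptyset\neq\varphi^{-1}(\{t\})=\varphi^{-1}(\{t\})\cap J(G_{\tau})$. If some $z\in\varphi^{-1}(\{t\})$ were to lie in $\partial U$ for some $U\in\mbox{Con}(F(G_{\tau}))$, then continuity of $\varphi$ together with $\varphi|_{U}\equiv c_{U}$ would force $t=\varphi(z)=c_{U}\in\mathcal{C}$, a contradiction. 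Therefore $\varphi^{-1}(\{t\})\cap J(G_{\tau})\subset J_{res}(G_{\tau})$, completing the argument.

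The proof has essentially no technical obstacle: the only inputs are the inclusion $W_{\tau}\subset C_{F(G_{\tau})}(Y)$ from statement~\ref{t:mtauspec2}, second countability of $Y$ (giving countability of $\mbox{Con}(F(G_{\tau}))$), and connectedness of $Y$ (giving uncountability of $\varphi(Y)$). The hypothesis $\dim_{\CC}W_{\tau}>1$ enters only to guarantee $(W_{\tau})_{nc}\neq\emptyset$, so that the statement is nonvacuous.
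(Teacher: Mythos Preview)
Your argument is correct and follows essentially the same route as the paper's proof (Lemma~\ref{l:pfmtauspec12}): define $A=\varphi(Y)\setminus\varphi(F(G_{\tau}))$, use $W_{\tau}\subset C_{F(G_{\tau})}(Y)$ together with $\sharp\mbox{Con}(F(G_{\tau}))\leq\aleph_{0}$ to see that $A$ is uncountable, and then use continuity of $\varphi$ to check that each fiber over $A$ lies in $J_{res}(G_{\tau})$. Your write-up is in fact more explicit than the paper's, since you spell out why $\varphi(Y)$ is uncountable (connectedness of $Y$) and why points of $\varphi^{-1}(\{t\})$ cannot lie on $\partial U$ for any Fatou component $U$.
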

To prove Theorem~\ref{t:mtauspec}, we need the 
following Lemmas~\ref{l:minfin}--\ref{l:pfmtauspec12}.   
\begin{lem}
\label{l:minfin}
Under the assumptions of Theorem~\ref{t:mtauspec}, 
let $U_{1},\ldots, U_{m}, Q $ be the sets for $\tau $ 
coming from the assumption that $\tau $ is mean stable 
on $Y$ (see Definition~\ref{d:lc2ms}(5)). 
Then $\sharp \emMin (\tau)<\infty $ and 
$\emMin(\tau )= \emMin (G_{\tau }, \cup _{j}U_{j}).$ 
\end{lem}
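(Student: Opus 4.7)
The plan is to establish the two claims in sequence: first $\sharp \Min(\tau) < \infty$ by combining condition (b) with the contraction in (a), and then the set equality by placing each minimal set inside the absorbing region $\cup_j U_j$.

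First I would verify that every $L \in \Min(\tau)$ meets the compact set $Q$. Given any $z \in L$, condition (b) supplies $f_z \in G_{\tau}$ with $f_z(z) \in \cup_j U_j$, and the $G_{\tau}$-invariance of $L$ forces $f_z(z) \in L \cap \cup_j U_j$; composing with any element of $(\supp\,\tau)^n$ then lands in $Q \cap L$ by condition (a). Next, suppose for contradiction that $\{L_k\}_{k \in \NN}$ are infinitely many distinct minimal sets, which are pairwise disjoint by Remark~\ref{r:minimal}. Pick $x_k \in L_k \cap Q$ and extract a subsequential limit $x_{k_i} \to x_\infty \in Q \subset \cup_j U_j$, say $x_\infty \in U_{j_0}$, so $x_{k_i} \in U_{j_0}$ for large $i$. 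Let $\delta > 0$ be a Lebesgue number for the cover $\{U_j\}_{j=1}^m$ of $Q$, and pick indices $k \neq k'$ with $d(x_k, x_{k'}) < \delta$ and $x_k, x_{k'} \in U_{j_0}$. The contraction in (a) gives $d(g(x_k), g(x_{k'})) \leq c\, d(x_k, x_{k'}) < \delta$ for any $g \in (\supp\,\tau)^n$, placing both images in a common $U_{j_1}$; iterating yields compositions $g^{(m)} \in (\supp\,\tau)^{mn}$ with $d(g^{(m)}(x_k), g^{(m)}(x_{k'})) \leq c^m d(x_k, x_{k'}) \to 0$, contradicting $d(L_k, L_{k'}) > 0$ since $g^{(m)}(x_k) \in L_k$ and $g^{(m)}(x_{k'}) \in L_{k'}$.

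For the equality $\Min(\tau) = \Min(G_{\tau}, \cup_j U_j)$, the inclusion $\supset$ is immediate from the definitions: any proper $G_{\tau}$-invariant compact subset of an element of $\Min(G_{\tau}, \cup_j U_j)$ automatically lies in $\cup_j U_j$, so minimality inside $\cup_j U_j$ forces minimality inside $Y$. For the reverse inclusion, I would take $L \in \Min(\tau)$ and a seed $z \in L \cap \cup_j U_j$ from the first step, and apply Remark~\ref{r:minimal} to $\Gamma := (\supp\,\tau)^n$ to write $L = \bigcup_{g \in \Gamma} g(L)$. Since $g(\cup_j U_j) \subset Q$ for each $g \in \Gamma$, the closure of the $\Gamma$-orbit of $z$ sits inside $Q$, and the topological minimality of $L$ under $G_{\tau}$, combined with decomposing any element of $G_{\tau}$ as an $n$-block followed by a tail of length $<n$, is used to propagate this containment from $z$ to all of $L$, giving $L \subset \cup_j U_j$.

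The main obstacle is precisely the last move: upgrading the intersection $L \cap \cup_j U_j \neq \emptyset$ to the full containment $L \subset \cup_j U_j$. Condition (b) only guarantees that each orbit enters $\cup_j U_j$ eventually, while single maps $h \in \supp\,\tau$ can send $\cup_j U_j$ outside itself, so only $n$-fold compositions absorb $\cup_j U_j$ into $Q$. Executing the propagation rigorously calls for a careful interplay between the topological minimality of $L$, the $n$-step absorption from (a), and the decomposition of arbitrary $G_{\tau}$-orbits into an $n$-aligned head plus a short tail.
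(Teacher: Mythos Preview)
Your finiteness argument is correct and follows the paper's strategy: the contraction in (a) forces any two nearby points of $Q$ lying in distinct minimal sets toward each other, contradicting disjointness. The paper's version is terser (``$\mathrm{diam}(\gamma_{k,1}(U_j)) \to 0$, hence $\sharp\Min(G_\tau,\cup_j U_j)\le m$''); your Lebesgue-number step is exactly what is needed to iterate the contraction rigorously, since condition (a) only contracts pairs lying in the \emph{same} $U_j$ and one must check the images remain in a common $U_{j'}$ after each $n$-step.

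For the equality, the obstacle you flag is not merely a matter of careful bookkeeping: the containment $L \subset \cup_j U_j$ can genuinely fail when $n > 1$. If $L = \{p,q\}$ is an attracting $2$-cycle for every map in $\supp\,\tau$ and the witnessing data consist of a single small ball $U_1$ about $p$ with $n = 2$, then conditions (a) and (b) hold but $q \notin \cup_j U_j$. Your ``$n$-block head plus short tail'' decomposition yields only $L = \bigcup_{r=0}^{n-1}\Lambda_r(L')$ with $L'\subset Q$ (where $\Lambda_r$ denotes the set of $r$-fold compositions from $\supp\,\tau$), and the tails $\Lambda_r(L')$ for $r\ge 1$ can leave $\cup_j U_j$. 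The paper's one-line argument has the same gap: it tacitly uses $G_\tau(\cup_j U_j)\subset \cup_j U_j$, i.e.\ $n=1$, and this is invoked explicitly later (proof of Lemma~\ref{l:azproof}). The clean fix is either to take the witnessing data with $n=1$, or to replace $\cup_j U_j$ by the $G_\tau$-invariant open set $\bigcup_{r=0}^{n-1}\Lambda_r(\cup_j U_j)$, which your own decomposition shows contains every minimal set.
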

\begin{proof}
Under the assumptions of our lemma, for each $(\gamma _{j})_{j\in \NN}\in (\supp\,\tau)^{\NN}$ 
and for each $j=1,\ldots, m$, 
diam$(\gamma _{n,1}(U_{j}))\rightarrow 0$ as 
$n\rightarrow \infty $. Therefore 
$\sharp \Min (G_{\tau }, \cup _{j}U_{j})\leq m <\infty .$ 
Moreover, since for each $a\in Y$ there exists a map $g_{a}\in G_{\tau }$ with $g_{a}(a)\in \cup _{j}U_{j}$, we have 
$\Min(\tau )=\Min (G_{\tau }, Y)=\Min (G_{\tau }, \cup _{j}U_{j}).$  
Thus, 
we have proved Lemma~\ref{l:minfin}. 
\end{proof}

\begin{lem}
\label{l:Kexists}
Under the assumption of Theorem~\ref{t:mtauspec}, 
for each neighborhood $B$ of $[1:0:0]$,    
there exists a compact subset  $K$ of 
$\Pt \setminus \{ [1:0:0]\}$ 
with 
$\Pt \setminus K\subset B$ such that 
$f(K)\subset K$ for each $f\in \supp\,\tau $ and such that 
$S_{\tau }\subset \mbox{int}(K).$  
\end{lem}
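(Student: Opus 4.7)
The plan is to take $K:=\Pt\setminus B'$ for a small open neighborhood $B'$ of $[1:0:0]$ lying inside $B$, disjoint from the (necessarily compact) set $S_{\tau}\subset Y$, and forward-invariant under $\hat g^{-1}$ for every $g\in\supp\,\tau$ simultaneously. Compactness of $S_{\tau}$ in $Y$ is immediate from Lemma~\ref{l:minfin}, since $\Min(\tau)$ is finite and each of its elements is a compact subset of $Y$. The real content is the simultaneous contraction of $B'$ by all $\hat g^{-1}$, which I would extract from the attracting-fixed-point structure at $[1:0:0]$ (Lemma~\ref{l:ghsm}(2)) combined with a uniformity argument over the compact set $\supp\,\tau$.

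First I would work in the affine chart $\Phi:(u,v)\mapsto[1:u:v]$ centred at $[1:0:0]$, in which $\Poi=\{v=0\}$. Writing a general $g\in X^{+}$ as $g(X,Y)=(Y+\alpha,\,p(Y)-\delta X)$ with $\deg p=n\geq 2$ and leading coefficient $c_{0}\neq 0$, the inverse $g^{-1}(X,Y)=(\delta^{-1}(p(X-\alpha)-Y),\,X-\alpha)$ becomes, after pulling back via $X=1/v$, $Y=u/v$, a map $(u,v)\mapsto(u',v')$ with
\begin{equation*}
v'=\frac{\delta\,v^{n}}{Q(u,v)},\qquad u'=\frac{\delta(1-\alpha v)\,v^{n-1}}{Q(u,v)},
\end{equation*}
where $Q(u,v)=c_{0}(1-\alpha v)^{n}+\sum_{j=1}^{n}c_{j}v^{j}(1-\alpha v)^{n-j}-uv^{n-1}$ satisfies $Q(u,0)=c_{0}\neq 0$. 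Hence $|v'|=O(|v|^{n})$ and $|u'|=O(|v|^{n-1})$ as $v\to 0$, uniformly for $|u|$ bounded, and these formulas extend continuously across $\{v=0\}$ to $(0,0)$, consistently with $\hat g^{-1}(\Poi\setminus\{[0:1:0]\})=\{[1:0:0]\}$ from Lemma~\ref{l:ghsm}(2). Since a compact subset of $X^{+}$ meets only finitely many of the components $X^{+}_{n}=\{g\in X^{+}:\deg p=n\}$, on $\supp\,\tau$ the integer $n$ is bounded and the parameters $\alpha,\delta,c_{0},\dots,c_{n}$ remain in a compact set with $|\delta|$ and $|c_{0}|$ bounded away from $0$. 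Consequently there exist $r,s>0$, arbitrarily small, such that $B':=\Phi(\{|u|<s,\,|v|<r\})$ lies in $B\setminus S_{\tau}$ and $\hat g^{-1}(\overline{B'})\subset B'$ for every $g\in\supp\,\tau$.

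It remains to verify $g(K)\subset K$ for each $g\in\supp\,\tau$, with $K:=\Pt\setminus B'$. Suppose, for contradiction, that $z\in K$ with $\hat g(z)\in B'$. If $z\in\Poi\setminus\{[1:0:0]\}$, then by Lemma~\ref{l:ghsm}(1), $\hat g(z)=[0:1:0]\notin B'$ (since $B'$ is chosen near $[1:0:0]$), a contradiction. Hence $z\in\Ct$, so $g(z)\in\Ct\subset\Pt\setminus\{[0:1:0]\}$, and applying $\hat g^{-1}$ yields $z=\hat g^{-1}(g(z))\in\hat g^{-1}(\overline{B'})\subset B'$, contradicting $z\in K$. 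The main obstacle is the uniform contraction step above: one must exploit both the attracting behaviour of each $\hat g^{-1}$ at $[1:0:0]$ and the fact that compactness of $\supp\,\tau$ in $X^{+}$ automatically bounds $\deg p$ together with the defining parameters of $g$. Once this uniformity is in hand, the remainder is a clean chart calculation plus the identity $g\circ g^{-1}=\mathrm{id}$ together with the collapse $\hat g(\Poi\setminus\{[1:0:0]\})=\{[0:1:0]\}$.
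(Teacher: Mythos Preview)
Your proof is correct and follows essentially the same route the paper sketches: the paper's one-line proof simply cites Lemmas~\ref{l:ghsm} and~\ref{l:minfin}, and your argument makes explicit precisely the uniform contraction of $\hat g^{-1}$ near $[1:0:0]$ (Lemma~\ref{l:ghsm}(2)) over the compact parameter set $\supp\,\tau$, together with the compactness of $S_{\tau}$ in $Y$ coming from Lemma~\ref{l:minfin}. One minor point: to conclude $S_{\tau}\subset\mathrm{int}(K)=\Pt\setminus\overline{B'}$ you need $\overline{B'}\cap S_{\tau}=\emptyset$ rather than merely $B'\cap S_{\tau}=\emptyset$, but this is immediate by shrinking $r,s$ slightly.
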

\begin{proof}
By Lemmas~\ref{l:ghsm} and \ref{l:minfin}, it is easy to see that the statement of 
our lemma holds. 
\end{proof}
Let $B$ be any neighborhood of $[1:0:0]$ in $\Pt.$ 
For the rest, let $K$ be a compact set in $\Pt $ 
with $\Pt \setminus K\subset B$ such that 
$f(K)\subset K$ for each $f\in \supp\,\tau$ and such that 
$S_{\tau }\subset \mbox{int}(K)$. 
Then, we may assume that 
$S_{\tau }\subset Q\subset \cup _{j=1}^{m}U_{j}\subset K$, where 
$U_{1},\ldots, U_{m} , Q$ are the sets for $\tau $ coming from the 
assumption that $\tau $ is mean stable on $Y$ 
(see Definition~\ref{d:lc2ms}(5)). 
\begin{lem}
\label{l:hypmetcont}
Under the assumptions of Theorem~\ref{t:mtauspec}, 
let $U_{1},\ldots, U_{m}, Q$ be the sets for $\tau $ coming 
from the definition of mean stability. 
Then, for each $\gamma =(\gamma _{j})_{j\in \NN} \in 
(\supp\,\tau)^{\NN}$ and for each $j$,   
$\mbox{diam}(\gamma _{n}\cdots \gamma _{1}(U_{j}))\rightarrow 
0 $ as $ n\rightarrow \infty $ and 
for each  $\gamma =(\gamma _{j})_{j\in \NN} \in 
(\supp\,\tau)^{\NN}$, for each $z\in \cup _{j}U_{j}$ 
$\| D(\gamma _{n,1})_{z}\| \rightarrow 0$ as 
$n\rightarrow \infty .$     
\end{lem}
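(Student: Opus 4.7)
The two conclusions both follow from a single chain-rule estimate made possible by the forward invariance built into condition (a) of mean stability. The key remark is that $\gamma_{n,1}(\bigcup_{k} U_{k}) \subset Q \subset \bigcup_{k} U_{k}$, so for any $z \in \bigcup_{k} U_{k}$ the orbit $z_{i} := \gamma_{in,1}(z)$ under blocks of length $n$ stays in $\bigcup_{k} U_{k}$, with $z_{i} \in U_{l_{i}}$ for some index $l_{i}$ at each step. Since $\gamma_{(i+1)n,\,in+1}$ is $c$-Lipschitz on $U_{l_{i}}$, its differential satisfies $\| D(\gamma_{(i+1)n,\,in+1})_{z_{i}} \| \leq c$, and the chain rule inductively yields
\[
\| D(\gamma_{Nn,1})_{z} \| \leq c^{N} \quad \text{for every } z \in \textstyle\bigcup_{k} U_{k} \text{ and every } N \geq 1.
\]

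The derivative assertion is then immediate. Write a general index $m$ as $m = Nn + r$ with $0 \leq r < n$ and factor $\gamma_{m,1} = \gamma_{m,\,Nn+1} \circ \gamma_{Nn,1}$. Compactness of $\supp\,\tau$ in $X^{+}$, continuity of $(f,w) \mapsto \| Df_{w} \|$, and compactness of $Q$ furnish a uniform constant $C_{0}$ with $\| D(\gamma_{m,\,Nn+1})_{w} \| \leq C_{0}$ for every $w \in Q$ and every admissible choice of at most $n-1$ maps from $\supp\,\tau$. Since $\gamma_{Nn,1}(z) \in Q$, combining this with the display gives $\| D(\gamma_{m,1})_{z} \| \leq C_{0} c^{N} \to 0$ as $m \to \infty$.

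For the diameter assertion, I integrate the derivative bound along paths. Pick a compact set $Q^{\ast}$ with $Q \subset \mathrm{int}(Q^{\ast}) \subset Q^{\ast} \subset \bigcup_{k} U_{k}$, cover $Q^{\ast}$ by finitely many small geodesically convex balls each contained in some $U_{k}$ (possible by the Lebesgue number of $\{ U_{k}\}$ on $Q^{\ast}$), and chain these balls to obtain a uniform constant $L$ bounding the intrinsic path distance in $Q^{\ast} \cap \bigcup_{k} U_{k}$ between any two points of the same path-component. Since $\gamma_{n,1}(U_{j}) \subset Q$, integrating $\| D(\gamma_{Nn,\,n+1})_{\cdot} \| \leq c^{N}$ along such a path yields $d(\gamma_{(N+1)n,1}(x), \gamma_{(N+1)n,1}(y)) \leq c^{N} L$ for any $x,y \in U_{j}$ whose first iterates lie in a common path-component of $Q^{\ast}$, so $\mathrm{diam}(\gamma_{m,1}(U_{j})) \to 0$ as $m \to \infty$.

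The main obstacle is the path-connectedness bookkeeping: the preceding estimate controls only pairs $x,y$ whose first iterates share a path-component of $Q^{\ast}$. This is resolved by refining the $U_{j}$'s (without disturbing the mean-stability witness) so that the image $\gamma_{n,1}(U_{j})$ lies in a single path-component of $Q^{\ast}$, which is possible because the cover is finite and condition (b) of mean stability provides the necessary finite reachability.
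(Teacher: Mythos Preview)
Your derivative argument is correct and is the substantive content here: the observation that every point of $\bigcup_k U_k$ lies in \emph{some} $U_l$, so the block $\gamma_{(i+1)n,in+1}$ has derivative norm $\le c$ at every such point, and the chain rule gives $\|D(\gamma_{Nn,1})_z\|\le c^N$ uniformly on $\bigcup_k U_k$. The paper's own proof is literally one line (``follows from the assumption that $\tau$ is mean stable''), so you have supplied what the paper leaves implicit.

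For the diameter, your instinct to integrate the derivative is sound, and you are right that there is a bookkeeping issue when $\gamma_{n,1}(U_j)$ is not known to lie in a single $U_k$. But your resolution in the last paragraph is off. Condition~(b) of mean stability concerns reachability of $\bigcup_j U_j$ from an arbitrary point of $\Lambda$; it says nothing about the component structure of the $U_j$'s or of $Q^\ast$, and it does not justify the refinement you describe. A cleaner way to close the gap is purely metric: let $\delta>0$ be a Lebesgue number for the cover $\{U_k\}$ of $Q$, chosen small enough that $\delta$-balls in $Y$ are geodesically convex. Your pointwise bound $\|D(\gamma_{Nn,1}')\|\le c^N$ on $\bigcup_k U_k$ then gives, by integrating along geodesics, that any two points $w,w'\in Q$ with $d(w,w')<\delta$ satisfy $d(\gamma_{Nn,1}'(w),\gamma_{Nn,1}'(w'))\le c^N d(w,w')$ for every $N$. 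After the first block the image $\gamma_{n,1}(U_j)$ sits in $Q$; covering this (compact, connected if $U_j$ is taken connected, which one may arrange without loss of generality since $Q$ is compact) by a finite chain of overlapping $\delta$-balls yields $\mathrm{diam}(\gamma_{(N+1)n,1}(U_j))\le Pc^N\delta\to 0$ with $P$ the chain length. This avoids the $Q^\ast$ and path-distance apparatus entirely. So: keep your derivative paragraph, drop the appeal to condition~(b), and replace the path-integration step by the Lebesgue-number/chain argument above.
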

\begin{proof}
The statement follows from the assumption that 
$\tau $ is mean stable on $Y.$  
\end{proof}
\begin{lem}
\label{l:azproof}
Under the assumptions of Theorem~\ref{t:mtauspec}, statement \ref{t:mtauspecaz} of Theorem~\ref{t:mtauspec} holds. 
\end{lem}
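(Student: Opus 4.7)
The plan is to reduce the claim to showing that for $\tau^{\NN}$-a.e.\ sequence $\gamma$ the orbit $\gamma_{n,1}(z)$ eventually enters the cover $\bigcup_{j=1}^{m}U_{j}$ coming from the mean stability of $\tau$; once that occurs, Lemma~\ref{l:hypmetcont} supplies the required contraction on each $U_{j}$. Since $z\neq[1:0:0]$, I first apply Lemma~\ref{l:Kexists} to a neighborhood $B$ of $[1:0:0]$ chosen so that $z\notin B$ and $\bigcup_{j}U_{j}\subset \Pt\setminus B$; this yields a compact set $K\subset Y$ containing both $z$ and $\bigcup_{j}U_{j}$, with $G_{\tau}(K)\subset K$. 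In particular the orbit $\{\gamma_{n,1}(z)\}_{n\geq 0}$ stays in $K$ for every $\gamma\in(\supp\,\tau)^{\NN}$, which is the compactness we need below.

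The main step is to bound the hitting time $T(\gamma):=\inf\{n\geq 0 \mid \gamma_{n,1}(z)\in \bigcup_{j}U_{j}\}$. For each $w\in K$ mean stability gives some $f_{w}=h_{k(w)}\circ\cdots\circ h_{1}\in G_{\tau}$ with $f_{w}(w)\in U_{j(w)}$, and by continuity of composition and evaluation together with openness of $U_{j(w)}$ one finds open neighborhoods $V_{w}$ of $w$ in $K$ and $W^{(w)}_{i}$ of $h_{i}$ in $X^{+}$ such that $g_{k(w)}\circ\cdots\circ g_{1}(w')\in U_{j(w)}$ for every $w'\in V_{w}$ and every $(g_{1},\ldots,g_{k(w)})\in \prod_{i}W^{(w)}_{i}$. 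Since each $h_{i}\in \supp\,\tau$, every $W^{(w)}_{i}$ has positive $\tau$-measure, so $p_{w}:=\prod_{i}\tau(W^{(w)}_{i})>0$. Extracting a finite subcover $V_{w_{1}},\ldots,V_{w_{L}}$ of $K$ and setting $p^{\ast}:=\min_{l}p_{w_{l}}>0$, $N^{\ast}:=\max_{l}k(w_{l})$, I obtain the uniform estimate: writing $A_{k}:=\{T>kN^{\ast}\}$, on $A_{k}$ the point $\gamma_{kN^{\ast},1}(z)$ belongs to some $V_{w_{l}}$, and the block of fresh maps $(\gamma_{kN^{\ast}+1},\ldots,\gamma_{kN^{\ast}+k(w_{l})})$ is independent of $\mathcal{F}_{kN^{\ast}}$, so $\tau^{\NN}(A_{k+1}\mid \mathcal{F}_{kN^{\ast}})\leq 1-p^{\ast}$ on $A_{k}$. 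Iterating gives $\tau^{\NN}(A_{k})\leq(1-p^{\ast})^{k}\to 0$, hence $T<\infty$ $\tau^{\NN}$-a.s.

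On the event $\{T<\infty\}$ let $j_{0}$ be such that $\gamma_{T,1}(z)\in U_{j_{0}}$. Continuity of $\gamma_{T,1}$ at $z$ together with openness of $U_{j_{0}}$ supplies $\delta=\delta(z,\gamma)>0$ with $\gamma_{T,1}(B(z,\delta))\subset U_{j_{0}}$, and for $n>T$
$$\mbox{diam}\bigl(\gamma_{n,1}(B(z,\delta))\bigr)\leq \mbox{diam}\bigl(\gamma_{n,T+1}(U_{j_{0}})\bigr),$$
which tends to $0$ as $n\to\infty$ by Lemma~\ref{l:hypmetcont} applied to the shifted sequence $(\gamma_{T+1},\gamma_{T+2},\ldots)\in(\supp\,\tau)^{\NN}$. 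Setting $\mathcal{A}_{z}:=\{T<\infty\}\cap(\supp\,\tau)^{\NN}$ then gives the required full-measure Borel subset. The delicate point is the uniformity in the probabilistic step: a naive Borel-Cantelli using a single preferred word $f_{z}$ does not work, because the "good" word depends on where the orbit currently sits; the uniformity of $p^{\ast}$ is recovered precisely by compactness of the invariant set $K$, and this is why the initial reduction via Lemma~\ref{l:Kexists} is the crucial ingredient.
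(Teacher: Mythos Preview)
Your proof is correct and follows the same route as the paper: reduce to showing that the orbit of $z$ almost surely enters $\bigcup_{j}U_{j}$ while staying inside a compact forward-invariant set $K$, and then invoke the contraction on the $U_{j}$ (Lemma~\ref{l:hypmetcont}) to shrink the image of a small ball. The only difference is cosmetic: the paper outsources the hitting-time step to \cite[Lemma~4.6]{Splms10}, whereas you write out the standard compactness / uniform-positive-probability / geometric-decay argument explicitly, making your version more self-contained.
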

\begin{proof}
Let $U_{1},\ldots, U_{m}, Q$ be the sets for $\tau $ 
coming from the definition of mean stability. 
We may assume that $\cup _{j}U_{j}\subset K.$ 
Note that $f(\cup _{j}U_{j})\subset Q\subset \cup _{j}U_{j}$ 
for each $f\in \supp\,\tau $ 
by the mean stability of $\tau .$ 
Moreover, 
for each $a\in Y$, there exists a $g_{a}\in G_{\tau }$ with 
$g_{a}(a)\in \cup _{j}U_{j}.$ 
By this property and \cite[Lemma 4.6]{Splms10},   
for each $z\in K$, there exists a Borel subset 
${\cal A}_{z,K}$ of $(\supp\,\tau)^{\NN} $ with $\tau ^{\NN}({\cal A}_{z,K})=1$ such that 
for each $\gamma \in {\cal A}_{z,K}$, there exists an $n\in \NN $ such that 
$\gamma _{n,1}(z)\in \cup _{j}U_{j}.$ Since $K$ can be taken 
so that $\Pt \setminus K$ is an arbitrarily small neighborhood of 
$[1:0:0]$, 
for each $z\in Y$ 
there exists a Borel subset  ${\cal A}_{z}$ of $X_{\tau }$ with $\tau ^{\NN}({\cal A}_{z})=1$ such that 
for each $\gamma \in {\cal A}_{z}$, there exists an $n\in \NN $ such that 
$\gamma _{n,1}(z)\in \cup _{j}U_{j}.$
Then ${\cal A}_{z}$ satisfies the desired property. 
\end{proof}

\begin{lem}
\label{l:kcpt} Under the assumptions of Theorem~\ref{t:mtauspec}, 
statement~\ref{t:mtauspec4} of Theorem~\ref{t:mtauspec} holds.
\end{lem}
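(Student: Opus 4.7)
The plan is to establish the two assertions of statement~\ref{t:mtauspec4} of Theorem~\ref{t:mtauspec} in turn. Compactness of $S_\tau$ is immediate from Lemma~\ref{l:minfin}, which gives $\sharp\Min(\tau) < \infty$, so that $S_\tau = \bigcup_{L \in \Min(\tau)} L$ is a finite union of compact sets.

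For the main assertion, fix $z \in Y$. First I would apply Lemma~\ref{l:azproof} (statement~\ref{t:mtauspecaz} of Theorem~\ref{t:mtauspec}, already established) to produce a Borel set ${\cal A}_z \subset (\supp\tau)^{\NN}$ with $\tau^{\NN}({\cal A}_z) = 1$ such that for each $\gamma \in {\cal A}_z$ there exists $n$ with $\gamma_{n,1}(z) \in \cup_j U_j$; by mean stability the orbit then lies in $Q \subset \cup_j U_j$ after at most $n_0$ further steps and stays there, and by Lemma~\ref{l:hypmetcont} $\mbox{diam}(\gamma_{m,1}(U_j))$ tends to $0$ uniformly in $\gamma \in (\supp\tau)^{\NN}$.

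Next I would sharpen ${\cal A}_z$ to force the orbit into arbitrarily small neighborhoods of $S_\tau$. For each $\varepsilon > 0$, the open set $N_\varepsilon(S_\tau) \cap \cup_j U_j$ meets the hypothesis of the Markov hitting argument in \cite[Lemma~4.6]{Splms10}: for every $a \in \overline{Q}$, Zorn's lemma applied to the forward-invariant compact set $\overline{G_\tau(a)}$ produces some $L \in \Min(\tau) \subset S_\tau$ inside it, so some $g_a \in G_\tau$ sends $a$ into $N_\varepsilon(S_\tau)$. Intersecting the resulting full-measure hitting sets over a sequence $\varepsilon_k \to 0$ yields a Borel set ${\cal C}_z \subset {\cal A}_z$ of full measure on which the orbit enters every $N_{\varepsilon_k}(S_\tau)$. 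To upgrade this to $d(\gamma_{m,1}(z), S_\tau) \to 0$, I would choose $\delta > 0$ so small that $N_\delta(S_\tau) \subset \cup_j U_j$ and observe that once $\gamma_{m,1}(z) \in U_j$ with $L \cap U_j \neq \emptyset$ for some $L \in \Min(\tau)$, any $p \in L \cap U_j$ satisfies $\gamma_{m+k,1}(p) \in L \subset S_\tau$ for all $k$, while Lemma~\ref{l:hypmetcont} gives $d(\gamma_{m+k,1}(z), \gamma_{m+k,1}(p)) \le \mbox{diam}(\gamma_{m+k, m+1}(U_j)) \to 0$.

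For the $\Omega$ clause, I would verify forward invariance of $\Omega$ under each $f \in \supp\tau$: if $A \in \mbox{Con}(F(G_\tau))$ meets some $L \in \Min(\tau)$ at a point $q$, then $f(A) \subset F(G_\tau)$ by Remark~\ref{r:ifgsgx}, $f(A)$ is connected, and $f(q) \in f(L) \subset L \subset S_\tau$, so the component of $F(G_\tau)$ containing $f(A)$ still belongs to $\Omega$. Since each $U_j \subset F(G_\tau)$ by the equicontinuity built into mean stability, $S_\tau \subset \cup_j U_j \subset F(G_\tau)$, so a sufficiently small $N_\delta(S_\tau)$ lies in $\Omega$; once the orbit enters this neighborhood it stays in $\Omega$, supplying the required $n$ with $\gamma_{n,1}(z) \in \Omega$. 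The main obstacle will be the contracting-neighborhood step: one must ensure that on the decisive return to $N_\delta(S_\tau)$ the orbit actually lands in some $U_j$ containing a point of $S_\tau$, since only then does the diameter contraction of $\gamma_{m,1}(U_j)$ translate into distance contraction to $S_\tau$. Handling the case $U_j \cap S_\tau = \emptyset$, together with orbits hopping between different $U_j$'s, will require refining the cover near $S_\tau$ (for instance by intersecting each $U_j$ with a connected component of $\Omega$ meeting $L$) and checking that the refined cover inherits both the uniform contraction and the invariance built into the original mean stability data.
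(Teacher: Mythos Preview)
Your approach works, but you have made the last step harder than it needs to be, and it differs from the paper's route. The obstacle you flag dissolves trivially: since $S_\tau \subset \cup_j U_j$, the subcollection $\{U_j : U_j \cap S_\tau \neq \emptyset\}$ already covers $S_\tau$, so for small enough $\delta$ one has $N_\delta(S_\tau) \subset \bigcup\{U_j : U_j \cap S_\tau \neq \emptyset\}$. A \emph{single} application of the hitting lemma with target $N_\delta(S_\tau)$ then lands the orbit in some good $U_j$, and your diameter argument (with $p \in U_j \cap S_\tau$) finishes; no countable intersection over $\varepsilon_k$, no refined cover, and no invariance of the refined cover are needed, since once $\gamma_{m,1}(z) \in U_j$ the later orbit stays inside the shrinking set $\gamma_{m+k,m+1}(U_j)$ regardless of which other $U_{j'}$ it also happens to meet.

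The paper takes a shorter path. It applies \cite[Lemma~4.6]{Splms10} once with target $\Omega \cap K$ (after noting, via the same Zorn argument you use, that every $z_0$ can be mapped into $\Omega$ by some $g\in G_\tau$), and then argues \emph{deterministically} that from any $z \in \Omega$ and for every $\gamma$, $d(\gamma_{n,1}(z), S_\tau) \to 0$. The point (left implicit in the paper) is that $z$ lies in a Fatou component $A$ containing some $p \in S_\tau \cap U_{j_0}$; any subsequential limit $\phi$ of the normal family $\{\gamma_{n,1}|_A\}$ is constant on the open set $A \cap U_{j_0}$ because $\mathrm{diam}(\gamma_{n,1}(U_{j_0})) \to 0$, hence constant on all of $A$ by the identity principle, with value $\phi(p) \in S_\tau$. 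This yields a stronger conclusion (convergence for all $\gamma$ from $\Omega$, not merely a.e.) and bypasses both the $\varepsilon_k$ intersection and the cover-refinement you were contemplating.
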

\begin{proof}Let $U_{1},\ldots, U_{m}, Q $ be the sets for $\tau $ 
coming from the definition of mean stability. 
By Lemma~\ref{l:minfin}, 
$S_{\tau }=\bigcup _{L\in \Min(\tau)} L$ is a compact 
subset of $\cup _{j}U_{j}(\subset F(G_{\tau })).$   
Moreover, $G_{\tau }(S_{\tau })\subset S_{\tau }$ and 
$G_{\tau }(F(G_{\tau }))\subset F(G_{\tau }).$  
Let $\Omega:= \bigcup _{A\in \mbox{Con}(F(G_{\tau })),A\cap S_{\tau }\neq \emptyset } A.$ 
Then, $G_{\tau }(\Omega )\subset \Omega.$ 
Since $\tau $ is mean stable on $Y$, 
for each $z_{0}\in Y$, there exists an element $g\in G_{\tau }$ such that 
$g(z_{0})\in \Omega.$ 
Combining this with $G_{\tau }(\Omega \cap K)\subset \Omega \cap K$ and 
\cite[Lemma 4.6]{Splms10},   
it follows that for each $z_{0}\in K$, there exists a Borel measurable subset 
${\cal V}_{z_{0},K}$ of $(\supp\,\tau)^{\NN}$ with $\tau ^{\NN}({\cal V}_{z_{0},K})=1$ such that 
for each $\gamma \in {\cal V}_{z_{0},K}$, 
there exists an $n\in \NN $ such that 
$\gamma _{n,1}(z_{0})\in \Omega \cap K\subset \Omega .$ 
Since $K$ can be taken so that $\Pt \setminus K$ is an arbitrarily 
small neighborhood of $[1:0:0]$, it follows that 
for each $z_{0}\in Y$, there exists a Borel measurable subset 
${\cal C}_{z_{0}}$ of $(\supp\,\tau )^{\NN}$ with $\tau ^{\NN}({\cal C}_{z_{0}})=1$ such that 
for each $\gamma \in {\cal C}_{z_{0}}$, 
there exists an $n\in \NN $ such that $\gamma _{n,1}(z_{0})\in \Omega.$ 
Also, for each $\gamma =(\gamma _{j})_{j\in \NN} \in (\supp\,\tau )^{\NN}$ and 
 for each $z\in \Omega$, 
$d(\gamma _{n,1}(z),S_{\tau })\rightarrow 0$ as $n\rightarrow \infty .$ 
It follows that  for each $\gamma \in {\cal C}_{z_{0}}$, 
$d(\gamma _{n,1}(z_{0}),S_{\tau })\rightarrow 0$ as $n\rightarrow \infty .$ 
Thus, we have proved our lemma. 
\end{proof}
\begin{lem}
\label{l:lscf}
Under the assumptions of Theorem~\ref{t:mtauspec}, 
$\emLSfk \subset \hat{C}_{F(G_{\tau })}(K).$ 
\end{lem}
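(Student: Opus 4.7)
The plan is first to reduce, by linearity of $\text{LS}(\cdot)$, to showing that every $\varphi \in {\cal U}_{f,\tau }(K)$ with $M_{\tau }\varphi = \alpha \varphi $ and $|\alpha |=1$ is constant on $U\cap K$ for each $U\in \mbox{Con}(F(G_{\tau }))$ with $U\cap K\neq \emptyset $. Iterating the eigenvalue equation gives $M_{\tau }^{n}\varphi =\alpha ^{n}\varphi $, so for all $y,z\in U\cap K$ and every $n\in \NN $,
\begin{equation*}
|\varphi (y)-\varphi (z)|\leq \int _{(X^{+})^{\NN }}\bigl|\varphi (\gamma _{n,1}(y))-\varphi (\gamma _{n,1}(z))\bigr|\, d\tau ^{\NN }(\gamma ).
\end{equation*}
Because $G_{\tau }(K)\subset K$ and $\varphi $ is uniformly continuous on the compact set $K$, dominated convergence reduces the lemma to proving that, for $\tau ^{\NN }$-a.e.\ $\gamma \in (X^{+})^{\NN }$,
\begin{equation*}
\mbox{diam}\bigl(\gamma _{n,1}(P)\bigr)\to 0 \ \ \text{as }n\to \infty ,
\end{equation*}
where $P$ is the compact image of any continuous path in $U$ joining $y$ and $z$ (such a path exists since $U$ is an open connected subset of the complex manifold $Y$).

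To establish the diameter claim, I would fix a countable dense subset $\{ p_{k}\} _{k\in \NN }$ of $U$ and set ${\cal A}:=\bigcap _{k\in \NN }{\cal A}_{p_{k}}$, a Borel set of full $\tau ^{\NN }$-measure by Lemma~\ref{l:azproof}. For each $\gamma \in {\cal A}$ and each $k$, Lemma~\ref{l:azproof} supplies some $\delta _{k}=\delta (p_{k},\gamma )>0$ with $\mbox{diam}(\gamma _{n,1}(B(p_{k},\delta _{k})))\to 0$. On the other hand, the family $\{\gamma _{n,1}|_{U}\}_{n}$ is equicontinuous on compact subsets of $U$ by the very definition of $F(G_{\tau })$; Arzel\`a--Ascoli then makes it a normal family, so any subsequence admits a further subsequence $\gamma _{n_{j},1}$ converging locally uniformly on $U$ to some continuous limit map $f:U\to \Pt $.

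The decisive step is that every such $f$ must be constant on $U$. Since each $\gamma _{n_{j},1}$ is holomorphic on $U$ (being the restriction of a finite composition of elements of $X^{+}$, which extends holomorphically to $Y$ by Lemma~\ref{l:ghsm}), $f$ is itself holomorphic as a local uniform limit of holomorphic maps. Because $\mbox{diam}(\gamma _{n,1}(B(p_{1},\delta _{1})))\to 0$, the map $f$ is constant on the nonempty open ball $B(p_{1},\delta _{1})\subset U$; the identity principle for holomorphic maps on the connected complex manifold $U$ then forces $f$ to be constant on all of $U$. Since every subsequential limit of $\{\gamma _{n,1}|_{P}\}$ is therefore a constant map, a standard Arzel\`a--Ascoli contradiction argument yields $\mbox{diam}(\gamma _{n,1}(P))\to 0$ for every $\gamma \in {\cal A}$. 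Combined with the dominated-convergence bound above this gives $\varphi (y)=\varphi (z)$, and since $y,z$ were arbitrary in $U\cap K$, we conclude $\varphi \in \hat{C}_{F(G_{\tau })}(K)$.

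The main obstacle is passing from the pointwise diameter-shrinking provided by Lemma~\ref{l:azproof} to a uniform diameter-shrinking along the whole compact path $P$: the shrinking radii $\delta (p_{k},\gamma )$ depend on $\gamma $ in an uncontrolled way and need not yield any finite cover of $P$. The crucial device that circumvents this difficulty is the identity principle for holomorphic maps, which converts shrinking on a single open ball into constancy on the whole connected Fatou component $U$, and thereby lets complex-analytic rigidity do what a purely measure-theoretic covering argument cannot.
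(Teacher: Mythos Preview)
Your proof is correct. Both your argument and the paper's hinge on Lemma~\ref{l:azproof}, but you take a more self-contained route. The paper first invokes an external reference (\cite[Proposition~4.7, Lemma~4.2]{Splms10}) to deduce that the orbit $\{M_{\tau }^{n}(\varphi )\}_{n}$ is relatively compact in $C(K)$, extracts a subsequence with $M_{\tau }^{n_{j}}(\varphi )\to \psi $ in $C(K)$ and $a^{n_{j}}\to 1$, concludes $\varphi =\psi $, and then asserts---rather tersely---that Lemma~\ref{l:azproof} forces $\psi (x)=\psi (y)$ for $x,y$ in the same Fatou component. Your approach avoids the external compactness result entirely: you integrate the eigenvalue relation directly, work pathwise in $\gamma $, and use normality on $F(G_{\tau })$ together with the identity principle for holomorphic maps to pass from the single shrinking ball $B(p_{1},\delta _{1})$ supplied by Lemma~\ref{l:azproof} to constancy of every subsequential limit on all of $U$. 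This makes explicit the complex-analytic rigidity step that the paper's proof leaves to the reader, and it is essentially what is needed to justify the paper's final sentence; the price is a slightly longer argument, while the gain is that nothing beyond Lemma~\ref{l:azproof} and standard normal-family facts is required.
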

\begin{proof}
Let $\varphi \in C(K)$ be such that $\varphi \neq 0$ and $M_{\tau }(\varphi )=a\varphi $ 
for some $a\in S^{1}.$ 
Since $\tau $ is mean stable on $Y$, we have 
$J_{\ker }(G_{\tau }|_{K})=\emptyset$, where 
$G_{\tau }|_{K}:=\{ g|_{K}\mid g\in G_{\tau }\} .$  
Combining this with 
\cite[Proposition 4.7, Lemma 4.2(2)(6)]{Splms10}, 
there exists a strictly increasing sequence $\{ n_{j}\} _{j=1}^{\infty }$ in $\NN $  and 
an element $\psi \in C(K)$ such that 
$M_{\tau }^{n_{j}}(\varphi )\rightarrow \psi $ 
and $a^{n_{j}}\rightarrow 1$ 
as $j\rightarrow \infty $. Thus,  
$\varphi =\frac{1}{a^{n_{j}}}M_{\tau }^{n_{j}}(\varphi )\rightarrow \psi $ 
in $C(K)$ as $j\rightarrow \infty .$ 
Therefore, $\varphi =\psi .$ 
Let $U\in \mbox{Con}(F(G_{\tau }))$ and let $x,y\in U\cap K.$ 
By Lemma~\ref{l:azproof}, 
we have 
$\psi (x)-\psi (y)=\lim _{j\rightarrow \infty }(M_{\tau }^{n_{j}}(\varphi )(x)-M_{\tau }^{n_{j}}(\varphi )(y))
=0.$ Therefore, $\varphi =\psi \in \hat{C}_{F(G_{\tau })}(K).$ 
Thus, we have proved our lemma.  
\end{proof}
\begin{lem}
\label{l:lspec}
Under the assumptions of Theorem~\ref{t:mtauspec}, 
statement~\ref{t:mtauspec5} holds.
\end{lem}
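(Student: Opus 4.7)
The plan is to exploit the minimality of $L$ to derive a sharp functional equation for unitary eigenvectors of $M_{\tau}$ on $C(L)$, from which the group structure of ${\cal U}_{v,\tau}(L)$ and all parts of the statement follow by purely algebraic manipulations.

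First I would show that for any $\varphi \in {\cal U}_{f,\tau}(L)$ with $M_{\tau}(\varphi) = a\varphi$ (where $|a| = 1$), the modulus $|\varphi|$ is constant on $L$. The pointwise inequality $|\varphi(z)| = |M_{\tau}(\varphi)(z)| \leq M_{\tau}(|\varphi|)(z)$ together with Jensen/monotonicity shows that the nonempty closed set $A := \{z \in L : |\varphi(z)| = \max_{L}|\varphi|\}$ satisfies $|\varphi(f(z))| = \max_{L}|\varphi|$ for $\tau$-a.e.\ $f \in \supp\,\tau$ whenever $z \in A$, and by continuity for every $f \in \supp\,\tau$. Hence $f(A) \subset A$ for every $f \in \supp\,\tau$, so $G_{\tau}(A) \subset A$, and the minimality of $L$ (Remark~\ref{r:minimal}) forces $A = L$. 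Normalizing so that $|\varphi| \equiv 1$ and invoking the equality case of the triangle inequality in $a\varphi(z) = \int \varphi(f(z))\,d\tau(f)$, we obtain the rigid functional equation $\varphi \circ f = a\varphi$ on $L$ for every $f \in \supp\,\tau$.

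From this functional equation, the remaining statements follow by algebra. Pointwise products and complex conjugates of normalized eigenvectors again satisfy the functional equation, with eigenvalues multiplied, respectively conjugated, so ${\cal U}_{v,\tau}(L)$ is a multiplicative subgroup of $S^{1}$. Moreover, if $\varphi_{1}, \varphi_{2}$ are normalized eigenvectors for the same eigenvalue, then the ratio $\varphi_{1}/\varphi_{2}$ is $G_{\tau}$-invariant on $L$; by minimality $\overline{G_{\tau}(z)} = L$ for every $z \in L$, so $\varphi_{1}/\varphi_{2}$ is constant, and each eigenspace is one-dimensional. Choosing one normalized eigenvector per eigenvalue identifies $\dim_{\CC}\emLSfl = \sharp\,{\cal U}_{v,\tau}(L) = r_{L}$. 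Once $r_{L} < \infty$ is established, ${\cal U}_{v,\tau}(L)$ is a finite subgroup of $S^{1}$, hence cyclic; letting $a_{L}$ be a generator and $\psi_{L,1}$ a normalized eigenvector for $a_{L}$, the functional equation immediately yields $(\psi_{L,1})^{j} \circ f = a_{L}^{j}(\psi_{L,1})^{j}$, so setting $\psi_{L,j} := (\psi_{L,1})^{j}$ verifies conclusions (a), (b), (c); conclusion (d) follows from one-dimensionality of each eigenspace and $\sharp\,{\cal U}_{v,\tau}(L) = r_{L}$.

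The main obstacle will be the finiteness of $r_{L}$; without it, nothing in the above argument prevents ${\cal U}_{v,\tau}(L)$ from being dense in $S^{1}$. The natural route is to pass from $L$ to $K$ and use the finite-dimensionality of $\emLSfk$ (obtained in the steps of Theorem~\ref{t:mtauspec} establishing statements~\ref{t:mtauspec2} and \ref{t:mtauspec2-1}): each normalized $\varphi \in {\cal U}_{f,\tau}(L)$ should extend to an element of ${\cal U}_{f,\tau}(K)$ via a renormalized iterative limit of the form $\tilde{\varphi}(z) := \lim_{n\to\infty} a^{-n} M_{\tau}^{n}(\tilde{\varphi}_{0})(z)$ for any continuous extension $\tilde{\varphi}_{0} \in C(K)$; convergence on $K$ is guaranteed by the contracting dynamics toward $S_{\tau}$ (Lemma~\ref{l:kcpt}) together with the fact that any two continuous extensions of $\varphi$ differ by an element of ${\cal B}_{0,\tau,K}$. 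This exhibits an injection from projective eigenclasses on $L$ into those on $K$, forcing $r_{L} \leq \dim_{\CC}\emLSfk < \infty$; the delicate point is verifying that this extension procedure is well defined and yields a genuine eigenvector on $K$, which is where mean stability of $\tau$ on $Y$ must be used most carefully.
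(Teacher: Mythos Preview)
Your argument for the functional equation $\varphi\circ f=a\varphi$ on $L$ (via $|\varphi|\equiv 1$ and the equality case of the triangle inequality for integrals) is correct and in fact slightly cleaner than the paper's route through Claims~1--3, which detours through constancy on connected components of $F(G_{\tau})$. The algebraic consequences you draw (group structure of ${\cal U}_{v,\tau}(L)$, one-dimensional eigenspaces, the basis $\psi_{L,j}=(\psi_{L,1})^{j}$) are all fine.

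The gap is in your finiteness argument. You propose to deduce $r_{L}<\infty$ from $\dim_{\CC}\emLSfk<\infty$, citing statements~\ref{t:mtauspec2} and~\ref{t:mtauspec2-1} of Theorem~\ref{t:mtauspec}. But in the paper's logical order, $\dim_{\CC}\emLSfk<\infty$ is established in Lemma~\ref{l:cclsfcb0}, whose proof explicitly invokes Lemma~\ref{l:lspec} (together with Lemmas~\ref{l:phiinj} and~\ref{l:lsfklsfl}). So your route is circular: the lemma you are proving is an input to the fact you want to quote.

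The paper avoids this by proving finiteness of ${\cal U}_{v,\tau}(L)$ directly inside the present lemma, using only minimality and the Fatou-set structure. Since $\tau$ is mean stable, $L\subset F(G_{\tau})$; fix $z_{0}\in L$ and let $A_{0}\in\mbox{Con}(F(G_{\tau}))$ with $z_{0}\in A_{0}$. One shows $\varphi|_{A_{0}\cap L}$ is constant (this is where Claim~1 is actually needed). By minimality, $z_{0}\in\overline{G_{\tau}(z_{0})}$, so there exist $l\in\NN$ and $(\beta_{1},\ldots,\beta_{l})\in(\supp\,\tau)^{l}$ with $\beta_{l}\cdots\beta_{1}(z_{0})\in A_{0}$. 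Iterating your functional equation gives $\varphi(\beta_{l}\cdots\beta_{1}(z_{0}))=a^{l}\varphi(z_{0})$, while constancy on $A_{0}\cap L$ gives $\varphi(\beta_{l}\cdots\beta_{1}(z_{0}))=\varphi(z_{0})$; hence $a^{l}=1$. This bounds ${\cal U}_{v,\tau}(L)\subset\{a\in S^{1}:a^{l}=1\}$ with $l$ independent of $\varphi$, and finiteness follows. You already have the functional equation; what you are missing is this recurrence-to-a-Fatou-component argument, which replaces your appeal to $\dim_{\CC}\emLSfk<\infty$.
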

\begin{proof}
Let $L\in \Min(\tau ).$ 
Let $\varphi \in \Ufl $ be such that $M_{\tau }(\varphi )=a\varphi $ for some $a\in S^{1}$ 
and $\sup _{z\in L}|\varphi (z)|=1.$ 
Let $\Theta := \{ z\in L\mid |\varphi (z)|=1\} .$ 
For each $z\in L$, we have 
$|\varphi (z)|= |M_{\tau }(\varphi )(z)|\leq M_{\tau }(|\varphi |)(z)\leq 1.$ 
Thus, $G_{\tau }(\Theta )\subset \Theta .$ 
Since $L\in \Min(\tau )$, $\overline{G_{\tau }(z)}=L$ for each 
$z\in \Theta .$ Hence, we obtain $\Theta =L.$ 
 Using the argument of the proof of Lemma~\ref{l:lscf}, 
it is easy to see the following claim. \\ 
Claim 1: 
For each $A\in \mbox{Con}(F(G_{\tau }))$ with $A\cap L\neq \emptyset $, 
$\varphi |_{A\cap L}$ is constant and 
$|\varphi ||_{A\cap L}\equiv 1.$ 

 Let $A_{0}\in \mbox{Con}(F(G_{\tau }))$ be an element with $A_{0}\cap L\neq \emptyset $ and 
 let $z_{0}\in A_{0}\cap L$ be a point. We now show the following claim. \\
Claim 2: The map $h\mapsto \varphi (h(z_{0})), h\in \supp\,\tau $, is constant. 

 To show this claim, by claim 1 and that 
 $\bigcup _{h\in \supp\,\tau }\{ h(z_{0})\} $ is a  
compact subset of $F(G_{\tau })$, we obtain that 
$\varphi (z_{0})=\frac{1}{a}M_{\tau }(\varphi )(z_{0})$ is equal to a finite convex combination 
of elements of $S^{1}.$  Since $|\varphi (z_{0})|=1$, it follows that 
$h\mapsto \varphi (h(z_{0})), h\in \supp\,\tau $ is constant. Thus, Claim 2 holds. 

 By Claim 2 and $M_{\tau }(\varphi )=a\varphi $, we immediately obtain the following claim.\\ 
Claim 3: For each $h\in \supp\,\tau $, $\varphi (h(z_{0}))=a\varphi (z_{0}).$ 

Since $L\in \Min(\tau )$, $\overline{G_{\tau }(z_{0})}=L.$ 
Hence there exist an $l\in \NN $ and an element $\beta =(\beta _{1},\ldots ,\beta _{l})\in 
(\supp\,\tau )^{l}$ such that $\beta _{l}\cdots \beta _{1}(z_{0})\in A_{0}.$ 
From Claim 3, it follows that $a^{l}=1.$ 
Thus, we have shown that $\Uvl\subset \{ a\in S^{1}\mid a^{l}=1\} .$ 
Moreover, by Claims 1, 3, and the previous argument, we obtain that if $\varphi _{1},\varphi _{2}\in C(L)$ 
with $\sup _{z\in L}|\varphi _{i}(z)|=1$,  
$a_{1},a_{2}\in S^{1}$, and $M_{\tau }(\varphi _{i})=a_{i}\varphi _{i}$, then 
$|\varphi _{i}|\equiv 1$, 
$M_{\tau }(\varphi _{1}\varphi _{2})=a_{1}a_{2}\varphi _{1}\varphi _{2}$, and 
$M_{\tau }(\varphi _{1}^{-1})=a_{1}^{-1}\varphi _{1}^{-1}.$ 
From these arguments, it follows that $\Ufl$ is a finite subgroup of $S^{1}.$ 
Let $r_{L}:= \sharp \Ufl .$ 
Let $a_{L}\in \Ufl$ be an element such that 
$\{ a_{L}^{j}\mid j=1,\ldots, r_{L}\}=\Ufl .$ 
By Claim 3 and $\overline{G_{\tau }(z_{0})}=L$, we obtain that 
any element $\varphi \in C(L)$ satisfying $M_{\tau }(\varphi )=a_{L}^{j}\varphi $ 
is uniquely determined by the constant $\varphi |_{A_{0}\cap L}.$ 
Thus, for each $j=1,\ldots r_{L}$, there exists a unique 
$\psi _{L,j}\in \Ufl $ such that 
$M_{\tau }\psi _{L,j}=a_{L}^{j}\psi _{L,j}$ and $\psi _{L,j}|_{A_{0}\cap L}\equiv 1.$ 
It is easy to see that $\{ \psi _{L,j}\} _{j=1}^{r_{L}}$ is a basis of 
$\LSfl $. Moreover, by the previous argument, we obtain that 
$\psi _{L,j}=(\psi _{L,1})^{j}$ for each $j=1,\ldots ,r_{L}.$ 
Thus, we have proved our lemma.   
\end{proof}
\begin{lem}
\label{l:lsfklsfl}
Under the assumptions and notation of Theorem~\ref{t:mtauspec}, 
the map 
$\alpha : \mbox{{\em LS}}({\cal U}_{f,\tau}(S_{\tau })) \rightarrow \oplus _{L\in \emMin(\tau)}\emLSfl$ 
defined by $\alpha (\varphi )=(\varphi |_{L})_{L\in 
\emMin (\tau )}$ 
is a linear isomorphism.  
\end{lem}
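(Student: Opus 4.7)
The plan is to exploit the structure of $S_\tau$ as a finite disjoint union of compact sets. By Lemma~\ref{l:minfin} we have $\sharp\,\text{Min}(\tau)<\infty$, and by Remark~\ref{r:minimal} distinct minimal sets are disjoint, so $S_\tau = \bigsqcup_{L\in\text{Min}(\tau)} L$ is a disjoint union of finitely many compact sets. In particular each $L$ is clopen in $S_\tau$.

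First I would check that $\alpha$ is well-defined with codomain $\bigoplus_L \text{LS}({\cal U}_{f,\tau}(L))$. Linearity is immediate, so it suffices to check this on a unitary eigenvector $\varphi \in {\cal U}_{f,\tau}(S_\tau)$ with $M_\tau \varphi = a\varphi$. Since $G_\tau(L)\subset L$, for each $z\in L$ we have $f(z)\in L$ for every $f\in\text{supp}\,\tau$; hence $(M_\tau(\varphi|_L))(z) = \int \varphi(f(z))\,d\tau(f) = (M_\tau\varphi)(z) = a\varphi|_L(z)$, so either $\varphi|_L=0$ or $\varphi|_L\in{\cal U}_{f,\tau}(L)$. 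Either way, $\varphi|_L\in\text{LS}({\cal U}_{f,\tau}(L))$.

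Next, injectivity is trivial: if $\alpha(\varphi)=0$ then $\varphi$ vanishes on $\bigcup_L L = S_\tau$, so $\varphi=0$ in $C(S_\tau)$. For surjectivity, fix $L_0\in\text{Min}(\tau)$ and $\psi\in{\cal U}_{f,\tau}(L_0)$ with $M_\tau\psi=a\psi$, and define $\tilde\psi\in C(S_\tau)$ by $\tilde\psi|_{L_0}:=\psi$ and $\tilde\psi|_L:=0$ for $L\neq L_0$. Because the minimal sets are pairwise disjoint and compact, $\tilde\psi$ is continuous on $S_\tau$. For $z\in L_0$, as above $(M_\tau\tilde\psi)(z)=(M_\tau\psi)(z)=a\psi(z)=a\tilde\psi(z)$; for $z\in L\neq L_0$ we have $f(z)\in L$ hence $\tilde\psi(f(z))=0$, so $(M_\tau\tilde\psi)(z)=0=a\tilde\psi(z)$. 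Thus $\tilde\psi\in{\cal U}_{f,\tau}(S_\tau)$ (it is nonzero since $\psi\neq 0$), and $\alpha(\tilde\psi)$ has $L_0$-component $\psi$ and all other components $0$. Running over all $L_0$ and all basis elements of ${\cal U}_{f,\tau}(L_0)$ supplied by Lemma~\ref{l:lspec}, the image of $\alpha$ contains a generating set for $\bigoplus_L \text{LS}({\cal U}_{f,\tau}(L))$, proving surjectivity.

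There is essentially no genuine obstacle: the entire argument rests on the disjoint-compact-union structure of $S_\tau$ (finiteness of $\text{Min}(\tau)$ via Lemma~\ref{l:minfin}, disjointness via Remark~\ref{r:minimal}) together with the forward invariance $G_\tau(L)\subset L$. The only point requiring minor care is confirming that extension by zero across clopen components of $S_\tau$ preserves both continuity and the eigenvector relation $M_\tau\tilde\psi = a\tilde\psi$, which both follow immediately from the invariance of each $L$ under every $f\in\text{supp}\,\tau$.
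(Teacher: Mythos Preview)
Your proof is correct and follows essentially the same approach as the paper: the paper's proof merely records that $\sharp\,\Min(\tau)<\infty$, that the minimal sets are pairwise disjoint, and that $(M_\tau\varphi)|_L=M_\tau(\varphi|_L)$, then asserts the conclusion. You have simply written out explicitly the well-definedness, injectivity, and surjectivity steps that the paper leaves implicit.
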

\begin{proof}
By Lemma~\ref{l:minfin}, $\sharp \Min(\tau)<\infty .$ 
Moreover, elements of $\Min(\tau)$ are mutually disjoint. 
Furthermore, for each $L\in \Min (\tau)$ and for each $\varphi \in C(S_{\tau })$, 
$(M_{\tau }(\varphi ))|_{L}=M_{\tau }(\varphi |_{L}).$ 
Thus, we easily see that the statement of our lemma holds. 
\end{proof}
\begin{lem}
\label{l:phiinj}
Under the assumptions and notation of Theorem~\ref{t:mtauspec}, \\ 
$\Psi _{S_{\tau }}(\emLSfk )\subset  \mbox{{\em LS}}({\cal U}_{f,\tau}(S_{\tau }))$ and 
$\Psi _{S_{\tau }}: \emLSfk \rightarrow \mbox{{\em LS}}({\cal U}_{f,\tau}(S_{\tau }))$ is injective.   
\end{lem}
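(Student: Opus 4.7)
The plan is to handle the two assertions separately. For the inclusion $\Psi_{S_\tau}(\mbox{LS}({\cal U}_{f,\tau}(K)))\subset \mbox{LS}({\cal U}_{f,\tau}(S_\tau))$, I would take any $\varphi\in \mbox{LS}({\cal U}_{f,\tau}(K))$ and write $\varphi=\sum_{k=1}^{q}c_{k}\varphi_{k}$ with $\varphi_{k}\in{\cal U}_{f,\tau}(K)$ and $M_{\tau}(\varphi_{k})=a_{k}\varphi_{k}$, $|a_{k}|=1$. Since every minimal set is $G_{\tau}$-forward invariant, we have $g(S_{\tau})\subset S_{\tau}$ for each $g\in\supp\,\tau$, so $M_{\tau}$ is well defined on $C(S_{\tau})$ and $M_{\tau}(\varphi_{k}|_{S_{\tau}})=a_{k}(\varphi_{k}|_{S_{\tau}})$. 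Thus $\varphi_{k}|_{S_{\tau}}$ is either $0$ or an element of ${\cal U}_{f,\tau}(S_{\tau})$ with the same unitary eigenvalue, whence $\varphi|_{S_{\tau}}\in \mbox{LS}({\cal U}_{f,\tau}(S_{\tau}))$.

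For injectivity I would suppose $\varphi\in\mbox{LS}({\cal U}_{f,\tau}(K))$ satisfies $\varphi|_{S_{\tau}}=0$, and reduce to the case where $\varphi$ is itself a unitary eigenvector. After grouping terms, write $\varphi=\sum_{j=1}^{q}\widetilde\varphi_{j}$ where each $\widetilde\varphi_{j}$ satisfies $M_{\tau}(\widetilde\varphi_{j})=\widetilde a_{j}\widetilde\varphi_{j}$ with the $\widetilde a_{j}\in S^{1}$ pairwise distinct. Since $S_{\tau}$ is $G_{\tau}$-invariant, the relation $\varphi|_{S_{\tau}}=0$ iterates to $\sum_{j}\widetilde a_{j}^{n}\,\widetilde\varphi_{j}|_{S_{\tau}}=0$ for every $n\geq 0$. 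Taking $n=0,1,\dots,q-1$ and evaluating at any point of $S_{\tau}$, the Vandermonde matrix $(\widetilde a_{j}^{n})$ is invertible, so $\widetilde\varphi_{j}|_{S_{\tau}}=0$ for every $j$. It therefore suffices to prove the key statement: if $\psi\in{\cal U}_{f,\tau}(K)$ and $\psi|_{S_{\tau}}=0$, then $\psi\equiv 0$ on $K$ (which contradicts $\psi\ne 0$ and forces each $\widetilde\varphi_{j}=0$, hence $\varphi=0$).

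To prove this key statement, fix $\psi\in{\cal U}_{f,\tau}(K)$ with $M_{\tau}(\psi)=a\psi$, $|a|=1$, and $\psi|_{S_{\tau}}=0$. For any $z\in K$, on the one hand $M_{\tau}^{n}(\psi)(z)=a^{n}\psi(z)$, so $|M_{\tau}^{n}(\psi)(z)|=|\psi(z)|$ for all $n$. On the other hand,
\[
 M_{\tau}^{n}(\psi)(z)=\int_{(X^{+})^{\NN}}\psi(\gamma_{n,1}(z))\,d\tau^{\NN}(\gamma).
\]
By statement \ref{t:mtauspec4} of Theorem~\ref{t:mtauspec} (established in Lemma~\ref{l:kcpt}), there is a Borel set ${\cal C}_{z}\subset (X^{+})^{\NN}$ with $\tau^{\NN}({\cal C}_{z})=1$ such that $d(\gamma_{n,1}(z),S_{\tau})\to 0$ as $n\to\infty$ for each $\gamma\in{\cal C}_{z}$. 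Since $G_{\tau}(K)\subset K$, the orbit stays in $K$, and since $\psi$ is continuous on $K$ with $\psi|_{S_{\tau}}=0$, we get $\psi(\gamma_{n,1}(z))\to 0$ for every $\gamma\in{\cal C}_{z}$. The bound $|\psi(\gamma_{n,1}(z))|\leq \|\psi\|_{\infty}$ justifies the dominated convergence theorem, yielding $M_{\tau}^{n}(\psi)(z)\to 0$. Combining the two expressions forces $|\psi(z)|=0$, and as $z\in K$ was arbitrary, $\psi\equiv 0$.

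The main technical point is the passage to the limit inside the integral, which rests on three structural facts assembled earlier: $G_{\tau}$-forward invariance of $K$ (Lemma~\ref{l:Kexists}), the almost-sure attraction of orbits to $S_{\tau}$ (Lemma~\ref{l:kcpt}), and the fact that unitary eigenvectors have constant modulus along iteration. Once these are in place the Vandermonde reduction is routine, so I do not expect any serious obstacle beyond the careful bookkeeping of eigenspaces and the verification that $M_{\tau}$ genuinely restricts to $C(S_{\tau})$.
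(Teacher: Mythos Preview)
Your proof is correct and follows essentially the same approach as the paper: both reduce to showing that a unitary eigenvector vanishing on $S_{\tau}$ must be identically zero, and both prove this by combining the eigenvalue relation with the almost-sure attraction of orbits to $S_{\tau}$ established in Lemma~\ref{l:kcpt}. The only minor differences are that you use the identity $|M_{\tau}^{n}(\psi)(z)|=|\psi(z)|$ directly whereas the paper passes to a subsequence $n_{j}$ with $a^{n_{j}}\to 1$ so that $\varphi=\frac{1}{a^{n_{j}}}M_{\tau}^{n_{j}}(\varphi)\to 0$, and you spell out the Vandermonde reduction from linear combinations to single eigenvectors, which the paper leaves implicit.
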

\begin{proof}
We first prove the following claim. 

\noindent Claim 1: $\Psi _{S_{\tau }}: \LSfk \rightarrow C(S_{\tau })$ is injective. 

To prove this claim, let $\varphi \in \Ufk $ and let $a\in S^{1}$ with 
$M_{\tau }(\varphi )=a\varphi $ and suppose $\varphi |_{S_{\tau }}\equiv 0.$ 
Let $\{ n_{j}\} _{j=1}^{\infty }$ be a sequence in $\NN $ such that 
$a^{n_{j}}\rightarrow 1$ as $j\rightarrow \infty .$ 
By Lemma~\ref{l:kcpt}, it follows that 
$\varphi =\frac{1}{a^{n_{j}}}M_{\tau }^{n_{j}}(\varphi )\rightarrow 0$ as $j\rightarrow \infty .$ 
Thus, $\varphi =0$. However, this is a contradiction. Therefore, Claim 1 holds. 

 The statement of our lemma easily follows from Claim 1. 
 Thus, we have proved our lemma.
%
\end{proof}
\begin{lem}
\label{l:cclsfcb0}
Under the assumptions and notation of Theorem~\ref{t:mtauspec}, 
${\cal B}_{0,\tau , K}$ is a closed subspace of $C(K)$ and  
there exists a direct sum decomposition 
$C(K)=\emLSfk \oplus {\cal B}_{0,\tau , K}.$ 
Moreover, $\dim _{\CC }(\emLSfk )<\infty $ and 
the projection $\pi _{\tau, K}:C(K)\rightarrow \emLSfk $ is continuous.   
Furthermore, setting $r:=\prod _{L\in \emMin (G_{\tau }, Y)}r_{L} $, we have that  
for each $\varphi \in \emLSfk $, $M_{\tau }^{r}(\varphi )=\varphi .$ 
\end{lem}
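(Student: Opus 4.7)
The plan is to establish the four claims in the order they appear, leaning heavily on the preceding lemmas of the section.

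First, for the closedness of $\mathcal{B}_{0,\tau,K}$, I would use that $M_{\tau}\colon C(K)\to C(K)$ is a Markov operator with $\|M_{\tau}\|_{\mathrm{op}}\le 1$. If $\varphi_{k}\to \varphi$ in $C(K)$ with each $\varphi_{k}\in \mathcal{B}_{0,\tau,K}$, then for any $\varepsilon>0$ I pick $k$ with $\|\varphi-\varphi_{k}\|_{\infty}<\varepsilon/2$ and estimate $\|M_{\tau}^{n}(\varphi)\|_{\infty}\le \|\varphi-\varphi_{k}\|_{\infty}+\|M_{\tau}^{n}(\varphi_{k})\|_{\infty}$; the second term is less than $\varepsilon/2$ for all large $n$, giving $\varphi\in \mathcal{B}_{0,\tau,K}$.

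Next, for $\dim_{\CC}(\emLSfk)<\infty$ I would combine Lemma~\ref{l:phiinj} and Lemma~\ref{l:lsfklsfl}: the composition $\alpha\circ \Psi_{S_{\tau}}$ gives an injective linear map $\emLSfk \hookrightarrow \bigoplus_{L\in \emMin(\tau)}\emLSfl$, and by Lemma~\ref{l:minfin} the index set is finite while by Lemma~\ref{l:lspec} each $\dim_{\CC}\emLSfl=r_{L}<\infty$; hence $\dim_{\CC}\emLSfk\le \sum_{L\in \emMin(\tau)}r_{L}$. For the periodicity assertion, let $\varphi\in \Ufk$ with $M_{\tau}\varphi=a\varphi$, $a\in S^{1}$. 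By the injectivity of $\Psi_{S_{\tau}}$ on $\emLSfk$ there exists some $L\in \emMin(\tau)$ with $\varphi|_{L}\not\equiv 0$; then $\varphi|_{L}\in \Ufl$ with the same eigenvalue $a$, so Lemma~\ref{l:lspec} gives $a\in \{a_{L}^{j}\}_{j=1}^{r_{L}}$, whence $a^{r_{L}}=1$ and therefore $a^{r}=1$. Linearity on the basis of $\Ufk$ then yields $M_{\tau}^{r}(\varphi)=\varphi$ for every $\varphi\in \emLSfk$.

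The main step, and the main obstacle, is the direct sum decomposition $C(K)=\emLSfk \oplus \mathcal{B}_{0,\tau,K}$ together with continuity of the projection. The intersection $\emLSfk \cap \mathcal{B}_{0,\tau,K}=\{0\}$ is immediate: if $\varphi$ lies in both, then $\varphi=M_{\tau}^{nr}(\varphi)\to 0$, so $\varphi=0$. For the sum decomposition, my plan is to define
$$\pi_{\tau,K}(\varphi)=\lim_{N\to \infty}\frac{1}{N}\sum_{n=0}^{N-1}M_{\tau}^{nr}(\varphi),$$
show the limit exists in $C(K)$ for every $\varphi$, and verify that $\varphi-\pi_{\tau,K}(\varphi)\in \mathcal{B}_{0,\tau,K}$. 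The limit is clear on $\emLSfk$ (it is the identity by the previous step) and on any $\varphi$ for which $M_{\tau}^{n}(\varphi)\to 0$ (the average tends to zero). The hard part is producing the limit for a general $\varphi\in C(K)$. Here I would prove that $\{M_{\tau}^{n}(\varphi)\}_{n\ge 0}$ is relatively compact in $C(K)$ by means of Arzelà--Ascoli: uniform boundedness is free from $\|M_{\tau}\|\le 1$, while the equicontinuity needs to be extracted from the mean-stability hypothesis together with Lemmas~\ref{l:hypmetcont} and \ref{l:kcpt} — orbits of points in $K$ are driven into the open sets $U_{1},\dots,U_{m}$ where a fixed number of compositions contract by a factor $c<1$, so a standard telescoping argument on $M_{\tau}^{n}(\varphi)(z)=\int \varphi(\gamma_{n,1}(z))\,d\tau^{n}(\gamma)$ gives a uniform modulus of continuity. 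Once relative compactness is in hand, any two subsequential limits $\psi_{1},\psi_{2}$ of $\frac{1}{N}\sum_{n=0}^{N-1}M_{\tau}^{nr}(\varphi)$ satisfy $M_{\tau}^{r}(\psi_{i})=\psi_{i}$ (so $\psi_{i}\in \emLSfk$), and taking differences in Cesàro averages forces $\psi_{1}=\psi_{2}$, establishing convergence of the full sequence. This produces $\pi_{\tau,K}$, which is a bounded linear projection onto $\emLSfk$ with kernel $\mathcal{B}_{0,\tau,K}$, and its continuity follows from $\|\pi_{\tau,K}(\varphi)\|_{\infty}\le \|\varphi\|_{\infty}$; in particular $\mathcal{B}_{0,\tau,K}=\ker \pi_{\tau,K}$ is closed (giving a second route to the first assertion).
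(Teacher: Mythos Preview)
Your overall strategy---establish relative compactness of orbits $\{M_{\tau}^{n}\varphi\}$ in $C(K)$ and then extract a splitting---is the same as the paper's. The paper obtains compactness from $J_{\ker}(G_{\tau}|_{K})=\emptyset$ via \cite[Proposition~4.7, Lemma~4.2]{Splms10} and then invokes \cite[p.~352]{Ly} (a Jacobs--de~Leeuw--Glicksberg type result) for the decomposition $C(K)=\overline{\LSfk}\oplus \mathcal{B}_{0,\tau,K}$; the finite-dimensionality and periodicity steps are handled via Lemmas~\ref{l:phiinj}, \ref{l:lsfklsfl}, \ref{l:lspec} exactly as you do.

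The genuine gap is in your last paragraph. You define $\pi_{\tau,K}$ as a Ces\`aro limit and then assert that its kernel is $\mathcal{B}_{0,\tau,K}$, but this is precisely the nontrivial point you flagged earlier (``verify that $\varphi-\pi_{\tau,K}(\varphi)\in\mathcal{B}_{0,\tau,K}$'') and never discharged. The mean ergodic theorem identifies $\ker\pi_{\tau,K}$ with $\overline{\mathrm{ran}(I-M_{\tau}^{r})}$, i.e.\ the set of $\psi$ whose Ces\`aro averages $\frac{1}{N}\sum_{n<N}M_{\tau}^{nr}\psi$ tend to~$0$; this is strictly weaker than $M_{\tau}^{n}\psi\to 0$. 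Upgrading Ces\`aro convergence to actual convergence requires the compact-semigroup machinery behind JdLG: one passes to the semigroup $\overline{\{M_{\tau}^{n}\}}$ in the strong operator topology, locates its minimal idempotent $e$, and uses that every $\omega$-limit operator lies in the group $e\,\overline{\{M_{\tau}^{n}\}}$ and hence annihilates $\ker e$. That is exactly what the paper outsources to \cite{Ly}. Your Arzel\`a--Ascoli sketch for equicontinuity is also thin (the time for an orbit to enter $\bigcup_{j}U_{j}$ depends on both $z$ and $\gamma$, so a ``standard telescoping argument'' is not enough without the kind of bookkeeping carried out later in the proof of Theorem~\ref{t:utauca}), but that part is at least recoverable along the lines of \cite{Splms10}; the Ces\`aro-to-convergence step is where your argument actually stops short.
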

\begin{proof}
Since $\tau $ is mean stable on $Y$, we have 
$J_{\ker }(G_{\tau }|_{K})=\emptyset.$ 
Combining this with \cite[Proposition 4.7, Lemma 4.2(2)(6)]{Splms10}, 
we obtain that 
for each $\varphi \in C(K)$, 
$\overline{\bigcup _{n=1}^{\infty }\{ M_{\tau }^{n}(\varphi )\} }$ is 
compact in $C(K).$ 
By \cite[p.352]{Ly}, 
it follows that there exists a direct sum decomposition 
$C(K) =\overline{\LSfk} \oplus  {\cal B}_{0,\tau ,K}.$ 
Moreover, combining Lemma~\ref{l:phiinj},  
Lemma~\ref{l:lspec}, and Lemma~\ref{l:lsfklsfl}, 
we obtain that  
$$\dim _{\CC }(\LSfk )<\infty $$ and for each $\varphi \in \LSfk ,$ $M_{\tau }^{r}(\varphi )=\varphi .$ 
Hence, there exists a direct sum decomposition 
$C(K)=\LSfk \oplus {\cal B}_{0,\tau ,K}.$ 
Since ${\cal B}_{0,\tau ,K}$ is closed in $C(K)$ and 
$\dim _{\CC }(\LSfk )<\infty $, 
it follows that the projection $\pi _{\tau, K}: C(K)\rightarrow \LSfk $ is continuous. 
Thus, we have proved our lemma.  
\end{proof}
\begin{lem}
\label{l:lsiso}
Under the assumptions and notation of Theorem~\ref{t:mtauspec}, 
statement~\ref{t:mtauspec6} holds. 
\end{lem}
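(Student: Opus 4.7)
The plan is to verify the three assertions separately, using the earlier lemmas as building blocks.

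First I would handle commutativity: since $G_{\tau }(S_{\tau })\subset S_{\tau }$ (each minimal set is $G_{\tau }$-invariant), for any $\psi \in C(K)$ and any $z\in S_{\tau }$ one has $(M_{\tau }\psi )(z)=\int \psi (g(z))\, d\tau (g)=\int (\psi |_{S_{\tau }})(g(z))\, d\tau (g)=(M_{\tau }(\psi |_{S_{\tau }}))(z)$, which is exactly $\Psi _{S_{\tau }}\circ M_{\tau }=M_{\tau }\circ \Psi _{S_{\tau }}$ on $\emLSfk $. From this commutation, any unitary eigenvector $\psi $ of $M_{\tau }:C(K)\rightarrow C(K)$ with eigenvalue $a$ restricts to a $C(S_{\tau })$ function with $M_{\tau }(\psi |_{S_{\tau }})=a\psi |_{S_{\tau }}$; moreover $\psi |_{S_{\tau }}\neq 0$ by Lemma~\ref{l:phiinj} (Claim 1). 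Hence $\Psi _{S_{\tau }}(\emLSfk )\subset \mbox{LS}({\cal U}_{f,\tau }(S_{\tau }))$, and the map is injective, again by Lemma~\ref{l:phiinj}.

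The remaining and main task is surjectivity of $\Psi _{S_{\tau }}$ onto $\mbox{LS}({\cal U}_{f,\tau }(S_{\tau }))$. Given $\varphi \in \mbox{LS}({\cal U}_{f,\tau }(S_{\tau }))$, I would use Tietze's extension theorem (applicable since $K$ is a compact metric space and $S_{\tau }$ is closed in $K$) to pick some $\tilde{\varphi }_{0}\in C(K)$ with $\tilde{\varphi }_{0}|_{S_{\tau }}=\varphi $. Next, using the direct sum decomposition $C(K)=\emLSfk \oplus {\cal B}_{0,\tau ,K}$ from Lemma~\ref{l:cclsfcb0}, let $\psi :=\pi _{\tau ,K}(\tilde{\varphi }_{0})\in \emLSfk $ and write $\tilde{\varphi }_{0}=\psi +\eta $ with $\eta \in {\cal B}_{0,\tau ,K}$.

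The key step is now to iterate by the period $r:=\prod _{L\in \emMin (\tau )}r_{L}$. Lemma~\ref{l:cclsfcb0} gives $M_{\tau }^{r}\psi =\psi $ (since $M_{\tau }^{r}$ is the identity on $\emLSfk $), and on the other hand, the description of $\mbox{LS}({\cal U}_{f,\tau }(L))$ in Lemmas~\ref{l:lspec} and~\ref{l:lsfklsfl} shows that every eigenvalue $a\in {\cal U}_{v,\tau }(S_{\tau })$ restricted to each minimal $L$ satisfies $a^{r_{L}}=1$, so $a^{r}=1$; consequently $M_{\tau }^{r}\varphi =\varphi $. Iterating on $K$ we then get $M_{\tau }^{rk}\tilde{\varphi }_{0}=\psi +M_{\tau }^{rk}\eta \rightarrow \psi $ in $C(K)$ as $k\rightarrow \infty $, while the commutativity of $M_{\tau }$ with restriction gives $M_{\tau }^{rk}\tilde{\varphi }_{0}|_{S_{\tau }}=M_{\tau }^{rk}\varphi =\varphi $ for every $k$. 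Passing to the limit yields $\psi |_{S_{\tau }}=\varphi $, proving surjectivity.

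The hard part is really this surjectivity step: the subtlety is that one cannot just ``invert'' a restriction, and the bridge between the two spaces is provided only by combining the spectral decomposition of $C(K)$ (Lemma~\ref{l:cclsfcb0}) with the common period $r$ that annihilates the action of $M_{\tau }^{r}$ on both $\emLSfk $ and $\mbox{LS}({\cal U}_{f,\tau }(S_{\tau }))$. Once injectivity, commutativity, and surjectivity are in hand, $\Psi _{S_{\tau }}$ is automatically a linear isomorphism onto $\mbox{LS}({\cal U}_{f,\tau }(S_{\tau }))$, completing the proof of statement~\ref{t:mtauspec6}.
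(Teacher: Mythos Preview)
Your proof is correct and follows essentially the same approach as the paper: commutativity and injectivity are handled via Lemma~\ref{l:phiinj}, and surjectivity is obtained by extending an element of $\mbox{LS}({\cal U}_{f,\tau }(S_{\tau }))$ to $C(K)$, applying $\pi_{\tau,K}$ (equivalently, taking the limit of $M_{\tau}^{rn}$ using Lemma~\ref{l:cclsfcb0}), and checking that the restriction to $S_{\tau}$ is preserved. The only cosmetic difference is that the paper extends the explicit basis vectors $\psi_{L,j}$ (setting them to zero on the other minimal sets $L'$) rather than invoking Tietze on an arbitrary $\varphi$, but the mechanism is the same.
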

\begin{proof}
It is easy to see that $\Psi _{S_{\tau }}\circ M_{\tau }=M_{\tau }\circ \Psi _{S_{\tau }}$ 
on $\LSfk .$ To prove our lemma, 
by Lemma~\ref{l:phiinj}, it is enough to show that 
$\Psi _{S_{\tau }}: \LSfk \rightarrow 
\mbox{{\em LS}}({\cal U}_{f,\tau }(S_{\tau })) \cong \oplus _{L\in \Min(\tau)}\LSfl $ 
is surjective. To show this, 
let $L\in \Min(\tau)$ and let $a_{L},r_{L}$, and $\{ \psi _{L,j}\} _{j=1}^{r_{L}}$ 
be as in Lemma~\ref{l:lspec} (statement~\ref{t:mtauspec5} of Theorem~\ref{t:mtauspec}).  
Let $\tilde{\psi }_{L,j}\in C(K)$ be an element such that 
$\tilde{\psi }_{L,j}|_{L}=\psi _{L,j}$  
and 
$\tilde{\psi }_{L,j}|_{L'}\equiv 0$ for each $L'\in \Min(\tau )$ with 
$L'\neq L.$ 
Let $r$ be the number in Lemma~\ref{l:cclsfcb0} and 
let $\pi _{\tau, K}: C(K)\rightarrow \LSfk $ be the projection. 
Then,  
$M_{\tau }^{rn}(\tilde{\psi }_{L,j})\rightarrow \pi _{\tau, K}(\tilde{\psi }_{L,j})$ 
in $C(K)$ 
as 
$n\rightarrow \infty .$ Therefore, 
$\pi _{\tau, K}(\tilde{\psi }_{L,j})|_{L}=\lim _{n\rightarrow \infty }
M_{\tau }^{rn}(\tilde{\psi }_{L,j}|_{L})=\psi _{L,j}.$ 
Similarly, $\pi _{\tau, K}(\tilde{\psi }_{L,j})|_{L'}\equiv 0$ for each 
$L'\in \Min(\tau )$ with $L'\neq L.$ 
Therefore, $\Psi _{S_{\tau }}: \LSfk \rightarrow 
\mbox{LS}({\cal U}_{f,\tau }(S_{\tau })) $ is surjective. 
Thus, we have completed the proof of our lemma. 
\end{proof}
\begin{rem}
\label{r:lkncpty}
Let $\{ K_{n}\}$ be an increasing sequence of 
compact subsets of $Y$ with 
$Y=\cup _{n=1}^{\infty }K_{n}$ such that 
for each $n\in \NN $ and for each 
$g\in \supp\,\tau$, we have 
$f(K_{n})\subset K_{n}$, and such that 
$S_{\tau }\subset Q\subset \cup _{j=1}^{m}U_{j}\subset K_{1}
\subset K_{2}\subset \cdots .$ Then, for each $n\in \NN$, 
the statements of Lemmas~\ref{l:azproof}--\ref{l:lsiso} 
with $K=K_{n}$  
hold. For each $n\in \NN$, let 
$W_{\tau, n}:=\mbox{LS}({\cal U}_{f,\tau}(K_{n}))\ (\subset 
C(K_{n})).$ 
Then, for each $n\in \NN$, 
by Lemma~\ref{l:lsiso} with $K=K_{n}$, the map 
$\varphi \in W_{\tau, n+1}\mapsto \varphi |_{K_{n}}\in W_{\tau, n}$ 
is a linear isomorphism. Hence,  
there exists the unique finite subspace $W_{\tau }$ of $C(Y)$ such that 
for each $n\in \NN$, we have $W_{\tau }|_{K_{n}}=
W_{\tau ,n}=\mbox{LS}({\cal U}_{f,\tau}(K_{n}))$ and 
the map $\varphi \in W_{\tau }\mapsto 
\varphi |_{K_{n}}\in W_{\tau ,n}=\mbox{LS}({\cal U}_{f,\tau}(K_{n}))$ 
is a linear isomorphism. Here, we set 
$W_{\tau}|_{K_{n}}:=\{ \varphi |_{K_{n}}\mid \varphi 
\in W_{\tau }\}.$ 
Since $M_{\tau }(W_{\tau,n})=W_{\tau, n}$ for each $n\in \NN$, 
we have  
$M_{\tau }(W_{\tau })=W_{\tau }.$ 
\end{rem}
\begin{lem}
\label{l:pf7}
Under the assumptions of Theorem~\ref{t:mtauspec}, 
statement \ref{t:mtauspec7} holds.
\end{lem}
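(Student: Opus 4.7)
The plan is to transport the spectral information successively from $K$ to $S_\tau$ and then to the individual minimal sets, using the two linear isomorphisms already built in Lemmas~\ref{l:phiinj}--\ref{l:lsiso} and Lemma~\ref{l:lsfklsfl}, and finally to read off the eigenvalues on each $L$ from Lemma~\ref{l:lspec}.

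First I would observe that by Lemma~\ref{l:lsiso}, the restriction map $\Psi_{S_\tau}:\emLSfk\to\mathrm{LS}({\cal U}_{f,\tau}(S_\tau))$ is a linear isomorphism which intertwines $M_\tau$ with itself. Since every unitary eigenvalue of $M_\tau$ on $C(K)$ is realized on the $M_\tau$-invariant subspace $\emLSfk$ (by the very definition of $\Uvk$), and similarly for $S_\tau$, the $M_\tau$-equivariance yields $\Uvk=\mathcal{U}_{v,\tau}(S_\tau)$ and moreover a bijection between the corresponding eigenvectors. In particular, $\dim_{\CC}\emLSfk=\dim_{\CC}\mathrm{LS}({\cal U}_{f,\tau}(S_\tau))$, and by Remark~\ref{r:lkncpty} this common number equals $\dim_{\CC}W_\tau$.

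Next I would combine this with Lemma~\ref{l:lsfklsfl}: the restriction map $\alpha(\varphi)=(\varphi|_L)_{L\in\Min(\tau)}$ is a linear isomorphism from $\mathrm{LS}({\cal U}_{f,\tau}(S_\tau))$ onto $\bigoplus_{L\in\Min(\tau)}\emLSfl$. The key point is that $\alpha$ is also $M_\tau$-equivariant, since for any $\varphi\in C(S_\tau)$ and any $L\in\Min(\tau)$ we have $(M_\tau\varphi)|_L=M_\tau(\varphi|_L)$ because $G_\tau(L)\subset L$ and $\supp\,\tau\cdot L\subset L$. Consequently, an element $\varphi\in\mathrm{LS}({\cal U}_{f,\tau}(S_\tau))$ is an eigenvector of $M_\tau$ with eigenvalue $a\in S^1$ if and only if each $\varphi|_L$ is either zero or an eigenvector with that same eigenvalue, which immediately gives $\mathcal{U}_{v,\tau}(S_\tau)=\bigcup_{L\in\Min(\tau)}\Uvl$ and the dimension identity $\dim_{\CC}\mathrm{LS}({\cal U}_{f,\tau}(S_\tau))=\sum_{L\in\Min(\tau)}\dim_{\CC}\emLSfl$.

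Finally, Lemma~\ref{l:lspec} gives, for each $L\in\Min(\tau)$, that $\Uvl=\{a_L^j\}_{j=1}^{r_L}$ is a finite cyclic subgroup of $S^1$ of order $r_L$, and also that $\dim_{\CC}\emLSfl=r_L$. Substituting these into the two chains obtained above yields all the equalities
\[
\Uvk=\mathcal{U}_{v,\tau}(S_\tau)=\bigcup_{L\in\Min(\tau)}\Uvl=\bigcup_{L\in\Min(\tau)}\{a_L^j\}_{j=1}^{r_L}
\]
and
\[
\dim_{\CC}W_\tau=\dim_{\CC}\emLSfk=\sum_{L\in\Min(\tau)}r_L.
\]
There is no serious obstacle here: the argument is essentially bookkeeping with two $M_\tau$-equivariant isomorphisms. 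The one point that must be verified carefully is the $M_\tau$-equivariance of the restriction $\alpha$, which rests on the forward invariance of each $L\in\Min(\tau)$ under $\supp\,\tau$ guaranteed by Remark~\ref{r:minimal}.
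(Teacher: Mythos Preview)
Your proposal is correct and follows essentially the same approach as the paper: the paper's proof simply cites Lemmas~\ref{l:lspec}, \ref{l:lsfklsfl}, \ref{l:lsiso} and Remark~\ref{r:lkncpty}, and you have spelled out precisely how these combine via the two $M_\tau$-equivariant isomorphisms $\Psi_{S_\tau}$ and $\alpha$. The one detail you flag as needing care---the $M_\tau$-equivariance of $\alpha$---is in fact already recorded in the proof of Lemma~\ref{l:lsfklsfl}.
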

\begin{proof}
Statement~\ref{t:mtauspec7} of Theorem~\ref{t:mtauspec} follows 
from Lemmas~\ref{l:lspec}, \ref{l:lsfklsfl}, \ref{l:lsiso},   and 
Remark~\ref{r:lkncpty}. 
\end{proof}   
\begin{lem}
\label{l:pf2-1}
Under the assumptions of Theorem~\ref{t:mtauspec}, 
statement~\ref{t:mtauspec2-1} holds. 
\end{lem}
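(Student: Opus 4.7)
The plan is to read off the functionals $\rho_j$ directly from the continuous projection $\pi_{\tau,K}: C(K)\to \mbox{LS}({\cal U}_{f,\tau}(K))$ supplied by Lemma~\ref{l:cclsfcb0}. First I would define $\rho_j\in C(K)^*$ as the unique coefficients in the basis $\{\varphi_j\}$: for every $\varphi\in C(K)$, write $\pi_{\tau,K}(\varphi)=\sum_{j=1}^q \rho_j(\varphi)\varphi_j$. Continuity of each $\rho_j$ (property (a)) is then immediate from continuity of $\pi_{\tau,K}$ together with the fact that coordinate functionals on a finite-dimensional normed space are bounded. The biorthogonality (c) falls out by applying $\pi_{\tau,K}$ to $\varphi_j\in \mbox{LS}({\cal U}_{f,\tau}(K))$, which is fixed. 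Uniqueness and the asymptotic formula follow because for any $\varphi\in C(K)$, $\varphi-\pi_{\tau,K}(\varphi)\in {\cal B}_{0,\tau,K}$ and $M_\tau$ preserves both summands, so $\|M_\tau^n(\varphi-\sum_j\rho_j(\varphi)\varphi_j)\|_\infty \to 0$; any other family with this property agrees on $\{\varphi_j\}$ (by evaluating the asymptotic formula on them), and since the $\rho_j$ extracting the $\{\varphi_j\}$-coordinates are already determined by the values on this basis and by continuous dependence on $\varphi$ through the decomposition, uniqueness follows.

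For the eigenvalue equation (b), I would exploit that the splitting $C(K)=\mbox{LS}({\cal U}_{f,\tau}(K))\oplus {\cal B}_{0,\tau,K}$ is $M_\tau$-invariant, which forces $\pi_{\tau,K}\circ M_\tau = M_\tau\circ \pi_{\tau,K}$. Applying both sides to an arbitrary $\varphi$, expanding in the basis, and using $M_\tau\varphi_j=\alpha_j\varphi_j$ together with linear independence of $\{\varphi_j\}$ gives $\rho_j(M_\tau\varphi)=\alpha_j\rho_j(\varphi)$, i.e.\ $M_\tau^*\rho_j=\alpha_j\rho_j$, so $\rho_j\in {\cal U}_{f,\tau,\ast}(K)$.

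The ``moreover'' in (c) requires identifying $\mbox{LS}({\cal U}_{f,\tau,\ast}(K))$. Let $A\subset C(K)^*$ be the annihilator of ${\cal B}_{0,\tau,K}$; by the direct sum decomposition $\dim A=q$ and each $\rho_j$ lies in $A$, so the $q$ linearly independent $\rho_j$'s form a basis of $A$. It remains to show $\mbox{LS}({\cal U}_{f,\tau,\ast}(K))\subseteq A$: if $M_\tau^*\psi=b\psi$ with $|b|=1$ and $\psi$ annihilates $\mbox{LS}({\cal U}_{f,\tau}(K))$, then for any $\varphi_2\in {\cal B}_{0,\tau,K}$, $|\psi(\varphi_2)|=|b^{-n}\psi(M_\tau^n\varphi_2)|\le \|\psi\|\|M_\tau^n\varphi_2\|_\infty\to 0$, forcing $\psi=0$; taken together with the splitting, every unitary eigenvector of $M_\tau^*$ already lies in $A$, and so does every finite linear combination. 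Hence $\mbox{LS}({\cal U}_{f,\tau,\ast}(K))=A$ and the $\{\rho_j\}$ form the desired basis.

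The main subtlety is (d), supp$\,\rho_j\subset S_\tau$. The idea is: if $\varphi\in C(K)$ has support disjoint from $S_\tau$, then its support is disjoint from every $L\in \mbox{Min}(\tau)$; since $G_\tau(L)\subset L$, for any $z\in L$ we have $\gamma_{n,1}(z)\in L$ for all $\gamma\in (\mbox{supp}\,\tau)^\NN$, hence $\varphi(\gamma_{n,1}(z))=0$ and $M_\tau^n\varphi\equiv 0$ on $L$. Passing to the limit along $n=rm$ (with $r$ as in Lemma~\ref{l:cclsfcb0}) gives $\pi_{\tau,K}(\varphi)|_{S_\tau}=0$, and Lemma~\ref{l:phiinj} (injectivity of $\Psi_{S_\tau}$) upgrades this to $\pi_{\tau,K}(\varphi)=0$; consequently $\rho_j(\varphi)=0$ for every $j$. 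Localizing this argument to a neighborhood of any point $z\in K\setminus S_\tau$ disjoint from $S_\tau$ yields $z\in a(\rho_j)$, so supp$\,\rho_j\subset S_\tau$. I expect the main obstacle to be the dimension count step identifying $\mbox{LS}({\cal U}_{f,\tau,\ast}(K))$ with $A$, since it requires the eigenvector-by-eigenvector vanishing argument above rather than a simple appeal to duality of the finite-dimensional invariant complement.
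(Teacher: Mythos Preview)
Your proposal is correct and follows essentially the same route as the paper: define $\rho_j$ as the coordinate functionals of the projection $\pi_{\tau,K}$, read off (a)--(c) from the $M_\tau$-invariant splitting $C(K)=\mbox{LS}({\cal U}_{f,\tau}(K))\oplus {\cal B}_{0,\tau,K}$, and obtain (d) by restricting to $S_\tau$ and invoking Lemma~\ref{l:phiinj}. Two small clean-ups: for uniqueness, subtract the two candidate decompositions and use that $M_\tau^r$ is the identity on $\mbox{LS}({\cal U}_{f,\tau}(K))$ (a linear functional on $C(K)$ is not determined by its values on a $q$-dimensional subspace, so your ``determined by the values on this basis'' line does not suffice); and for the basis part of (c), your computation $|\psi(\varphi_2)|=|b^{-n}\psi(M_\tau^n\varphi_2)|\to 0$ already applies to \emph{every} unitary eigenvector $\psi$ and every $\varphi_2\in{\cal B}_{0,\tau,K}$, giving $\psi\in A$ directly---the extra hypothesis ``$\psi$ annihilates $\mbox{LS}({\cal U}_{f,\tau}(K))$'' and the appeal to the dual splitting are unnecessary detours.
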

\begin{proof}
Let $\{ \varphi _{j}\} $ and $\{ \alpha _{j}\} $ be as in 
statement~\ref{t:mtauspec2-1} of Theorem~\ref{t:mtauspec}. 
Let $\varphi \in C(K)$. 
Then, there exists a unique family $\{ \rho _{j}(\varphi )\} _{j=1}^{q}$ in $\CC $  
such that $\pi _{\tau, K}(\varphi )=\sum _{j=1}^{q}\rho _{j}(\varphi )\varphi _{j}$, where $\pi _{\tau ,K}:C(K)\rightarrow 
\mbox{LS}({\cal U}_{f,\tau }(K))$ is the projection.  
It is easy to see that $\rho _{j}:C(K)\rightarrow \CC $ is a linear 
functional. 
Moreover, since $\pi _{\tau, K}:C(K)\rightarrow \LSfk $ 
is continuous (Lemma~\ref{l:cclsfcb0}), 
each $\rho _{j}:C(K)\rightarrow \CC $ is continuous. 
By Lemma~\ref{l:cclsfcb0} again, 
it is easy to see that $\rho _{i}(\varphi _{j})=\delta _{ij}.$ 
To show $M_{\tau }^{\ast }(\rho _{j})=\alpha _{j}\rho _{j}$, 
let $\varphi \in C(K)$ and let $\zeta := \varphi -\pi_{\tau ,K} (\varphi ).$ 
Then,  
$M_{\tau }(\varphi )=\sum _{j=1}^{q}\rho _{j}(\varphi )\alpha _{j}\varphi _{j}+M_{\tau }(\zeta ).$ 
Hence, $\rho _{j}(M_{\tau }(\varphi ))=\alpha _{j}\rho _{j}(\varphi ).$ 
Therefore, $M_{\tau }^{\ast }(\rho _{j})=\alpha _{j}\rho _{j}.$ 
To prove that $\{ \rho _{j}\} $ is a basis of 
$\LSfak$, let $\rho \in \Ufak $ and $a\in S^{1}$ be such that 
$M_{\tau }^{\ast }(\rho )=a\rho .$ 
Let $r$ be the number in Lemma~\ref{l:cclsfcb0}. 
Let $\{ n_{i}\} _{i=1}^{\infty }$ be a strictly increasing sequence in $\NN $ such that 
$a^{rn_{i}}\rightarrow 1$ as $i\rightarrow \infty .$ 
Let $\varphi \in C(K) $ and let 
$\zeta =\varphi -\pi (\varphi ).$ 
Then, $\rho (\varphi )=\frac{1}{a^{rn_{i}}}(M_{\tau }^{\ast })^{rn_{i}}(\rho )(\varphi )
=\frac{1}{a^{rn_{i}}}\rho (\sum _{j=1}^{q}\rho _{j}(\varphi )\varphi _{j}+
M_{\tau }^{rn_{i}}(\zeta ))\rightarrow \sum _{j=1}^{q}\rho _{j}(\varphi )
\rho (\varphi _{j})$ as $i\rightarrow \infty .$ 
Therefore, $\rho \in \mbox{LS}(\{ \rho _{j} \mid j=1\ldots, q\}). $ 
Thus, $\{ \rho _{j}\} _{j=1}^{q}$ is a basis of 
$\LSfak. $ 
To prove supp$\, \rho _{j}\subset S_{\tau }$, 
let $\varphi \in C(K)$ be such that supp$\, \varphi \subset K \setminus S_{\tau }.$ 
Let $\zeta =\varphi -\pi (\varphi ).$ Then,  
$\varphi =\sum _{j=1}^{q}\rho _{j}(\varphi )\varphi _{j}+\zeta .$ 
Let $r$ be the number in Lemma~\ref{l:cclsfcb0}. 
Then, $M_{\tau }^{rn}(\varphi )\rightarrow \sum _{j=1}^{q}\rho _{j}(\varphi )\varphi _{j}$ 
as $n\rightarrow \infty .$ 
Hence, $\sum _{j=1}^{q}\rho _{j}(\varphi )\varphi _{j}|_{S_{\tau }}=
\lim _{n\rightarrow \infty }M_{\tau }^{rn}(\varphi |_{S_{\tau }})=0.$ 
By Lemma~\ref{l:phiinj}, we obtain $\rho _{j}(\varphi )=0$ for each $j.$ 
Therefore, supp$\, \rho _{j}\subset S_{\tau }$ for each $j.$ 
Thus, we have completed the proof of our lemma. 
\end{proof}
\begin{lem}
\label{l:pf7-1}
Under the assumptions of Theorem~\ref{t:mtauspec}, 
statement~\ref{t:mtauspec7-1} holds. 
\end{lem}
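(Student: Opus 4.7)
The plan is to deduce all three equalities from the structural results already established, with Lemma~\ref{l:pf2-1} (statement~\ref{t:mtauspec2-1}) doing essentially all of the work. First I would apply that lemma directly on $K$: if $\{\varphi_j\}_{j=1}^{q}$ is a basis of $\emLSfk$ with $M_{\tau}\varphi_{j}=\alpha_{j}\varphi_{j}$, then by parts~(b) and~(c) of statement~\ref{t:mtauspec2-1} the dual family $\{\rho_{j}\}_{j=1}^{q}$ is a basis of $\emLSfak$ satisfying $M_{\tau}^{\ast}\rho_{j}=\alpha_{j}\rho_{j}$. The forward inclusion $\Uvk\subset\Uvak$ is then immediate. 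For the reverse, any $a\in\Uvak$ comes from some nonzero $\rho\in\Ufak$; expanding $\rho=\sum c_{j}\rho_{j}$ in the basis of $\emLSfak$ and equating $a\rho=M_{\tau}^{\ast}\rho=\sum c_{j}\alpha_{j}\rho_{j}$ forces $a$ to coincide with some $\alpha_{j}\in\Uvk$. Hence $\Uvak=\Uvk$.

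The same argument then runs verbatim on $S_{\tau}$ and on each $L\in\emMin(\tau)$, provided the chain of Lemmas~\ref{l:azproof}--\ref{l:pf2-1} remains valid with $K$ replaced by $S_{\tau}$ or by $L$. The key analytic input is the decomposition $C(T)=\mbox{LS}({\cal U}_{f,\tau}(T))\oplus{\cal B}_{0,\tau,T}$ with continuous projection for $T\in\{S_{\tau},L\}$. For $T=S_{\tau}$ I would obtain this by transporting the $K$-decomposition through the restriction isomorphism $\Psi_{S_{\tau}}\colon\emLSfk\to\mbox{LS}({\cal U}_{f,\tau}(S_{\tau}))$ of Lemma~\ref{l:lsiso}, together with the observation that $M_{\tau}^{rn}(\varphi|_{S_{\tau}})$ is just the restriction of $M_{\tau}^{rn}$ applied to any continuous extension, so the image of $\pi_{\tau,K}$ restricted to $S_{\tau}$ furnishes the projection on $C(S_{\tau})$. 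For $T=L\in\emMin(\tau)$, Lemma~\ref{l:lspec} already supplies the explicit basis $\{\psi_{L,j}\}_{j=1}^{r_{L}}$ of $\emLSfl$ and identifies $\Uvl=\{a_{L}^{j}\}$ as a finite cyclic group, while mean stability together with minimality gives $J_{\ker}(G_{\tau}|_{L})=\emptyset$, so that the argument of Lemma~\ref{l:cclsfcb0} (relative compactness of $\{M_{\tau}^{n}\varphi\}$ in $C(L)$ via \cite[Prop.~4.7]{Splms10} and \cite[p.~352]{Ly}) delivers the decomposition on $C(L)$.

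With these decompositions in place, the verbatim analogue of Lemma~\ref{l:pf2-1} produces dual bases $\{\rho_{j}^{S_{\tau}}\}$ of $\mbox{LS}({\cal U}_{f,\tau,\ast}(S_{\tau}))$ and $\{\rho_{L,j}\}$ of $\emLSfal$, each consisting of $M_{\tau}^{\ast}$-eigenvectors whose eigenvalues exhaust ${\cal U}_{v,\tau}(S_{\tau})$ and $\Uvl$ respectively. The same ``dual basis plus eigenvalue comparison'' argument used above for $K$ then yields ${\cal U}_{v,\tau,\ast}(S_{\tau})={\cal U}_{v,\tau}(S_{\tau})$ and $\Uval=\Uvl$ for each $L\in\emMin(\tau)$.

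The main obstacle, and really the only non-routine point, is the verification of the decomposition on $C(L)$ for $L\in\emMin(\tau)$: one must check that $J_{\ker}(G_{\tau}|_{L})=\emptyset$ (which needs both minimality, giving dense orbits, and the contraction provided by mean stability on a neighborhood of $S_{\tau}$) so that the orbit $\{M_{\tau}^{n}\varphi\}_{n\ge1}$ is precompact in $C(L)$. Once this is granted, everything else is linear algebra of eigenvectors of operators acting on finite-dimensional subspaces.
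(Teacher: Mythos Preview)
Your proposal is correct and follows essentially the same approach as the paper: the paper also deduces $\Uvak=\Uvk$ directly from items~(b) and~(c) of statement~\ref{t:mtauspec2-1} (Lemma~\ref{l:pf2-1}), and then simply states that ``using the same method as in the proof of Lemma~\ref{l:pf2-1}'' one obtains the equalities for $S_{\tau}$ and for each $L$. Your concern about $J_{\ker}(G_{\tau}|_{L})=\emptyset$ is slightly overstated, since $L\subset S_{\tau}\subset F(G_{\tau})$ already gives equicontinuity of $G_{\tau}$ on $L$, so the kernel Julia set of $G_{\tau}|_{L}$ is empty immediately; otherwise your elaboration of how the decomposition transfers to $C(S_{\tau})$ and $C(L)$ is a faithful unpacking of what the paper leaves implicit.
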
 
\begin{proof}
By items (b) and (c) of statement~\ref{t:mtauspec2-1} of Theorem~\ref{t:mtauspec} 
(see Lemma~\ref{l:pf2-1}), 
we obtain $\Uvak =\Uvk. $ Using the same method as that in the proof of Lemma~\ref{l:pf2-1},  
we obtain ${\cal U}_{v,\tau, \ast}(S_{\tau })=
{\cal U}_{v,\tau}(S_{\tau })$ and $\Uval=\Uvl $ 
for each $L\in \Min(\tau).$ 
Thus, we have completed the proof of our lemma. 
\end{proof}
\begin{lem}
\label{l:suppllj}
Under the assumptions of Theorem~\ref{t:mtauspec}, 
statements~\ref{t:mtauspec7-2} and \ref{t:mtauspec8} of Theorem~\ref{t:mtauspec} hold. 
\end{lem}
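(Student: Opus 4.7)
The plan is to first build the basis $\{\varphi_{L,i,K}\}$ from existing material, then extract the dual basis $\{\rho_{L,i,K}\}$ from Lemma~\ref{l:pf2-1}, then pin down the support (the delicate point), and finally deduce the count in statement~\ref{t:mtauspec7-2} from a semiconjugacy argument.

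For the basis, I would lift the eigenfunctions $\psi_{L,i}$ produced in Lemma~\ref{l:lspec} to global elements of $\LSfk$: using the isomorphism $\Psi_{S_{\tau}}^{-1}$ from Lemma~\ref{l:lsiso} composed with the direct-sum decomposition $\mbox{LS}({\cal U}_{f,\tau}(S_{\tau}))\cong \bigoplus_{L'\in\Min(\tau)}\LSfl$ of Lemma~\ref{l:lsfklsfl}, define $\varphi_{L,i,K}$ as the unique element of $\LSfk$ whose restriction to $L$ is $\psi_{L,i}$ and to every other $L'\in\Min(\tau)$ is zero. Properties (a)--(d) then drop out of the corresponding properties of $\psi_{L,i}$ and the equivariance of $\Psi_{S_{\tau}}$, and there are $\sum_L r_L=\dim_{\CC}\LSfk$ such functions by statement~\ref{t:mtauspec7}, so $\{\varphi_{L,i,K}\}$ is a basis. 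The dual family $\{\rho_{L,i,K}\}$ with (a), (b), (c), (f) is then the family produced by Lemma~\ref{l:pf2-1} relabelled, and $\supp\rho_{L,i,K}\subset S_{\tau}$ is item (d) of statement~\ref{t:mtauspec2-1}.

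The crux is (e), $\supp\rho_{L,i,K}=L$. The inclusion $\subset$ is a direct consequence of forward invariance of $L$: if $\supp\varphi\cap L=\emptyset$, then for $z\in L$ every $g\in G_{\tau}$ sends $z$ into $L$, so $(M_{\tau}^{rn}\varphi)(z)=\int \varphi(g(z))\,d\tau^{rn}(g)=0$; taking the limit gives $\pi_{\tau,K}(\varphi)|_L=\sum_{j}\rho_{L,j,K}(\varphi)\psi_{L,j}\equiv 0$ on $L$, and linear independence forces $\rho_{L,j,K}(\varphi)=0$. For the reverse inclusion, I would pass to the total variation $|\rho_{L,i,K}|$ as a positive Radon measure on $K$. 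From $|M_{\tau}\varphi|\le M_{\tau}|\varphi|$ and $M_{\tau}\mathbf{1}\equiv \mathbf{1}$ one deduces $M_{\tau}^{*}|\rho_{L,i,K}|\ge |\rho_{L,i,K}|$, and equality of total masses (applied at $\varphi=\mathbf{1}$) upgrades this to $M_{\tau}^{*}|\rho_{L,i,K}|=|\rho_{L,i,K}|$. Then $S:=\supp|\rho_{L,i,K}|\subset L$ is forward invariant under $G_{\tau}$: if some $g_0\in\supp\tau$ mapped $z\in S$ to a point outside $S$, a nonnegative continuous bump $\varphi$ near $g_0(z)$ disjoint from $S$ would satisfy $|\rho_{L,i,K}|(\varphi)=0$ while $M_{\tau}\varphi>0$ in a neighbourhood of $z$ (hence of positive $|\rho_{L,i,K}|$-mass), contradicting invariance. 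Minimality of $L$ then forces $S=L$.

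Finally, for statement~\ref{t:mtauspec7-2}, I would exploit that $\psi_{L,1}:L\to S^{1}$ is an actual semiconjugacy: since $|M_{\tau}\psi_{L,1}(z)|=|a_L\psi_{L,1}(z)|=1$ equals $\int |\psi_{L,1}(g(z))|\,d\tau(g)$, the equality case of the triangle inequality forces $\psi_{L,1}(g(z))=a_L\psi_{L,1}(z)$ for every $g\in\supp\tau$ and $z\in L$. Hence $\psi_{L,1}(L)\subset S^{1}$ is a minimal set for the rotation $z\mapsto a_L z$, which (as $a_L$ has order $r_L$) is a single orbit of $r_L$ points, and $L=\bigsqcup_{j=0}^{r_L-1}L_j$ with $L_j:=\psi_{L,1}^{-1}(a_L^{j}b)$ satisfying $g(L_j)\subset L_{j+1 \bmod r_L}$. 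Each $L_j$ is therefore $G_{\tau}^{r_L}$-invariant; by a minimality-transfer argument (any $M\in\Min(G_{\tau}^{r_L},L_j)$ yields $\bigcup_{k}g_k\cdots g_1(M)=L$ as a closed $G_{\tau}$-invariant set, forcing $M=L_j$) each $L_j$ is $G_{\tau}^{r_L}$-minimal, giving $\sharp\Min(G_{\tau}^{r_L},L)\ge r_L$. The reverse inequality follows because every $M\in\Min(G_{\tau}^{r_L},L)$ has constant image under $\psi_{L,1}$ (by the same triangle-inequality argument applied to $G_{\tau}^{r_L}$), hence lies inside a single $L_j$ and thus equals it. The main obstacle will be the reverse support inclusion in (e); in particular, running the positivity/continuity argument that forces forward invariance of $\supp|\rho_{L,i,K}|$ cleanly at the level of the Markov-integral operator $M_{\tau}$ is the most technical piece.
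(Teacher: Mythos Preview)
Your proposal is correct. For statement~\ref{t:mtauspec7-2} you recover exactly the paper's argument (the semiconjugacy $\psi_{L,1}\!:L\to S^{1}$ and the decomposition $L=\bigsqcup L_{j}$ are precisely what the paper extracts from Claim~3 of Lemma~\ref{l:lspec}). For statement~\ref{t:mtauspec8} you diverge: the paper first shows that $M_{\tau}^{r_{L}}$ acting on each piece $L_{j}$ has unitary spectrum reduced to $\{1\}$, produces the unique $(M_{\tau}^{\ast})^{r_{L}}$-invariant probability measure $\omega_{L,j}$ on $L_{j}$ as the limit of $M_{\tau}^{nr_{L}}$, reads off $\supp\omega_{L,j}=L_{j}$ from minimality of $L_{j}$, and then defines $\rho_{L,i,K}$ as the discrete Fourier transform $\frac{1}{r_{L}}\sum_{j}a_{L}^{-ij}\omega_{L,j}$, so that $\supp\rho_{L,i,K}=L$ is immediate. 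You instead take the abstract dual basis supplied by Lemma~\ref{l:pf2-1} and recover the support equality by the total-variation argument $M_{\tau}^{\ast}|\rho_{L,i,K}|=|\rho_{L,i,K}|$ plus forward invariance of its support. Your route is cleaner in that it avoids the auxiliary spectral computation on each $L_{j}$; the paper's route is more constructive, exhibiting $\rho_{L,i,K}$ explicitly as a signed combination of ergodic probability measures, which is structurally informative (and is what gets reused later in the proof of Theorem~\ref{t:kjemfhf}).
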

\begin{proof}
Let $L\in \Min(\tau)$ and 
let $z_{0}\in L\subset F(G_{\tau })$ be a point. 
Let $\{ \psi _{L,j}\} _{j=1}^{r_{L}}$ be as in statement \ref{t:mtauspec5} of 
Theorem~\ref{t:mtauspec}. 
We may assume $\psi _{L,1}(z_{0})=1.$ 
For each $j=1,\ldots ,r_{L}$,  
let $L_{j}:= \{ z\in L\mid \psi _{L,1}(z)=a_{L}^{j}\} .$ 
By Claim 3 in the proof of Lemma~\ref{l:lspec}, 
$L$ is equal to the disjoint union of 
compact subsets $L_{j}$, and 
for each $h\in \supp\,\tau $ and 
for each $j=1,\ldots ,r_{L}$,  
$h(L_{j})\subset L_{j+1}$, where $L_{r_{L}+1}:=L_{1}.$ 
In particular, 
$G_{\tau }^{r_{L}}(L_{j})\subset L_{j}$ for each 
$j=1,\ldots ,r_{L}.$ 

Since $\overline{G_{\tau }(z)}=L$ for each 
$z\in L$, it follows that $\overline{G_{\tau }^{r_{L}}(z)}=L_{j}$ for each 
$j=1,\ldots ,r_{L}$ and for each $z\in L_{j}.$ 
Therefore, $\{ L_{j}\mid j=1,\ldots, r_{L}\}=
\Min (G_{\tau }^{r_{L}}, L).$ 
Thus, statement~\ref{t:mtauspec7-2} of Theorem~\ref{t:mtauspec} holds. 

To prove statement \ref{t:mtauspec8} of Theorem~\ref{t:mtauspec}, let $j\in \{ 1,\ldots ,r_{L}\} .$ 
Let us consider the argument in the proof of Lemma~\ref{l:lspec}, replacing $L$ by $L_{j}$ and 
$G_{\tau }$ by $G_{\tau }^{r_{L}}$. Then, the number $r_{L}$ in the proof of Lemma~\ref{l:lspec}
is equal to $1$ in this case. 
For, if there exists a nonzero element $\psi \in C(L_{j})$ and a $b=e^{\frac{2\pi i}{s}}\neq 1$ with 
$s\in \NN $ such that $M_{\tau }^{r_{L}}(\psi ) =b\psi $, 
then extending $\psi $ to the element $\tilde{\psi }\in C(L)$ by setting 
$\tilde{\psi }|_{L_{i}}=0$ for each $i$ with $i\neq j$, 
and setting $\hat{\psi }:= \sum _{j=1}^{sr_{L}}(e^{\frac{2\pi i}{sr_{L}}})^{-j}M_{\tau }^{j}(\tilde{\psi })\in C(L)$, 
we obtain $\hat{\psi }\neq 0$ and $M_{\tau }(\hat{\psi })=e^{\frac{2\pi i}{sr_{L}}}\hat{\psi }$, 
which is a contradiction.    
Therefore, using the argument in the proof of Lemmas~\ref{l:lspec} and \ref{l:cclsfcb0}, 
we see that for each $\varphi \in C(L_{j})$, 
there exists a number $\omega _{L,j}(\varphi )\in \CC $ 
such that 
$M_{\tau }^{nr_{L}}(\varphi )\rightarrow \omega _{L,j}(\varphi )\cdot 1_{L_{j}}$ 
as $n\rightarrow \infty .$ 
It is easy to see that $\omega _{L,j}$ is a positive linear functional. 
Therefore, $\omega _{L,j}\in {\frak M}_{1}(L_{j}).$ 
Thus, $\omega _{L,j}$ is the unique $(M_{\tau }^{\ast })^{r_{L}}$-invariant element 
of ${\frak M}_{1}(L_{j}).$ 
Since $L_{j}\in \Min(G_{\tau }^{r_{L}},L)$, 
it is easy to see that supp$\, \omega _{L,j}=L_{j}.$ 
Since $M_{\tau }^{\ast }(\omega _{L,j})\in {\frak M}_{1}(L_{j+1})$ and 
$(M_{\tau }^{\ast })^{r_{L}}(M_{\tau }^{\ast }(\omega _{L,j}))=
M_{\tau }^{\ast }(\omega _{L,j})$, 
it follows that $M_{\tau }^{\ast }(\omega _{L,j})=\omega _{L,j+1}$ for each 
$j=1,\ldots ,r_{L}$, where $\omega _{L,r_{L}+1}:=\omega _{L,1}.$ 
For each 
$i=1,\ldots, r_{L}$, 
let $\rho _{L,i}:= \frac{1}{r_{L}}\sum _{j=1}^{r_{L}}a_{L}^{-ij}\omega _{L,j}\in 
C(L)^{\ast }$  
and $\tilde{\psi } _{L,i}:=\sum _{j=1}^{r_{L}}a_{L}^{ij}1_{L_{j}}\in C(L).$ 
Then, it is easy to see that 
$M_{\tau }^{\ast }(\rho _{L,i})=a_{L}^{i}\rho _{L,i}$,  
$M_{\tau }(\tilde{\psi }_{L,i})=a_{L}^{i}\tilde{\psi }_{L,i}$, and 
$\rho _{L,i}(\tilde{\psi }_{L,j})=\delta _{ij}.$ 
By Lemma~\ref{l:lsiso}, 
there exists a unique element $\varphi _{L,i,K}\in \LSfk$ such that 
$\varphi _{L,i,K}|_{L}=\tilde{\psi }_{L,i}$ and $\varphi _{L,i,K}|_{L'}\equiv 0$ for 
each $L'\in \Min (\tau)$ with $L'\neq L.$ 
Let $\rho _{L,i,K}\in C(K)^{\ast }$ be the unique element 
such that $\rho _{L,i,K}(\varphi )=\rho _{L,i}(\varphi |_{L})$ for each 
$\varphi \in C(K).$ 
Then, it is easy to see that $\{ \varphi _{L,i,K}\} _{L,i}$ and 
$\{ \rho _{L,i,K}\} _{L,i}$ 
are the desired families. Thus, we have completed the proof of our lemma. 
\end{proof} 
\begin{lem}
\label{l:mtdualpf}
Under the assumptions of Theorem~\ref{t:mtauspec}, 
statement~\ref{t:mtauspecdual} holds. 
\begin{proof}
Statement~\ref{t:mtauspecdual} follows from Lemma~\ref{l:pf2-1}. 
\end{proof}

\end{lem}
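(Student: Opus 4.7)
The plan is to bootstrap directly off the decomposition obtained in Lemma~\ref{l:pf2-1}. Recall from statement~\ref{t:mtauspec2-1} that we have a basis $\{\varphi_j\}_{j=1}^q$ of $\emLSfk$ with $M_\tau(\varphi_j)=\alpha_j\varphi_j$, $|\alpha_j|=1$, and continuous linear functionals $\{\rho_j\}_{j=1}^q$ that form a basis of $\emLSfak$, with $M_\tau^{\ast}(\rho_j)=\alpha_j\rho_j$ and $\|M_\tau^{n}(\varphi-\sum_j\rho_j(\varphi)\varphi_j)\|_\infty\to 0$ for every $\varphi\in C(K)$. Consequently, for any $\nu\in{\frak M}_1(K)$ and any $\varphi\in C(K)$,
\[
((M_\tau^{\ast})^n\nu)(\varphi)\;=\;\nu(M_\tau^n\varphi)\;=\;\sum_{j=1}^{q}\alpha_j^{n}\rho_j(\varphi)\,\nu(\varphi_j)\;+\;o(1)
\qquad(n\to\infty).
\]
Thus, setting $\sigma_n:=\sum_{j=1}^{q}\alpha_j^{n}\nu(\varphi_j)\rho_j\in\emLSfak$, I get that $(M_\tau^{\ast})^n\nu-\sigma_n\to 0$ in the weak$^{\ast}$ topology.

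The next step is to convert this into convergence, in the metric $d_{0,K}$, of $(M_\tau^{\ast})^n\nu$ to the set $\emLSfak\cap {\frak M}_1(K)$. The argument I would use is a compactness one. Since $K$ is compact, ${\frak M}_1(K)$ is weak$^{\ast}$-compact and metrizable; moreover $d_{0,K}$ induces exactly that topology. Suppose for contradiction that the conclusion fails: then there exist $\varepsilon>0$ and a subsequence $n_k\to\infty$ with $d_{0,K}((M_\tau^{\ast})^{n_k}\nu,\emLSfak\cap{\frak M}_1(K))\geq\varepsilon$. By compactness I may extract a further subsequence (still denoted $n_k$) along which $(M_\tau^{\ast})^{n_k}\nu\to\mu$ weakly$^{\ast}$ for some $\mu\in{\frak M}_1(K)$, and simultaneously $\alpha_j^{n_k}\to\beta_j\in S^1$ for every $j=1,\ldots,q$ (using that the $q$-torus is compact). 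The displayed identity then gives $\mu=\sum_{j=1}^q\beta_j\nu(\varphi_j)\rho_j$, which belongs to $\emLSfak$. Hence $\mu\in\emLSfak\cap{\frak M}_1(K)$, contradicting the distance lower bound.

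For the dimension estimate, I would just note that $\emLSfak$ is a $\CC$-linear subspace of $C(K)^{\ast}$ of complex dimension $q=\dim_\CC\emLSfk<\infty$ (by Lemma~\ref{l:pf2-1}), hence a real vector space of dimension $2q$. The intersection $\emLSfak\cap{\frak M}_1(K)$ sits inside this $2q$-dimensional real vector space, so its topological (covering) dimension is at most $2q=2\dim_\CC\emLSfk$, as required.

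The only slightly delicate point is verifying that the approximating elements $\sigma_n$ — or rather the weak$^{\ast}$ limits obtained along suitable subsequences — indeed land in $\emLSfak\cap{\frak M}_1(K)$, i.e.\ that positivity and total mass $1$ are preserved in the limit. But this is automatic: each $(M_\tau^{\ast})^n\nu$ already lies in ${\frak M}_1(K)$ because $M_\tau$ is a Markov operator ($M_\tau 1=1$ and $M_\tau$ preserves nonnegativity), and ${\frak M}_1(K)$ is weak$^{\ast}$-closed. The $\sigma_n$ themselves need not be probability measures, but I never need them to be — the contradiction argument uses only that ${\frak M}_1(K)$ is weak$^{\ast}$-compact and $\emLSfak$ is weak$^{\ast}$-closed (being finite-dimensional). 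So no extra work is required beyond what is already supplied by Lemma~\ref{l:pf2-1}.
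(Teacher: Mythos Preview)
Your proof is correct and takes essentially the same approach as the paper: the paper's proof is the single line ``follows from Lemma~\ref{l:pf2-1},'' and you have simply filled in the details of how that implication goes, via the decomposition $(M_\tau^\ast)^n\nu = \sigma_n + o(1)$ and a compactness/subsequence argument in $({\frak M}_1(K),d_{0,K})$. Your dimension estimate is also the intended one, using that $\emLSfak$ has complex dimension $q=\dim_\CC\emLSfk$ by Lemma~\ref{l:pf2-1}(c).
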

\begin{lem}
\label{l:mt9pf}
Under the assumptions and notation of Theorem~\ref{t:mtauspec}, 
statement~\ref{t:mtauspec9} holds.  
\end{lem}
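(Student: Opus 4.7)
The plan is to identify $T_{L,\tau }$ with the pointwise limit of $M_\tau ^n(\phi _L)$ for a suitably chosen continuous cutoff $\phi _L$, and then extract continuity and $W_\tau $-membership from the spectral decomposition of $M_\tau $ on $C(K)$ furnished by Lemma~\ref{l:pf2-1}. To set this up, I would first apply the mean stability of $\tau $ together with Lemma~\ref{l:taumsema} to produce, for every $L\in \emMin (\tau )$, an open neighborhood $V_L$ of $L$ in $Y$ such that the $V_L$'s are pairwise disjoint, each is forward invariant under every $f\in \supp \tau $, and $\mbox{diam}(\gamma _{n,1}(V_L))\to 0$ uniformly in $\gamma \in (\supp \tau )^{\NN }$. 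Then pick $\phi _L\in C(Y)$ with $0\le \phi _L\le 1$, $\phi _L\equiv 1$ on $L$, and $\supp \phi _L\subset V_L$.

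Next I would verify three pointwise identities. By statement~\ref{t:mtauspec4} (Lemma~\ref{l:kcpt}), $\tau ^{\NN }$-almost every orbit from $z$ approaches $S_\tau \subset \bigcup _L V_L$; by the disjointness and forward invariance of the $V_L$'s the orbit then enters exactly one $V_{L'}$ and tends to $L'$, yielding both $\sum _{L}T_{L,\tau }(z)=1$ and, via dominated convergence applied to $\phi _L(\gamma _{n,1}(z))$, the pointwise limit $M_\tau ^n(\phi _L)(z)\to T_{L,\tau }(z)$ for every $z\in Y$. The invariance $M_\tau (T_{L,\tau })=T_{L,\tau }$ then follows from Fubini applied to the tail event defining $T_{L,\tau }$, which is insensitive to a single shift of the driving sequence.

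The third step is the spectral extraction. Fix a compact $K\subset Y$ with $G_\tau (K)\subset K$ and $S_\tau \subset \mbox{int}(K)$, and let $\{\varphi _j,\alpha _j,\rho _j\}_{j=1}^{q}$ be as in Lemma~\ref{l:pf2-1}, which gives
$$M_\tau ^n(\phi _L)(z)=\sum _{j=1}^{q}\alpha _j^{\,n}\,\rho _j(\phi _L|_K)\,\varphi _j(z)+\eta _n(z),\qquad \|\eta _n\|_\infty \to 0.$$
By Lemmas~\ref{l:lspec} and~\ref{l:pf7} each $\alpha _j\in \Uvk$ is a root of unity, so the sequences $\{\alpha _j^{\,n}\}_n$ share a common period $r$. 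Combining the pointwise limit from the second step with the sup-norm convergence along each residue class $n\equiv k\pmod r$, the linear independence of the $\varphi _j$ forces $\alpha _j^{\,k}\rho _j(\phi _L|_K)$ to be independent of $k$, hence $\rho _j(\phi _L|_K)=0$ whenever $\alpha _j\ne 1$. Consequently $T_{L,\tau }|_K=\sum _{\alpha _j=1}\rho _j(\phi _L|_K)\,\varphi _j$ lies in $\emLSfk \cap \ker (M_\tau -I)$, so it is continuous on $K$ and fixed by $M_\tau $. Since $\Pt \setminus K$ can be made an arbitrarily small neighborhood of $[1:0:0]$, exhausting $Y$ by such $K_n$ and invoking the consistency in Remark~\ref{r:lkncpty} glues these restrictions into a single element of $W_\tau $ that coincides with $T_{L,\tau }$ on all of $Y$.

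The main obstacle will be the third step: nontrivial unitary eigenvalues of $M_\tau $ prevent $\{M_\tau ^n(\phi _L)\}$ from converging in $C(K)$, so one cannot simply read off $T_{L,\tau }|_K\in W_\tau |_K$. Instead, the $M_\tau $-fixed component must be extracted by comparing sup-norm limits along distinct residue classes with the pointwise limit, which is where the root-of-unity structure of $\Uvk$ supplied by Lemma~\ref{l:lspec} is essential. The construction of the neighborhoods $V_L$ and the Fubini argument for $M_\tau $-invariance are routine by comparison.
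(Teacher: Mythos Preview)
Your proposal is correct, and the overall architecture --- choose a cutoff $\phi_L$, identify $T_{L,\tau}(z)$ as the pointwise limit $\lim_n M_\tau^n(\phi_L)(z)$ via Lemma~\ref{l:kcpt} and dominated convergence, then exhaust $Y$ by compacta $K$ --- is the same as the paper's. The difference lies in how continuity and $W_\tau$-membership are extracted.

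The paper does not invoke the spectral decomposition of Lemma~\ref{l:pf2-1} at all. Instead it uses that $J_{\ker}(G_\tau|_K)=\emptyset$ together with \cite[Proposition~4.7, Lemma~4.2]{Splms10} to conclude that $\overline{\{M_\tau^n(\varphi_L)\}_n}$ is compact in $C(K)$; combined with the already-established pointwise limit, this forces uniform convergence to $T_{L,\tau}|_K$, giving continuity in one stroke. The identity $M_\tau(T_{L,\tau})=T_{L,\tau}$ then follows directly from the limit relation, and membership in $\emLSfk$ is immediate since a nonzero continuous fixed vector is a unitary eigenvector for the eigenvalue~$1$. Your concern that ``nontrivial unitary eigenvalues prevent $\{M_\tau^n(\phi_L)\}$ from converging in $C(K)$'' is therefore not an obstacle in the paper's route: pointwise convergence plus precompactness already rules out oscillation, so one never needs to argue separately that $\rho_j(\phi_L|_K)=0$ for $\alpha_j\neq 1$.

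What your route buys is a more explicit identification of $T_{L,\tau}|_K$ as the $\alpha_j=1$ component of $\pi_{\tau,K}(\phi_L|_K)$, obtained entirely from lemmas already proved in this section (Lemmas~\ref{l:pf2-1}, \ref{l:lspec}, \ref{l:pf7}) rather than from the external reference \cite{Splms10}. The paper's route is shorter but leans on that reference for the compactness of the $M_\tau$-orbit.
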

\begin{proof}
By Lemma~\ref{l:kcpt}, 
we have $\sum _{L\in \Min(\tau)}T_{L,\tau }(z)=1$ for each 
$z\in Y.$ 
For each $L\in \Min(\tau)$, 
let $V_{L}$ be an open neighborhood of $L$ in $Y$ 
such that $\overline{V_{L}}\cap \overline{V_{L'}}=\emptyset $ whenever 
$L, L\in \Min(\tau ), L\neq L'.$  Here, 
$\overline{V_{L}}$ (resp. $\overline{V_{L'}}$) denotes the closure 
of $V_{L}$ (resp. $V_{L'}$) in $\Pt.$ 
For each $L\in \Min(\tau)$, 
let $\varphi _{L}\in C(K)$ be an element 
such that $\varphi _{L}|_{V_{L}\cap K}\equiv 1$ and 
$\varphi _{L}|_{\bigcup _{L'\neq L}V_{L'}\cap K}\equiv 0.$ 
From Lemma~\ref{l:kcpt} 
it follows that 
\begin{equation}
\label{eq:tltinv}
T_{L,\tau }(z)=\int _{(\supp\,\tau )^{\NN}}\lim _{n\rightarrow \infty }\varphi _{L}(\gamma _{n,1}(z))\ 
d\tau ^{\NN}(\gamma )=\lim _{n\rightarrow \infty }\int _{(\supp\,\tau)^{\NN}}\varphi _{L}(\gamma _{n,1}(z))\ 
d\tau ^{\NN}(\gamma )
 =\lim _{n\rightarrow \infty }M_{\tau}^{n}(\varphi _{L})(z)
\end{equation}
 for each $z\in K.$ 
 Since $\tau $ is mean stable on $Y$, we have 
$J_{\ker }(G_{\tau }|_{K})=\emptyset.$ 
Combining this with \cite[Proposition 4.7, Lemma 4.2(2)(6)]{Splms10}, 
we obtain that  
for each $\varphi \in C(K)$, 
$\overline{\bigcup _{n=1}^{\infty }\{ M_{\tau }^{n}(\varphi )\} }$ is 
compact in $C(K).$ 
Combining this with (\ref{eq:tltinv}),  
we obtain that 
$T_{L,\tau }|_{K}\in C(K)$ for each $L\in \Min(\tau )$. 
Since $K$ can be taken so that $\Pt \setminus K$ is an arbitrarily small neighborhood of $[1:0:0]$, it follows that 
$T_{L,\tau }$ is continuous on $Y$ for each 
$L\in \Min(\tau ).$  
Moreover, from (\ref{eq:tltinv}) again, we obtain 
$M_{\tau }(T_{L,\tau })=T_{L,\tau }.$

Thus, we have proved our lemma. 
\end{proof} 
\begin{lem}
\label{l:pfmtauspecjconti}
Under the assumptions of Theorem~\ref{t:mtauspec}, 
statement~\ref{t:mtauspecjconti} holds. 
\end{lem}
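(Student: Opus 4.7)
The plan is to prove each of the three assertions in statement~\ref{t:mtauspecjconti} in sequence.

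\emph{Closedness of $J_\beta^+$ in $Y$.} By Lemma~\ref{l:wrconv}, I can fix $R>1$ and $\rho>1$ such that every $h\in\supp\,\tau$ satisfies condition~(A) for $(R,\rho)$; in particular $V_R^+$ is forward invariant and expanding under every such $h$, so $V_R^+\subset I_\beta^+$ for every $\beta\in(\supp\,\tau)^{\ZZ}$. Hence $K_\beta^+\cap V_R^+=\emptyset$, and since every neighborhood of $[0:1:0]$ in $\Pt$ contains a conical region lying inside $V_R^+$, the $\Pt$-closure of $K_\beta^+$ avoids $[0:1:0]$. For $[a:b:0]\in\Poi$ with $a\neq 0$ and $b\neq 0$, a sequence $(x_n,y_n)\in\Ct$ converging to $[a:b:0]$ has $y_n/x_n\to b/a\notin\{0,\infty\}$; since the polynomial part of every $h\in\supp\,\tau$ has degree $\geq 2$ with coefficients bounded on the compact set $\supp\,\tau$, the $y$-coordinate of $\beta_1(x_n,y_n)$ is eventually much larger than the $x$-coordinate, so $\beta_1(x_n,y_n)\in V_R^+$ for $n$ large and $(x_n,y_n)\in I_\beta^+$ eventually. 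Thus $\overline{K_\beta^+}\cap\Poi\subseteq\{[1:0:0]\}$ and $J_\beta^+=\partial K_\beta^+$ is closed in $Y$.

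\emph{Almost-sure vanishing of $\mbox{Leb}_4(J_\gamma^+)$.} Mean stability implies $J_{\ker}(G_\tau|_K)=\emptyset$ for any sufficiently large compact $K\subset Y$ invariant under $\supp\,\tau$: each $z\in K$ admits $g_z\in G_\tau$ with $g_z(z)\in\bigcup_jU_j\subset F(G_\tau)$ by part~(b) of the mean-stability definition. Then \cite[Proposition~4.8]{Splms10}, applied as in the proof of Proposition~\ref{p:intl4jg0}, gives $\mbox{Leb}_4(J_\gamma^+\cap K)=0$ for $\tau^{\ZZ}$-a.e.\ $\gamma$. Exhausting $Y$ by an increasing family of such compact sets (as in Remark~\ref{r:lkncpty}) and intersecting the corresponding full-measure sets produces a Borel ${\cal B}\subset(\supp\,\tau)^{\ZZ}$ with $\tau^{\ZZ}({\cal B})=1$ on which $\mbox{Leb}_4(J_\gamma^+)=0$.

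\emph{Almost-sure continuity of $\beta\mapsto J_\beta^+$.} Lower semicontinuity at every $\beta\in(\supp\,\tau)^{\ZZ}$ is supplied directly by Proposition~\ref{p:Ggcontiph}(viii). For upper semicontinuity, I shrink ${\cal B}$ further by intersecting with the Borel sets ${\cal A}_{z_k}$ from Lemma~\ref{l:azproof} along a countable dense sequence $\{z_k\}\subset Y$. Fix $\gamma\in{\cal B}$ and $w_0\in Y\setminus J_\gamma^+$. If $w_0\in I_\gamma^+$ then ${\cal G}_\gamma^+(w_0)>0$, and continuity of $(x,y,\gamma)\mapsto{\cal G}_\gamma^+(x,y)$ (Proposition~\ref{p:Ggcontiph}(i)) supplies a neighborhood of $w_0$ inside $I_\beta^+$ for all $\beta$ near $\gamma$; if $w_0\in\Poi\setminus\{[1:0:0]\}$ the uniform escape argument from the first paragraph works on a full product-topology neighborhood of $\gamma$. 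In the remaining case $w_0\in\mbox{int}(K_\gamma^+)$, I pick $z_k$ so close to $w_0$ that the ball $B(z_k,\delta_k)$ from Lemma~\ref{l:azproof} is a neighborhood of $w_0$, and then fix $N$ with $\gamma_{N,1}(B(z_k,\delta_k))\subset\bigcup_jU_j$. Continuity of composition in the product topology yields $\beta_{N,1}(B(z_k,\delta_k/2))\subset\bigcup_jU_j$ for $\beta$ near $\gamma$, after which the mean-stability property keeps all further $\beta$-iterates in the compact set $Q$, so $B(z_k,\delta_k/2)\subset K_\beta^+$ and $w_0\notin J_\beta^+$.

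The main obstacle is this last case: converting the almost-sure long-time capture of $\gamma$-orbits by the mean-stable attractor into a uniform finite-time statement that is stable under a product-topology perturbation of the entire two-sided sequence $\gamma$. The countable-dense reduction via Lemma~\ref{l:azproof}, combined with the mean-stability invariance of $\bigcup_jU_j$ into the compact $Q$ under every $n$-fold product from $\supp\,\tau$, is what enables this finite-time bridge, but the coordination between the parameters $\delta_k$, $N$, and the product-neighborhood of $\gamma$ must be arranged with care.
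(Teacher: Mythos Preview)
Your overall architecture matches the paper's: closedness via the behaviour near $\Poi$, Lebesgue-null via $J_{\ker}(G_\tau|_K)=\emptyset$ and \cite[Proposition~4.8]{Splms10}, lower semicontinuity from Proposition~\ref{p:Ggcontiph}(viii), and upper semicontinuity via a countable dense set plus a finite-time capture argument. The first two parts and the case split for upper semicontinuity are fine.

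There is, however, a real gap in the case $w_0\in\mbox{int}(K_\gamma^+)$. You write ``pick $z_k$ so close to $w_0$ that the ball $B(z_k,\delta_k)$ from Lemma~\ref{l:azproof} is a neighborhood of $w_0$'', but $\delta_k=\delta(z_k,\gamma)$ is produced by that lemma with no lower bound: it is whatever radius makes $\gamma_{n_k,1}(B(z_k,\delta_k))\subset\bigcup_jU_j$, and nothing prevents $\delta_k<d(z_k,w_0)$ for every $k$. Your full-measure set $\bigcap_k{\cal A}_{z_k}$ guarantees capture only for the dense points $z_k$, not for $w_0$ itself, so you cannot simply pick $k$.

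The paper repairs exactly this point by invoking, implicitly, the Fatou characterisation of $J_\gamma^+$ from Lemma~\ref{l:igkg}: since $w_0\notin J_\gamma^+$, the family $\{\gamma_{n,1}\}$ is equicontinuous on some $B(w_0,\epsilon_0)$. One then chooses a dense $x_p\in B(w_0,\epsilon_0)$ and uses Lemma~\ref{l:kcpt} (rather than Lemma~\ref{l:azproof}) to get $d(\gamma_{n,1}(x_p),S_\tau)\to 0$; equicontinuity transfers this to the whole ball, giving $N$ and $\epsilon>0$ with $\gamma_{N,1}(B(w_0,\epsilon))$ inside a fixed compact subset of $F(G_\tau)$. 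From there your perturbation argument (openness of $F(G_\tau)$ and continuity of finitely many coordinates of $\beta$) goes through. So the fix is to insert the equicontinuity step; without it the ``coordination'' you flag in your last paragraph is not merely delicate but actually fails as written.
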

\begin{proof}
Let $\beta \in (\supp\,\tau )^{\ZZ}.$ 
Then, by Lemma~\ref{l:igkg} and Lemma~\ref{l:ghsm} (1), 
the closure of $J_{\beta }^{+}$ in $Y$ does not meet 
$\Poi \setminus \{ [1:0:0]\}.$ 
Hence, $J_{\beta }^{+}$ is a closed subset of $Y.$ 

By Proposition~\ref{p:Ggcontiph}(viii), 
the map $\beta \in (\supp\,\tau)^{\ZZ}
\rightarrow J_{\beta }^{+}$ has the lower semicontinuity 
in Definition~\ref{d:contiseqsets}, where 
we regard the sets $J_{\beta }^{+}$ as closed subsets of $Y.$ 

As in Remark~\ref{r:lkncpty}, 
let $\{ K_{n}\}$ be an increasing sequence of 
compact subsets of $Y$ with 
$Y=\cup _{n=1}^{\infty }K_{n}$ such that 
for each $n\in \NN $ and for each 
$g\in \supp\,\tau$, we have 
$g(K_{n})\subset K_{n}$, and such that 
$S_{\tau }\subset Q\subset \cup _{j=1}^{m}U_{j}\subset K_{1}
\subset \mbox{int }(K_{2})\subset K_{2}\subset 
\mbox{int}(K_{3})\subset K_{3}\subset \cdots .$ Then for each $n\in \NN$, 
the statements of Lemmas~\ref{l:azproof}--\ref{l:lsiso} 
with $K=K_{n}$  
hold. Also, we have that 
for each $\beta \in (\supp\,\tau)^{\ZZ}$, 
\begin{equation}
\label{eq:jrhopikn}
J_{\beta }^{+}=\cup _{n=1}^{\infty }(J_{\beta }^{+}\cap 
\mbox{int}(K_{n})).
\end{equation} 
By Lemma~\ref{l:igkg}, we have the following claim. 

Claim 1. 
For each 
$n\in \NN $ and for each $\beta =(\beta _{j})_{j\in \ZZ}\in (\supp\,\tau)^{\ZZ}$,  
$J_{\beta }^{+}\cap \mbox{int}(K_{n})$ 
is equal to the set of elements 
$z\in \mbox{int}(K_{n})$ for which 
there exists no open neighborhood $U$ of $z$ 
in int$(K_{n})$ such that $\{ \beta _{m,1}: U\rightarrow K_{n}\} _{m=1}^{\infty }$ 
is equicontinuous on $U.$

Let $n\in \NN.$ Since 
$\tau $ is mean stable on $Y$, 
for each $z\in K_{n}$, there exists a map 
$h_{z}\in G_{\tau }$ with $h_{z}(z)\in K_{n}\cap F(G_{\tau }).$ 
Therefore, $J_{\ker }(G_{\tau }|_{K_{n}})=\emptyset $,  
where $G_{\tau }|_{K_{n}}=\{ g|_{K_{n}} \mid g\in G_{\tau }\}.$ 
Combining the above argument  with Claim 1 and \cite[Proposition 4.8]{Splms10}, 
we obtain that for $\tau ^{\ZZ}$-a.e. $\gamma \in (\supp\,\tau)^{\ZZ}$, 
$\mbox{Leb}_{4}(J_{\gamma }^{+}\cap \mbox{int}(K_{n}))=0.$ 
Combining this with (\ref{eq:jrhopikn}), we see that 
there exists a Borel subset ${\cal B}_{1}$ of 
$(\supp\,\tau)^{\ZZ}$ with $\tau ^{\ZZ}({\cal B}_{1})=1$ 
such that for each $\gamma \in {\cal B}_{1}$, 
we have $\mbox{Leb}_{4}(J_{\gamma }^{+})=0$.

Let $\{ x_{n}\}_{n=1}^{\infty }$ be a sequence in $Y$ 
such that $\overline{\{x_{n}\mid n\in \NN\}}=Y.$ 
By Lemma~\ref{l:kcpt}, 
there exists a Borel subset ${\cal B}_{2}$ of $(\supp\,\tau )^{\ZZ}$ 
with $\tau ^{\ZZ}({\cal B}_{2})=1$ such that 
for each $\gamma =(\gamma _{j})_{j\in \ZZ }\in 
{\cal B}_{2}$ and for each $p\in \NN$, we have 
\begin{equation}
\label{eq:dgamman1xp}
d(\gamma _{n,1}(x_{p}), S_{\tau })\rightarrow 0 \mbox{ as }  
n\rightarrow \infty .
\end{equation}  

 Let $\gamma =(\g _{j})_{j\in \ZZ} \in {\cal B}_{2}.$ 
 We want to show  the upper semicontinuity 
 of the map 
 $\beta \in (\supp\,\tau )^{\ZZ} \mapsto J_{\beta }^{+}$ 
 at $\beta =\gamma $.  
 To do this, 
 let $\{ \beta ^{n}\} _{n=1}^{\infty }$ 
 be a sequence in $(\supp\,\tau )^{\ZZ}$ 
 which converges to $\gamma $ in $(\supp\,\tau )^{\ZZ}.$ 
 Also, let $\{ z_{n}\} _{n=1}^{\infty }$ be a sequence 
 in $\Ct $ such that $z_{n}\in J_{\beta ^{n}}^{+}$ for each 
 $n\in \NN.$ Suppose that 
 there exists a point $z_{0}\in Y$ such that 
 $z_{n}\rightarrow z_{0}$ as $n\rightarrow \infty .$ 
 To prove  the upper semicontinuity 
 of the map 
 $\beta \in (\supp\,\tau )^{\ZZ} \mapsto J_{\gamma }^{+}$ 
 at $\beta =\gamma $, we need to show that $z_{0}\in J_{\gamma }^{+}.$  
 Suppose that 
 $z_{0}\in Y\setminus J_{\gamma }^{+}.$ 
 Let $U$ be the connected component of 
 $Y\setminus J_{\gamma }^{+}$ with 
 $z_{0}\in U.$  Then, by (\ref{eq:dgamman1xp}),  
 Lemma~\ref{l:hypmetcont} and the fact 
 $S_{\tau }\subset F(G_{\tau })$, we obtain that 
 there exist an element $p\in \NN $, a 
 number $\ve >0$, and a compact subset 
 $A_{0}$ of $F(G_{\tau })$  such that 
 $\gamma _{p,1}(B(z_{0},\ve))\subset A_{0}\subset F(G_{\tau }).$ 
 Since $\beta ^{n}\rightarrow \gamma $ as $n\rightarrow \infty $, 
 it follows that there exists a number $N_{1}\in \NN$ 
 such that for each $n\in \NN $ with $n\geq N_{1}$, 
 we have $\beta ^{n}_{p,1}(B(z_{0},\ve))\subset F(G_{\tau }).$ 
 Let $N_{2}\in \NN $ be an element such that 
 for each $q\in \NN $ with $q\geq N_{2}$, $z_{q}\in B(z_{0}, \ve ).$ 
 Let $s=\max \{ N_{1}, N_{2}\}.$ Then,  
 we have $\beta ^{s}_{p,1}(z_{s})\in F(G_{\tau }).$ 
 However, since $z_{s}\in J_{\beta ^{s}}^{+}$, we have 
 $\beta ^{s}_{p,1}(z_{s})\in J(G_{\tau })$, which is a contradiction. 
 Hence, we must have that $z_{0}\in J_{\gamma }^{+}.$ 
 From the above argument, 
 we obtain that the upper semicontinuity 
 of the map 
 $\beta \in (\supp\,\tau )^{\ZZ} \mapsto J_{\beta }^{+}$ 
 at $\beta =\gamma $. 
 
Thus, it follows that for each $\gamma \in {\cal B}_{2}$, 
the map 
$\beta \in (\supp\,\tau)^{\ZZ}\mapsto 
J_{\beta }^{+}$ is continuous at $\beta =\gamma .$ 

Let ${\cal B}={\cal B}_{1}\cap {\cal B}_{2}.$ Then 
${\cal B}$ has the desired property. 
Therefore, we have proved our lemma.  
\end{proof}

\begin{lem}
\label{l:mt9-2pf} 
Under the assumptions of Theorem~\ref{t:mtauspec}, statement~\ref{t:mtauspec9-2} holds.  
\end{lem}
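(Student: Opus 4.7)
The plan is to deduce (b) immediately from the known structure of $W_{\tau}$, and then to deduce (a) by a topological argument combining the continuity of $T_{L,\tau}$, its constancy on connected components of $F(G_{\tau})$, and the intermediate value theorem on the path-connected space $Y = \Pt \setminus \{[1:0:0]\}$.

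For (b), I would first invoke statement~\ref{t:mtauspec9} (Lemma~\ref{l:mt9pf}) to see that $T_{L,\tau} \in W_{\tau}$ for every $L \in \emMin(\tau)$. The family $\{T_{L,\tau}\}_{L \in \emMin(\tau)}$ is linearly independent, since for distinct $L_{1}, L_{2} \in \emMin(\tau)$ one has $T_{L_{1},\tau}|_{L_{1}} \equiv 1$ while $T_{L_{2},\tau}|_{L_{1}} \equiv 0$ (by the definition of $T_{L,\tau}$ and the attracting property of minimal sets in $\Ct$ given by Lemma~\ref{l:taumsema}, together with the trivial observation that $T_{\{[0:1:0]\},\tau}$ takes the same values on $\{[0:1:0]\}$ and its complement among minimal sets). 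Hence $\dim_{\CC} W_{\tau} \geq \sharp \emMin(\tau) \geq 2$, and the equality with $\dim_{\CC}(\emLSfk)$ is part of statement~\ref{t:mtauspec2} (Lemma~\ref{l:cclsfcb0}).

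For (a), fix $L \in \emMin(\tau)$ and pick $L' \in \emMin(\tau) \setminus \{L\}$. I would rely on three properties of $T_{L,\tau}$: (i) continuity on the connected, path-connected space $Y$ with $T_{L,\tau}|_{L} \equiv 1$ and $T_{L,\tau}|_{L'} \equiv 0$, from statement~\ref{t:mtauspec9}; (ii) constancy on every connected component of $F(G_{\tau})$, which I would obtain by combining Lemma~\ref{l:lscf} (giving $\mbox{LS}({\cal U}_{f,\tau}(K_{n})) \subset \hat{C}_{F(G_{\tau})}(K_{n})$) with the exhaustion of $Y$ by compacta $K_{n}$ in Remark~\ref{r:lkncpty} and the identity $T_{L,\tau}|_{K_{n}} \in \mbox{LS}({\cal U}_{f,\tau}(K_{n}))$; (iii) $L, L' \subset F(G_{\tau})$, which follows from the mean stability hypothesis. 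Given $t \in [0,1]$, path-connectedness of $Y$ and the intermediate value theorem applied along a path in $Y$ from a point of $L$ to a point of $L'$ yield some $z \in Y$ with $T_{L,\tau}(z) = t$. If $z \in J(G_{\tau})$, we are done. Otherwise let $V$ be the component of $F(G_{\tau})$ containing $z$; by (ii), $T_{L,\tau} \equiv t$ on $V$. Because $T_{L,\tau}$ attains the two distinct values $0$ and $1$ on $Y$, $V$ is a proper open subset of $Y$, so its topological boundary $\partial V$ in $Y$ is nonempty; by the very definition of $F(G_{\tau})$, $\partial V \subset J(G_{\tau})$, and by continuity $T_{L,\tau} \equiv t$ on $\partial V$. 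This produces the required point of $J(G_{\tau})$ mapping to $t$, proving $T_{L,\tau}(J(G_{\tau})) = [0,1]$.

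The main obstacle will be the extreme values $t = 0$ and $t = 1$, since no strict intermediate value argument directly places them. The scheme above nevertheless handles them uniformly: the component of $F(G_{\tau})$ containing $L$ (resp.\ containing some $L' \neq L$) is necessarily a proper open subset of $Y$ because the presence of a second distinct minimal set forces the opposite extreme value of $T_{L,\tau}$ to be attained in the complement. Hence its boundary in $Y$ is nonempty and lies in $J(G_{\tau})$, supplying $1$ (resp.\ $0$) as a value of $T_{L,\tau}$ on $J(G_{\tau})$, and completing the proof.
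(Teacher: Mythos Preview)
Your proof is correct and follows essentially the same approach as the paper. The paper's argument is terser---it simply notes that $T_{L,\tau}(Y)=[0,1]$ by continuity and connectedness of $Y$, and then invokes $T_{L,\tau}\in C_{F(G_{\tau})}(Y)$ to conclude $T_{L,\tau}(J(G_{\tau}))=[0,1]$---whereas you spell out the boundary argument explicitly; for (b) the paper deduces $\dim_{\CC}W_{\tau}>1$ from the nonconstancy of a single $T_{L,\tau}$ rather than from linear independence of the family, but both are straightforward.
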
 
\begin{proof} 
We now suppose  $\sharp \Min (\tau )\geq 2$. 
Let $L\in \Min(\tau ).$ 
Since 
$T_{L,\tau }: Y\rightarrow [0,1]$ is continuous, and since 
$T_{L,\tau }|_{L}\equiv 1$ and $T_{L,\tau }|_{L'}\equiv 0$ for each 
$L'\in \Min (\tau )$ with $L'\neq L$, 
it follows that $T_{L,\tau }(Y)=[0,1].$ Since 
$T_{L,\tau }$ is continuous on $Y$ and since $T_{L,\tau }\in C_{F(G_{\tau })}(Y)$, 
we obtain that $T_{L,\tau }(J(G_{\tau }))=[0,1].$ In particular, 
$\dim _{\CC }(\LSfk )>1.$  
Thus, we have proved our lemma. 
\end{proof}
\begin{lem}
\label{l:pfmtauspeccfi}
Under the assumptions of Theorem~\ref{t:mtauspec}, 
statement~\ref{t:mtauspeccfi} holds.
\end{lem}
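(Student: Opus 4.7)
The plan is to argue by contradiction, exploiting the structural fact from statement~\ref{t:mtauspec2} of Theorem~\ref{t:mtauspec} (proved in Lemma~\ref{l:lscf}) that $W_\tau \subset C_{F(G_\tau)}(Y)$, so every element of $W_\tau$ is a continuous function on $Y = \Pt \setminus \{[1:0:0]\}$ that is constant on each connected component of $F(G_\tau)$.

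First, I would observe that the 1-dimensional subspace of constant functions always lies in $W_\tau$, since $M_\tau(1) = 1$ makes $1$ a unitary eigenvector. Hence the hypothesis $\dim_\CC W_\tau > 1$ yields a nonconstant element $\varphi \in W_\tau$. By the above, $\varphi \in C(Y)$ and $\varphi$ is constant on each $U \in \mbox{Con}(F(G_\tau))$.

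Next, suppose for contradiction that $\sharp \mbox{Con}(F(G_\tau)) = N < \infty$, with components $U_1,\ldots,U_N$, and that $\varphi|_{U_j} \equiv c_j$. Then $\varphi(F(G_\tau)) \subset \{c_1,\ldots,c_N\}$. The assumption $\mbox{int}(J(G_\tau)) = \emptyset$ implies $F(G_\tau)$ is dense in $Y$, so by continuity of $\varphi$ on $Y$ we get $\varphi(Y) \subset \overline{\{c_1,\ldots,c_N\}} = \{c_1,\ldots,c_N\}$. Thus $\varphi$ takes only finitely many values on $Y$. Since $Y = \Pt \setminus \{[1:0:0]\}$ is connected (removing a single point from the connected complex $2$-manifold $\Pt$ leaves it connected), $\varphi(Y)$ is a connected finite subset of $\CC$, hence a singleton. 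This contradicts the nonconstancy of $\varphi$, completing the proof that $\sharp \mbox{Con}(F(G_\tau)) = \infty$.

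I do not expect a genuine obstacle here: the lemma is essentially an immediate consequence of (i) the inclusion $W_\tau \subset C_{F(G_\tau)}(Y)$ established earlier, (ii) the density of $F(G_\tau)$ in $Y$ under the empty-interior hypothesis, and (iii) the connectedness of $Y$. The only items needing explicit verification are the elementary facts that constants belong to $W_\tau$ and that $Y$ is connected, both of which are standard.
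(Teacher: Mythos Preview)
Your proof is correct and follows essentially the same approach as the paper: pick a nonconstant $\varphi\in W_\tau$, use $W_\tau\subset C_{F(G_\tau)}(Y)$ together with density of $F(G_\tau)$ (from $\mbox{int}(J(G_\tau))=\emptyset$) to force $\varphi(Y)$ to be contained in the finite set of values on the finitely many components, and then derive a contradiction. The paper phrases the contradiction via the cardinality observation $\sharp\varphi(Y)>\aleph_0$ rather than your connectedness-of-$Y$ argument, but these are two ways of saying the same thing and the underlying idea is identical.
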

\begin{proof}
Suppose $\dim _{\CC }(W_{\tau })>1$ and int$(J(G_{\tau }))=\emptyset .$ Then, $(W_{\tau })_{nc}\neq \emptyset.$ 
Let $\varphi \in (W_{\tau }) _{nc}.$ 
Then, $\sharp \varphi (Y)>\aleph _{0}.$ 
Since  int$(J(G_{\tau }))=\emptyset $ and 
$\varphi $ is continuous on $Y$, 
we have $\varphi (Y)=\overline{\varphi (F(G_{\tau }))}$ 
in $\CC .$  
Therefore, $\sharp \mbox{Con}(F(G_{\tau }))=\infty .$ 
Thus, we have proved our lemma. 
\end{proof}

\begin{lem}
\label{l:kattfpt1}
Under the assumptions of Theorem~\ref{t:mtauspec}, 
statement~\ref{t:mtauspec10} holds. 
\end{lem}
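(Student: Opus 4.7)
The inclusion $\overline{\{z\in S_{\tau }\mid \exists g\in G_{\tau } \text{ s.t. } z \text{ is an attracting fixed point of }g\}}\subset S_{\tau }$ is immediate from the fact that $S_{\tau }$ is a compact subset of $Y$ (Lemma~\ref{l:kcpt}) and the defining set is contained in $S_{\tau }.$ So the plan is to prove the reverse inclusion: the attracting fixed points in $S_{\tau }$ are dense in $S_{\tau }.$ Since $S_{\tau }=\bigcup _{L\in \Min(\tau )}L$ and $\sharp \Min(\tau )<\infty $ (Lemma~\ref{l:minfin}), it suffices to show that in each $L\in \Min(\tau )$, the attracting fixed points of elements of $G_{\tau }$ are dense. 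The case $L=\{[0:1:0]\}$ is trivial: by Lemma~\ref{l:ghsm}~(1), $[0:1:0]$ is an attracting fixed point of every $\hat{g}$ for $g\in \supp\,\tau $, hence of every element of $G_{\tau }.$

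So I fix $L\in \Min(\tau )$ with $L\subset \Ct $, a point $z_{0}\in L$, and $\varepsilon >0$, and I will produce an attracting fixed point $p\in L$ of some $g\in G_{\tau }$ with $d(p,z_{0})<\varepsilon .$ Let $U_{1},\ldots ,U_{m},Q,n,c$ be the data from the definition of mean stability, with $S_{\tau }\subset Q\subset \bigcup _{j}U_{j}.$ Pick $j_{0}$ with $z_{0}\in U_{j_{0}}$ and a closed ball $\overline{B(z_{0},\delta )}\subset U_{j_{0}}$ of some radius $\delta <\varepsilon .$ Iterating the contraction hypothesis, any composition in $G_{\tau }$ of length exactly $kn$ is a $c^{k}$-contraction on each $U_{j}$, mapping it into some $U_{j'}.$ Since $\supp\,\tau $ is compact and the elements are uniformly Lipschitz on a compact forward-invariant neighborhood $K$ of $S_{\tau }$ (coming from Lemma~\ref{l:Kexists}), with uniform bound $M$, any composition of length $\ell \geq N_{0}$ from $\supp\,\tau $ has Lipschitz constant at most $M^{n-1}c^{\lfloor \ell /n\rfloor -1}$ on $U_{j_{0}}.$ Thus for a sufficiently large threshold $N$, every composition of length $\geq N$ from $\supp\,\tau $ is a $\tfrac 12$-contraction on $U_{j_{0}}$ (in the Fubini--Study distance $d$).

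Next I produce $g\in G_{\tau }$ of length $\geq N$ with $d(g(z_{0}),z_{0})<\delta /2$ by an inductive construction using minimality. Set $w_{0}:=z_{0}.$ Since $L=\overline{G_{\tau }(w_{0})}$, there exists $g_{1}\in G_{\tau }$ with $g_{1}(w_{0})\in B(z_{0},\delta /2).$ Setting $w_{1}:=g_{1}(w_{0})\in L$, minimality applied to $w_{1}$ gives $g_{2}\in G_{\tau }$ with $g_{2}(w_{1})\in B(z_{0},\delta /2).$ Iterating, $g_{k}\circ \cdots \circ g_{1}(z_{0})\in B(z_{0},\delta /2)$ for every $k\geq 1$, and the total word-length is $\geq k.$ Choosing $k$ large enough that $k\geq N$ yields $g:=g_{k}\circ \cdots \circ g_{1}\in G_{\tau }$ which is a $\tfrac 12$-contraction on $U_{j_{0}}\supset \overline{B(z_{0},\delta )}$ with $d(g(z_{0}),z_{0})<\delta /2.$ For $x\in \overline{B(z_{0},\delta )}$ we then have $d(g(x),z_{0})\leq d(g(x),g(z_{0}))+d(g(z_{0}),z_{0})\leq \tfrac 12 d(x,z_{0})+\delta /2\leq \delta $, so $g(\overline{B(z_{0},\delta )})\subset \overline{B(z_{0},\delta )}.$ The Banach fixed-point theorem now gives a unique fixed point $p\in \overline{B(z_{0},\delta )}$ of $g$, which is attracting since $g$ is a strict contraction on a neighborhood of $p.$ As $g^{k}(z_{0})\in L$ for every $k$ (since $L$ is $G_{\tau }$-invariant) and $L$ is closed, $p=\lim _{k\to \infty }g^{k}(z_{0})\in L\subset S_{\tau }$, and $d(p,z_{0})<\delta <\varepsilon .$ This establishes the density and completes the proof.

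The most delicate step is the inductive construction of a single element $g\in G_{\tau }$ that simultaneously has word-length exceeding the contraction threshold $N$ \emph{and} satisfies $g(z_{0})\in B(z_{0},\delta /2).$ A ``one-shot'' application of minimality only produces some $h\in G_{\tau }$ with $h(z_{0})$ close to $z_{0}$ but of arbitrary (possibly short) length, and naively post-composing with more maps would move $h(z_{0})$ away from $z_{0}.$ The remedy is the telescoping construction above, in which minimality is reapplied at each intermediate point $w_{i}\in L$ to guarantee that every partial composition lands back in $B(z_{0},\delta /2)$; this is available precisely because every forward iterate remains inside the invariant minimal set $L.$
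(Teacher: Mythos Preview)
Your proof is correct and follows essentially the same strategy as the paper's: both use mean stability for contraction and minimality of $L$ for recurrence to build some $g\in G_\tau$ mapping a small ball strictly into itself, then take the resulting attracting fixed point, which lies in $L$ as a limit of $G_\tau$-iterates of a point of $L$. The paper's two-step version (first shrink $B(b,\epsilon)$ via some $g_1$, then return via $g_2$ using minimality together with Lemma~\ref{l:hypmetcont}, and set $g=g_2g_1$) is terser; your telescoping construction spells out how to obtain an arbitrarily long word in $G_\tau$ that still lands in $B(z_0,\delta/2)$, which is precisely the content hidden in the paper's choice of $g_2$. One minor point: your assertion that every length-$kn$ word is a $c^k$-contraction on $U_{j_0}$ tacitly assumes each successive $n$-block image lands in a single $U_{j'}$, which the definition of mean stability does not literally guarantee; it is cleanest to take $\delta$ below the Lebesgue number of the cover $\{U_j\}$ of $Q$ and iterate the contraction on $B(z_0,\delta)$ rather than on all of $U_{j_0}$.
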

\begin{proof}
Let $L\in \Min(\tau).$ 
To prove our lemma, it suffices to prove the following claim. \\  
Claim: If $L\subset \Ct$ then 
$L\subset 
\overline{\{ z\in L\mid \exists g\in G_{\tau } \mbox{ s.t. } z \mbox{ is an attracting fixed point of } g\} }.$ 

To prove the above claim, 
let $b\in L.$ 
Let $\epsilon >0.$ By Lemma~\ref{l:hypmetcont}, there exists an element $g_{1}\in G_{\tau }$ 
such that $g_{1}(B(b,\epsilon ))\subset B(g_{1}(b),\frac{\epsilon }{2}).$ 
Since $\overline{G_{\tau }(g_{1}(b))}=L$,  
Lemma~\ref{l:hypmetcont} implies that there exists an element $g_{2}\in G_{\tau }$ 
such that $\overline{g_{2}(B(g_{1}(b),\frac{\epsilon }{2}))}\subset B(b,\epsilon ).$ 
Thus, $\overline{g_{2}g_{1}(B(b,\epsilon ))}\subset B(b,\epsilon ).$ 
Let $g=g_{2}g_{1}.$ 
Then, $z_{0}:= \lim _{n\rightarrow \infty }g^{n}(b)\in B_{h}(b,\epsilon )\cap L$ is an attracting fixed point of 
$g.$ Therefore, we have proved the above claim. 
Thus, we have proved our lemma.
\end{proof}
\begin{lem}
\label{l:pfmtauspec12}
Under the assumptions of Theorem~\ref{t:mtauspec}, 
statement~\ref{t:mtauspec12} holds.
\end{lem}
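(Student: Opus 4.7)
The plan is to exploit statement~\ref{t:mtauspec2}, which gives $W_{\tau }\subset C_{F(G_{\tau })}(Y)$, so that any $\varphi \in (W_{\tau })_{nc}$ is automatically constant on each connected component of $F(G_{\tau })$. For each $U\in \mbox{Con}(F(G_{\tau }))$, I denote the corresponding value by $c_{U}\in \CC$, and I set $C:=\{ c_{U}\mid U\in \mbox{Con}(F(G_{\tau }))\}$. Since $\Pt$ is second countable, $F(G_{\tau })$ has at most countably many connected components, so $C$ is at most countable.

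Next I will verify that $\varphi (Y)$ is uncountable. Since $Y=\Pt \setminus \{[1:0:0]\}$ is connected (removing a point from the connected real $4$-manifold $\Pt$ leaves a connected set) and $\varphi :Y\to \CC$ is continuous, $\varphi (Y)$ is connected in $\CC$. Because $\varphi $ is nonconstant, $\varphi (Y)$ has more than one point, and a connected subset of a metric space containing at least two points is uncountable (apply $d(a,\cdot )$ for $a,b\in \varphi (Y)$, $a\neq b$, and note its image in $\RR$ is a nondegenerate interval). Hence $A:=\varphi (Y)\setminus C$ is uncountable.

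Now fix $t\in A$. Since $t\in \varphi (Y)$ the set $\varphi ^{-1}(\{t\})$ is nonempty, and since $\varphi |_{U}\equiv c_{U}\neq t$ on every $U\in \mbox{Con}(F(G_{\tau }))$, the level set is disjoint from $F(G_{\tau })$; thus $\emptyset \neq \varphi ^{-1}(\{t\})\subset J(G_{\tau })$. To see the inclusion in $J_{res}(G_{\tau })$, take any $z\in \varphi ^{-1}(\{t\})\cap J(G_{\tau })$ and suppose $z\in \partial U$ for some $U\in \mbox{Con}(F(G_{\tau }))$. Then continuity of $\varphi $ at $z$ together with $\varphi |_{U}\equiv c_{U}$ forces $t=\varphi (z)=c_{U}\in C$, contradicting $t\in A$. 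Therefore $z\in J_{res}(G_{\tau })$, completing the verification that $\emptyset \neq \varphi ^{-1}(\{t\})\cap J(G_{\tau })\subset J_{res}(G_{\tau })$ for every $t\in A$.

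No step here is delicate: the whole argument reduces to (i) the structural fact $W_{\tau }\subset C_{F(G_{\tau })}(Y)$ provided by statement~\ref{t:mtauspec2}, (ii) the second countability of $\Pt$ giving countability of $\mbox{Con}(F(G_{\tau }))$, and (iii) the elementary topological observation that a nondegenerate connected subset of $\CC$ is uncountable. The only place that requires a bit of care is making sure one may apply statement~\ref{t:mtauspec2} on all of $Y$ rather than on a single compact set $K$; this is automatic because $W_{\tau }$ is defined as a subspace of $C(Y)$ and the statement there already asserts $W_{\tau }\subset C_{F(G_{\tau })}(Y)$.
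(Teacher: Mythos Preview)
Your proof is correct and follows essentially the same approach as the paper: both set $A=\varphi (Y)\setminus \varphi (F(G_{\tau }))$ (your set $C$ equals $\varphi (F(G_{\tau }))$ since $\varphi \in C_{F(G_{\tau })}(Y)$), use countability of $\mbox{Con}(F(G_{\tau }))$ and uncountability of $\varphi (Y)$ to conclude $A$ is uncountable, and then use continuity plus constancy on components to show each level set over $A$ lies in $J_{res}(G_{\tau })$. You simply spell out in more detail why $\varphi (Y)$ is uncountable and why boundary points would force $t\in C$.
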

\begin{proof}
Suppose $\dim _{\CC }(W_{\tau })>1$ and let 
$\varphi \in (W_{\tau }) _{nc}.$ 
Let $A:= \varphi (Y )\setminus \varphi (F(G_{\tau })) .$ 
Since $\varphi \in C_{F(G_{\tau })}(Y)$ and 
since $\sharp \mbox{Con}(F(G_{\tau }))\leq \aleph _{0}$, 
we have $\sharp A>\aleph _{0}.$ 
Moreover, since $\varphi $ is continuous on $Y$, 
it is easy to see that for each $t\in A$, 
$\emptyset \neq \varphi ^{-1}(\{ t\} )\subset J_{res}(G_{\tau }).$ 
Thus, we have proved our lemma.  
\end{proof}

We now prove Theorem~\ref{t:mtauspec}. \\ 

\noindent {\bf Proof of Theorem~\ref{t:mtauspec}:}
Combining Lemmas~\ref{l:minfin}--
\ref{l:pfmtauspecjconti} and Remark~\ref{r:lkncpty}, 
we easily see that all of the statements 1--19 of Theorem~\ref{t:mtauspec} hold. 
\qed 
\begin{rem}
\label{r:sametauthm}
If $\tau \in {\frak M}_{1,c}(X^{-})$ is mean stable 
on $Y'=\Pt \setminus \{ [0:1:0]\}$, then we can prove  
the statements that are similar to those in  
Theorem~\ref{t:mtauspec} hold for $\tau $ 
 using the same arguments above.  
\end{rem}

We now show that if 
$\tau \in {\frak M}_{1,c}(X^{+})$ is mean stable on 
$\Pt \setminus \{ [1:0:0]\}$, then 
the space $\mbox{LS}({\cal U}_{f,\tau }(K))$ in Theorem~\ref{t:mtauspec} is included in the Banach space of 
$\alpha$-\Hol der continuous functions on $K$ for some $\alpha 
\in (0,1)$. 
\begin{df}
Let $K\in \Cpt (\Pt ).$ 
For each $\alpha \in (0,1)$,  
let \\ $C^{\alpha }(K ):= \{ \varphi \in C(K)\mid \sup _{x,y\in K, x\neq y} |\varphi (x)-\varphi (y)|/d(x,y)^{\alpha} <\infty \} $ 
 be the Banach space of all complex-valued $\alpha $-H\"{o}lder continuous functions on $K $
endowed with the $\alpha $-H\"{o}lder norm $\| \cdot \| _{\alpha },$ 
where $\| \varphi \| _{\alpha }:= \sup _{z\in K}| \varphi (z)| +\sup _{x,y\in K,x\neq y}|\varphi (x)-\varphi (y)|/d(x,y)^{\alpha }$ 
for each $\varphi \in C^{\alpha }(K).$ 
\end{df}

\begin{thm}
\label{t:utauca}
Let $\tau \in {\frak M}_{1,c}(X^{+})$. Suppose that 
$\tau $ is mean stable on $Y=\Pt \setminus \{ [1:0:0]\}.$ 
Let $K$ be a compact subset of $Y$ such that 
$G_{\tau }(K)\subset K$ and $\cup _{L\in \Min(\tau)}L\subset 
\mbox{int}(K).$ 
Then, 
there exists an $\alpha _{0}\in (0,1)$ such that for each $\alpha \in (0,\alpha _{0})$, 
$\mbox{{\em LS}}({\cal U}_{f,\tau }(K))\subset C^{\alpha }(K).$ 
Moreover, for each $\alpha \in (0,\alpha _{0})$, 
there exists a constant $E_{\alpha }>0$ such that 
for each $\varphi \in C(K)$, 
$\| \pi _{\tau, K }(\varphi )\| _{\alpha }\leq E_{\alpha }\| \varphi \| _{\infty }.$  
Furthermore, for each $\alpha \in (0,\alpha _{0})$ and for each $L\in \emMin (\tau)$, 
$T_{L,\tau }|_{K}\in C^{\alpha }(K).$ 
\end{thm}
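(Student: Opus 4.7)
The plan is to derive a Lasota--Yorke type estimate for $M_\tau$ on $C^\alpha(K)$ for small $\alpha>0$, then use the identity $\varphi = a^{-N}M_\tau^N\varphi$ (for any eigenfunction $\varphi\in\Ufk$ with eigenvalue $a$ of modulus $1$) to bootstrap Holder regularity. Let $U_1,\ldots,U_m,Q,n,c$ be the mean stability data for $\tau$, and set $\Lambda:=\max_{f\in\supp\tau}\sup_{z\in K}\|Df_z\|<\infty$ (finite by compactness of $\supp\tau$ and $K$). By compactness of $Q$ inside $\bigcup_j U_j$, pick $\delta_0>0$ so that, for every $z \in Q$, $B(z,\delta_0)$ is contained in some $U_k$; hence whenever $\gamma_{n,1}(x)\in Q$ and $d(\gamma_{n,1}(x),\gamma_{n,1}(y))<\delta_0$, the points $\gamma_{n,1}(x)$ and $\gamma_{n,1}(y)$ lie in a common $U_k$.

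The first main step is a uniform exponential absorbing-time estimate: there exist $\theta\in(0,1)$, $C_0>0$ and $N_1\in\NN$ (taken to be a multiple of $n$) such that for every $z\in K$ and $N\in\NN$,
\[
\tau^{NN_1}\bigl(\bigl\{\gamma : \gamma_{jN_1,1}(z)\notin \textstyle\bigcup_k U_k\ \text{for all}\ 1\le j\le N\bigr\}\bigr) \le C_0\theta^N.
\]
This follows from condition (b) of the mean stability (each $z\in K$ admits $f_z\in G_\tau$ with $f_z(z)\in\bigcup U_j$), continuity of the elements of $\supp\tau$, and a compactness argument on $K$ yielding a uniform lower bound $p>0$ for the $\tau^{N_1}$-probability of entering $\bigcup U_j$ within $N_1$ steps (compare \cite[Lemma 4.6]{Splms10}); iteration in blocks of $N_1$ steps gives the exponential decay. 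Crucially, by condition (a) of mean stability, once $\gamma_{jN_1,1}(z)\in\bigcup U_j$ the orbit remains in $\bigcup U_j$ at all subsequent $N_1$-step iterates.

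The second main step is the Lasota--Yorke inequality. For $x,y\in K$ with $r:=d(x,y)$ small and $\varphi\in C^\alpha(K)$, bound
\[
|M_\tau^{NN_1}\varphi(x)-M_\tau^{NN_1}\varphi(y)| \le \int \bigl|\varphi(\gamma_{NN_1,1}(x))-\varphi(\gamma_{NN_1,1}(y))\bigr|\, d\tau^{NN_1}(\gamma),
\]
splitting the integrand by the first time $jN_1$ at which the orbit of $x$ enters $\bigcup U_j$. On the absorbed set, provided $\Lambda^{jN_1}r<\delta_0$ the choice of $\delta_0$ places $\gamma_{jN_1,1}(y)$ in the same $U_k$ as $\gamma_{jN_1,1}(x)$; iterating the contraction from condition (a) yields $d(\gamma_{NN_1,1}(x),\gamma_{NN_1,1}(y))\le \Lambda^{jN_1}c^{(N-j)N_1/n}r$, and hence the contribution is at most $[\varphi]_\alpha \Lambda^{jN_1\alpha}c^{\alpha(N-j)N_1/n}r^\alpha$. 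On the unabsorbed set, control the integrand by $2\|\varphi\|_\infty$ times the measure bound $C_0\theta^N$. Tuning $N\sim\log(1/r)$ so that $\Lambda^{NN_1}r$ stays comparable to $\delta_0$, and choosing $\alpha_0>0$ small enough that $c^{\alpha_0}\Lambda^{N_1\alpha_0}<\theta^{-1/2}$ (or a similar balance), yields
\[
[M_\tau^{NN_1}\varphi]_\alpha \le \lambda^N[\varphi]_\alpha + C_\alpha\|\varphi\|_\infty \qquad (\lambda<1,\ \alpha\in(0,\alpha_0)).
\]

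Applying this to any $\varphi\in\Ufk$ (for which $[\varphi]_\alpha=[M_\tau^{NN_1}\varphi]_\alpha$) and taking $N$ large gives $[\varphi]_\alpha\le C'_\alpha\|\varphi\|_\infty$, hence $\varphi\in C^\alpha(K)$ with a uniform bound depending only on $\|\varphi\|_\infty$. The inclusion $\LSfk\subset C^\alpha(K)$ follows, and combining with the continuity of the projection $\pi_{\tau,K}:C(K)\to\LSfk$ from Theorem~\ref{t:mtauspec}(\ref{t:mtauspec2}) gives $\|\pi_{\tau,K}(\varphi)\|_\alpha\le E_\alpha\|\varphi\|_\infty$. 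The Holder property of $T_{L,\tau}|_K$ is then immediate since $T_{L,\tau}|_K\in\LSfk$ by Theorem~\ref{t:mtauspec}(\ref{t:mtauspec9}). The main obstacle is the coupling argument in the second step: the pre-absorption Lipschitz distortion $\Lambda^{jN_1}$ and post-absorption contraction $c^{(N-j)N_1/n}$ must be balanced against the exponential decay $\theta^N$ of the unabsorbed tail, across the joint distribution of absorbing times, to ensure a genuine Holder contraction at some small positive $\alpha$.
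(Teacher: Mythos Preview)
Your argument has a circularity at the bootstrap step. The Lasota--Yorke estimate $[M_\tau^{NN_1}\varphi]_\alpha \le \lambda^N[\varphi]_\alpha + C_\alpha\|\varphi\|_\infty$ is obtained for $\varphi\in C^\alpha(K)$, since the absorbed contribution is bounded via $[\varphi]_\alpha$ times a contracted distance to the power~$\alpha$. But an eigenvector $\varphi\in\Ufk$ is only known to lie in $C(K)$; if $[\varphi]_\alpha=\infty$, the inequality $(1-\lambda^N)[\varphi]_\alpha\le C_\alpha\|\varphi\|_\infty$ is vacuous and yields nothing. Working instead with the modulus of continuity $\omega_\varphi$ on the absorbed set does not help either: with $N$ tuned so that $\Lambda^{NN_1}r\asymp\delta_0$, the absorbed part is of order $\sum_{k=0}^N\theta^{N-k}\omega_\varphi(\delta_0\eta^{kN_1})$ for some $\eta<1$, and for a merely continuous $\varphi$ (e.g.\ $\omega_\varphi(s)\sim 1/\log(1/s)$) this is not $O(r^\alpha)$ for any $\alpha>0$.

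The paper's proof supplies the missing ingredient: by Theorem~\ref{t:mtauspec}-\ref{t:mtauspec2} (see Lemma~\ref{l:lscf}), every $\varphi\in\LSfk$ lies in $\hat{C}_{F(G_\tau)}(K)$, i.e.\ is \emph{constant on each connected component of $F(G_\tau)$} restricted to $K$. The covering is then built from \emph{connected} neighborhoods $\Lambda_{z_j}$ and maps $g_{z_j}\in G_\tau$ with $g_{z_j}(\Lambda_{z_j})\subset F(G_\tau)$ (not merely into $\bigcup_k U_k$). On the absorbed event both $\gamma_{rn,1}(z)$ and $\gamma_{rn,1}(z_0)$ land in a single component of $F(G_\tau)$, so the integrand $\varphi(\gamma_{rn,1}(z))-\varphi(\gamma_{rn,1}(z_0))$ vanishes \emph{identically}. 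Only the unabsorbed tail, of measure $\le a^n$, contributes, giving $|\varphi(z)-\varphi(z_0)|\le 2a^n\|\varphi\|_\infty$ with no H\"older seminorm term at all; choosing $n$ via $C_1^{-n-1}C_2\le d(z,z_0)<C_1^{-n}C_2$ then yields the H\"older bound for any $\alpha$ with $aC_1^\alpha<1$. Your absorbing-time estimate and overall architecture are sound; inserting the local-constancy fact to annihilate the absorbed contribution makes the proof go through, and the metric contraction from condition~(a) of mean stability is then not needed for this theorem (it is used later, in Theorem~\ref{t:kjemfhf}).
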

\begin{rem}
\label{r:lrm1mbk}
Let $\tau \in {\frak M}_{1,c}(X^{+})$. Suppose that 
$\tau $ is mean stable on $Y=\Pt \setminus \{ [1:0:0]\}.$ Then,  
by Theorem~\ref{t:mtauspec}, for each 
neighborhood $B$ of $[1:0:0]$, there exists a compact subset $K$ 
of $Y$ such that $\Pt \setminus K\subset B$, 
$G_{\tau }(K)\subset K$ and $\cup _{L\in \Min(\tau)}L\subset 
\mbox{int}(K).$ 
\end{rem}
\noindent {\bf Proof of Theorem~\ref{t:utauca}.}
By 
Theorem~\ref{t:mtauspec}, 
there exists an $r\in \NN $ 
such that 
for each $\varphi \in \mbox{LS}({\cal U}_{f,\tau }(K ))$, 
$M_{\tau }^{r}(\varphi )=\varphi .$ 
Since 
$\tau $ is mean stable on $Y$, 
for each $z\in K$, there exist a map $g_{z}\in G_{\tau }$ and a 
compact  connected neighborhood $\Lambda _{z}$ of $z$ in $Y$ such that 
$g_{z}(\Lambda_{z})\subset F(G_{\tau }).$ 
Since $K$ is compact, there exists a finite family 
$\{ z_{j}\} _{j=1}^{s}$ in $K$ such that 
$\bigcup _{j=1}^{s}\mbox{int}(\Lambda_{z_{j}})\supset K.$ 
Since $G_{\tau }(F(G_{\tau }))\subset F(G_{\tau })$, 
replacing $r$ by a larger number if necessary,  
we may assume that for each $j=1,\ldots ,s$, 
there exists an element $\beta ^{j}=(\beta _{1}^{j},\ldots ,\beta _{r}^{j})\in (\supp\,\tau )^{r}$ 
such that $g_{z_{j}}=\beta _{r}^{j}\circ \cdots \circ \beta _{1}^{j}.$  
For each $j=1,\ldots ,s$, let 
$V_{j}$ be a compact neighborhood of $\beta ^{j}$ in 
$(\supp\,\tau)^{r}$ 
such that for each $\zeta =(\zeta _{1},\ldots ,\zeta _{r})\in V_{j}$, 
$\zeta _{r}\cdots \zeta _{1}(\Lambda_{z_{j}})\subset F(G_{\tau }).$ 
Let $a:=\max \{ \tau ^{r}((\supp\,\tau )^{r}\setminus V_{j})\mid j=1,\ldots ,s\} \in [0,1).$ Here, we set 
$\tau ^{r}=\otimes _{j=1}^{n}\tau .$ 
Let $$C_{1}:= 2\max \{ \sup\{ \mbox{Lip}(\zeta _{r}\circ \cdots \circ \zeta _{1}, K)\mid 
(\zeta _{1},\ldots ,\zeta _{r})\in (\supp\,\tau )^{r} \} ,1\} \geq 2.$$  
Here, for each holomorphic map $h$ on $Y$ we set 
$\mbox{Lip}(h,K):=\sup \{ d(h(z), h(w))/d(z,w)\mid z,w\in K, z\neq w\}.$  
Let $\alpha _{0} \in [0,1)$ be a number such that 
$aC_{1}^{\alpha _{0}}<1.$ Let $0<\alpha <\alpha _{0}.$ Then 
$aC_{1}^{\alpha }<1.$ Let $C_{2}>0$ be a number such that 
for each $z\in K$, there exists a $j\in \{ 1,\ldots ,s\} $ with 
$B(z,C_{2})\cap K\subset \mbox{int}(\Lambda_{z_{j}}).$  
Let $\varphi \in \mbox{LS}({\cal U}_{f,\tau }(K)).$ 
Let $z_{0},z\in K$ be two points. 
If $d(z,z_{0})>C_{1}^{-1}C_{2}$, then 
$$| \varphi (z)-\varphi (z_{0})|/d(z,z_{0})^{\alpha }\leq 
2\| \varphi \| _{\infty }\cdot (C_{1}C_{2}^{-1})^{\alpha }.$$ 
We now suppose that there exists an $n\in \NN $ such that 
$C_{1}^{-n-1}C_{2}\leq d(z,z_{0})\leq C_{1}^{-n}C_{2}.$ 
Then, for each $j\in \NN $ with $1\leq j\leq n$ and 
for each $(\g _{1},\ldots ,\g_{rj})\in (\supp\, \tau )^{rj}$, we have 
$d(\g _{rj}\circ \cdots \circ \g _{1}(z),\g _{rj}\circ \cdots \circ \g _{1}(z_{0}))<C_{2}.$ 
Let $i_{0}\in \{ 1,\ldots ,s\} $ be a number such that 
$B(z_{0},C_{2})\cap K\subset \Lambda_{z_{i_{0}}}.$ 
Let $A(0):= \{ \g =(\g_{j})_{j\in \NN} \in (\supp\,\tau )^{\NN }\mid 
(\g _{1},\ldots ,\g _{r})\in V_{i_{0}}\} $ and 
$B(0):=\{ \g =(\g _{j})_{j\in \NN}\in (\supp\,\tau )^{\NN }\mid 
(\g _{1},\ldots ,\g _{r})\not\in V_{i_{0}}\} .$ 
 Inductively, for each $j=1,\ldots ,n-1$, 
 let $A(j):= \{ \g =(\g _{j})_{j\in \NN}\in B(j-1)\mid \exists i \mbox{ s.t. } 
 B(\g _{rj,1}(z_{0}),C_{2})\subset \Lambda_{z_{i}}, (\g _{rj+1},\ldots ,\g _{rj+r})\in V_{i}\} $ 
 and $B(j):= B(j-1)\setminus A(j).$ 
Then for each $j=1,\ldots ,n-1$, 
$\tau ^{\NN}(B(j))\leq a \tau ^{\NN}(B(j-1)).$ Therefore, 
$\tau ^{\NN}(B(n-1))\leq a^{n}.$ 
Moreover, we have 
$(\supp\,\tau )^{\NN }=\amalg _{j=0}^{n-1}A(j)\amalg B(n-1).$ 
Furthermore, by Theorem~\ref{t:mtauspec}-\ref{t:mtauspec2}, 
$\varphi \in \hat{C}_{F(G_{\tau })}(K).$ 
Thus, we obtain that 
\begin{align*}
\    & |\varphi (z)-\varphi (z_{0})| = |M_{\tau }^{rn}(\varphi )(z)-M_{\tau }^{rn}(\varphi )(z_{0})| \\ 
\leq & |\sum _{j=0}^{n-1}\int _{A(j)}\varphi (\g _{rn,1}(z))-\varphi (\g _{rn,1}(z_{0})) d\tau ^{\NN}(\g )| 
       + |\int _{B(n-1)}\varphi (\g _{rn,1}(z))-\varphi (\g _{rn,1}(z_{0}))
       d\tau ^{\NN}(\g )|\\ 
\leq & \int _{B(n-1)}|\varphi (\g _{rn,1}(z))
-\varphi (\g _{rn,1}(z_{0}))|d\tau ^{\NN}(\g )\\ 
\leq & 2a^{n}\| \varphi \| _{\infty }\leq a^{n}(C_{1}^{n+1}C_{2}^{-1})^{\alpha }d(z,z_{0})^{\alpha }2\| \varphi \| _{\infty }
\leq C_{1}^{\alpha }C_{2}^{-\alpha }2\| \varphi \| _{\infty } d(z,z_{0})^{\alpha }.        
\end{align*}  
From these arguments, it follows that 
$\varphi $ belongs to $C^{\alpha }(K).$ 

Let $\{ \rho _{j}\} _{j=1}^{q}$ be a basis of 
$\mbox{LS}({\cal U}_{f,\tau ,\ast }(K))$ and let 
$\{ \varphi _{j}\} _{j=1}^{q}$ be a basis of 
$\mbox{LS}({\cal U}_{f,\tau }(K))$ such that 
for each $\psi \in C(K)$, 
$\pi _{\tau ,K}(\psi )=\sum _{j=1}^{q}\rho _{j}(\psi )\varphi _{j}$
(see Theorem~\ref{t:mtauspec}). 
Then, for each $\psi \in C(K)$, 
$\| \pi _{\tau ,K}(\psi )\| _{\alpha }\leq \sum _{j=1}^{q}|\rho _{j}(\psi )|\| \varphi _{j}\| _{\alpha }
\leq (\sum _{j=1}^{q}\| \rho _{j}\| _{\infty }\| \varphi _{j}\| _{\alpha })\| \psi \| _{\infty }$, 
where $\| \rho _{j}\| _{\infty }$ denotes the operator norm of 
$\rho _{j}:(C(K ),\| \cdot \| _{\infty })\rightarrow \CC .$ 
 
 We now let $L\in \Min(\tau)$ and let $\alpha \in (0,\alpha _{0})$. 
 By Theorem~\ref{t:mtauspec}-\ref{t:mtauspec9},
 $T_{L,\tau }|_{K}\in \mbox{LS}({\cal U}_{f,\tau }(K)).$ 
 Thus $T_{L,\tau }|_{K}\in C^{\alpha }(K).$ 
 
Thus, we have proved Theorem~\ref{t:utauca}.  
\qed 

We now prove that if $\tau \in {\frak M}_{1,c}(X^{+})$ 
is mean stable on $Y=\Pt \setminus \{ [1:0:0]\}$ and 
$K$ is a compact subset $Y$ such that 
$G_{\tau }(K)\subset K$ and 
$\cup _{L\in \Min(\tau )}L\subset \mbox{int}(K)$, then 
there exists a number $\alpha =\alpha (K)\in (0,1)$ 
such that for each $\varphi\in C^{\alpha }(K)$, 
$ M_{\tau }^{n}(\varphi)$ tends to $\mbox{LS}({\cal U}_{f,\tau }(K))$ 
as $n\rightarrow \infty $ exponentially fast. 
\begin{thm}[Cooperation Principle: Exponential Rate of Convergence] 
\label{t:kjemfhf}
Let $\tau \in {\frak M}_{1,c}(X^{+})$. Suppose that 
$\tau $ is mean stable on $\Pt \setminus \{ [1:0:0]\}.$ 
Let $Y=\Pt \setminus \{ [1:0:0]\}.$ 
Let $r:= \prod _{L\in \emMin(\tau)}\dim _{\CC }(\mbox{{\em LS}}({\cal U}_{f,\tau }(L)))$. (Note that by Theorem~\ref{t:mtauspec}, 
$r$ is finite.)   
Let $K$ be a compact subset $Y$ such that 
$G_{\tau }(K)\subset K$ and 
$\cup _{L\in \Min(\tau )}L\subset \mbox{int}(K).$ 
Then, 
there exist a constant $\alpha =\alpha (K)\in (0,1)$, 
a constant $\lambda =\lambda (K)\in (0,1)$, and a constant $C=C(K)>0$ satisfying that 
for each $\varphi \in C^{\alpha }(K)$, 
we have all of the following.
\begin{itemize}
\item[{\em (1)}] 
$\| M_{\tau }^{nr}(\varphi )-\pi _{\tau ,K}(\varphi )\| _{\alpha }\leq 
C\lambda ^{n}\| \varphi -\pi _{\tau ,K}(\varphi )\| _{\alpha }$ for each $n\in \NN .$ 
\item[{\em (2)}]
$\| M_{\tau }^{n}(\varphi -\pi _{\tau ,K}(\varphi ))\| _{\alpha }\leq C\lambda ^{n}\| \varphi -\pi _{\tau ,K}(\varphi )\| _{\alpha }$
 for each $n\in \NN .$ 
\item[{\em (3)}] 
$\| M_{\tau }^{n}(\varphi -\pi _{\tau ,K}(\varphi ))\| _{\alpha }\leq C\lambda ^{n}\| \varphi \| _{\alpha }$
 for each $n\in \NN .$ 
\item[{\em (4)}] 
$\| \pi _{\tau ,K}(\varphi )\| _{\alpha }\leq C\| \varphi \| _{\alpha }.$ 
\end{itemize}
 
\end{thm}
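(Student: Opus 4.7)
The plan is to reduce all four statements to the single key estimate
\begin{equation*}
\|M_\tau^{rn}\psi\|_\alpha \leq C\lambda^n \|\psi\|_\alpha, \qquad \psi \in C^\alpha(K)\cap {\cal B}_{0,\tau,K},
\end{equation*}
and then to establish this estimate by combining the splitting technique from the proof of Theorem~\ref{t:utauca} with an Ionescu-Tulcea--Marinescu quasi-compactness argument. First I would fix $\alpha_0 \in (0,1)$ as furnished by Theorem~\ref{t:utauca} and further shrink $\alpha$ so that $aC_1^\alpha<1$, where $a\in[0,1)$ and $C_1\geq 2$ are the constants appearing in the proof of Theorem~\ref{t:utauca} (governing the probability of not yet ``escaping'' into the Fatou set and the per-$r$-step Lipschitz constant on $K$). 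Statement (4) is then immediate from Theorem~\ref{t:utauca} and $\|\cdot\|_\infty\leq\|\cdot\|_\alpha$. Since $M_\tau$ commutes with $\pi_{\tau,K}$ and $M_\tau^r$ is the identity on $W_\tau|_K=\mbox{LS}({\cal U}_{f,\tau}(K))$ by Theorem~\ref{t:mtauspec}, one has $M_\tau^{rn}(\varphi-\pi_{\tau,K}(\varphi))=M_\tau^{rn}(\varphi)-\pi_{\tau,K}(\varphi)$, which reduces (1) to the key estimate at $r$-multiples; statement (2) for general $n$ then follows by writing $n=rm+s$ with $0\leq s<r$ and absorbing $\max_s\|M_\tau^s\|_\alpha$ into the constant; and (3) follows from (2) together with $\|\varphi-\pi_{\tau,K}(\varphi)\|_\alpha \leq (1+\|\pi_{\tau,K}\|_{C^\alpha\to C^\alpha})\|\varphi\|_\alpha$ provided by (4).

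To prove the key estimate, I would first establish a Doeblin--Fortet / Lasota--Yorke inequality of the form
\begin{equation*}
\|M_\tau^{rn}\psi\|_\alpha \leq \kappa_n \|\psi\|_\alpha + B_n\|\psi\|_\infty, \qquad \psi \in C^\alpha(K),
\end{equation*}
with $\kappa_n\to 0$ exponentially. The decomposition $(\supp\tau)^{\NN}=\amalg_{j=0}^{n-1}A(j)\amalg B(n-1)$ from the proof of Theorem~\ref{t:utauca} is the main tool: for $\gamma \in A(j)$ the orbit $\gamma_{r(j+1),1}$ sends some neighborhood of $z_0$ into the compact Fatou set $Q\subset\bigcup_iU_i$ after which every further block of $r$ maps contracts by a factor $c\in(0,1)$ coming from mean stability, giving $d(\gamma_{rn,1}(z),\gamma_{rn,1}(z_0))\leq c^{n-j-1}\min(D_0,C_1^{j+1}d(z,z_0))$ (where $D_0=\mbox{diam}(Q)$); for $\gamma \in B(n-1)$ we have $\tau^{\NN}(B(n-1))\leq a^n$ and only the crude diameter bound $d(\gamma_{rn,1}(z),\gamma_{rn,1}(z_0))\leq \mbox{diam}(K)$ is available. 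A dyadic split of $d(z,z_0)$ relative to $D_0C_1^{-n}$, combined with the Hölder bound $|\psi(x)-\psi(y)|\leq\|\psi\|_\alpha d(x,y)^\alpha$ on the $A(j)$-part and the sup-norm bound on the $B(n-1)$-part, yields the inequality; the choice $aC_1^\alpha<1$ is exactly what makes the $B(n-1)$-contribution decay after dividing by $d(z,z_0)^\alpha$.

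Combined with the compact inclusion $C^\alpha(K)\hookrightarrow C(K)$ (Arzelà--Ascoli) and boundedness of $M_\tau$ on $C(K)$, the Doeblin--Fortet inequality yields via the Ionescu-Tulcea--Marinescu theorem that $M_\tau:C^\alpha(K)\to C^\alpha(K)$ is quasi-compact with essential spectral radius strictly less than $1$. The spectral decomposition on $C(K)$ from Theorem~\ref{t:mtauspec} shows that the non-essential $\alpha$-spectrum consists precisely of the finitely many unitary eigenvalues $\{a_L^i\}$ whose eigenvectors all lie in $W_\tau|_K\subset C^\alpha(K)$ (by Theorem~\ref{t:utauca}); consequently the restriction of $M_\tau$ to ${\cal B}_{0,\tau,K}\cap C^\alpha(K)$ has spectral radius strictly less than $1$, which is exactly the desired exponential decay.

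The main obstacle is the Doeblin--Fortet step itself: balancing the Lipschitz expansion $C_1^{j+1}$ accumulated before the orbit enters the Fatou set against the contraction $c^{n-j-1}$ afterward, in such a way that the $A(j)$-contribution yields an $\alpha$-Hölder seminorm proportional to $\|\psi\|_\alpha$ with a coefficient $\kappa_n$ that decays geometrically (rather than remaining merely bounded as in the proof of Theorem~\ref{t:utauca}). I expect that an adequate strategy is to optimize the index $j^\ast\approx\log(D_0/d(z,z_0))/\log C_1$ at which the minimum switches from $C_1^{j+1}d(z,z_0)$ to $D_0$, obtaining separate exponentially small factors in the two ranges and then summing. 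An alternative route that avoids the full Ionescu-Tulcea--Marinescu machinery is to prove exponential decay in $\|\cdot\|_\infty$ on ${\cal B}_{0,\tau,K}\cap C^\alpha(K)$ first (using that $\psi|_L\in{\cal B}_{0,\tau,L}$ for each $L\in\mbox{Min}(\tau)$ by Theorem~\ref{t:mtauspec}-\ref{t:mtauspec6}, and that each minimal set $L\subset F(G_\tau)$ carries a uniformly contracting dynamics via the Carathéodory distance as used in the proof of Theorem~\ref{t:nyItaut}), and then to bootstrap this to the $\alpha$-Hölder norm through the same $A(j),B(n-1)$ splitting.
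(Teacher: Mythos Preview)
Your proposal is correct in outline but takes a genuinely different route from the paper. The paper never invokes the Ionescu--Tulcea--Marinescu theorem or any abstract quasi-compactness machinery; it proceeds entirely by direct estimates, in four layers. First (Lemma~\ref{l:suptheta}) it proves local H\"older contraction on compact subsets of Fatou components meeting the minimal sets. Second, and this is the step your sketch bypasses, it performs a finite-dimensional spectral analysis of $M_{\tau}^{r_{L}}$ acting on the space $\hat{C}_{F(G_{\tau})}(H_{j})$ of locally constant functions on small forward-invariant neighborhoods $H_{j}$ of the pieces $L_{j}$ of each minimal set; since this space is finite-dimensional and the only unitary eigenvalue there is $1$, one gets sup-norm exponential convergence of $M_{\tau}^{rn}(\varphi)$ to $\pi_{\tau,K}(\varphi)$ on $\overline{U}=\overline{\bigcup H_{j}}$. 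Third, the $A(j),B(n-1)$ splitting (exactly as in the proof of Theorem~\ref{t:utauca}) propagates this sup-norm decay from $\overline{U}$ to all of $K$. Fourth, a dyadic case analysis on $d(z,z_{0})$---very close to what you describe---bootstraps the sup-norm decay to the $\alpha$-H\"older norm, using both the already established sup-norm result and the local H\"older contraction from the first step.

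Your ITM route packages steps two through four into a single abstract statement: once the Doeblin--Fortet inequality is in hand, quasi-compactness is automatic and the peripheral $\alpha$-spectrum is identified with ${\cal U}_{v,\tau}(K)$ via Theorem~\ref{t:mtauspec} and Theorem~\ref{t:utauca}. This is cleaner conceptually and avoids the explicit finite-dimensional analysis on $\hat{C}_{F(G_{\tau})}(H_{j})$, at the cost of importing a nontrivial spectral theorem. One technical point to watch: the contraction factor $c$ from mean stability applies to blocks of the fixed length $n$ from Definition~\ref{d:lc2ms}(5), not to blocks of length $r=\prod_{L}r_{L}$; you will need to work with a common multiple $k$ of both (the paper does exactly this, introducing $k$ with $r\mid k$). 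Your ``alternative route'' is in fact closer to the paper's structure, though the paper's mechanism for sup-norm decay on the minimal sets is the finite-dimensional argument above rather than a Carath\'eodory-distance estimate.
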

\begin{rem}
\label{r:lrm1mbk2}
Let $\tau \in {\frak M}_{1,c}(X^{+})$. Suppose that 
$\tau $ is mean stable on $Y=\Pt \setminus \{ [1:0:0]\}.$ Then,  
by Theorem~\ref{t:mtauspec}, for each 
neighborhood $B$ of $[1:0:0]$, there exists a compact subset $K$ 
of $Y$ such that $\Pt \setminus K\subset B$, 
$G_{\tau }(K)\subset K$ and $\cup _{L\in \Min(\tau)}L\subset 
\mbox{int}(K).$ 
\end{rem}

To prove Theorem~\ref{t:kjemfhf}, we need the following  lemma.  
Let $\tau \in {\frak M}_{1,c}(X^{+}).$ 
Suppose 
$\tau $ is mean stable on $Y=\Pt \setminus \{ [1:0:0]\}.$ 
Then,   
all statements in Theorem~\ref{t:mtauspec}
hold for $\tau. $  
Let $L\in \Min(\tau)$ and 
let $r_{L}:=\dim _{\CC }(\mbox{LS}({\cal U}_{f,\tau }(L))).$ 
Using the notation in the proof of Theorem~\ref{t:mtauspec}, 
we have $r_{L}=\sharp (\Min (G_{\tau }^{r_{L}},L)).$   

\begin{lem}
\label{l:suptheta}
Let $\tau \in {\frak M}_{1,c}(X^{+}).$ Suppose $\tau $ is mean 
stable on $Y=\Pt \setminus \{ [1:0:0]\}.$ 
Let $L\in \emMin (\tau)$ and let 
$r_{L}=\dim _{\CC}(\mbox{\em LS}({\cal U}_{f,\tau }(L))).$ 
Let $j\in \{ 1,\ldots, r_{L}\} .$  
Let $A_{1},\ldots, A_{t}$ be the sets 
such that 
$\{ A_{1},\ldots, A_{t}\}
 =\{ A\in \mbox{{\em Con}}(F(G_{\tau }))\mid A\cap L_{j}\neq \emptyset\} $. 
 Let $i=1,\ldots, t$ and  
let $K_{0}$ be a nonempty compact subset of $A_{i}.$ Let $\alpha \in (0,1).$  
Then, there exist a constant $\tilde{C}_{K_{0},\alpha }\geq 1$ 
and a constant $\theta _{\alpha }\in (0,1)$ 
such that 
for each $\varphi \in C^{\alpha }(K_{0})$, for each $z,w\in K_{0}$, and 
for each $n\in \NN $, 
$|M_{\tau }^{n}(\varphi )(z)-M_{\tau }^{n}(\varphi )(w)|\leq \| \varphi \| _{\alpha }
\theta _{\alpha }^{n}\tilde{C}_{K_{0},\alpha}d(z,w)^{\alpha }.$  
\end{lem}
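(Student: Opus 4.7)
The plan is to adapt the nested-event argument used in the proof of Theorem~\ref{t:utauca}, sharpening it to extract an exponential rate in $n$. Starting from
$$M_\tau^n(\varphi)(z)-M_\tau^n(\varphi)(w)=\int\bigl[\varphi(\gamma_{n,1}(z))-\varphi(\gamma_{n,1}(w))\bigr]\,d\tau^{\NN}(\gamma)$$
and the pointwise H\"older bound $|\varphi(a)-\varphi(b)|\leq\|\varphi\|_{\alpha}d(a,b)^{\alpha}$, the task reduces to proving the averaged $\alpha$-distortion estimate
$$I_n(z,w):=\int d(\gamma_{n,1}(z),\gamma_{n,1}(w))^{\alpha}\,d\tau^{\NN}(\gamma)\leq\tilde{C}_{K_0,\alpha}\theta_{\alpha}^{n}d(z,w)^{\alpha}$$
uniformly in $z,w\in K_0$, for some $\theta_\alpha\in(0,1)$ and $\tilde{C}_{K_0,\alpha}\geq 1$.

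Next, I would set up a contracting finite cover of $K_0$. Let $(U_1,\ldots,U_m,Q,N_0,c)$ denote the data from the mean stability of $\tau$ on $Y$ (writing $N_0$ for the constant called $n$ in Definition~\ref{d:lc2ms} to avoid a clash with the iteration index). Since $K_0\subset A_i$ is compact, $A_i\cap L_j\neq\emptyset$, $L_j\subset S_\tau\subset Q\subset\bigcup_k U_k$, and $G_\tau$ is equicontinuous on the Fatou component $A_i$, for each $z\in K_0$ there exist $g_z\in G_\tau$ and a compact connected neighborhood $\Lambda_z\subset A_i$ of $z$ with $g_z(\Lambda_z)\subset U_{k(z)}$. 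Extracting a finite subcover $\{\Lambda_{z_1},\ldots,\Lambda_{z_s}\}$ of $K_0$, expressing each $g_{z_\ell}$ as a common length-$r$ word $\beta^{\ell}=(\beta^\ell_1,\ldots,\beta^\ell_r)\in(\supp\,\tau)^r$, and choosing open neighborhoods $V_\ell\subset(\supp\,\tau)^r$ of $\beta^\ell$ for which the inclusion persists, I set $a:=\max_\ell\tau^r((\supp\,\tau)^r\setminus V_\ell)\in[0,1)$.

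For the exponential estimate, fix $z,w\in K_0$ with $d(z,w)<\delta$, where $\delta$ is a Lebesgue number of the cover $\{\Lambda_{z_\ell}\}$, so that $z,w\in\Lambda_{z_{\ell_0}}$ for some $\ell_0$ (the case $d(z,w)\geq\delta$ is absorbed into $\tilde{C}_{K_0,\alpha}$ via $\|\varphi\|_\infty\leq\|\varphi\|_\alpha$). Partition $(\supp\,\tau)^{\NN}$ into length-$r$ blocks and, for each $N$, define nested events $A(0),\ldots,A(N-1)$ and the residual $B(N-1)$ as in the proof of Theorem~\ref{t:utauca}: on $A(j)$ the first $j$ blocks successively lie in some $V_{\ell_1},\ldots,V_{\ell_j}$, so that $\gamma_{rj,1}(\Lambda_{z_{\ell_0}})\subset U_{k(\ell_j)}$, with the $(j+1)$-st block failing; $B(N-1)$ is the all-success event. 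By independence of blocks under $\tau^{\NN}$ and sub-multiplicativity, $\tau^{\NN}(B(N-1))\leq a^N$. The mean-stability invariance $\gamma_{N_0,1}(\bigcup_k U_k)\subset Q\subset\bigcup_k U_k$ then keeps the iteration inside the contracting cover, and each further group of $N_0$ maps contracts by $c$. Hence on $A(j)$ I have $d(\gamma_{n,1}(z),\gamma_{n,1}(w))\leq C_1 c^{\lfloor(n-rj)/N_{0}\rfloor}d(z,w)$; integrating with the trivial bound $d(\cdot,\cdot)\leq 2\,\mathrm{diam}\,Y$ on $B(N-1)$ gives
$$I_n(z,w)\leq C'\bigl(c^{\alpha n/N_{0}}+a^{n/r}\bigr)d(z,w)^{\alpha},$$
and setting $\theta_\alpha:=\max(c^{\alpha/N_{0}},a^{1/r})<1$ yields the claim.

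The main obstacle is arranging the successive ``good'' blocks to yield genuine uniform contraction on $\Lambda_{z_{\ell_0}}$ rather than merely boundedness. This hinges on two mean-stability facts used together: the forward invariance $\gamma_{N_0,1}(\bigcup_k U_k)\subset Q\subset\bigcup_k U_k$, which keeps the iteration inside the contracting cover once it enters, and the uniform $N_0$-block contraction factor $c$ on each $U_k$. A secondary subtlety is balancing the ``good'' rate $c^{\alpha/N_{0}}$ against the ``bad'' large-deviation rate $a^{1/r}$; both are automatically strictly below $1$, so $\theta_\alpha<1$, but the constant $\tilde{C}_{K_0,\alpha}$ must absorb prefactors from both regimes, accounting for its dependence on $\alpha$ and on $K_0$ through the cover.
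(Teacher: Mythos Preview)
Your argument has a genuine gap, and the overall strategy is both heavier than necessary and does not deliver the statement for every $\alpha\in(0,1)$.

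The decisive issue is the residual term. You bound the contribution of $B(N-1)$ by $\tau^{\NN}(B(N-1))\cdot(2\,\mathrm{diam}\,Y)^{\alpha}\le a^{n/r}(2\,\mathrm{diam}\,Y)^{\alpha}$, and then write $I_n(z,w)\le C'\bigl(c^{\alpha n/N_0}+a^{n/r}\bigr)d(z,w)^{\alpha}$. But the diameter bound carries no $d(z,w)^{\alpha}$ factor, so this inequality fails as $d(z,w)\to 0$. The analogous step in Theorem~\ref{t:utauca} works only because there $n$ is \emph{chosen from} $d(z,z_0)$ via $C_1^{-n-1}C_2\le d(z,z_0)<C_1^{-n}C_2$, which manufactures the missing factor; here $n$ is an arbitrary iteration index, so that trick is unavailable. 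If instead you use a Lipschitz bound $d(\gamma_{n,1}(z),\gamma_{n,1}(w))\le C_1^{n/r}d(z,w)$ on the residual set, the term becomes $(aC_1^{\alpha})^{n/r}d(z,w)^{\alpha}$, and $aC_1^{\alpha}<1$ is \emph{not} automatic for all $\alpha\in(0,1)$ --- so even the repaired argument would only yield the lemma for small $\alpha$. (Your description of $A(j)$ and $B(N-1)$ also appears to have ``success'' and ``failure'' interchanged relative to the scheme in Theorem~\ref{t:utauca}; with $a=\max_{\ell}\tau^{r}((\supp\,\tau)^{r}\setminus V_{\ell})$ the bound $\tau^{\NN}(B(N-1))\le a^{N}$ is the all-\emph{failure} estimate, not all-success.)

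The paper's proof is much shorter because it exploits a feature you do not use: $K_0$ sits inside a Fatou component $A_i$ meeting the attracting piece $L_j$, so one has \emph{deterministic} pathwise contraction. Namely, there exist $C_{K_0}\ge 1$ and $\theta\in(0,1)$ such that for every $\gamma\in(\supp\,\tau)^{\NN}$, every $z,w\in K_0$ and every $n$,
\[
d(\gamma_{n,1}(z),\gamma_{n,1}(w))\le C_{K_0}\,\theta^{n}\,d(z,w).
\]
Raising to the power $\alpha$ and integrating gives the lemma at once with $\theta_\alpha=\theta^{\alpha}$ and $\tilde{C}_{K_0,\alpha}=C_{K_0}^{\alpha}$, valid for every $\alpha\in(0,1)$. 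There is no ``bad'' event to control: once you are in the basin, every sequence contracts. The large-deviations machinery belongs to the later step (the proof of Theorem~\ref{t:kjemfhf}), where one must pass from $K_0\subset F(G_\tau)$ to the full compact $K$ that may contain Julia points.
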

\begin{proof}
Let $K_{0}$ be a compact subset of $A_{i}$. 
Since $\tau $ is mean stable on $Y$, 
there exist a constant $C_{K_{0}}\geq 1$ and a constant 
$\theta \in (0,1)$ such that 
for each $\gamma =(\gamma _{j})_{j\in \NN}\in 
(\supp\,\tau)^{\NN}$, 
for each $z,w\in K_{0}$ and for each $n\in \NN$, 
we have 
$d(\gamma _{n,1}(z), \gamma _{n,1}(w))
\leq C_{K_{0}}\theta ^{n}d(z,w).$ 
Let $\alpha \in (0,1).$  
Let $\varphi \in C^{\alpha }(K_{0})$ and let $z,w\in K_{0}.$ 
Let $n\in \NN .$ 
Then, we obtain 
\begin{align*}
|M_{\tau }^{n}(\varphi )(z)-M_{\tau }^{n}(\varphi )(w)|
& \leq \int _{(\supp\,\tau )^{\NN }}|\varphi (\g _{n,1}(z))-\varphi (\g _{n,1}(w))| d\tau ^{\NN}(\g )\\ 
& \leq \int _{(\supp\,\tau )^{\NN }}\| \varphi \| _{\alpha }d(\g _{n,1}(z),\g _{n,1}(w))^{\alpha }d\tau ^{\NN}(\g )\\ 
& \leq \| \varphi \| _{\alpha }\int _{(\supp\,\tau )^{\NN }}C_{K_{0}}^{\alpha }
\theta ^{\alpha n}d(z,w)^{\alpha } d\tau ^{\NN}(\g )\leq \| \varphi \| _{\alpha }\theta ^{\alpha n}C_{K_{0}}^{\alpha }d(z,w)^{\alpha }. 
\end{align*}
Therefore, the statement of our lemma holds. 
\end{proof}
We now prove Theorem~\ref{t:kjemfhf}.\\ 
\noindent {\bf Proof of Theorem~\ref{t:kjemfhf}:} 
Let $L\in \Min(\tau).$ Let 
$r_{L}:=\dim _{\CC }(\mbox{LS}({\cal U}_{f,\tau }(L))).$ 
Using the notation in the proof of Theorem~\ref{t:mtauspec}, 
let $L_{1},\ldots, L_{r_{L}}$ be the sets such that 
$\{ L_{j} \mid j=1,\ldots, r_{L}\}=\Min (G_{\tau }^{r_{L}},L).$ 
Since $\tau $ is mean stable on $Y$, for each 
$j=1,\ldots, r_{L}$, there exists an open subset  
$H_{j}$ of $Y$ with $L_{j}\subset H_{j}$ 
such that $\overline{H_{j}}$ is a compact subset of 
$F(G_{\tau })$ and such that $G_{\tau }(H_{j})\subset H_{j}.$   
Let $j\in \{ 1,\ldots, r_{L}\}$ and let $A_{1},\ldots, A_{t}$ be the sets such that 
$\{ A_{i} \mid i=1,\ldots, t\}=\{ A\in \mbox{Con}(F(G_{\tau }))\mid A\cap L_{j}\neq \emptyset \} .$ 
We may assume that $H_{j}\subset \cup _{i=1}^{t}A_{i}.$  
Let $H_{j,i}:= H_{j}\cap A_{i}$ and $L_{j,i}:=L_{j}\cap A_{i}.$ 
By Lemma~\ref{l:suptheta}, 
there exist a family $\{ D_{0,\alpha }\} _{\alpha \in (0,1)} $ of 
positive constants, a family $\{ D_{1,\alpha }\} _{\alpha \in (0,1)}$ of positive constants,
 and  
a family $\{ \lambda _{1,\alpha }\} _{\alpha \in (0,1)}$ 
of elements in $ (0,1)$ 
such that for each $\alpha \in (0,1)$, 
for each $L\in \Min(\tau)$, 
for each $i$, for each $j$, 
for each $\g =(\g _{j})_{j\in \NN} \in (\supp\,\tau )^{\NN }$, 
for each $z,w\in H_{j,i}$, for each $n\in \NN $ and for each $\varphi \in C^{\alpha }(K)$, 
\begin{equation}
\label{eq:d0d1}
|M_{\tau }^{n}(\varphi )(z)-M_{\tau }^{n}(\varphi )(w)|\leq D_{0,\alpha }\lambda _{1,\alpha }^{n}
\| \varphi \| _{\alpha } d(z,w)^{\alpha }\leq D_{1,\alpha }\lambda _{1,\alpha }^{n}\| \varphi \| _{\alpha }.
\end{equation}
For each subset $\Gamma $ of $\Pt $ and for each bounded function $\psi :\Gamma\rightarrow \CC $, 
we set $\| \psi \| _{\Gamma}:=\sup _{z\in \Gamma }|\psi (z)|.$ 
For each $i=1,\ldots ,t$, let $x_{i}\in L_{j,i}$ be a point. 
Let $\varphi \in C^{\alpha }(K)$. By (\ref{eq:d0d1}), we obtain   
$$\sup _{z\in H_{j,i}}|M_{\tau }^{nr_{L}}(\varphi )(z)-M_{\tau }^{nr_{L}}(\varphi )(x_{i})|\leq 
D_{1,\alpha }\lambda _{1,\alpha }^{n}\| \varphi \| _{\alpha }$$ for each $i, j, n.$  
Therefore, for each $j$ and for each $l,n \in \NN $, 
\begin{equation}
\label{eq:d1l1a}
\| M_{\tau }^{lr_{L}}(M_{\tau }^{nr_{L}}(\varphi )-\sum _{i=1}^{t}M_{\tau }^{nr_{L}}(\varphi )(x_{i})\cdot 1_{H_{j,i}})\| _{H_{j}}
\leq D_{1,\alpha }\lambda _{1,\alpha }^{n}\| \varphi \| _{\alpha }. 
\end{equation}
We now consider $M_{\tau }^{r_{L}}:\hat{C}_{F(G_{\tau })}(H_{j})\rightarrow \hat{C}_{F(G_{\tau })}(H_{j}).$ 
We have $\dim _{\CC }(\hat{C}_{F(G_{\tau })}(H_{j}))<\infty .$ 
Moreover, by the argument in the proof of 
Lemma~\ref{l:suppllj}, 
$M_{\tau }^{r_{L}}:\hat{C}_{F(G_{\tau })}(H_{j})\rightarrow 
\hat{C}_{F(G_{\tau })}(H_{j})$ has 
exactly one unitary eigenvalue $1$, and has exactly one unitary eigenvector 
$1_{H_{j}}.$ Therefore, there exist a constant $\lambda _{2}\in (0,1)$ 
and a constant $D_{2}>0$, each of which depends only on $\tau $ 
and does not depend on $\alpha $ and $\varphi $, such that 
for each $l\in \NN $, 
\begin{align}
\label{eq:d2la2}
\ & \| M_{\tau }^{lr_{L}}(\sum _{i=1}^{t}M_{\tau }^{nr_{L}}(\varphi )(x_{i})1_{H_{j,i}})
-\lim _{m\rightarrow \infty }M_{\tau }^{mr_{L}}(\sum _{i=1}^{t}
M_{\tau }^{nr_{L}}(\varphi )(x_{i})1_{H_{j,i}})\| _{H_{j}} \notag \\  
\leq & D_{2}\lambda _{2}^{l}\| \sum _{i=1}^{t}M_{\tau }^{nr_{L}}(\varphi )(x_{i})1_{H_{j,i}}\| _{H_{j}}
\leq D_{2}\lambda _{2}^{l}t\| \varphi \| _{K }.
\end{align} 
Since $\lambda _{2}$ does not depend on $\alpha $, we may assume that 
for each $\alpha \in (0,1)$, 
$\lambda _{1,\alpha }\geq \lambda _{2}.$ 
From (\ref{eq:d1l1a}) and (\ref{eq:d2la2}), 
it follows that for each $n\in \NN $ and for each $l_{1},l_{2}\in \NN $ with $l_{1},l_{2}\geq n$, 
\begin{align*}
\    & \| M_{\tau }^{(l_{1}+n)r_{L}}(\varphi )-M_{\tau }^{(l_{2}+n)r_{L}}(\varphi )\| _{H_{j}}\\ 
\leq & \| M_{\tau }^{(l_{1}+n)r_{L}}(\varphi )-M_{\tau }^{l_{1}r_{L}}(\sum _{i=1}^{t}
       M_{\tau }^{nr_{L}}(\varphi )(x_{i})1_{H_{j,i}})\| _{H_{j}}\\ 
\    & \ + \| M_{\tau }^{l_{1}r_{L}}(\sum _{i=1}^{t}M_{\tau }^{nr_{L}}(\varphi )(x_{i})1_{H_{j,i}})
         -\lim _{m\rightarrow \infty }M_{\tau }^{mr_{L}}(\sum _{i=1}^{t}M_{\tau }^{nr_{L}}(\varphi )(x_{i})1_{H_{j,i}})\| _{H_{j}}\\ 
\    & \ + \| \lim _{m\rightarrow \infty }M_{\tau }^{mr_{L}}(\sum _{i=1}^{t}M_{\tau }^{nr_{L}}(\varphi )(x_{i})1_{H_{j,i}})
      - M_{\tau }^{l_{2}r_{L}}(\sum _{i=1}^{t}M_{\tau }^{nr_{L}}(\varphi )(x_{i})1_{H_{j,i}})\| _{H_{j}}\\ 
\    & \ +\| M_{\tau }^{l_{2}r_{L}}(\sum _{i=1}^{t}M_{\tau }^{nr_{L}}(\varphi )(x_{i})1_{H_{j,i}})
      -M_{\tau }^{(l_{2}+n)r_{L}}(\varphi )\| _{H_{j}}\\ 
\leq & 2D_{1,\alpha }\lambda _{1,\alpha }^{n}\| \varphi \| _{\alpha }+D_{2}\lambda _{2}^{l_{1}}t\| \varphi \| _{\alpha }
      +D_{2}\lambda _{2}^{l_{2}}t\| \varphi \| _{\alpha } 
      \leq (2D_{1,\alpha }+2D_{2}t)\lambda _{1,\alpha }^{n}\| \varphi \| _{\alpha }.      
\end{align*} 
Letting $l_{1}\rightarrow \infty $, we obtain that 
for each $l_{2}\in \NN $ with $l_{2}\geq n$, 
$\| \pi _{\tau ,K}(\varphi )-M_{\tau }^{(l_{2}+n)r_{L}}(\varphi )\| _{H_{j}}\leq 
(2D_{1,\alpha }+2D_{2}t)\lambda _{1,\alpha }^{n}\| \varphi \| _{\alpha }.$ 
In particular, for each $n\in \NN $, 
$\| \pi _{\tau ,K}(\varphi )-M_{\tau }^{2nr_{L}}(\varphi )\| _{H_{j}}\leq 
(2D_{1,\alpha }+2D_{2}t)\lambda _{1,\alpha }^{n}\| \varphi \| _{\alpha }.$ 
Therefore,  for each $n\in \NN $, 
\begin{equation}
\label{eq:pitauhj}
\| \pi _{\tau ,K}(\varphi )-M_{\tau }^{nr_{L}}(\varphi )\| _{H_{j}}\leq 
(2D_{1,\alpha }+2D_{2}t)\lambda _{1,\alpha }^{-1/2}(\lambda _{1,\alpha }^{1/2})^{n}\| M_{\tau }^{r_{L}}\| _{\alpha }\| \varphi \| _{\alpha }, 
\end{equation}  
where $\| M_{\tau }^{r_{L}}\| _{\alpha }$ denotes the operator norm of 
$M_{\tau }^{r_{L}}:C^{\alpha }(K)\rightarrow C^{\alpha }(K).$ 
Let $U:=\bigcup _{L,j}H_{j}$ and let $r:=\prod _{L}r_{L}.$ 
From the above arguments, it follows that there exist a family $\{ D_{3,\alpha }\} _{\alpha \in (0,1)} $ 
of positive constants and a family 
$\{ \lambda _{3,\alpha }\} _{\alpha \in (0,1)} $ of elements in 
$ (0,1)$ 
such that for each $\alpha \in (0,1)$, for each 
$\varphi \in C^{\alpha }(K)$ and for each 
$n\in \NN $, 
\begin{equation}
\label{eq:UleqD3}
\| \pi _{\tau ,K}(\varphi )-M_{\tau }^{rn}(\varphi )\| _{\overline{U}}\leq 
D_{3,\alpha }\lambda _{3,\alpha }^{n}\| \varphi \| _{\alpha }. 
\end{equation} 
Since $\tau $ is mean stable on $Y$, 
for each $z\in K$, there exist a map $g_{z}\in G_{\tau }$ and a compact connected neighborhood 
$\Lambda_{z}$ of $z$ in $Y$ such that 
$g_{z}(\Lambda_{z})\subset U.$ 
Since $K$ is compact, there  exists a finite family 
$\{ z_{j}\} _{j=1}^{s}$ in $K$ such that 
$\bigcup _{j=1}^{s}\mbox{int}(\Lambda_{z_{j}})\supset K.$ 
Since $G_{\tau }(U)\subset U$, we may assume that 
there exists a $k$ such that 
for each $j=1,\ldots ,s$, there exists an element $\beta ^{j}=(\beta _{1}^{j},\ldots ,\beta _{k}^{j})\in (\supp\,\tau)^{k}$ 
with $g_{z_{j}}=\beta _{k}^{j}\circ \cdots \circ \beta _{1}^{j}.$ 
We may also assume that $r|k.$ 
For each $j=1,\ldots ,s$, let $V_{j}$ be a compact neighborhood of $\beta ^{j}$ in $(\supp\,\tau )^{k}$ such that 
for each $\zeta =(\zeta _{1},\ldots ,\zeta _{k})\in V_{j}$, 
$\zeta _{k}\circ \cdots \circ \zeta _{1}(\Lambda_{z_{j}})\subset U.$ Let 
$a:=\max \{ \tau ^{k}((\supp\,\tau)^{k}\setminus V_{j})\mid j=1,\ldots ,s\} \in [0,1)$, where $\tau^{k}=\otimes _{j=1}^{k}\tau .$  
Let $C_{1}:=2\max \{ \max \{ \mbox{Lip}(\zeta _{k}\circ \cdots \circ \zeta _{1}, K) \mid 
(\zeta _{1},\ldots ,\zeta _{k})\in (\supp\,\tau )^{k},z\in K \} ,1\} .$ 
Here, for each holomorphic map $h$ on $Y$, 
we set $\mbox{Lip}(h, K)=\sup \{ d(h(z), h(w))/d(z,w)\mid z, w\in K, z\neq w\} .$ 
Let $\alpha _{1}\in (0,1)$ be such that 
$aC_{1}^{\alpha _{1}}<1$ and $\mbox{LS}({\cal U}_{f,\tau }(K ))\subset C^{\alpha _{1}}(K )$ (see Theorem~\ref{t:utauca}). 
 Let $C_{2}>0$ be a constant such that 
 for each $z\in K $ there exists a $j\in \{ 1,\ldots ,s\} $ with  
$B(z,C_{2})\subset \Lambda_{z_{j}}.$ 
Let $n\in \NN .$ 
Let $z_{0}\in K$ be any point. 
Let $i_{0}\in \{ 1,\ldots ,s\} $ be such that 
$B(z_{0},C_{2})\in \Lambda_{z_{i_{0}}}.$ 
Let $A(0):=\{ \g =(\g _{j})\in (\supp\,\tau )^{\NN }\mid (\g _{1},\ldots ,\g _{k})\in V_{i_{0}}\} $ 
and let $B(0):= \{ \g=(\g_{j})_{j\in \NN}\in (\supp\,\tau )^{\NN }\mid (\g _{1},\ldots ,\g _{k})\not\in V_{i_{0}}\} .$ 
Inductively, for each $j=1,\ldots ,n-1$, let 
$A(j):=\{ \g =(\g_{j})_{j\in \NN}\in B(j-1)\mid \exists i \mbox{ s.t. }B(\g _{kj,1}(z_{0}),C_{2})\subset \Lambda_{z_{i}}, 
(\g _{kj+1},\ldots ,\g _{kj+k})\in V_{i}\} $ and let 
$B(j):= B(j-1)\setminus A(j).$ Then, for each $j=1,\ldots ,n-1$, 
$\tau ^{\NN}(B(j))\leq a \tau ^{\NN}(B(j-1))\leq \cdots \leq a^{j+1}$ and 
$\tau ^{\NN}(A(j))\leq \tau ^{\NN}(B(j-1))\leq a^{j}.$  
Moreover, we have 
$(\supp\,\tau )^{\NN }=\amalg _{j=0}^{n-1}A(j)\amalg B(n-1).$ 
Therefore, we obtain that 
\begin{align}
\label{eq:mtauknz0}
\    & |M_{\tau }^{kn}(\varphi )(z_{0})-\pi _{\tau ,K}(\varphi )(z_{0})|=
       |M_{\tau }^{kn}(\varphi )(z_{0})-M_{\tau }^{kn}(\pi _{\tau ,K}(\varphi ))(z_{0})| \notag \\ 
\leq & \left|\sum _{j=0}^{n-1}\int _{A(j)}(\varphi (\g _{kn,1}(z_{0}))
        -\pi _{\tau ,K}(\varphi )(\g _{kn,1}(z_{0})))d\tau ^{\NN}(\g )\right| \notag \\ 
\    & \ +\left|\int _{B(n-1)}(\varphi (\g _{kn,1}(z_{0}))
        -\pi _{\tau ,K}(\varphi )(\g _{kn,1}(z_{0})))d\tau ^{\NN}(\g )\right|. 
\end{align}
For each $j=0,\ldots ,n-1$, there exists a Borel subset $A'(j)$ of 
$(\supp\,\tau )^{k(j+1)}$ such that 
$A(j)=A'(j)\times \supp\,\tau \times \supp\,\tau \times \cdots .$ 
Hence, by (\ref{eq:UleqD3}), we obtain that for each $\alpha \in (0,1)$ 
and for each $\varphi \in C^{\alpha }(K)$, 
\begin{align}
\label{eq:apjph}
\    & \left| \int _{A(j)}(\varphi (\g _{kn,1}(z_{0}))-\pi _{\tau ,K}(\varphi )(\g _{kn,1}(z_{0}))) 
       d\tau ^{\NN}(\g )\right| \notag \\ 
= & \left|  \int _{A'(j)}(M_{\tau }^{k(n-j-1)}(\varphi )(\g _{k(j+1)}\circ \cdots \circ \g _{1}(z_{0}))
              -\pi _{\tau ,K}(\varphi )(\g _{k(j+1)}\circ \cdots \circ \g _{1}(z_{0}))) d\tau (\g _{k(j+1)})\cdots 
              d\tau (\g _{1})\right| \notag \\ 
\leq & D_{3,\alpha }\lambda _{3,\alpha }^{n-j-1}\| \varphi \| _{\alpha }\tau ^{\NN}(A(j))
\leq D_{3,\alpha }\lambda _{3,\alpha }^{n-j-1}a^{j}\| \varphi \|_{\alpha }.    
\end{align}
By (\ref{eq:mtauknz0}) and (\ref{eq:apjph}), 
it follows that 
\begin{align*}
\    & |M_{\tau }^{kn}(\varphi )(z_{0})-\pi _{\tau ,K}(\varphi )(z_{0})|
\leq  \sum _{j=0}^{n-1}D_{3,\alpha }\lambda _{3,\alpha }^{n-j-1}a^{j}\| \varphi \| _{\alpha }
        +a^{n}(\| \varphi \| _{\infty }+\| \pi _{\tau ,K}(\varphi )\| _{\infty })\\ 
\leq & \left( D_{3,\alpha }n(\max \{ \lambda _{3,\alpha }, a\} )^{n-1} 
       +a^{n}(1+\| \pi _{\tau ,K}\| _{\infty } )\right) \| \varphi \| _{\alpha },  
\end{align*} 
where $\| \pi _{\tau ,K}\| _{\infty }$ denotes the operator norm of 
$\pi _{\tau ,K}:(C(K),\| \cdot \| _{\infty })\rightarrow (C(K),\| \cdot \| _{\infty }).$ 
For each $\alpha \in (0,1)$, let 
$\zeta _{\alpha }:=\frac{1}{2}(1+\max \{ \lambda _{3,\alpha },a\} )<1.$ 
From these arguments, it follows that there exists a family $\{ C_{3,\alpha }\} _{\alpha \in (0,1)}$ 
of positive constants such that for each $\alpha \in (0,1)$,  
for each $\varphi \in C^{\alpha }(K)$ and for each $n\in \NN $,  
\begin{equation}
\label{eq:mtauC3}
\| M_{\tau }^{kn}(\varphi )-\pi _{\tau }(\varphi )\| _{\infty }
\leq C_{3,\alpha }\zeta _{\alpha }^{n}\| \varphi \| _{\alpha }. 
\end{equation} 
For the rest of the proof, let 
$\alpha \in (0,\alpha _{1}).$ Let $\eta _{\alpha }:=
\max \{ \lambda _{1,\alpha },aC_{1}^{\alpha }\} \in (0,1).$ 
Let $z,z_{0}\in K$ and let $\varphi \in C^{\alpha }(K).$  
If $d(z,z_{0})\geq C_{1}^{-1}C_{2}$, then 
\begin{equation}
\frac{| M_{\tau }^{kn}(\varphi )(z)-M_{\tau }^{kn}(\varphi )(z_{0})
-(\pi _{\tau }(\varphi )(z)-\pi _{\tau }(\varphi )(z_{0}))|}{d(z,z_{0})^{\alpha }}
\leq 2C_{3,\alpha }\zeta _{\alpha }^{n}\| \varphi \| _{\alpha } (C_{1}C_{2}^{-1})^{\alpha }.
\end{equation}
We now suppose that there exists an $m\in \NN $ such that 
$C_{1}^{-m-1}C_{2}\leq d(z,z_{0})<C_{1}^{-m}C_{2}.$ 
Then, for each $\g =(\g_{j})_{j\in \NN}\in (\supp\,\tau )^{\NN }$ and for 
each $j=1,\ldots ,m$, 
\begin{equation}
\label{eq:dgkjC2}
d(\g _{kj,1}(z),\g _{kj,1}(z_{0}))<C_{2}. 
\end{equation}  
Let $n\in \NN $. Let $\tilde{m}:=\min \{ n,m\} .$ 
Let $i_{0}\in  \{ 1,\ldots ,s\} $ be such that $B(z_{0},C_{2})\subset \Lambda_{z_{i_{0}}}$ and 
let $A(0),B(0),\ldots, A(\tilde{m}-1),B(\tilde{m}-1)$ be as before. 
Then,  
we have 
\begin{align}
\label{eq:a0am}
\    & |M_{\tau }^{kn}(\varphi )(z)-M_{\tau }^{kn}(\varphi )(z_{0})
      -(\pi _{\tau ,K}(\varphi )(z)-\pi _{\tau ,K}(\varphi )(z_{0}))|\notag \\ 
\leq & \left| \sum _{j=0}^{\tilde{m}-1}\int _{A(j)}[\varphi (\g _{kn,1}(z))-\varphi (\g _{kn,1}(z_{0}))
        -(\pi _{\tau ,K}(\varphi )(\g _{kn,1}(z))-\pi _{\tau ,K}(\varphi )(\g _{kn,1}(z_{0})))]
        d\tau ^{\NN}(\g )\right| \notag \\ 
\    & \ + \left| \int _{B(\tilde{m}-1)}[\varphi (\g _{kn,1}(z))-\varphi (\g _{kn,1}(z_{0}))
        -(\pi _{\tau ,K}(\varphi )(\g _{kn,1}(z))-\pi _{\tau ,K}(\varphi )(\g _{kn,1}(z_{0})))]
        d\tau ^{\NN}(\g )\right| . 
\end{align} 
Let $A'(j)$ be as before. 
By (\ref{eq:d0d1}), (\ref{eq:dgkjC2}), and 
the fact 
``$\mbox{LS}({\cal U}_{f,\tau }(K))\subset \hat{C}_{F(G_{\tau })}(K)$'' 
(Theorem~\ref{t:mtauspec}-\ref{t:mtauspec2} 
),  
we obtain that for each $j=0,\ldots ,\tilde{m}-1$, 
\begin{align}
\label{eq:ajineq}
\ & \left| \int _{A(j)}[\varphi (\g _{kn,1}(z))-\varphi (\g _{kn,1}(z_{0}))
        -(\pi _{\tau ,K}(\varphi )(\g _{kn,1}(z))-\pi _{\tau ,K}(\varphi )(\g _{kn,1}(z_{0})))]
        d\tau ^{\NN}(\g )\right|\notag \\
= & \left| \int _{A(j)}(\varphi (\g _{kn,1}(z))-\varphi (\g _{kn,1}(z_{0}))
 d\tau ^{\NN}(\g )\right| 
    \notag \\ 
= & \left| \int _{A'(j)}[M_{\tau }^{k(n-j-1)}(\varphi )(\g _{k(j+1),1}(z))-
           M_{\tau }^{k(n-j-1)}(\varphi )(\g _{k(j+1),1}(z_{0}))] 
           d\tau (\g _{k(j+1)})\cdots d\tau (\g _{1})\right| \notag \\ 
\leq & \int _{A'(j)}D_{0,\alpha }d(\g _{k(j+1),1}(z),\g _{k(j+1),1}(z_{0}))^{\alpha }
       \lambda _{1,\alpha }^{n-j-1} \| \varphi \| _{\alpha } 
       d\tau (\g _{k(j+1)})\cdots d\tau (\g _{1}) \notag \\ 
\leq & D_{0,\alpha }C_{1}^{\alpha (j+1)}d(z,z_{0})^{\alpha }
       \lambda _{1,\alpha }^{n-j-1}a^{j}\| \varphi \| _{\alpha } 
\leq  D_{0,\alpha }C_{1}^{\alpha }\eta _{\alpha }^{n-1}\| \varphi \| _{\alpha }
      d(z,z_{0})^{\alpha }.   
\end{align}
Let $B'(\tilde{m}-1)$ be a Borel subset of 
$(\supp\,\tau )^{k\tilde{m}}$ such that 
$B(\tilde{m}-1)=B'(\tilde{m}-1)\times 
\supp\,\tau \times \supp\,\tau \times \cdots .$ 
We now consider the following two cases. Case (I): $\tilde{m}=m$. Case (II): 
$\tilde{m}=n.$ 

Suppose we have Case (I). 
Then, by (\ref{eq:mtauC3}), we obtain that  
\begin{align}
\label{eq:Bm-1c1}
\    & \left| \int _{B(\tilde{m}-1)}[\varphi (\g _{kn,1}(z))-\varphi (\g _{kn,1}(z_{0}))
        -(\pi _{\tau ,K}(\varphi )(\g _{kn,1}(z))-\pi _{\tau ,K}(\varphi )(\g _{kn,1}(z_{0})))]
        d\tau ^{\NN}(\g )\right| \notag \\ 
\leq & \int _{B'(m-1)}| M_{\tau }^{k(n-m)}(\varphi )(\g _{km}\circ \cdots \circ \g_{1}(z))
        -\pi _{\tau ,K}(\varphi )(\g _{km}\circ \cdots \circ \g_{1}(z))| 
       d\tau (\g _{km})\cdots d\tau (\g _{1})\notag \\
\    & \ +\int _{B'(m-1)}  | M_{\tau }^{k(n-m)}(\varphi )(\g _{km}\circ \cdots \circ \g_{1}(z_{0}))
        -\pi _{\tau ,K}(\varphi )(\g _{km}\circ \cdots \circ \g_{1}(z_{0}))| 
       d\tau (\g _{km})\cdots d\tau (\g _{1})\notag \\
\leq & 2C_{3,\alpha }\zeta _{\alpha }^{n-m}\| \varphi \| _{\alpha }a^{m} 
       \leq  2C_{3,\alpha }\zeta _{\alpha }^{n-m}\| \varphi \| _{\alpha }a^{m}
       \cdot (C_{1}^{m+1}C_{2}^{-1}d(z,z_{0}))^{\alpha }\notag \\ 
=    & 2C_{3,\alpha }\zeta _{\alpha }^{n-m}(aC_{1}^{\alpha })^{m}(C_{1}C_{2}^{-1})^{\alpha }
       \| \varphi \| _{\alpha } d(z,z_{0})^{\alpha }  
       \leq  2C_{3,\alpha }(C_{1}C_{2}^{-1})^{\alpha }\zeta _{\alpha }^{n-m}\eta _{\alpha }^{m}
       \| \varphi \| _{\alpha }d(z,z_{0})^{\alpha }.          
\end{align}
We now suppose we have Case (II). 
Since $\mbox{LS}({\cal U}_{f,\tau }(K ))\subset C^{\alpha }(K )$, 
we obtain  
\begin{align}
\label{eq:Bm-1c2}
\    &   \left| \int _{B(\tilde{m}-1)}[\varphi (\g _{kn,1}(z))-\varphi (\g _{kn,1}(z_{0}))
        -(\pi _{\tau ,K}(\varphi )(\g _{kn,1}(z))-\pi _{\tau ,K}(\varphi )(\g _{kn,1}(z_{0})))]
        d\tau ^{\NN}(\g )\right| \notag \\ 
\leq & \int _{B(n-1)}| \varphi (\g _{kn,1}(z))-\varphi (\g _{kn,1}(z_{0}))| 
d\tau ^{\NN}(\g ) 
        + \int _{B(n-1)}| \pi _{\tau ,K}(\varphi )(\g _{kn,1}(z))
                      -\pi _{\tau ,K}(\varphi )(\g _{kn,1}(z_{0}))| d\tau ^{\NN}(\g )\notag \\ 
\leq & C_{1}^{\alpha n}d(z,z_{0})^{\alpha }a^{n}\| \varphi \| _{\alpha }
       +C_{1}^{\alpha n}d(z,z_{0})^{\alpha }a^{n}\| \pi _{\tau ,K}(\varphi )\| _{\alpha }\notag \\ 
\leq & C_{1}^{\alpha n}a^{n}(1+E_{\alpha })\| \varphi \| _{\alpha }d(z,z_{0})^{\alpha },          
\end{align}
where $E_{\alpha }$ denotes the number in Theorem~\ref{t:utauca}. 
Let $\xi _{\alpha }:= \frac{1}{2}(\max \{ \zeta _{\alpha },\eta _{\alpha }\} +1) \in (0,1).$ 
Combining (\ref{eq:a0am}), (\ref{eq:ajineq}), (\ref{eq:Bm-1c1}), and (\ref{eq:Bm-1c2}), 
it follows that there exists a constant $C_{4,\alpha }>0$ such that 
for each $\varphi \in C^{\alpha }(K)$, 
for each $n\in \NN$, and for each $z,z_{0}\in K$,  
\begin{equation}
\label{eq:mtauval}
 |M_{\tau }^{kn}(\varphi )(z)-M_{\tau }^{kn}(\varphi )(z_{0})
-(\pi _{\tau , K}(\varphi )(z)-\pi _{\tau , K}(\varphi )(z_{0}))| 
\leq C_{4,\alpha }\xi _{\alpha }^{n}\| \varphi \| _{\alpha }d(z,z_{0})^{\alpha }.
\end{equation}  
Let $C_{5,\alpha }=C_{3,\alpha }+C_{4,\alpha }$. 
By (\ref{eq:mtauC3}) and (\ref{eq:mtauval}), we obtain that 
for each $\varphi \in C^{\alpha }(K )$ and for each $n\in \NN $, 
\begin{equation}
\label{eq:mtauC5}
\| M_{\tau }^{kn}(\varphi )-\pi _{\tau ,K}(\varphi )\| _{\alpha }
\leq C_{5,\alpha }\xi _{\alpha }^{n}\| \varphi \| _{\alpha }. 
\end{equation}  
Let $\lambda =\xi _{\alpha }$ and 
$C=C_{5,\alpha }\cdot \max\{ \| M_{\tau }^{i}\| _{\alpha }\mid 
i=0,\ldots, k-1\}$, where $M_{\tau }^{0}$ denotes the identity map 
on $C^{\alpha }(K).$ 
From (\ref{eq:mtauC5}), statement (3) of our theorem holds. 

Let $\psi \in C^{\alpha }(K ).$ Setting 
$\varphi =\psi -\pi _{\tau ,K}(\psi )$, 
by (\ref{eq:mtauC5}), we obtain that statement (2) of our theorem holds. 
Statement (4) of our theorem follows from Theorem~\ref{t:utauca} 
replacing $C$ by another constant if necessary. 
Statement (1) follows from statement (2).  

Thus, we have proved Theorem~\ref{t:kjemfhf}. 
\qed 

\ 

We now consider the spectrum Spec$_{\alpha , K}(M_{\tau })$ of $M_{\tau }:C^{\alpha }(K) \rightarrow C^{\alpha }(K).$ 
By Theorem~\ref{t:utauca}, 
${\cal U}_{v,\tau }(K)\subset \mbox{Spec}_{\alpha ,K}(M_{\tau })$ for some $\alpha \in (0,1).$ 
From Theorem~\ref{t:kjemfhf}, we can show that 
the distance between ${\cal U}_{v,\tau }(K)$ and 
$\mbox{Spec}_{\alpha ,K}(M_{\tau })\setminus {\cal U}_{v,\tau }(K)$ is positive. 
\begin{thm}
\label{t:kjemfsp}
Under the assumptions of Theorem~\ref{t:kjemfhf}, 
we have all of the following.
\begin{itemize}
\item[{\em (1)}]
$\mbox{{\em Spec}}_{\alpha ,K}(M_{\tau })\subset \{ z\in \CC \mid |z|\leq \lambda \} 
\cup {\cal U}_{v,\tau }(K)$, 
where $\alpha =\alpha (K)\in (0,1)$ and $\lambda =\lambda (K)\in (0,1)$ are the constants in Theorem~\ref{t:kjemfhf}. 
\item[{\em (2)}] 
Let $\zeta \in \CC \setminus (\{ z\in \CC \mid |z|\leq \lambda \} 
\cup {\cal U}_{v,\tau }(K))$. 
Then, $(\zeta I-M_{\tau })^{-1}:C^{\alpha }(K)\rightarrow C^{\alpha }(K)$ 
is equal to 
$(\zeta I-M_{\tau })|_{\emLSfk}^{-1}\circ \pi _{\tau ,K}+\sum _{n=0}^{\infty }
\frac{M_{\tau }^{n}}{\zeta ^{n+1}}(I-\pi _{\tau ,K}),$  
where $I$ denotes the identity on $C^{\alpha }(K).$ 

\end{itemize}

\end{thm}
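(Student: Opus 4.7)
The plan is to exploit the spectral decomposition provided by Theorem~\ref{t:kjemfhf} to split $C^{\alpha }(K)$ into an $M_{\tau }$-invariant direct sum on which the spectrum can be analyzed separately. First I would note that by Theorem~\ref{t:kjemfhf}(4) the projection $\pi _{\tau ,K}:C(K)\to \mbox{LS}({\cal U}_{f,\tau }(K))$ restricts to a bounded projection $\pi _{\tau ,K}:C^{\alpha }(K)\to C^{\alpha }(K)$ (Theorem~\ref{t:utauca} already ensures $\mbox{LS}({\cal U}_{f,\tau }(K))\subset C^{\alpha }(K)$). Setting $W_{K}:=\mbox{LS}({\cal U}_{f,\tau }(K))$ and $V_{K}:=\ker (\pi _{\tau ,K})\cap C^{\alpha }(K)$, this yields a topological direct sum decomposition $C^{\alpha }(K)=W_{K}\oplus V_{K}$ with $\dim _{\CC }W_{K}<\infty$.

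Next I would verify the $M_{\tau }$-invariance of this decomposition. Invariance of $W_{K}$ is immediate from Theorem~\ref{t:mtauspec}-\ref{t:mtauspec2}. Invariance of $V_{K}$ reduces to the commutation relation $\pi _{\tau ,K}\circ M_{\tau }=M_{\tau }\circ \pi _{\tau ,K}$, which follows by writing $\pi _{\tau ,K}(\varphi )=\sum _{j=1}^{q}\rho _{j}(\varphi )\varphi _{j}$ as in Theorem~\ref{t:mtauspec}-\ref{t:mtauspec2-1} and invoking $M_{\tau }\varphi _{j}=\alpha _{j}\varphi _{j}$ together with $M_{\tau }^{\ast }\rho _{j}=\alpha _{j}\rho _{j}$. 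I would then analyze the spectrum on each piece:
\begin{itemize}
\item On the finite-dimensional space $W_{K}$, the operator $M_{\tau }|_{W_{K}}$ has spectrum exactly ${\cal U}_{v,\tau }(K)$ by construction (it is diagonalized by the basis $\{\varphi _{j}\}$).
\item On $V_{K}$, Theorem~\ref{t:kjemfhf}(3) gives $\| M_{\tau }^{n}\varphi \| _{\alpha }\leq C\lambda ^{n}\| \varphi \| _{\alpha }$ for all $\varphi \in V_{K}$, so the spectral radius of $M_{\tau }|_{V_{K}}$ is at most $\lambda $ and hence $\mbox{Spec}(M_{\tau }|_{V_{K}})\subset \{ z\in \CC \mid |z|\leq \lambda \}$.
\end{itemize}
Since the spectrum of a block-diagonal operator is the union of the spectra of its blocks, statement (1) follows.

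For statement (2), fix $\zeta \notin \{ |z|\leq \lambda \} \cup {\cal U}_{v,\tau }(K)$. On $V_{K}$ the uniform bound $\| M_{\tau }^{n}|_{V_{K}}\| _{\alpha }\leq C\lambda ^{n}$ makes the Neumann series $\sum _{n=0}^{\infty }\zeta ^{-(n+1)}M_{\tau }^{n}$ absolutely convergent in the operator norm, producing $(\zeta I-M_{\tau })|_{V_{K}}^{-1}$. On $W_{K}$ the operator $\zeta I-M_{\tau }|_{W_{K}}$ is invertible because $\zeta \notin {\cal U}_{v,\tau }(K)$. Applying each resolvent to the corresponding component $\pi _{\tau ,K}$ or $I-\pi _{\tau ,K}$ and adding yields the stated formula
\[
(\zeta I-M_{\tau })^{-1}=(\zeta I-M_{\tau })|_{W_{K}}^{-1}\circ \pi _{\tau ,K}+\sum _{n=0}^{\infty }\frac{M_{\tau }^{n}}{\zeta ^{n+1}}(I-\pi _{\tau ,K}).
\]

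The main technical obstacle I expect is the careful justification that $\pi _{\tau ,K}$ acts continuously on the $\alpha$-H\"older norm and that the series on $V_{K}$ converges in the operator norm of $C^{\alpha }(K)$ rather than merely in the supremum norm. Both issues are resolved by Theorems~\ref{t:utauca} and \ref{t:kjemfhf}(2)--(4), so once the invariance of $V_{K}$ under $M_{\tau }$ (via commutation of $\pi _{\tau ,K}$ with $M_{\tau }$) is established, the rest is a standard block-decomposition argument.
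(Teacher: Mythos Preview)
Your proposal is correct and follows essentially the same approach as the paper: both use the $M_{\tau }$-invariant splitting $C^{\alpha }(K)=\mbox{LS}({\cal U}_{f,\tau }(K))\oplus \ker \pi _{\tau ,K}$, bound the spectral radius on the second summand by $\lambda$ via Theorem~\ref{t:kjemfhf}, and build the resolvent as the sum of the finite-dimensional inverse on the first summand and the Neumann series on the second. The only cosmetic difference is that the paper skips the explicit commutation argument and instead writes down the candidate operator $\Omega$ directly and verifies $(\zeta I-M_{\tau })\Omega =\Omega (\zeta I-M_{\tau })=I$ by a short block computation.
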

\begin{rem}
\label{r:lrm1mbk3}
Let $\tau \in {\frak M}_{1,c}(X^{+})$. Suppose that 
$\tau $ is mean stable on $Y=\Pt \setminus \{ [1:0:0]\}.$ Then,  
by Theorem~\ref{t:mtauspec}, for each 
neighborhood $B$ of $[1:0:0]$, there exists a compact subset $K$ 
of $Y$ such that $\Pt \setminus K\subset B$, 
$G_{\tau }(K)\subset K$ and $\cup _{L\in \Min(\tau)}L\subset 
\mbox{int}(K).$ 
\end{rem}

\noindent {\bf Proof of Theorem~\ref{t:kjemfsp}:} 
Let $A:= \{ z\in \CC \mid | z |\leq \lambda \} \cup 
{\cal U}_{v,\tau }(K ).$ 
Let $\zeta \in \CC \setminus A.$ 
Then, by Theorem~\ref{t:kjemfhf}, 
 $\sum _{n=0}^{\infty }
\frac{M_{\tau }^{n}}{\zeta ^{n+1}}(I-\pi _{\tau ,K})$ 
converges in the space of bounded linear operators on $C^{\alpha }(K)$ endowed 
with the operator norm. 
Let $\Omega := (\zeta I-M_{\tau })|_{\LSfk}^{-1}\circ \pi _{\tau ,K}+\sum _{n=0}^{\infty }
\frac{M_{\tau }^{n}}{\zeta ^{n+1}}(I-\pi _{\tau ,K}).$ 
Let $W_{\tau ,K}:= \mbox{LS}({\cal U}_{f,\tau }(K)).$ 
Then, we have 
\begin{align*}
(\zeta I-M_{\tau })\circ \Omega  
= &  \left( (\zeta I-M_{\tau })|_{W_{\tau ,K}}\circ \pi _{\tau ,K}
    +(\zeta I-M_{\tau })|_{{\cal B}_{0,\tau ,K}}
    \circ (I-\pi _{\tau ,K})\right) \\ 
\ & \ \circ \left( (\zeta I-M_{\tau })|_{W_{\tau ,K}}^{-1}\circ 
  \pi _{\tau ,K}
     + \sum _{n=0}^{\infty }\frac{M_{\tau }^{n}}{\zeta ^{n+1}}|_{{\cal B}_{0,\tau ,K}}\circ 
    (I-\pi _{\tau ,K}) \right) \\ 
= & I|_{W_{\tau ,K}}\circ \pi _{\tau ,K}+(\zeta I-M_{\tau })\circ 
    (\sum _{n=0}^{\infty }\frac{M_{\tau }^{n}}{\zeta ^{n+1}})\circ (I-\pi _{\tau ,K})\\ 
= & \pi _{\tau ,K}+(\sum _{n=0}^{\infty }\frac{M_{\tau }^{n}}{\zeta ^{n}}-
    \sum _{n=0}^{\infty }\frac{M_{\tau }^{n+1}}{\zeta ^{n+1}})\circ (I-\pi _{\tau ,K})=I.      
\end{align*}
Similarly, we have $\Omega \circ (\zeta I-M_{\tau })=I.$ 
Therefore, statements (1) and (2) of our theorem hold. 

Thus, we have proved Theorem~\ref{t:kjemfsp}.  
\qed 

\ 

Combining Theorem~\ref{t:kjemfsp} and perturbation theory for linear operators (\cite{K}), 
we obtain the following Theorem~\ref{t:kjemfsppt}. In particular,
we obtain complex-two-dimensional analogues of the Takagi  function.  
For the explanation on the Takagi function 
(a famous example of continuous function that is nowhere differentiable on $[0,1]$)
and 
complex-one-dimensional analogues of it, see 
\cite{Splms10, Sadv}. 
Note that by Theorem~\ref{t:yniceaod}, it is easy to see that 
if we denote by ${\cal A}$ the set of elements $\tau \in {\frak M}_{1,c}(X^{+})$ 
satisfying that $\sharp \supp\,\tau <\infty $ and 
$\tau\mbox{ is mean stable on } \Pt \setminus 
\{ [1:0:0]\}$, then ${\cal A}$  
is dense in ${\frak M}_{1,c}(X^{+})$ with respect to the wH-topology ${\cal O}$
(see also Remark~\ref{r:msfinsupp}).  
\begin{thm}
\label{t:kjemfsppt}
Let $m\in \NN $ with $m\geq 2.$
Let $h_{1},\ldots ,h_{m}\in X^{+}$. 
Let $G$ be the semigroup generated 
by $h_{1},\ldots, h_{m}$, 
i.e., $G=\{ h_{i_{n}}\circ \cdots \circ h_{i_{1}}\mid 
n\in \NN, i_{j}\in \{ 1,\ldots, m\} (\forall j)\} .$  
Let ${\cal W}_{m}:= \{ (a_{1},\ldots ,a_{m})\in (0,1)^{m}\mid \sum _{j=1}^{m}a_{j}=1 \} 
\cong \{ (a_{1},\ldots ,a_{m-1})\in (0,1)^{m-1}\mid \sum _{j=1}^{m-1}a_{j}<1 \}.$ 
For each $a=(a_{1},\ldots ,a_{m})\in {\cal W}_{m}$, 
let $\tau _{a}:= \sum _{j=1}^{m}a_{j}\delta _{h_{j}}\in {\frak M}_{1,c}(X^{+}).$ 
Suppose that there exists an element $c\in {\cal W}_{m}$
such that $\tau _{c}$ is mean stable on $\Pt \setminus \{ [1:0:0]\}$ 
{\em (}Remark: then for each $a\in {\cal W}_{m}$, $\tau _{a}$ is mean stable 
on $\Pt \setminus \{ [1:0:0]\}${\em )}.  
Let $Y=\Pt \setminus \{ [1:0:0]\}.$ 
Then,  
for each compact subset $K$ of $Y$ 
such that $G(K)\subset K$ and 
$\cup _{L\in \Min(G,Y)}L\subset \mbox{int}(K)$, 
we have the following statements {\em (1), (2)} and {\em (3)}. 
\begin{itemize}
\item[{\em (1)}] 
For each $b\in {\cal W}_{m}$   
there exist an $\alpha =\alpha (b, K)\in (0,1)$ and an open neighborhood $V_{b}$ of $b$ in ${\cal W}_{m}$ 
such that for each $a\in V_{b}$, we have 
$\mbox{{\em LS}}({\cal U}_{f,\tau _{a}}(K))\subset C^{\alpha }(K)$, 
$\pi _{\tau _{a},K}(C^{\alpha }(K))\subset C^{\alpha }(K)$ and 
$(\pi _{\tau _{a},K}:C^{\alpha }(K)\rightarrow C^{\alpha }(K))\in L(C^{\alpha }(K))$, 
where $L(C^{\alpha }(K))$ denotes the Banach space of bounded linear operators on $C^{\alpha }(K)$ 
endowed with the operator norm,  and such that the map 
$a\mapsto (\pi _{\tau _{a},K}:C^{\alpha }(K)\rightarrow C^{\alpha }(K))\in 
L(C^{\alpha }(K))$ 
 is real analytic in $V_{b}.$ 
\item[{\em (2)}] 
Let $L\in \emMin( G, Y).$ 
Then, for each $b\in {\cal W}_{m}$, there exists an $\alpha =\alpha (b,K)\in (0,1)$ such that 
the map $a\mapsto T_{L,\tau _{a}}|_{K}\in (C^{\alpha }(K),\| \cdot \| _{\alpha })$ is real analytic 
in an open neighborhood of $b$ in ${\cal W}_{m}.$ Moreover, 
the map $a\mapsto T_{L,\tau _{a}}|_{K}\in (C(K),\| \cdot \| _{\infty })$ is real analytic in 
${\cal W}_{m}.$ In particular, for each 
$z\in Y $, the map $a\mapsto T_{L,\tau _{a}}(z)$ is real analytic in ${\cal W}_{m}.$ 
Furthermore, for any multi-index $n=(n_{1},\ldots ,n_{m-1})\in (\NN \cup \{ 0\})^{m-1}$ and for any 
$b\in {\cal W}_{m},$  
 the function 
 $z\mapsto [(\frac{\partial }{\partial a_{1}})^{n_{1}}\cdots (\frac{\partial }{\partial a_{m-1}})^{n_{m-1}}
(T_{L,\tau _{a}}(z))]|_{a=b}$ belongs to $C_{F(G)}(Y).$ 
\item[{\em (3)}] 
Let $L\in \emMin(G, Y)$ and let $b\in {\cal W}_{m}.$ 
For each $i=1,\ldots ,m-1$ and for each $z\in Y$, let 
$\psi _{i,b}(z):=[\frac{\partial }{\partial a_{i}}(T_{L,\tau _{a}}(z))]|_{a=b}$  
and let $\zeta _{i,b}(z):= T_{L,\tau _{b}}(h_{i}(z))-T_{L,\tau _{b}}(h_{m}(z)).$  
Then,  
$\psi _{i,b}|_{K}$ is the unique solution of 
the functional equation $(I-M_{\tau _{b}})(\psi )=\zeta _{i,b}|_{K}, \psi |_{S_{\tau _{b}}}=0, \psi \in C(K)$,  
where $I$ denotes the identity map. Moreover, 
there exists a number $\alpha =\alpha (b,K)\in (0,1)$ such that  
$\psi _{i,b}|_{K}=\sum _{n=0}^{\infty }M_{\tau _{b}}^{n}(\zeta _{i,b}|_{K})$ in $(C^{\alpha }(K),\| \cdot \| _{\alpha }).$  
\end{itemize}  
\end{thm}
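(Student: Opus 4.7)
The plan is to combine the spectral gap from Theorem~\ref{t:kjemfsp} with holomorphic functional calculus (perturbation theory in the style of Kato) for the isolated unitary part of the spectrum, exploiting the decisive observation that the semigroup $G=G_{\tau _{a}}$ is independent of $a\in {\cal W}_{m}$; in particular $\emMin (\tau _{a})=\emMin (G,Y)$ and the integers $r_{L}$ are fixed, so by Theorem~\ref{t:mtauspec}-\ref{t:mtauspec7}, $\Uvk =\bigcup _{L\in \emMin (G,Y)}\{ a_{L}^{j}\}$ is a single finite subset ${\cal U}_{v}$ of $S^{1}$ independent of $a$.  First I would note that $M_{\tau _{a}}=\sum _{j=1}^{m}a_{j}M_{h_{j}}$ with $M_{h_{j}}(\varphi )=\varphi \circ h_{j}$ bounded on $C^{\alpha }(K)$, so $a\mapsto M_{\tau _{a}}\in L(C^{\alpha }(K))$ is linear, hence real analytic.

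For (1), fix $b\in {\cal W}_{m}$ and apply Theorems~\ref{t:kjemfhf},~\ref{t:kjemfsp} to obtain $\alpha =\alpha (b,K)\in (0,1)$ and $\lambda \in (0,1)$ with $\mbox{Spec}_{\alpha ,K}(M_{\tau _{b}})\subset \{ |z|\leq \lambda\}\cup {\cal U}_{v}.$ Choose a contour $\Gamma$ consisting of small circles enclosing exactly the points of ${\cal U}_{v}$ inside $\{\lambda <|z|<1+\varepsilon\}$.  By continuity of $a\mapsto M_{\tau _{a}}$ in operator norm, there is a neighborhood $V_{b}\subset {\cal W}_{m}$ of $b$ such that $\Gamma \cap \mbox{Spec}(M_{\tau _{a}})=\emptyset $ for every $a\in V_{b}$, and the spectral projection onto the unitary part is
$$\pi _{\tau _{a},K}=\frac{1}{2\pi i}\oint _{\Gamma }(\zeta I-M_{\tau _{a}})^{-1}\, d\zeta .$$
Expanding $(\zeta I-M_{\tau _{a}})^{-1}$ by Neumann series around $(\zeta I-M_{\tau _{b}})^{-1}$ (uniformly in $\zeta \in \Gamma $) gives $a\mapsto \pi _{\tau _{a},K}\in L(C^{\alpha }(K))$ real analytic on $V_{b}$; its image is $W_{\tau _{a}}|_{K}$ by Theorem~\ref{t:utauca}.

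For (2), choose once and for all $\varphi _{L}\in C^{\alpha _{0}}(K)$ with $\varphi _{L}|_{L}\equiv 1$ and $\varphi _{L}|_{L'}\equiv 0$ for $L'\in \emMin (G,Y)\setminus \{ L\}$ (possible since these compact sets are disjoint).  I claim $T_{L,\tau _{a}}|_{K}=\pi _{\tau _{a},K}(\varphi _{L})$.  Expanding in the basis of Theorem~\ref{t:mtauspec}-\ref{t:mtauspec8}, $\pi _{\tau _{a},K}(\varphi _{L})=\sum _{L',i}\rho _{L',i,K}(\varphi _{L})\varphi _{L',i,K}$; since $\mbox{supp}\,\rho _{L',i,K}\subset L'$ the coefficient vanishes for $L'\neq L$, and for $L'=L$ the explicit formula $\rho _{L,i}=\frac{1}{r_{L}}\sum _{j}a_{L}^{-ij}\omega _{L,j}$ from the proof of Lemma~\ref{l:suppllj} together with $\varphi _{L}|_{L_{j}}\equiv 1$ gives $\rho _{L,i,K}(\varphi _{L})=\delta _{i,r_{L}}$.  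So $\pi _{\tau _{a},K}(\varphi _{L})=\varphi _{L,r_{L},K}$, which equals $T_{L,\tau _{a}}|_{K}$ by uniqueness of the $M_{\tau _{a}}$-fixed element of $W_{\tau _{a}}|_{K}$ with the stated boundary values on $S_{\tau }=\bigcup _{L\in \emMin (G,Y)}L$ (independent of $a$).  Combining with (1), $a\mapsto T_{L,\tau _{a}}|_{K}\in C^{\alpha }(K)$ is real analytic near $b$; since real analyticity is local, $a\mapsto T_{L,\tau _{a}}|_{K}\in C(K)$ is real analytic on all of ${\cal W}_{m}$, and point evaluation yields analyticity of $a\mapsto T_{L,\tau _{a}}(z)$ for each $z\in Y$.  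The $C_{F(G)}(Y)$-membership of all partial derivatives follows because $T_{L,\tau _{a}}\in C_{F(G_{\tau _{a}})}(Y)=C_{F(G)}(Y)$ for every $a$, and this closed subspace is preserved under passage to the limit of difference quotients.

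For (3), differentiate $M_{\tau _{a}}(T_{L,\tau _{a}})=T_{L,\tau _{a}}$ with respect to $a_{i}$ (treating $a_{m}=1-\sum _{j<m}a_{j}$), using $\frac{\partial M_{\tau _{a}}}{\partial a_{i}}(\varphi )(z)=\varphi (h_{i}(z))-\varphi (h_{m}(z))$, to get $(I-M_{\tau _{b}})(\psi _{i,b})=\zeta _{i,b}$ on $K$, with $\psi _{i,b}|_{S_{\tau _{b}}}=0$ since $T_{L,\tau _{a}}|_{S_{\tau _{b}}}$ is independent of $a$.  Uniqueness: two solutions differ by a $1$-eigenvector of $M_{\tau _{b}}$ in $W_{\tau _{b}}|_{K}$ vanishing on $S_{\tau _{b}}$, which must be zero by the injectivity of $\Psi _{S_{\tau _{b}}}$ (Lemma~\ref{l:phiinj}).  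For the series formula, note $\zeta _{i,b}|_{S_{\tau _{b}}}\equiv 0$ because each $L'\in \emMin (G,Y)$ is forward invariant under every $h_{j}\in \supp\, \tau _{b}$ and $T_{L,\tau _{b}}$ is constant on each $L'$; consequently $\pi _{\tau _{b},K}(\zeta _{i,b})=0$, so Theorem~\ref{t:kjemfhf}(3) yields $\|M_{\tau _{b}}^{n}(\zeta _{i,b})\|_{\alpha }\leq C\lambda ^{n}\|\zeta _{i,b}\|_{\alpha }$, making the series converge in $C^{\alpha }(K)$.  Telescoping gives $(I-M_{\tau _{b}})\sum _{n\geq 0}M_{\tau _{b}}^{n}(\zeta _{i,b})=\zeta _{i,b}$, the restriction to $S_{\tau _{b}}$ is zero by forward invariance, and uniqueness identifies the sum with $\psi _{i,b}|_{K}$.

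The main obstacle will be the clean identification $T_{L,\tau _{a}}|_{K}=\pi _{\tau _{a},K}(\varphi _{L})$, which requires carefully tracking the action of the functionals $\rho _{L,i,K}$ on the chosen plateau function $\varphi _{L}$ to rule out contributions from unitary eigenvalues other than $1$; once this is in place, everything else reduces to standard analytic perturbation of an isolated part of the spectrum and to the spectral gap already guaranteed by Theorem~\ref{t:kjemfsp}.
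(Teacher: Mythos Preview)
Your proposal is correct and follows essentially the same architecture as the paper: real-analyticity of $a\mapsto M_{\tau_a}$ on $C^{\alpha}(K)$, then Kato-style analytic perturbation theory (Riesz projection via a contour enclosing the fixed finite set ${\cal U}_{v}$) for (1); the identification $T_{L,\tau_a}|_K=\pi_{\tau_a,K}(\varphi_L)$ for (2); and differentiation of $M_{\tau_a}(T_{L,\tau_a})=T_{L,\tau_a}$ plus the spectral gap for (3).

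The one place where your route diverges is the identification in (2), which you flag as the ``main obstacle'' and attack by expanding in the basis $\{\varphi_{L',i,K}\}$ and computing the coefficients $\rho_{L',i,K}(\varphi_L)$ via the explicit formulas from Lemma~\ref{l:suppllj}. The paper's argument is shorter and avoids this bookkeeping entirely: since there is an $r$ with $M_{\tau_a}^{r}|_{\mbox{LS}({\cal U}_{f,\tau_a}(K))}=I$, one has $M_{\tau_a}^{nr}(\varphi_L|_K)\to \pi_{\tau_a,K}(\varphi_L|_K)$ in $C(K)$ by Theorem~\ref{t:mtauspec}-\ref{t:mtauspec2}; but Theorem~\ref{t:mtauspec}-\ref{t:mtauspec4} and the choice of $\varphi_L$ give $M_{\tau_a}^{n}(\varphi_L)(z)\to T_{L,\tau_a}(z)$ pointwise, so the two limits coincide. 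Your explicit computation works too, but the paper's limiting argument shows that the obstacle you anticipated is not really there.
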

\begin{rem}
\label{r:lrm1mbk3_2}
Under the assumptions of Theorem~\ref{t:kjemfsppt}, 
by Theorem~\ref{t:mtauspec}, for each 
neighborhood $B$ of $[1:0:0]$, there exists a compact subset $K$ 
of $Y$ such that $\Pt \setminus K\subset B$, 
$G(K)\subset K$ and $\cup _{L\in \Min(G, Y)}L\subset 
\mbox{int}(K).$ 
\end{rem}

\noindent {\bf Proof of Theorem~\ref{t:kjemfsppt}:} 
Under the assumptions of Theorem~\ref{t:kjemfsppt}, 
by using the method in the proofs of \cite[Lemmas 5.1, 5.2]{SU1}, 
we obtain that for each $\alpha \in (0,1)$,  
the map  
$a\in {\cal W}_{m} \mapsto M_{\tau _{a}}\in 
L(C^{\alpha }(K ))$ is real-analytic, where $L(C^{\alpha }(K)) $ denotes the 
Banach space of bounded linear operators on $C^{\alpha }(K)$ endowed with 
the operator norm. 
Moreover, by using the method in the proof of Theorem~\ref{t:utauca}, 
we can show that for each $b\in {\cal W}_{m}$, 
there exist  an $\alpha \in (0,1)$ and an open neighborhood $V_{b}$ of $b$ in ${\cal W}_{m}$ 
such that for each $a\in V_{b}$, we have $\mbox{LS}({\cal U}_{f,\tau _{a}}(K ))\subset C^{\alpha }(K).$ 
In particular, $\pi _{\tau _{a},K}(C^{\alpha }(K))\subset C^{\alpha }(K )$ for each $a\in V_{b}.$   
Statement (1) follows from the above arguments, 
Theorem~\ref{t:mtauspec}, 
Theorem~\ref{t:kjemfsp}, and \cite[p368-369, p212]{K}. 

We now prove statement (2). 
For each $L\in \Min(G, Y)=\Min(\tau _{c})$, 
let $\varphi _{L}:Y \rightarrow [0,1]$ be a $C^{\infty }$ function on $Y$ such that  
$\varphi _{L}|_{L}\equiv 1$ and such that for each 
$L'\in \Min (G, Y)$ with $L'\neq L$, 
$\varphi _{L}|_{L'}\equiv 0.$   
Then, 
by Theorem~\ref{t:mtauspec}-\ref{t:mtauspec4}, 
we have that for each $z\in K$, 
$T_{L,\tau _{a}}(z)=\lim _{n\rightarrow \infty }M_{\tau _{a}}^{n}(\varphi _{L})(z).$ 
Moreover, since $\tau $ is mean stable on $Y$, 
we have $J_{\ker }(G_{\tau }|_{K})=\emptyset .$ 
Combining these with \cite[Lemma 4.2, Proposition 4.7]{Splms10}, 
we obtain $T_{L,\tau _{a}}=\lim _{n\rightarrow \infty }M_{\tau _{a}}^{n}(\varphi )$ 
in $C(K).$ By 
Theorem~\ref{t:mtauspec}--\ref{t:mtauspec3}, 
\ref{t:mtauspec5}, \ref{t:mtauspec6}, 
for each $a\in {\cal W}_{m}$, 
there exists a number $r\in \NN $ such that 
for each $\psi \in \mbox{LS}({\cal U}_{f,\tau }(K ))$, 
$M_{\tau _{a}}^{r}(\psi )=\psi .$ Therefore, 
by Theorem~\ref{t:mtauspec}-\ref{t:mtauspec2}, 
$T_{L,\tau _{a}}|_{K}=\lim _{n\rightarrow \infty }M_{\tau _{a}}^{nr}(\varphi _{L}|_{K})
=\lim _{n\rightarrow \infty }M_{\tau _{a}}^{nr}
(\varphi _{L}|_{K}-\pi _{\tau _{a},K}(\varphi _{L}|_{K})
+\pi _{\tau _{a},K}(\varphi _{L}|_{K}))=\pi _{\tau _{a},K}(\varphi _{L}|_{K}).$ 
Combining this with statement (1) of our theorem and 
Theorem~\ref{t:mtauspec}-\ref{t:mtauspec2}, 
it is easy to see that statement (2) of our theorem holds. 

 We now prove statement (3). 
By taking the partial derivative of $M_{\tau _{a}}(T_{L,\tau _{a}})(z)=T_{L,\tau _{a}}(z)$ 
with respect to $a_{i}$, it is easy to see that 
$\psi _{i,b}$ satisfies the functional equation 
$(I-M_{\tau _{b}})(\psi _{i,b})=\zeta _{i,b}, \psi _{i,b}|_{S_{\tau _{b}}}=0.$ 
Let $\psi \in C(K)$ be a solution of 
$(I-M_{\tau _{b}})(\psi )=\zeta _{i,b}|_{K}, \psi |_{S_{\tau _{b}}}=0.$ 
Then, for each $n\in \NN $, 
\begin{equation}
\label{eq:I-M}
(I-M_{\tau _{b}}^{n})(\psi )=\sum _{j=0}^{n-1}M_{\tau _{b}}^{j}
(\zeta _{i,b}|_{K}).
\end{equation}  
By the definition of $\zeta _{i,b}$,  
$\zeta _{i,b}|_{S_{\tau _{b}}}=0.$ Therefore, by 
Theorem~\ref{t:mtauspec}-\ref{t:mtauspec2-1}(d), 
$\pi _{\tau _{b},K}(\zeta _{i,b}|_{K})=0.$ 
Thus, denoting by $C=C(K)$ and $\lambda =\lambda (K)$ the constants in Theorem~\ref{t:kjemfhf}, 
we obtain $\| M_{\tau _{b}}^{n}(\zeta _{i,b}|_{K})\| _{\alpha }\leq C\lambda ^{n}\| \zeta _{i,b}|_{K}\| _{\alpha }$ in $C^{\alpha }(K).$  
Moreover, since $\psi |_{S_{\tau _{b}}}=0$, 
Theorem~\ref{t:mtauspec}-\ref{t:mtauspec2-1}(d) 
implies that 
$\pi _{\tau _{b},K}(\psi )=0.$ Therefore, $M_{\tau _{b}}^{n}(\psi )\rightarrow 0$  
in $C(K)$ as $n\rightarrow \infty .$ Letting $n\rightarrow \infty $ in (\ref{eq:I-M}), 
we obtain that 
$\psi =\sum _{j=0}^{\infty }M_{\tau _{b}}^{j}(\zeta _{i,b}|_{K}).$ 
Therefore, we have proved statement (3). 

  Thus, we have proved Theorem~\ref{t:kjemfsppt}. 
\qed  
\begin{rem}
\label{r:xnegsimth}
If $\tau \in {\frak M}_{1,c}(X^{-})$ is mean stable 
on $Y'=\Pt \setminus \{ 0:1:0]\}$, then we can 
show the results similar to Theorems~\ref{t:utauca}, 
\ref{t:kjemfhf}, \ref{t:kjemfsp}, and \ref{t:kjemfsppt}
 using the methods in the proofs of these theorems.  
\end{rem}

We now prove Theorem~\ref{t:rpmms1}. 

\noindent {\bf Proof of Theorem~\ref{t:rpmms1}. }
Let 
${\cal A}^{+}:= \{ \tau \in {\frak M}_{1,c}(X^{+}) \mid 
\tau \mbox{ is mean stable on }\Pt \setminus 
\{ [1:0:0]\}\} $ and 
${\cal A}^{-}:= \{ \tau \in {\frak M}_{1,c}(X^{+}) \mid 
\tau^{-1} \mbox{ is mean stable on }\Pt \setminus 
\{ [0:1:0]\}\} .$ 
 By Theorem~\ref{t:yniceaod}, 
${\cal A}^{+}$ and ${\cal A}^{-}$
are open and dense in ${\frak M}_{1,c}(X^{+})$ with respect 
to the wH-topology ${\cal O}$ in  ${\frak M}_{1,c}(X^{+})$. 
Thus, ${\cal MS}={\cal A}^{+}\cap {\cal A}^{-}$ is open and 
dense in ${\frak M}_{1,c}(X^{+})$ with respect to ${\cal O}$.  

We now let $\tau \in {\cal MS}.$ 
By Theorems~\ref{t:mtauspec}, \ref{t:kjemfhf}, 
\ref{t:kjemfsp} and Remark~\ref{r:sametauthm}, statements (1)--(6) hold for $\tau $. 
Hence, we have proved Theorem~\ref{t:rpmms1}. 
\qed 

\begin{ex}
\label{e:periodn}
For each $n\in \NN$, there exist an element $\tau \in {\cal MS}$ and 
an element $L\in \Min (\tau )$ with $L\subset \CC $ such that 
the number $r_{L}$ in Theorem~\ref{t:mtauspec} is equal to $n.$ 
For, let $p(y)$ be a polynomial of degree two or more having an attracting 
periodic cycle  in $\CC $ of period $n$. Let $\delta \in \CC \setminus \{ 0\} $ be a number  
such that $|\delta |$ is sufficiently small. Let 
$f(x,y)=(y, p(y)-\delta x).$ Then $f\in X^{+}$ and 
$f$ has an attracting periodic cycle $E$ in $\Ct $ of period $n.$ 
By  Theorem~\ref{t:rpmms1}, there exists an element $\tau \in {\cal MS}$ 
arbitrarily close to $\delta _{f}$ with respect to the wH-topology. 
Then  there  exists an element $L\in \Min(\tau )$ with $E\subset L\subset \Ct$ 
such that $r_{L}=n$ provided that $\tau $ is close enough to $\delta _{f}.$   
\end{ex}

We now prove Theorem~\ref{t:rpmms2}. 

\noindent {\bf Proof of Theorem~\ref{t:rpmms2}.} 
Let 
${\cal A}^{+}_{1}:= \{ \tau \in {\frak M}_{1,c}(X_{1}^{+}) \mid 
\tau \mbox{ is mean stable on }\Pt \setminus 
\{ [1:0:0]\}\} $ and 
${\cal A}^{-}_{1}:= \{ \tau \in {\frak M}_{1,c}(X_{1}^{+}) \mid 
\tau^{-1} \mbox{ is mean stable on }\Pt \setminus 
\{ [0:1:0]\}\} .$ 
 By Theorem~\ref{t:yniceaod}, 
${\cal A}^{+}_{1}$ and ${\cal A}^{-}_{1}$
are open and dense in ${\frak M}_{1,c}(X_{1}^{+})$ with respect 
to the wH-topology ${\cal O}$ in  ${\frak M}_{1,c}(X_{1}^{+})$. 
Thus, ${\cal MS}\cap {\frak M}_{1,c}(X_{1}^{+}) 
={\cal A}_{1}^{+}\cap {\cal A}_{1}^{-}$ is open and 
dense in ${\frak M}_{1,c}(X_{1}^{+})$ with respect to the 
wH-topology ${\cal O}$ in ${\frak M}_{1,c}(X_{1}^{+}).$

Let $\tau \in {\cal MS}\cap {\frak M}_{1,c}(X_{1}^{+}).$ 
By Lemma~\ref{l:gjcaek}(6), 
there exists no attracting minimal set $L$ of $\tau $ 
with $L\subset \Ct $ and 
 there exists no attracting minimal set $L$ of $\tau ^{-1}$ 
with $L\subset \Ct $. 
Combining this with Theorem~\ref{t:mtauspec} and 
Lemma~\ref{l:taumsema}, 
we can easily see that statements (1) and (2) in Theorem~\ref{t:rpmms2} 
hold. Thus, we have proved Theorem~\ref{t:rpmms2}. 
 \qed  
 
 \ 
 
\noindent {\bf Acknowledgements.} 
This
research was partially supported by JSPS Kakenhi 
grant numbers 
19H01790 and 
24K00526. English of this paper has been checked by  professional English proofreading companies.  \\ 
{\bf Statements and Declarations.} The author declares no conflicts of interest associated with this manuscript. 
Data sharing is not applicable to this article as no datasets were generated or analyzed during the current study.

\end{document}